\newtheorem{theorem}{\sc Theorem}[section]
\newtheorem{proposition}[theorem]{\sc Proposition}
\newtheorem{notation}[theorem]{\sc Notation}
\newtheorem{lemma}[theorem]{\sc Lemma}
\newtheorem{corollary}[theorem]{\sc Corollary}
\theoremstyle{definition}
\newtheorem{definition}[theorem]{\sc Definition}
\newtheorem{example}[theorem]{\sc Example}
\theoremstyle{remark}
\newtheorem{remark}[theorem]{\sc Remark}
\newcommand\id{\mathrm{Id}}
\newcommand\M{\mathcal{M}}
\newcommand\Alg{\mathrm{Alg}}
\newcommand\Bialg{\mathrm{Bialg}}
\renewcommand\Vec{\mathrm{Vec}}
\newcommand\U{\mathcal{U}}
\newcommand\yd{{_{H}^{H}\mathcal{YD}}}
\newcommand{\duplica}[1]{#1\otimes #1}
\def\pulb{\ar@{}[dr]|(0.2){\mbox{\Large{$\lrcorner$}}}}
\def\push{\ar@{}[dr]|(0.8){\mbox{\Large{$\ulcorner$}}}}
\newenvironment{invisible}{{\noindent\sc \underline{\color{blue}Invisible (To be hidden)}:\quad}\color{red}}{\medskip}
\begin{document}
\title{Monadic vs Adjoint Decomposition}
\author{Alessandro Ardizzoni}
\address{%
\parbox[b]{\linewidth}{University of Turin, Department of Mathematics ``G. Peano'', via
Carlo Alberto 10, I-10123 Torino, Italy}}
\email{alessandro.ardizzoni@unito.it}
\urladdr{sites.google.com/site/aleardizzonihome}
\author{Claudia Menini}
\address{University of Ferrara, Department of Mathematics and Computer Science, Via Machiavelli
30, Ferrara, I-44121, Italy}
\email{men@unife.it}
\urladdr{sites.google.com/a/unife.it/claudia-menini}
\subjclass[2010]{Primary 18C15; Secondary 18A40}
\thanks{This paper was written while the authors were members of the
"National Group for Algebraic and Geometric Structures and their
Applications" (GNSAGA-INdAM). They were both partially supported by MIUR
within the National Research Project PRIN 2017. The authors would like to express their gratitude to the referee for the accurate checking, the careful reading and the useful comments that improved an earlier version of this paper.}

\begin{abstract}
It is known that the so-called monadic decomposition, applied to the adjunction connecting the category of bialgebras to the category of vector spaces via the tensor and the primitive functors, returns the usual adjunction between bialgebras and (restricted) Lie algebras.
Moreover, in this framework, the notions of augmented monad and combinatorial rank play a central role. In order to set these results into a wider context, we are led to substitute the monadic decomposition by what we call the adjoint decomposition. This construction has the
advantage of reducing the computational complexity when compared to the first
one. We connect the two decompositions by means of an embedding and we
investigate its properties by using a relative version of Grothendieck
fibration. As an application, in this wider setting, by using the notion of augmented monad, we introduce a notion of combinatorial rank  that, among other things, is expected to give some hints
on the length of the monadic decomposition.
\end{abstract}

\keywords{Adjunctions, Monads, Fibrations, Separability, Combinatorial Rank}
\maketitle
\tableofcontents

\section*{Introduction}

Let $\mathcal{A}$ be a category with all coequalizers and let $L\dashv R:\mathcal{A}\to \mathcal{B} $ be an adjunction with
unit $\eta :\mathrm{Id}\rightarrow RL$ and counit $\epsilon :LR\rightarrow
\mathrm{Id}$. Consider the Eilenberg-Moore category $\mathcal{B}_{1}$ of algebras
over the monad $\left( RL,R\epsilon L,\eta \right) .$ Then the comparison
functor $R_{1}:\mathcal{A}\rightarrow \mathcal{B}_{1}$ has a left adjoint $%
L_{1},$ with unit $\eta _{1}:\mathrm{Id}\rightarrow R_{1}L_{1}$ and counit $%
\epsilon _{1}:L_{1}R_{1}\rightarrow \mathrm{Id}$, and we can compute the
Eilenberg-Moore  category $\mathcal{B}_{2}$ of algebras over the monad $\left(
R_{1}L_{1},R_{1}\epsilon _{1}L_{1},\eta _{1}\right) .$ Going on this way we
obtain a tower
\begin{equation*}
\vcenter{\xymatrixcolsep{2cm}\xymatrixrowsep{0.7cm}
\xymatrix{\mathcal{A}\ar@<.5ex>[d]^{R}&\mathcal{A}\ar@<.5ex>[d]^{R_1}%
\ar[l]_{\mathrm{Id}_{\mathcal{A}}}&\mathcal{A}\ar@<.5ex>[d]^{R_2}\ar[l]_{%
\mathrm{Id}_{\mathcal{A}}}&\cdots
\ar[l]_{\mathrm{Id}_{\mathcal{A}}}\quad\cdots&\mathcal{A}\ar@<.5ex>[d]^{R_N}%
\ar[l]_{\mathrm{Id}_{\mathcal{A}}}\\
\mathcal{B}\ar@<.5ex>@{.>}[u]^{L}&\mathcal{B}_1\ar@<.5ex>@{.>}[u]^{L_1}
\ar[l]_{U_{0,1}}&\mathcal{B}_2 \ar@<.5ex>@{.>}[u]^{L_2}
\ar[l]_{U_{1,2}}&\cdots\ar[l]_{U_{2,3}}\quad\cdots&\mathcal{B}_N%
\ar@<.5ex>@{.>}[u]^{L_N}\ar[l]_{U_{N-1,N}} }}
\end{equation*}%
where $U_{n,n+1}$ denotes the forgetful functor and $U_{n,n+1}\circ
R_{n+1}=R_{n}.$ If this process stops exactly after $N$ steps, meaning that $%
N$ is the smallest positive integer such that $U_{N,N+1}$ is a category
isomorphism, then $R$ is said to have a\emph{\ monadic decomposition of
monadic length }$N$. For relevant outcomes of this notion we refer to \cite%
{AGM-MonadicLie1, AGM-Restricted, AM-MM}. We just mention here how our interest in this construction stems from the case when $(L,R)$ is the adjunction $(\widetilde{T},P)$, where $P$ is the functor that associates to any bialgebra, over a base field $\Bbbk$, its space of primitive elements and its left adjoint $\widetilde{T}$ associates to a vector space $V$ its tensor algebra $TV$ endowed with the usual bialgebra structure in which the elements of $V$ are primitive. One of the outcomes of the papers quoted above is the existence of an equivalence $\Lambda$ between the category $\Vec_2$ and the category $\mathrm{Lie}$ of either Lie algebras, if $\mathrm{char}(\Bbbk)=0$, or restricted Lie algebras, if $\mathrm{char}(\Bbbk)>0$. Moreover one has $\Lambda\circ P_2=\mathcal{P}$ and $H\circ\Lambda=U_{0,1}U_{1,2}$. Here $(\mathcal{U},\mathcal{P})$ is the usual adjunction between $\Bialg$ and $\mathrm{Lie}$ given by the (restricted) universal enveloping algebra functor and the primitive functor.
\begin{equation*}
\xymatrixcolsep{1cm}\xymatrixrowsep{0.30cm}\xymatrix{\Bialg%
\ar@<.5ex>[dd]^{P}&&\Bialg\ar@<.5ex>[dd]^{P_1}|(.30)\hole%
\ar[ll]_{\id}&&\Bialg%
\ar@<.5ex>[dd]^{P_2}\ar[ll]_{\id}\ar[dl]|{\id}\\
&&&\Bialg\ar@<.5ex>[dd]^(.30){\mathcal{P}}\ar[ulll]^(.70){%
\id}\\
\Vec\ar@<.5ex>@{.>}[uu]^{\widetilde{T}}&&\Vec_1%
\ar@<.5ex>@{.>}[uu]^{ \widetilde{T}_1}|(.70)\hole
\ar[ll]_{U_{0,1}}&&\Vec_2 \ar@<.5ex>@{.>}[uu]^{\widetilde{T}_2}
\ar[ll]_(.20){U_{1,2}}|\hole \ar[dl]^{\Lambda} \\
&&&\mathrm{Lie}\ar@<.5ex>@{.>}[uu]^(.70){\mathcal{U}}\ar[ulll]^{H}}
\end{equation*}
The starting functor $P$ comes out to have monadic decomposition of monadic length at most $2$ and this reflects the fact that the functor $\mathcal{U}$ is fully faithful, or equivalently the unit $\id\to\mathcal{PU}$ of the adjunction $(\mathcal{U},\mathcal{P})$ is invertible, which is part of the so-called Milnor-Moore theorem. Thus if the input $(L,R)$ of the monadic decomposition procedure is the adjunction $(\widetilde{T},P)$, then the corresponding output, when the iteration stops, is the adjunction $(\mathcal{U},\mathcal{P})$ up to equivalence.

We point out that unit $\widetilde{\eta}$ of the adjunction $(\widetilde{T},P)$ splits via a suitable natural retraction $\gamma:P\widetilde{T}\to\id$, i.e. an augmentation for the associated monad, that allows to define a functor $\Gamma_1:\Vec\to \Vec_1, V\mapsto (V,\gamma V)$. The composite functor $S_1:=\widetilde{T}_1\Gamma_1:\Vec\to \Bialg$ associates to a vector space $V$ the tensor bialgebra $\widetilde{T}V$ factored out by the ideal generated by its homogeneous primitive elements of degree at least two. If $\widetilde{\eta}_1$ denotes the unit of $(\widetilde{T}_1,P_1)$, it comes out that $\widetilde{\eta}_1\Gamma_1V$ is invertible for every $V$ (this is equivalent to ask that $V$ has \emph{combinatorial rank} at most one) and this plays a central role in proving that  the iteration stops after two steps.

It is natural to wonder what happens to monadic decomposition if we substitute the category of vector spaces over $\Bbbk$ and the category of bialgebras over $\Bbbk$ by an arbitrary braided monoidal category $\M$ and the category $\Bialg(\M)$ of bialgebras in $\M$ respectively, once we made the proper assumptions on $\M$ to have an analogue of the adjunction $(\widetilde{T},P)$. Partial results have been obtained in \cite{AM-MM} giving rise to the notion of Milnor-Moore Category. It is worth to notice that, to the best of our knowledge, even in the more restrictive case when $\M$ is a symmetric monoidal category it is an open problem whether the monadic length is still at most $2$.

In order to look at the problem from a more general perspective, unconstrained by the particular features of the examples considered above, we think one has to investigate the stationarity of monadic decomposition at the level of  an arbitrary adjunction $(L,R)$. The notions of augmented monad and of combinatorial rank, mentioned above, are expected to play a relevant role in the picture. Moreover, since the procedure may, in principle, stop at some level higher than $2$, the functor $\Gamma_1$, arising from the augmentation, should be extended to some functor $\Gamma_n:\mathcal{B}\to \mathcal{B}_n$. A first attempt to define such a functor shows how it is inconvenient to prove that the candidate object $\Gamma_n B$ belongs to $\mathcal{B}_n$ for every $B$ in $\mathcal{B}$. This is due to the fact that to test if an object
belongs to this category several equalities have to be checked. The first
aim of this paper is to reduce drastically the number of equalities to
verify by replacing the category $\mathcal{B}_{n}$ by a new category $%
\mathcal{B}_{\left[ n\right] }$. More precisely, we construct a kind of
monadic decomposition that we call an \emph{adjoint decomposition }as
follows,
\begin{equation*}
\xymatrixcolsep{2cm}\xymatrixrowsep{0.7cm}\xymatrix{\mathcal{A}%
\ar@<.5ex>[d]^{R}&\mathcal{A}\ar@<.5ex>[d]^{R_{[1]}}\ar[l]_{\mathrm{Id}}&%
\mathcal{A}\ar@<.5ex>[d]^{R_{[2]}}\ar[l]_{\mathrm{Id}}&\cdots
\ar[l]_{\mathrm{Id}}\quad\cdots&\mathcal{A}\ar@<.5ex>[d]^{R_{[n]}}\ar[l]_{%
\mathrm{Id}}\\
\mathcal{B}\ar@<.5ex>@{.>}[u]^{L}&\mathcal{B}_{[1]}%
\ar@<.5ex>@{.>}[u]^{L_{[1]}} \ar[l]_{U_{[0,1]}}&\mathcal{B}_{[2]}
\ar@<.5ex>@{.>}[u]^{L_{[2]}}
\ar[l]_{U_{[1,2]}}&\cdots\ar[l]_{U_{[2,3]}}\quad\cdots&\mathcal{B}_{[n]}%
\ar@<.5ex>@{.>}[u]^{L_{[n]}}\ar[l]_{U_{[n-1,n]}}}
\end{equation*}%
where $(L_{[n]},R_{[n]},\epsilon _{\lbrack n]},\eta _{\lbrack n]})$ is a
suitable adjunction. Denote by $U_{[a,b]}:\mathcal{B}_{[b]}\rightarrow
\mathcal{B}_{[a]}$ the composite functor $U_{[a,a+1]}\circ
U_{[a+1,a+2]}\circ \cdots \circ U_{[b-2,b-1]}\circ U_{[b-1,b]}$ for all $%
a\leq b.$ An object in $\mathcal{B}_{[n]}$ is a pair $B_{[n]}:=(B_{[n-1]},b_{%
\left[ n-1\right] })$ where $B_{[n-1]}$ is an object in $\mathcal{B}_{[n-1]}$
and $b_{\left[ n-1\right] }:RL_{\left[ n-1\right] }B_{\left[ n-1\right]
}\rightarrow U_{[0,n-1]}B_{[n-1]}$ is a morphism in $\mathcal{B}$. Thus it can
be regarded as a datum $B_{[n]}:=(B_{[0]},b_{\left[ 0\right] },b_{\left[ 1%
\right] },\ldots ,b_{\left[ n-1\right] })$, where $b_{\left[ t\right] }:RL_{%
\left[ t\right] }B_{\left[ t\right] }\rightarrow B_{[0]}$ and $%
B_{[t]}:=U_{[t,n]}B_{[n]}$, for each $t\in \left\{ 0,\ldots ,n-1\right\} $.
A morphism $f_{\left[ n\right] }:B_{\left[ n\right] }\rightarrow B_{\left[ n%
\right] }^{\prime }$ in $\mathcal{B}_{\left[ n\right] }$ is a morphism $f_{%
\left[ n-1\right] }:B_{\left[ n-1\right] }\rightarrow B_{\left[ n-1\right]
}^{\prime }$ in $\mathcal{B}_{\left[ n-1\right] }$ such that $U_{\left[ 0,n-1%
\right] }f_{\left[ n-1\right] }\circ b_{[n-1]}=b_{[n-1]}^{\prime }\circ
RL_{[n-1]}f_{[n-1]}$.

For every $n\geq 1,$ we can construct a fully faithful functor $\Lambda _{n}:%
\mathcal{B}_{n}\rightarrow \mathcal{B}_{[n]}$ which satisfies the equalities
$\Lambda _{n}\circ R_{n}=R_{[n]}$ and $U_{\left[ n-1,n\right] }\circ \Lambda
_{n}=\Lambda _{n-1}\circ U_{n-1,n}$ i.e. that makes commutative the solid
faces of the following diagram.
\begin{equation}
\vcenter{\xymatrixcolsep{1.5cm}\xymatrixrowsep{0.3cm}\xymatrix{&\mathcal{A}%
\ar[dl]|{\mathrm{Id}}\ar@<.5ex>[dd]^(.70){R_{n-1}}|\hole&&\mathcal{A}%
\ar@<.5ex>[dd]^{R_n}\ar[ll]_{\mathrm{Id}}\ar[dl]|{\mathrm{Id}}\\
\mathcal{A}\ar@<.5ex>[dd]^(.30){R_{[n-1]}}&&\mathcal{A}%
\ar@<.5ex>[dd]^(.30){R_{[n]}}\ar[ll]|(.30){\mathrm{Id}}\\
&\mathcal{B}_{n-1}\ar[dl]|{\Lambda_{n-1}}%
\ar@<.5ex>@{.>}[uu]^(.30){L_{n-1}}|(.51)\hole &&\mathcal{B}_n
\ar@<.5ex>@{.>}[uu]^{L_n}
\ar[ll]^(.70){U_{n-1,n}}|(.47)\hole|(.49)\hole\ar[dl]|{\Lambda_{n}} \\
\mathcal{B}_{[n-1]}\ar@<.5ex>@{.>}[uu]^(.70){
L_{[n-1]}}&&\mathcal{B}_{[n]}\ar@<.5ex>@{.>}[uu]^(.70){
L_{[n]}}\ar[ll]^(.30){U_{[n-1,n]}}}}  \label{diag:Lambda}
\end{equation}

Furthermore we have an isomorphism $\lambda _{n}:L_{[n]}\Lambda
_{n}\rightarrow L_{n}$. By means of a relative version of Grothendieck
fibration, we are able to give sufficient conditions for an object in $%
\mathcal{B}_{[n]}$ to be the image through $\Lambda _{n}$ of an object in $%
\mathcal{B}_{n}$. As an instance of how this strategy works we construct,
under appropriate conditions, involving an augmentation for the monad $RL$, a family of functors $\Gamma _{\left[ n\right]
}:\mathcal{B}\rightarrow \mathcal{B}_{[n]}$, $n\in \mathbb{N}$, that factor
through $\Lambda _{n}$ returning the desired functor of $\Gamma_n:\mathcal{B}\to \mathcal{B}_n$.
These constructions apply to the adjunction $\widetilde{T}\dashv P:\Bialg(\M)\to \M $. In the particular case when $\M$ is the category $\yd$ of Yetter-Drinfeld modules over a Hopf algebra $H$, we obtain an explicit description of the functors $S_{[n]}:=\widetilde{T}_{[n]}\Gamma_{[n]}\cong \widetilde{T}_{n}\Gamma_{n}=:S_n$, which extend the functor $S_1$ mentioned above. The combinatorial rank of an object $V$ in $\yd$, regarded as a braided vector space through the braiding of $\yd$, is exactly the smallest $n$ such that the canonical projection $S_{[n]}V\to S_{[n+1]}V$ is invertible and in this case $S_{[n]}V$ is isomorphic to the Nichols algebra of $V$. Since the previous projection makes sense also if we start from a general adjunction $L\dashv R:\mathcal{A}\to \mathcal{B} $ and an object $B$ in $\mathcal{B}$, we are led to a notion of combinatorial rank in this wide
setting that, among other things, is expected to give some hints on the
length of the monadic decomposition. Finally we propose possible lines of future investigation.

\subsection*{Description of main results and applications}

The paper is organized as follows.

In Section \ref{sec:1} we recall the notion of monadic decomposition and the
definition of inserter category together with its properties needed in the
paper.

In Section \ref{sec:2} we revise the notion of Adjoint triangle introduced
by Dubuc. In Proposition \ref{pro:adj}, we give a procedure to associate a
new adjoint triangle to a given one. By means of this result, we construct
iteratively the adjoint decomposition.

In Section \ref{sec:3} we compare monadic and adjoint decompositions. More
explicitly, we construct a fully faithful functor $\Lambda _{n}:\mathcal{B%
}_{n}\rightarrow \mathcal{B}_{[n]},$ which is injective on objects,
connecting the two decompositions. This is obtained in Remark \ref{rem:adj}
by applying iteratively Proposition \ref{pro:5}.

In Section \ref{sec:4}, we investigate a relative version of Grothendieck
fibrations. As a byproduct, we deduce other properties of the functor $%
\Lambda _{n}.$ In particular, in Theorem \ref{teo:LambdaFibr}, we prove it
is an $\mathsf{M}\left( U_{\left[ n\right] }\right) $-fibration, where $%
\mathsf{M}\left( U_{\left[ n\right] }\right) $ stands for the class of
morphisms in $\mathcal{B}_{[n]}$ whose image in $\mathcal{B}$ via the
forgetful functor $U_{\left[ n\right] }:\mathcal{B}_{[n]}\rightarrow
\mathcal{B}$ are monomorphisms. As a consequence, in Theorem \ref%
{teo:main}, we give conditions guaranteeing that an object $B_{\left[ n%
\right] }\in \mathcal{B}_{[n]}$ is the image of an object in $\mathcal{B}_{n}$
through $\Lambda _{n}.$ These conditions enable to reduce the number of
equalities to check in order to establish that an object lives inside $%
\mathcal{B}_{n}.$ In Corollary \ref{coro:ff} we are able to prove that if $%
L_{\left[ N\right] }$ is fully faithful for some $N,$ then $R$ has a monadic
decomposition of monadic length at most $N.$

In Section \ref{sec:5} we connect these results to the notion of augmented monad. Explicitly, given a
suitable diagram involving two adjunctions $\left( L,R\right) $ and $\left(
L^{\prime },R^{\prime }\right) $, in Theorem \ref{thm:main}, we prove that
under certain assumptions, if the monad $R^{\prime }L^{\prime }$ is augmented, then so is $RL$
and we can construct a family of functors $\Gamma _{\left[ n\right] }:%
\mathcal{B}\rightarrow \mathcal{B}_{[n]}$, $n\in \mathbb{N}$. Any object of
the form $\Gamma _{\left[ n\right] }B\in \mathcal{B}_{[n]},$ with $B\in
\mathcal{B}$, fulfills the conditions mentioned above and hence it belongs
to the image of $\Lambda _{n}$. As a consequence $\Gamma _{\left[ n\right] }$
factors through $\Lambda _{n}$, see Proposition \ref{pro:Gamman}.

In Section \ref{sec:6}, we study our prototype example for Theorem \ref%
{thm:main} which also explains the relevant role played by the functors $%
\Gamma _{\left[ n\right] }$. Given a preadditive braided monoidal category $%
\mathcal{M}$ having equalizers, denumerable coproducts and coequalizers of
reflexive pairs of morphisms and such that all of them are preserved by the
tensor products, we construct a diagram, as in Theorem \ref{thm:main},
\begin{equation*}
\xymatrixcolsep{1.5cm}\xymatrixrowsep{0.7cm}\xymatrix{\Bialg(\M)\ar[r]^-{%
\mho ^{+}}\ar@<.5ex>@{.>}[d]^{P}&\Alg^+(\M)\ar@<.5ex>@{.>}[d]^{\Omega
^{+}}\\ \M\ar[r]^{\id}\ar@<.5ex>[u]^{\widetilde{T}}&\M\ar@<.5ex>[u]^{T^+}}
\end{equation*}

\begin{invisible}
\begin{equation*}
\begin{array}{ccc}
\mathrm{Bialg}\left( \mathcal{M}\right) & \overset{\mho ^{+}}{%
\longrightarrow } & \mathrm{Alg}^{+}\left( \mathcal{M}\right) \\
\widetilde{T}\uparrow \downarrow P &  & T^{+}\uparrow \downarrow \Omega ^{+}
\\
\mathcal{M} & \overset{\mathrm{Id}}{\longrightarrow } & \mathcal{M}%
\end{array}%
\end{equation*}
\end{invisible}

where $\mathrm{Bialg}\left( \mathcal{M}\right) $ is the category of
bialgebras in $\mathcal{M}$, $\mathrm{Alg}^{+}\left( \mathcal{M}\right) $ is
the category of augmented algebras in $\mathcal{M},$ $\widetilde{T}$ is the
tensor bialgebra functor, $P$ is the primitive functor, $T^{+}$ is
essentially the tensor algebra functor and $\Omega ^{+}$ associates to an
augmented algebra $\left( A,\varepsilon \right) $ the kernel in $\mathcal{M}$
of its augmentation $\varepsilon $. The functor $\mho ^{+}$ is just the
forgetful functor. By the foregoing we get a family of functors $\Gamma _{%
\left[ n\right] }:\mathcal{M}\rightarrow \mathcal{M}_{[n]}$, $n\in \mathbb{N}
$, that factor through $\Lambda _{n}$, as desired.

In Section \ref{sec:7} we describe explicitly these functors $\Gamma _{\left[
n\right] }$ in the case when $\mathcal{M}$ is the category $_{H}^{H}\mathcal{%
YD}$ of Yetter-Drinfeld modules over a finite-dimensional Hopf algebra $H,$
the particular case of the category $\mathrm{Vec}$ of vector spaces being
obtained by taking $H=\Bbbk .$ Concurrently we are lead to define a possible
analogue of the notion of combinatorial rank $\kappa \left( V,c\right) $ of
a braided vector space $\left( V,c\right) $ as defined in \cite[Section 5]%
{Ar-OntheComb} by mimicking V. K. Kharchenko's definition in \cite[%
Definition 5.4]{Kharchenko}. We refer to \cite{Kharchenko-LNM} for an
overview on the notion of combinatorial rank and its importance. Recall that
a braided vector space $\left( V,c\right) $ is a vector space $V$ endowed
with a braiding $c:V\otimes V\rightarrow V\otimes V.$ The tensor algebra $TV$
can be endowed with a braided bialgebra structure (this means to have a
braided vector space endowed with an algebra and a coalgebra structure
suitably compatible with the braiding), arising from the braiding of $V,$
that we denote by $T\left( V,c\right) .$ If we divide out $T\left(
V,c\right) $ by the ideal generated by its homogeneous primitive elements of
degree at least two we obtain a new braided bialgebra, say $S_{\left[ 1%
\right] }\left( V,c\right) $. We can repeat the same procedure on this
braided bialgebra obtaining a new quotient braided bialgebra $S_{\left[ 2%
\right] }\left( V,c\right) $ and go on this way. At the limit this procedure
yields the so-called Nichols algebra $B\left( V,c\right) $ and the number of
steps occurred is exactly $\kappa \left( V,c\right) .$

\begin{invisible}
In other words $\kappa \left( V,c\right) $ is the combinatorial rank of $%
B\left( V,c\right) $ as defined by Kharchenko in \cite[Definition 5.4]%
{Kharchenko}, once we adapt this notion from the world of character Hopf
algebras to that of braided bialgebras by replacing the free enveloping Hopf
algebra $G\left\langle X\right\rangle $, used therein, by the braided tensor
algebra $T\left( V,c\right) $.
\end{invisible}

Now it is well-known that under some finiteness conditions, a braided vector
space $\left( V,c\right) $ can be realized as an object in the category $%
_{H}^{H}\mathcal{YD}$ for some Hopf algebra $H$ and $c$ becomes the braiding
$c_{V,V}$ of $_{H}^{H}\mathcal{YD}$ applied to $V,$ see \cite[3.2.9]%
{Schauenburg}. On the other hand $_{H}^{H}\mathcal{YD}$ is a braided
monoidal category and any bialgebra in it becomes in a natural way a braided
bialgebra in the above sense if we forget the Yetter-Drinfeld module
structure and we just keep the underlying braiding, algebra and coalgebra
structures. In particular $\widetilde{T}V\in \mathrm{Bialg}\left( _{H}^{H}%
\mathcal{YD}\right) $ becomes the braided tensor algebra $T\left( V,c\right)
$ mentioned above. Define the functors $S_{\left[ n\right] }:=\widetilde{T}_{%
\left[ n\right] }\Gamma _{\left[ n\right] }:{}_{H}^{H}\mathcal{YD}%
\rightarrow \mathrm{Bialg}\left( _{H}^{H}\mathcal{YD}\right) $. In Example %
\ref{ex:YD}, we shows that $S_{\left[ n\right] }V\in \mathrm{Bialg}\left(
_{H}^{H}\mathcal{YD}\right) $ becomes the braided bialgebra $S_{\left[ n%
\right] }\left( V,c\right) $ mentioned above, for each $n\in \mathbb{N}$. As
a consequence the combinatorial rank of $V,$ regarded as braided vector
space through the braiding $c=c_{V,V}$ of $_{H}^{H}\mathcal{YD}$ as above,
is the smallest $n$ such that the canonical projection $S_{\left[ n\right]
}V\rightarrow S_{\left[ n+1\right] }V$ is invertible, if such an $n$ exists,
and in this case we have $S_{\left[ n\right] }V=\mathcal{B}\left( V,c\right)
$.

Since, in the setting of Theorem \ref{thm:main}, we can always define $%
S_{[n]}:=L_{[n]}\Gamma _{\lbrack n]}:\mathcal{B}\rightarrow \mathcal{A}$,
for every $B\in \mathcal{B}$ we are lead to define (see Definition \ref%
{def:combrank}) the \emph{combinatorial rank} of an object $B\in \mathcal{B}$%
, with respect to the adjunction $\left( L,R\right) $, to be the smallest $n$
such that the canonical projection $S_{[n]}B\rightarrow S_{[n+1]}B$ is
invertible (see Lemma \ref{lem:rank}), if such an $n$ exists. Thus a concept
of combinatorial rank can be introduced and investigated in this very
general setting in which there is neither a bialgebra nor a braided vector
spaces but just an adjunction $(L,R)$ as in Theorem \ref{thm:main}. In the
case when $\mathcal{M}$ is the category $\mathrm{Vec}$ of vector spaces and
the adjunction is $(\widetilde{T},P),$ every object in $\mathrm{Vec}$ has
combinatorial rank at most one (Example \ref{ex:vec}), but this is not true
for an arbitrary $\mathcal{M}$, e.g. the category ${_{H}^{H}\mathcal{YD}},$
see Example \ref{ex:YDrank2}. In Theorem \ref{thm:bound}, we prove that, if
the adjunction $\left( L_{N},R_{N}\right) $ is idempotent for some positive
integer $N$ (e.g. $R$ has a monadic decomposition of length $N$), then every
object in the domain of $R$ has combinatorial rank at most $N$ with respect
to the adjunction $\left( L,R\right) .$ As a corollary we obtain that every
symmetric MM-category in the sense of \cite[Definition 7.4]{AM-MM} has all
objects of combinatorial rank at most one, with respect to the adjunction $(%
\widetilde{T},P),$ see Corollary \ref{coro:MM}.

A possible idea for a future investigation is to establish whether the
general framework, in which the notion of combinatorial rank is settled
here, can give new hints on the existence of some bound for the
combinatorial rank of objects in a proper category $\mathcal{B}$ with
respect to an adjunction $\left( L,R\right) $ (or more specifically in a
category $\mathcal{M}$ with respect to the adjunction $(\widetilde{T},P)$)
as it happens in $\mathrm{Vec}$. The fact that all objects in $\mathrm{Vec}$
have combinatorial rank at most one constitutes one of the main ingredients in \cite%
{AGM-MonadicLie1} to prove that the monadic decomposition of $P:\mathrm{Bialg%
}(\mathrm{Vec})\rightarrow \mathrm{Vec}$ has length at most two. A natural
question, that we also leave to future investigations, is to determine
whether a similar result still holds in the setting of $\mathcal{B}$ as
above for the functor $R$. Such a result would be related to an analogue of
the so-called Milnor-Moore theorem, see Remark \ref{rem:Milnor-Moore}. More
generally one can ask whether the length of the monadic decomposition of the
functor $R$ is upper-bounded in case the combinatorial rank of objects in $%
\mathcal{B}$ with respect to $\left( L,R\right) $ is upper-bounded.

\section{Monadic Decomposition and Inserter Category\label{sec:1}}

Throughout this paper $\Bbbk $ will denote a field. All vector spaces and
(co)algebras will be defined over $\Bbbk $. The unadorned tensor product $%
\otimes$ will denote the tensor product over $\Bbbk $ if not stated
otherwise. We denote either by $\mathfrak{M}$ or $\mathrm{Vec}$ the category of vector
spaces.

\begin{definition}
Recall that a \emph{monad} on a category $\mathcal{A}$ is a triple $\mathbb{Q%
}:=\left( Q,m,u\right) ,$ where $Q:\mathcal{A}\rightarrow \mathcal{A}$ is a
functor, $m:QQ\rightarrow Q$ and $u:\mathcal{A}\rightarrow Q$ are functorial
morphisms satisfying the associativity and the unitality conditions $m\circ
mQ=m\circ Qm$ and $m\circ Qu=\mathrm{Id}_{Q}=m\circ uQ.$ An\emph{\ algebra}
over a monad $\mathbb{Q}$ on $\mathcal{A}$ (or simply a $\mathbb{Q}$\emph{%
-algebra}) is a pair $\left( X,{\mu }\right) $ where $X\in \mathcal{A}$ and $%
{\mu }:QX\rightarrow X$ is a morphism in $\mathcal{A}$ such that ${\mu }%
\circ Q{\mu }={\mu }\circ mX$ and ${\mu }\circ uX=\mathrm{Id}_{X}.$ A \emph{%
morphism between two} $\mathbb{Q}$-\emph{algebras} $\left( X,{\mu }\right) $
and $\left( X^{\prime },{\mu }^{\prime }\right) $ is a morphism $%
f:X\rightarrow X^{\prime }$ in $\mathcal{A}$ such that ${\mu }^{\prime
}\circ Qf=f\circ {\mu }$. We will denote by ${_{\mathbb{Q}}\mathcal{A}}$ the
category of $\mathbb{Q}$-algebras and their morphisms. This is the so-called
\emph{Eilenberg-Moore category} of the monad $\mathbb{Q}$ (which is
sometimes also denoted by ${\mathcal{A}}^{\mathbb{Q}}$ in the literature).
When the multiplication and unit of the monad are clear from the context, we
will just write $Q$ instead of $\mathbb{Q}$.
\end{definition}

A monad $\mathbb{Q}$ on $\mathcal{A}$ gives rise to an adjunction $\left(
F,U\right) :=\left( {_{\mathbb{Q}}}F,{_{\mathbb{Q}}}U\right) $ where $U:{_{%
\mathbb{Q}}\mathcal{A\rightarrow A}}$ is the forgetful functor and $F:%
\mathcal{A}\rightarrow {_{\mathbb{Q}}\mathcal{A}}$ is the free functor.
Explicitly:
\begin{equation*}
U\left( X,{\mu }\right) :=X,{\mathcal{\quad }}Uf:=f{\mathcal{\qquad }}\text{%
and}{\mathcal{\qquad }F}X:=\left( QX,mX\right) ,{\mathcal{\quad }}Ff:=Qf.
\end{equation*}%
Note that $UF=Q.$ The unit of the adjunction $\left( F,U\right) $ is given
by the unit $u:\mathcal{A}\rightarrow UF=Q$ of the monad $\mathbb{Q}$. The
counit $\lambda :FU\rightarrow {_{\mathbb{Q}}\mathcal{A}}$ of this
adjunction is uniquely determined by the equality $U\left( \lambda \left( X,{%
\mu }\right) \right) ={\mu }$ for every $\left( X,{\mu }\right) \in {_{%
\mathbb{Q}}\mathcal{A}}.$ It is well-known that the forgetful functor $U:{_{%
\mathbb{Q}}\mathcal{A}}\rightarrow \mathcal{A}$ is faithful and reflects
isomorphisms (see e.g. \cite[Proposition 4.1.4]{Borceux2}).

Let $L\dashv R:\mathcal{A}\to \mathcal{B} $ be an adjunction with unit $\eta :\id_{\mathcal{B}}\to RL$ and counit $\epsilon:LR\to\id_{\mathcal{A}}$. Then $\left( RL,R\epsilon L,\eta \right) $ is a monad on $\mathcal{B}$
and we can consider the so-called \emph{comparison functor} $K:\mathcal{A}%
\rightarrow {_{RL}\mathcal{B}}$ which
is defined by $KX:=\left( RX,R\epsilon X\right) $ and $Kf:=Rf.$ Note that $%
_{RL}U\circ K=R$.

\begin{definition}
An adjunction $L\dashv R:\mathcal{A}\to \mathcal{B} $ is called \emph{monadic} (tripleable in Beck's
terminology \cite[Definition 3, page 8]{Beck}) whenever the comparison
functor $K:\mathcal{A}\rightarrow {_{RL}}\mathcal{B}$ is an equivalence of
categories. A functor $R$ is called \emph{monadic} if it has a left adjoint $%
L$ such that the adjunction $(L,R)$ is monadic, see \cite[Definition 3',
page 8]{Beck}.
\end{definition}

\begin{definition}
\label{def:MonDec} (See \cite[Definition 2.7]{AGM-MonadicLie1}, \cite[%
Definition 2.1]{AHW} and \cite[Definitions 2.10 and 2.14]{MS}) Fix a $N\in
\mathbb{N}$. We say that a functor $R$ has a\emph{\ monadic decomposition of
monadic length }$N$\emph{\ }whenever there exists a sequence $\left(
R_{n}\right) _{n\leq N}$ of functors $R_{n}$ such that

1) $R_{0}=R$;

2) for $0\leq n\leq N$, the functor $R_{n}$ has a left adjoint functor $%
L_{n} $;

3) for $0\leq n\leq N-1$, the functor $R_{n+1}$ is the comparison functor
induced by the adjunction $\left( L_{n},R_{n}\right) $ with respect to its
associated monad;

4) $L_{N}$ is fully faithful while $L_{n}$ is not fully faithful for $%
0\leq n\leq N-1.$

Compare with the construction performed in \cite[1.5.5, page 49]{Manes-PhD}.

For $R:\mathcal{A}\to \mathcal{B}$, as above we have a diagram
\begin{equation}
\vcenter{\xymatrixcolsep{2cm}\xymatrixrowsep{.5cm}
\xymatrix{\mathcal{A}\ar@<.5ex>[d]^{R_0}&\mathcal{A}\ar@<.5ex>[d]^{R_1}%
\ar[l]_{\mathrm{Id}_{\mathcal{A}}}&\mathcal{A}\ar@<.5ex>[d]^{R_2}\ar[l]_{%
\mathrm{Id}_{\mathcal{A}}}&\cdots
\ar[l]_{\mathrm{Id}_{\mathcal{A}}}\quad\cdots&\mathcal{A}\ar@<.5ex>[d]^{R_N}%
\ar[l]_{\mathrm{Id}_{\mathcal{A}}}\\
\mathcal{B}_0\ar@<.5ex>@{.>}[u]^{L_0}&\mathcal{B}_1\ar@<.5ex>@{.>}[u]^{L_1}
\ar[l]_{U_{0,1}}&\mathcal{B}_2 \ar@<.5ex>@{.>}[u]^{L_2}
\ar[l]_{U_{1,2}}&\cdots\ar[l]_{U_{2,3}}\quad\cdots&\mathcal{B}_N%
\ar@<.5ex>@{.>}[u]^{L_N}\ar[l]_{U_{N-1,N}} }}  \label{diag:MonadicDec}
\end{equation}

where $\mathcal{B}_{0}=\mathcal{B}$ and, for $1\leq n\leq N,$

\begin{itemize}
\item $\mathcal{B}_{n}$ is the category of $ Q_{n-1}$-algebras ${_{Q_{n-1}}\mathcal{B}_{n-1}}$, where $Q_{n-1}:=R_{n-1}L_{n-1}$;

\item $U_{n-1,n}:\mathcal{B}_{n}\rightarrow \mathcal{B}_{n-1}$ is the
forgetful functor ${_{Q_{n-1}}}U$.
\end{itemize}

We will denote by $\eta _{n}:\mathrm{Id}_{\mathcal{B}_{n}}\rightarrow
R_{n}L_{n}$ and $\epsilon _{n}:L_{n}R_{n}\rightarrow \mathrm{Id}_{\mathcal{A}%
}$ the unit and counit of the adjunction $\left( L_{n},R_{n}\right) $
respectively for $0\leq n\leq N$. Note that one can introduce the forgetful
functor $U_{m,n}:\mathcal{B}_{n}\rightarrow \mathcal{B}_{m}$ for all $m\leq
n $ with $0\leq m,n\leq N$.

We point out that $L_N$ is full and faithful is equivalent to the fact
that the forgetful functor $U_{N,N+1}$ is a category isomorphism, see e.g.
\cite[Remark 2.4]{AGM-MonadicLie1}.
\end{definition}

We refer to \cite[Remarks 2.8 and 2.10]{AGM-MonadicLie1} for further
comments on monadic decompositions.\medskip

We now recall the notion of inserter category which will be a crucial tool in the construction of the adjoint
decomposition.

\begin{definition}
Let $F,G:\mathcal{A}\rightarrow \mathcal{B}$ be functors. The \emph{inserter
category} $\left\langle F|G\right\rangle $ has objects the pairs $\left(
A,\alpha _{A}\right) $ where $A\in \mathcal{A}$ and $\alpha
_{A}:FA\rightarrow GA$ is a morphism in $\mathcal{B}$. A morphism $f:\left( A,\alpha
_{A}\right) \rightarrow \left( A^{\prime },\alpha _{A^{\prime }}\right) $ is
a morphism $f:A\rightarrow A^{\prime }$ in $\mathcal{A}$ such that the
following diagram commutes%
\begin{equation*}
\xymatrixcolsep{1.5cm}\xymatrixrowsep{0.5cm}\xymatrix{FA\ar[d]_{\alpha_A}%
\ar[r]^{Ff}&FA'\ar[d]^{\alpha_{A'}}\\ GA\ar[r]^{Gf}&GA' }
\end{equation*}%
If we denote by
\begin{equation*}
P=P_{\left\langle F|G\right\rangle }:\left\langle F|G\right\rangle
\rightarrow \mathcal{A},\qquad\left( A,\alpha _{A}\right) \mapsto A,\qquad
f\mapsto f
\end{equation*}%
the forgetful functor, then there is a natural transformation
\begin{equation*}
\psi :=\psi _{\left\langle F|G\right\rangle }:FP\rightarrow GP
\end{equation*}%
which is defined by $\psi \left( A,\alpha \right) =\alpha $ for every $%
\left( A,\alpha \right) \in \left\langle F\downarrow G\right\rangle .$

Given functors $F,G,F^{\prime },G^{\prime }:\mathcal{A}\rightarrow \mathcal{B%
}$ and natural transformations $\phi :F^{\prime }\rightarrow F$ and $\gamma
:G\rightarrow G^{\prime }$ we can define the functor%
\begin{equation*}
\left\langle \phi |\gamma \right\rangle :\left\langle F|G\right\rangle
\rightarrow \left\langle F^{\prime }|G^{\prime }\right\rangle, \quad\left( A,FA%
\overset{\alpha _{A}}{\rightarrow }GA\right) \mapsto \left( A,F^{\prime }A%
\overset{\phi A}{\rightarrow }FA\overset{\alpha _{A}}{\rightarrow }GA\overset%
{\gamma A}{\rightarrow }G^{\prime }A\right) ,\quad f\mapsto f.
\end{equation*}
\end{definition}

\begin{remark}
We point out that $\left\langle F|G\right\rangle $ is exactly the inserter $\mathrm{\mathbf{Insert}}(F,G) $ in the $2$-category $\mathrm{Cat%
}$, see e.g. \cite[page 157]{CS}.
\end{remark}

\begin{lemma}
\label{lem:lift}1) Let $F,G:\mathcal{A}\rightarrow \mathcal{B}$ be functors
and let $Q:\mathcal{Q}\rightarrow \mathcal{A}$ be a functor endowed with a
natural transformation $q:FQ\rightarrow GQ.$ Then there is a unique functor $%
Q\left[ q\right] :\mathcal{Q}\rightarrow \left\langle F|G\right\rangle $
such that $P\circ Q\left[ q\right] =Q$ and $\psi Q\left[ q\right] =q.$
Explicitly $Q\left[ q\right] X:=\left( QX,qX\right) \in \left\langle
F|G\right\rangle $ for every $X\in \mathcal{Q}.$ Clearly any functor $N:%
\mathcal{Q}\rightarrow \left\langle F|G\right\rangle $ is of the form $Q%
\left[ q\right] $ for $Q=PN$ and $q=\psi N.$

2)\ Let $Q\left[ q\right] ,K\left[ k\right] :\mathcal{Q}\rightarrow
\left\langle F|G\right\rangle $ be functors and let $\pi :Q\rightarrow K$ be
such that $G\pi \circ q=k\circ F\pi .$ Then there is a unique natural
transformation $\widetilde{\pi }:Q\left[ q\right] \rightarrow K\left[ k%
\right] $ such that $P\widetilde{\pi }=\pi .$ Clearly any natural
transformation $\nu :Q\left[ q\right] \rightarrow K\left[ k\right] $ is of
the form $\widetilde{\pi }$ for $\pi =P\nu .$

3)\ Let $\phi :F^{\prime }\rightarrow F$ and $\gamma :G\rightarrow G^{\prime
}.$ Then $\left\langle \phi |\gamma \right\rangle \circ Q\left[ q\right] =Q%
\left[ \gamma Q\circ q\circ \phi Q\right] .$
\end{lemma}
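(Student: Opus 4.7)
The plan is to verify each of the three parts by a direct unwinding of the definition of the inserter category $\left\langle F|G\right\rangle$, whose objects and morphisms are precisely the pairs and compatibility squares appearing in the statement. No new constructions are required: every candidate is literally named by the data provided, and the work amounts to a universality-of-factorization argument.

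For part 1, I would define $Q[q]$ on objects by $Q[q]X := (QX, qX)$ and on morphisms $f : X \to X'$ in $\mathcal{Q}$ by $Q[q]f := Qf$. The key verification is that $Qf$ qualifies as a morphism $(QX, qX) \to (QX', qX')$ in $\left\langle F|G\right\rangle$: this is exactly the naturality square $G(Qf) \circ qX = qX' \circ F(Qf)$, which holds by naturality of $q : FQ \to GQ$ at $f$. Functoriality of $Q[q]$ is then inherited from that of $Q$, and the equalities $P \circ Q[q] = Q$ and $\psi Q[q] = q$ hold by construction. For uniqueness, any functor $N : \mathcal{Q} \to \left\langle F|G\right\rangle$ with $PN = Q$ and $\psi N = q$ is forced on objects, since $NX$ has first component $PNX = QX$ and second component $\psi NX = qX$; on morphisms, $Nf$ viewed through $P$ must equal $Qf$, so $N = Q[q]$.

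For part 2, I would set $\widetilde{\pi}X := \pi X$, regarded as a morphism in $\left\langle F|G\right\rangle$ from $(QX, qX)$ to $(KX, kX)$. The hypothesis $G\pi \circ q = k \circ F\pi$, evaluated at $X$, is precisely the compatibility square needed by the definition of morphism in the inserter. Naturality of $\widetilde{\pi}$ then reduces under $P$ to naturality of $\pi$, and uniqueness is immediate from $P\widetilde{\pi} = \pi$. Part 3 is a direct unwinding: applying $\left\langle \phi|\gamma\right\rangle \circ Q[q]$ to $X \in \mathcal{Q}$ yields $(QX, \gamma QX \circ qX \circ \phi QX)$, which coincides with the object assignment of $Q[\gamma Q \circ q \circ \phi Q]$; both functors act as $Qf$ on morphisms, so the two composites agree.

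I do not anticipate any substantial obstacle: the statement articulates the universal property that the inserter is designed to satisfy, and its proof amounts to matching components. The only care required is to recognize, whenever an object $(A, \alpha)$ is introduced in $\left\langle F|G\right\rangle$, which naturality square must be checked, and to observe that this square is supplied in each case either by the naturality of a given transformation or by an explicit hypothesis.
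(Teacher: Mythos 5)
Your proposal is correct and follows essentially the same route as the paper: defining $Q[q]$ componentwise, checking that the required compatibility square is in each case supplied by naturality of $q$ or by the stated hypothesis, and deducing uniqueness from the faithfulness of the forgetful functor $P$. The only cosmetic difference is in part 3, where the paper derives the identity by computing $P\circ\left\langle \phi|\gamma\right\rangle\circ Q[q]$ and $\psi\left\langle \phi|\gamma\right\rangle Q[q]$ and then invoking the uniqueness statement of part 1, whereas you unwind both sides directly on objects and morphisms; these are equivalent.
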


\begin{proof}
1)\ For every $X\in \mathcal{Q}$ define $Q\left[ q\right] X:=\left(
QX,qX:FQX\rightarrow GQX\right) \in \left\langle F|G\right\rangle .$ Given $%
f:X\rightarrow Y$ in $\mathcal{Q},$ by naturality of $q$ we have $qY\circ
FQf=GQf\circ qX$ so that $f$ induces a morphism $Q\left[ q\right] f:Q\left[ q%
\right] X\rightarrow Q\left[ q\right] Y$ such that $PQ\left[ q\right] f=Qf.$
Thus the functor $Q\left[ q\right] :\mathcal{Q}\rightarrow \left\langle
F|G\right\rangle $ is defined. Moreover $\psi Q\left[ q\right] X=\psi \left(
QX,qX\right) =qX$ so that $\psi Q\left[ q\right] =q.$ Let us check that $Q%
\left[ q\right] $ is unique. Given a functor $N:\mathcal{Q}\rightarrow
\left\langle F|G\right\rangle $ such that $PN=Q$ and $\psi N=q$ we have that
$PNX=QX$ so that $NX=\left( QX,\alpha \right) $ for some $\alpha .$ Moreover
$qX=\psi NX=\psi \left( QX,\alpha \right) =\alpha $ and hence $NX=\left(
QX,qX\right) =Q\left[ q\right] X.$ Moreover, given $f:X\rightarrow Y$ in $%
\mathcal{Q},$ we have $PNf=Qf=PQ\left[ q\right] f.$ Since $P$ is faithful,
we deduce $Nf=Q\left[ q\right] f$ and hence $W=Q\left[ q\right] .$

2) For $X\in \mathcal{Q}$ we have $G\pi X\circ qX=kX\circ F\pi X.$ Since $Q%
\left[ q\right] X:=\left( QX,qX\right) $ and $K\left[ k\right] X:=\left(
KX,kX\right) ,$ we get that $\pi X$ induces $\widetilde{\pi }X:Q\left[ q%
\right] X\rightarrow K\left[ k\right] X$ such that $P\widetilde{\pi }X=\pi
X. $ The naturality of $\pi X$ induces the one of $\widetilde{\pi }X$ so
that we get $\widetilde{\pi }:Q\left[ q\right] \rightarrow K\left[ k\right] $
such that $P\widetilde{\pi }=\pi .$

3) First we have%
\begin{eqnarray*}
P_{\left\langle F^{\prime }|G^{\prime }\right\rangle }\circ \left\langle
\phi |\gamma \right\rangle \circ Q\left[ q\right] &=&P_{\left\langle
F|G\right\rangle }\circ Q\left[ q\right] =Q \\
\psi _{\left\langle F^{\prime }|G^{\prime }\right\rangle }\left(
\left\langle \phi |\gamma \right\rangle \circ Q\left[ q\right] \right)
&=&\left( \gamma P_{\left\langle F|G\right\rangle }\circ \psi \circ \phi
P_{\left\langle F|G\right\rangle }\right) Q\left[ q\right] \\
&=&\gamma P_{\left\langle F|G\right\rangle }Q\left[ q\right] \circ \psi Q%
\left[ q\right] \circ \phi P_{\left\langle F|G\right\rangle }Q\left[ q\right]
=\gamma Q\circ q\circ \phi Q
\end{eqnarray*}%
so that $\left\langle \phi |\gamma \right\rangle \circ Q\left[ q\right] =Q%
\left[ \gamma Q\circ q\circ \phi Q\right] .$
\end{proof}

\begin{invisible}
$\left\langle F|G\right\rangle $ is a sort of pullback as follows.%
\begin{eqnarray*}
&&%
\begin{array}{ccc}
\left\langle F|G\right\rangle & \overset{P}{\longrightarrow } & \mathcal{A}
\\
P\downarrow & \overset{\psi :FP\rightarrow GP}{\Leftarrow } & F\downarrow \\
\mathcal{A} & \overset{G}{\longrightarrow } & \mathcal{B}%
\end{array}%
\begin{array}{ccc}
\mathcal{Q} & \overset{Q}{\longrightarrow } & \mathcal{A} \\
Q\downarrow & \overset{q:FQ\rightarrow GQ}{\Leftarrow } & F\downarrow \\
\mathcal{A} & \overset{G}{\longrightarrow } & \mathcal{B}%
\end{array}
\\
&\Rightarrow &\exists \mathcal{Q}\rightarrow \left\langle F|G\right\rangle
:X\mapsto \left( QX,q_{X}:FQX\rightarrow GQX\right) .
\end{eqnarray*}%
Let $F,G:\mathcal{A}\rightarrow \mathcal{B}$ be functors. We have an
equalizer of functors
\begin{equation*}
\left\langle F|G\right\rangle \overset{E}{\longrightarrow }\left\langle
F\downarrow G\right\rangle \overset{P_{1}}{\underset{P_{2}}{%
\rightrightarrows }}\mathcal{A}
\end{equation*}%
where $\left\langle F\downarrow G\right\rangle $ is the comma category while
$P_{1}$ and $P_{2}$ are the obvious projection functors. To prove this fact,
recall the comma category has objects $\left( X,Y,\alpha
_{X,Y}:FX\rightarrow GY\right) $ and morphisms $\left( f,g\right) :\left(
X,Y,\alpha _{X,Y}\right) \rightarrow \left( X^{\prime },Y^{\prime },\alpha
_{X^{\prime },Y^{\prime }}\right) $ such that%
\begin{equation*}
\begin{array}{ccc}
FX & \overset{\alpha _{X,Y}}{\longrightarrow } & GY \\
Ff\downarrow &  & Gg\downarrow \\
FX^{\prime } & \overset{\alpha _{X^{\prime },Y^{\prime }}}{\longrightarrow }
& GY^{\prime }%
\end{array}%
.
\end{equation*}%
We have a faithful functor%
\begin{equation*}
\left\langle F|G\right\rangle \overset{E}{\longrightarrow }\left\langle
F\downarrow G\right\rangle :\left( A,\alpha _{A}\right) \mapsto \left(
A,A,\alpha _{A}\right) ;\qquad f\mapsto \left( f,f\right) .
\end{equation*}%
Via this functor we get an equalizer of categories%
\begin{equation*}
\left\langle F|G\right\rangle \overset{E}{\longrightarrow }\left\langle
F\downarrow G\right\rangle \overset{P_{1}}{\underset{P_{2}}{%
\rightrightarrows }}\mathcal{A}
\end{equation*}%
where $P_{1}$ and $P_{2}$ are the obvious projection functors. In fact
\begin{eqnarray*}
P_{1}E\left( A,\alpha _{A}\right) &=&P_{1}\left( A,A,\alpha _{A}\right)
=A=P_{2}\left( A,A,\alpha _{A}\right) =P_{2}E\left( A,\alpha _{A}\right) \\
P_{1}Ef &=&P_{1}\left( f,f\right) =f=P_{2}\left( f,f\right) =P_{2}Ef.
\end{eqnarray*}%
Let $Q:\mathcal{C}\rightarrow \left\langle F\downarrow G\right\rangle $ be a
functor such that $P_{1}Q=P_{2}Q.$ For every $C\in \mathcal{C}$ we write $%
QC=\left( X,Y,\alpha _{X,Y}:FX\rightarrow GY\right) $ for some $X,Y,$ and $%
\alpha _{X,Y}.$ Then $X=P_{1}QC=P_{2}QC=Y$ so that $\overline{Q}C:=\left(
X,\alpha _{X,X}:FX\rightarrow GX\right) \in \left\langle F|G\right\rangle $
and $QC=E\overline{Q}C.$ Given $h\in \mathcal{C}$ we write $Qh=\left(
f,g\right) $ for some $f$ and $g.$ Then $f=P_{1}Qh=P_{2}Qh=g$ so that $%
\overline{Q}h:=f\in \left\langle F|G\right\rangle .$ Then $Qh=E\overline{Q}%
h. $ Clearly this define a functor $\overline{Q}:\mathcal{C}\rightarrow
\left\langle F|G\right\rangle $ such that $E\overline{Q}=Q.$ Since $E$ is
injective on morphisms and objects, $\overline{Q}$ is unique.
\end{invisible}

\begin{proposition}
\label{pro:aureo}Consider the forgetful functor $P=P_{\left\langle
F|G\right\rangle }:\left\langle F|G\right\rangle \rightarrow \mathcal{A}.$
Let $f:\left( A,a\right) \rightarrow \left( C,c\right) $ and $g:\left(
B,b\right) \rightarrow \left( C,c\right) $ morphisms in $\left\langle
F|G\right\rangle $ and let $h:A\rightarrow B$ be a morphism in $\mathcal{A}$
such that $Pf=Pg\circ h.$ If $GPg$ is a monomorphism, then there is a
(unique) morphism $h^{\prime }:\left( A,a\right) \rightarrow \left(
B,b\right) $ such that $Ph^{\prime }=h$ and $f=g\circ h^{\prime }.$

\begin{invisible}
\begin{equation*}
\begin{array}{ccc}
A &  &  \\
h\downarrow & \searrow Pf &  \\
B & \overset{Pg}{\longrightarrow } & C%
\end{array}%
\begin{array}{ccc}
\left( A,a\right) &  &  \\
\exists h^{\prime }\downarrow & \searrow f &  \\
\left( B,b\right) & \overset{g}{\longrightarrow } & \left( C,c\right)%
\end{array}%
\end{equation*}
\end{invisible}
\end{proposition}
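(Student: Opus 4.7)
The claim is essentially a cancellation argument that exploits the assumption that $GPg$ is a monomorphism, so my plan is to unravel the compatibility conditions hidden in the fact that $f$ and $g$ are morphisms in $\langle F|G\rangle$ and then cancel $GPg$ on the left to recover the square needed for $h$ to lift.

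First I would rename, for brevity, $\alpha:=Pf:A\to C$, $\beta:=Pg:B\to C$, so that the hypothesis reads $\alpha=\beta\circ h$. Since $f:(A,a)\to(C,c)$ and $g:(B,b)\to(C,c)$ are morphisms in $\langle F|G\rangle$, the defining squares give
\[
G\alpha\circ a=c\circ F\alpha\qquad\text{and}\qquad G\beta\circ b=c\circ F\beta.
\]
Using these, together with the functoriality of $F$ and $G$ and the hypothesis $\alpha=\beta\circ h$, I would compute
\[
G\beta\circ(b\circ Fh)=(G\beta\circ b)\circ Fh=c\circ F\beta\circ Fh=c\circ F\alpha=G\alpha\circ a=G\beta\circ Gh\circ a=G\beta\circ(Gh\circ a).
\]
Since $G\beta=GPg$ is a monomorphism by assumption, I can cancel it on the left to obtain $b\circ Fh=Gh\circ a$, which is exactly the commutativity condition required for $h:A\to B$ to define a morphism $h':(A,a)\to(B,b)$ in $\langle F|G\rangle$ with $Ph'=h$.

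To conclude $f=g\circ h'$, I would invoke the fact that $P$ is faithful (which is immediate from the definition of morphisms in the inserter category): one has $P(g\circ h')=Pg\circ Ph'=\beta\circ h=\alpha=Pf$, and faithfulness forces $g\circ h'=f$. Uniqueness of $h'$ likewise follows because $P$ is faithful: any other $h''$ with $Ph''=h$ equals $h'$.

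The main (and essentially only) obstacle is arranging the computation so that a single application of the mono assumption on $GPg$ yields the required square; everything else is a formal consequence of the definitions of morphisms in $\langle F|G\rangle$ and of the faithfulness of the forgetful functor $P$. No further properties of $F$, $G$ or $\mathcal{A},\mathcal{B}$ are needed.
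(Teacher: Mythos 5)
Your proof is correct and follows essentially the same route as the paper: both arguments use the commutativity of the outer rectangle (from $f$ being a morphism and $Pf=Pg\circ h$) together with the right-hand square (from $g$), cancel the monomorphism $GPg$ to obtain $b\circ Fh=Gh\circ a$, and then conclude via faithfulness of $P$. No gaps.
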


\begin{proof}
Consider the following diagram%
\begin{equation*}
\xymatrixcolsep{1.5cm}\xymatrixrowsep{0.5cm}\xymatrix{FA\ar[d]^{a}%
\ar[r]^{Fh}&FB\ar[d]^{b}\ar[r]^{FPg}&FC\ar[d]^{c}\\GA\ar[r]^{Gh}&GB%
\ar[r]^{GPg}&GC}
\end{equation*}%
Since $f$ is a morphism in $\left\langle F|G\right\rangle $, then the
external diagram commutes, and since $g$ is a morphism in $\left\langle
F|G\right\rangle $, so does the right square. Using that $GPg$ is
a monomorphism, we deduce that the left square commutes as well i.e. $h$ induces a morphism $h^{\prime }:\left( A,a\right) \rightarrow
\left( B,b\right) $ such that $Ph^{\prime }=h.$ Now $Pf=Pg\circ h=P\left(
g\circ h^{\prime }\right) $ and $P$ is faithful imply $f=g\circ h^{\prime }.$
Since $P$ is faithful, $h^{\prime }$ is unique.
\end{proof}

\begin{example}
Recall from \cite[Definition 2.2.2]{Pi}, that given an endofunctor $F:\mathcal{A}\rightarrow \mathcal{A},$ the category of $%
F $\textbf{-algebras} (not to be confused with an Eilenberg-Moore algebra)
is $F$-$\mathrm{Alg}=\left\langle F|\mathrm{Id}_{\mathcal{A}}\right\rangle $.
\end{example}

Let $F,G:\mathcal{A}\rightarrow \mathcal{B}$ be functors and let $\epsilon
:F\rightarrow G$ be a natural transformation. If $\mathcal{B}$ has
coequalizers we can define the functor
\begin{equation}\label{def:Ueps}
\mathcal{U}\left( \epsilon \right) :\left\langle F|G\right\rangle
\rightarrow \mathcal{B}
\end{equation}%
by the following coequalizer of natural transformations%
\begin{equation}  \label{coeq:pieps}
\xymatrixcolsep{1.5cm} \xymatrix{FP\ar@<.5ex>[r]^{\psi}\ar@<-.5ex>[r]_{%
\epsilon P}&GP\ar[r]^{\pi :=\pi \left( \epsilon \right) }&
\mathcal{U}(\epsilon)}
\end{equation}

\begin{invisible}
\begin{equation}
FP\overset{\psi }{\underset{\epsilon P}{\rightrightarrows }}GP\overset{\pi
:=\pi \left( \epsilon \right) }{\rightarrow }\mathcal{U}\left( \epsilon
\right) .
\end{equation}
Given $\left( A,\alpha \right) \in \left\langle F|G\right\rangle $, the
object $\mathcal{U}\left( A,\alpha \right) $ is defined by the following
coequalizer
\begin{equation*}
FA\overset{\alpha }{\underset{\epsilon _{A}}{\rightrightarrows }}GA\overset{%
\pi _{\left( A,\alpha \right) }}{\rightarrow }\mathcal{U}\left( A,\alpha
\right) .
\end{equation*}%
Given $f:\left( A,\alpha \right) \rightarrow \left( A^{\prime },\alpha
^{\prime }\right) $ in $\left\langle F|G\right\rangle ,$ we have $\alpha
^{\prime }\circ FPf=GPf\circ \alpha .$ Thus
\begin{eqnarray*}
\pi _{\left( A^{\prime },\alpha ^{\prime }\right) }\circ GPf\circ \alpha
&=&\pi _{\left( A^{\prime },\alpha ^{\prime }\right) }\circ \alpha ^{\prime
}\circ FPf \\
&=&\pi _{\left( A^{\prime },\alpha ^{\prime }\right) }\circ \epsilon
_{A^{\prime }}\circ FPf\overset{\text{nat. }\epsilon }{=}\pi _{\left(
A^{\prime },\alpha ^{\prime }\right) }\circ GPf\circ \epsilon _{A}
\end{eqnarray*}%
so that $\pi _{\left( A^{\prime },\alpha ^{\prime }\right) }\circ GPf$
coequalizes the pair $\left( \alpha ,\epsilon _{A}\right) $ and hence there
is a unique morphism $\mathcal{U}f:\mathcal{U}\left( A,\alpha \right)
\rightarrow \mathcal{U}\left( A^{\prime },\alpha ^{\prime }\right) $ such
that%
\begin{equation*}
\mathcal{U}f\circ \pi _{\left( A,\alpha \right) }=\pi _{\left( A^{\prime
},\alpha ^{\prime }\right) }\circ GPf.
\end{equation*}%
This defines a functor $\mathcal{U}:=\mathcal{U}\left( \epsilon \right)
:\left\langle F|G\right\rangle \rightarrow \mathcal{B}$ and it means that $%
\pi =\left( \pi _{\left( A,\alpha \right) }\right) _{\left( A,\alpha \right)
\in \left\langle F|G\right\rangle }$ is a natural transformation. Given a
natural transformation $\xi :GP\rightarrow H$ such that $\xi \circ \psi =\xi
\circ \epsilon P$ we get that $\xi _{\left( A,\alpha \right) }\circ \alpha
=\xi _{\left( A,\alpha \right) }\circ \epsilon _{A}$ so that there is a
unique morphism $\overline{\xi }_{\left( A,\alpha \right) }:\mathcal{U}%
\left( A,\alpha \right) \rightarrow H\left( A,\alpha \right) $ such that $%
\overline{\xi }_{\left( A,\alpha \right) }\circ \pi _{\left( A,\alpha
\right) }=\xi _{\left( A,\alpha \right) }.$ Given $f:\left( A,\alpha \right)
\rightarrow \left( A^{\prime },\alpha ^{\prime }\right) $ in $\left\langle
F|G\right\rangle $, we get%
\begin{equation*}
\overline{\xi }_{\left( A^{\prime },\alpha ^{\prime }\right) }\circ \mathcal{%
U}f\circ \pi _{\left( A,\alpha \right) }=\overline{\xi }_{\left( A^{\prime
},\alpha ^{\prime }\right) }\circ \pi _{\left( A^{\prime },\alpha ^{\prime
}\right) }\circ GPf=\xi _{\left( A,\alpha \right) }\circ GPf=Hf\circ \xi
_{\left( A,\alpha \right) }=Hf\circ \overline{\xi }_{\left( A,\alpha \right)
}\circ \pi _{\left( A,\alpha \right) }
\end{equation*}%
and hence $\overline{\xi }_{\left( A^{\prime },\alpha ^{\prime }\right)
}\circ \mathcal{U}f=Hf\circ \overline{\xi }_{\left( A,\alpha \right) }$
which means that $\overline{\xi }:=\left( \overline{\xi }_{\left( A,\alpha
\right) }\right) _{\left( A,\alpha \right) \in \left\langle F|G\right\rangle
}:\mathcal{U}\rightarrow H$ is a natural transformation such that $\overline{%
\xi }\circ \pi =\xi .$ Moreover $\overline{\xi }$ is unique as $\pi $ is
surjective on components.

See Lemma 2.1 in the paper [Pre-torsors and Galois Comodules Over Mixed
Distributive Laws] of Claudia for a general proof.
\end{invisible}

\section{Adjoint Triangles and Adjoint Decomposition\label{sec:2}}

In this section we construct iteratively the category $\mathcal{B}_{\left[ n%
\right] }\ $and an analogue of the monadic decomposition that will be called
the adjoint decomposition. Our first aim is to obtain an analogue of the
Eilenberg-Moore category. For this purpose we will use the notion of adjoint
triangle.

\begin{definition}
\label{def:augtriang} \cite[Definition 1]{Dubuc} By an \emph{adjoint triangle%
}, we mean a diagram of functors

\begin{invisible}
\begin{equation}
\mathbb{T}:=%
\begin{array}{ccc}
\mathcal{A} & \overset{\mathrm{Id}_{\mathcal{A}}}{\longrightarrow } &
\mathcal{A} \\
L\uparrow \downarrow R & \zeta & L^{\prime }\uparrow \downarrow R^{\prime }
\\
\mathcal{B} & \overset{G}{\longrightarrow } & \mathcal{B}^{\prime }%
\end{array}%
\quad \vcenter{\xymatrixcolsep{1.5cm}\xymatrixrowsep{1.5cm}\xymatrix{&%
\mathcal{A}\ar@<.3ex>[dl]^*-<0.2cm>{^{R}}\ar@<.3ex>[dr]^*-<0.2cm>{^{R'}}%
\ar@{}[d]|-{\zeta}\\
\mathcal{B}\ar@<.3ex>@{.>}[ur]^*-<0.2cm>{^{L}}\ar[rr]^G&&\mathcal{B}'%
\ar@<.3ex>@{.>}[ul]^*-<0.2cm>{^{L'}}}}
\end{equation}
\end{invisible}

\begin{equation}
\vcenter{\xymatrixcolsep{1.5cm}\xymatrixrowsep{0.7cm}\xymatrix{\mathcal{A}\ar[r]^\id%
\ar@{}[dr]|-{\zeta}\ar@<.4ex>[d]^*-<0.2cm>{^{R}}&
\mathcal{A}\ar@<.4ex>[d]^*-<0.2cm>{^{R'}}\\
\mathcal{B}\ar@<.4ex>@{.>}[u]^*-<0.2cm>{^{L}}\ar[r]^G&\mathcal{B}'%
\ar@<.4ex>@{.>}[u]^*-<0.2cm>{^{L'}}}}  \label{diag:AugTr}
\end{equation}%
where $\left( L,R,\eta ,\epsilon \right) $ and $\left( L^{\prime },R^{\prime
},\eta ^{\prime },\epsilon ^{\prime }\right) $ are adjunctions and $%
GR=R^{\prime }$.

The letter $\zeta $ inserted in (\ref{diag:AugTr}) is the unique natural
transformation $\zeta :L^{\prime }G\rightarrow L$ such that $\epsilon \circ
\zeta R=\epsilon ^{\prime }$ namely $\zeta :=\epsilon ^{\prime }L\circ
L^{\prime }G\eta $. The convention to write this natural transformation inside the respective adjoint triangle will be useful to state our results along the paper. It is easy to
check that%
\begin{equation}
R^{\prime }\zeta \circ \eta ^{\prime }G=G\eta .  \label{form:zeta1}
\end{equation}
Note that diagram \eqref{diag:AugTr} has been drawn as a square to make
it more readable, although the two copies of $\mathcal{A}$ on the top can be
glued together to give rise, in fact, to a triangle.
\end{definition}

\begin{invisible}
Assume $\epsilon \circ \zeta R=\epsilon ^{\prime }.$ Then $\zeta =\epsilon
L\circ L\eta \circ \zeta =\epsilon L\circ \zeta RL\circ L^{\prime }G\eta
=\left( \epsilon \circ \zeta R\right) L\circ L^{\prime }G\eta =\epsilon
^{\prime }L\circ L^{\prime }G\eta .$ Assume $\zeta =\epsilon ^{\prime
}L\circ L^{\prime }G\eta .$ We compute $\epsilon \circ \zeta R=\epsilon
\circ \epsilon ^{\prime }LR\circ L^{\prime }G\eta R=\epsilon ^{\prime }\circ
L^{\prime }R^{\prime \prime }G\eta R=\epsilon ^{\prime }\circ L^{\prime
\prime }G\eta R=\epsilon ^{\prime }.$

We also have%
\begin{equation*}
R^{\prime }\zeta \circ \eta ^{\prime }G=R^{\prime }\epsilon ^{\prime }L\circ
R^{\prime }L^{\prime }G\eta \circ \eta ^{\prime }G=R^{\prime }\epsilon
^{\prime }L\circ \eta ^{\prime }GRL\circ G\eta =R^{\prime }\epsilon ^{\prime
}L\circ \eta ^{\prime }R^{\prime }L\circ G\eta =G\eta .
\end{equation*}
\end{invisible}

\begin{remark}
\label{rem:comptr}As a particular case of horizontal composition of adjoint
squares (see \cite[I,6.8 ]{Gray-FormalCat}), we can define the (horizontal)
composition $\mathbb{T}^{\prime \prime }:=\mathbb{T}^{\prime }\ast \mathbb{T}
$ of two adjoint triangles $\mathbb{T}^{\prime }$ and $\mathbb{T}$ by

\begin{invisible}
\begin{equation*}
\mathbb{T}^{\prime }:=\vcenter{\xymatrix{&\mathcal{A}%
\ar@<.3ex>[dl]^*-<0.2cm>{^{R''}}\ar@<.3ex>[dr]^*-<0.2cm>{^{R}}%
\ar@{}[d]|-(0.7){\theta}\\
\mathcal{B}''\ar@<.3ex>@{.>}[ur]^*-<0.2cm>{^{L''}}\ar[rr]_\Theta&&%
\mathcal{B}\ar@<.3ex>@{.>}[ul]^*-<0.2cm>{^{L}}}} \qquad \mathbb{T}:=%
\vcenter{\xymatrix{&\mathcal{A}\ar@<.3ex>[dl]^*-<0.2cm>{^{R}}%
\ar@<.3ex>[dr]^*-<0.2cm>{^{R'}}\ar@{}[d]|-(0.7){\zeta}\\
\mathcal{B}\ar@<.3ex>@{.>}[ur]^*-<0.2cm>{^{L}}\ar[rr]_G&&\mathcal{B}'%
\ar@<.3ex>@{.>}[ul]^*-<0.2cm>{^{L'}}}} \qquad \mathbb{T}^{\prime \prime }:=%
\vcenter{\xymatrix{&\mathcal{A}\ar@<.3ex>[dl]^(.3)*-<0.2cm>{^{R''}}%
\ar@<.3ex>[dr]^(.3)*-<0.2cm>{^{R'}}\ar@{}[d]|-(0.7){\zeta=\theta*\zeta}\\
\mathcal{B}''\ar@<.3ex>@{.>}[ur]^(.7)*-<0.2cm>{^{L''}}\ar[rr]_{G''=G%
\Theta}&&\mathcal{B}'\ar@<.3ex>@{.>}[ul]^(.7)*-<0.2cm>{^{L'}}}}
\end{equation*}
\end{invisible}

\begin{equation*}
\mathbb{T}^{\prime }:=\vcenter{\xymatrixcolsep{1.5cm}\xymatrixrowsep{.7cm}%
\xymatrix{\mathcal{A}\ar[r]^\id\ar@{}[dr]|-{\theta}%
\ar@<.4ex>[d]^*-<0.2cm>{^{R''}}& \mathcal{A}\ar@<.4ex>[d]^*-<0.2cm>{^{R}}\\
\mathcal{B}''\ar@<.4ex>@{.>}[u]^*-<0.2cm>{^{L''}}\ar[r]^\Theta&\mathcal{B}%
\ar@<.4ex>@{.>}[u]^*-<0.2cm>{^{L}}}}\qquad\mathbb{T}:=\vcenter{%
\xymatrixcolsep{1.5cm}\xymatrixrowsep{.71cm}\xymatrix{\mathcal{A}\ar[r]^\id%
\ar@{}[dr]|-{\zeta}\ar@<.4ex>[d]^*-<0.2cm>{^{R}}&
\mathcal{A}\ar@<.4ex>[d]^*-<0.2cm>{^{R'}}\\
\mathcal{B}\ar@<.4ex>@{.>}[u]^*-<0.2cm>{^{L}}\ar[r]^G&\mathcal{B}'%
\ar@<.4ex>@{.>}[u]^*-<0.2cm>{^{L'}}} }\qquad\mathbb{T}^{\prime \prime }:=%
\vcenter{\xymatrixcolsep{1.5cm}\xymatrixrowsep{.7cm}\xymatrix{\mathcal{A}%
\ar[r]^\id\ar@{}[dr]|-{\zeta''=\theta*\zeta}\ar@<.4ex>[d]^*-<0.2cm>{^{R''}}&
\mathcal{A}\ar@<.4ex>[d]^*-<0.2cm>{^{R'}}\\
\mathcal{B}''\ar@<.4ex>@{.>}[u]^*-<0.2cm>{^{L''}}\ar[r]^{G''=G\Theta}&%
\mathcal{B}'\ar@<.4ex>@{.>}[u]^*-<0.2cm>{^{L'}}}}
\end{equation*}

\begin{invisible}
\begin{equation*}
\mathbb{T}^{\prime }:=%
\begin{array}{ccc}
\mathcal{A} & \overset{\mathrm{Id}_{\mathcal{A}}}{\longrightarrow } &
\mathcal{A} \\
L^{\prime \prime }\uparrow \downarrow R^{\prime \prime } & \theta &
L\uparrow \downarrow R \\
\mathcal{B}^{\prime \prime } & \overset{\Theta }{\longrightarrow } &
\mathcal{B}%
\end{array}%
\text{ and }\mathbb{T}:=%
\begin{array}{ccc}
\mathcal{A} & \overset{\mathrm{Id}_{\mathcal{A}}}{\longrightarrow } &
\mathcal{A} \\
L\uparrow \downarrow R & \zeta & L^{\prime }\uparrow \downarrow R^{\prime }
\\
\mathcal{B} & \overset{G}{\longrightarrow } & \mathcal{B}^{\prime }%
\end{array}%
\end{equation*}%
namely the adjoint triangle $\mathbb{T}^{\prime \prime }:=\mathbb{T}^{\prime
}\ast \mathbb{T}$ defined as follows
\begin{equation*}
\mathbb{T}^{\prime \prime }:=%
\begin{array}{ccc}
\mathcal{A} & \overset{\mathrm{Id}_{\mathcal{A}}}{\longrightarrow } &
\mathcal{A} \\
L^{\prime \prime }\uparrow \downarrow R^{\prime \prime } & \zeta ^{\prime
\prime }:=\theta \ast \zeta & L^{\prime }\uparrow \downarrow R^{\prime } \\
\mathcal{B}^{\prime \prime } & \overset{G^{\prime \prime }:=G\Theta }{%
\longrightarrow } & \mathcal{B}^{\prime }%
\end{array}%
\end{equation*}
\end{invisible}

where $\theta \ast \zeta :=\left( L^{\prime }G\Theta \overset{\zeta \Theta }{%
\rightarrow }L\Theta \overset{\theta }{\rightarrow }L^{\prime \prime
}\right) .$ In fact $\left( G\Theta \right) R^{\prime \prime }=GR=R^{\prime
} $ and
\begin{equation*}
\epsilon ^{\prime \prime }\circ \left( \theta \ast \zeta \right) R^{\prime
\prime }=\epsilon ^{\prime \prime }\circ \theta R^{\prime \prime }\circ
\zeta \Theta R^{\prime \prime }=\epsilon \circ \zeta R=\epsilon ^{\prime }.
\end{equation*}
\end{remark}

To any adjoint triangle $\mathbb{T}$ as in (\ref{diag:AugTr}) we would like to associate
a new adjoint triangle $\mathbb{T}^2$ as follows%
\begin{gather}\label{diag:firstT2}
\mathbb{T}:=\vcenter{\xymatrixcolsep{1.5cm}\xymatrixrowsep{.71cm}\xymatrix{%
\mathcal{A}\ar[r]^\id\ar@{}[dr]|-{\zeta}\ar@<.4ex>[d]^*-<0.2cm>{^{R}}&
\mathcal{A}\ar@<.4ex>[d]^*-<0.2cm>{^{R'}}\\
\mathcal{B}\ar@<.4ex>@{.>}[u]^*-<0.2cm>{^{L}}\ar[r]^G&\mathcal{B}'%
\ar@<.4ex>@{.>}[u]^*-<0.2cm>{^{L'}}} } \qquad \rightsquigarrow \qquad
\mathbb{T}^2:=\vcenter{\xymatrixcolsep{1.5cm}\xymatrixrowsep{.71cm}%
\xymatrix{\mathcal{A}\ar[r]^\id\ar@<.4ex>[d]^*-<0.2cm>{^{R\left(
\mathbb{T}\right)}}& \mathcal{A}\ar@<.4ex>[d]^*-<0.2cm>{^{R}}\\
\mathrm{I}(\mathbb{T})\ar@<.4ex>@{.>}[u]^*-<0.2cm>{^{L\left(
\mathbb{T}\right)}}\ar[r]^{P(\mathbb{T})}&\mathcal{B}%
\ar@<.4ex>@{.>}[u]^*-<0.2cm>{^{L}}} }
\end{gather}

\begin{invisible}
\begin{gather*}
\mathbb{T}:=%
\begin{array}{ccc}
\mathcal{A} & \overset{\mathrm{Id}_{\mathcal{A}}}{\longrightarrow } &
\mathcal{A} \\
L\uparrow \downarrow R & \zeta & L^{\prime }\uparrow \downarrow R^{\prime }
\\
\mathcal{B} & \overset{G}{\longrightarrow } & \mathcal{B}^{\prime }%
\end{array}%
\qquad \rightsquigarrow \qquad
\begin{array}{ccc}
\mathcal{A} & \overset{\mathrm{Id}_{\mathcal{A}}}{\longrightarrow } &
\mathcal{A} \\
L\left( \mathbb{T}\right) \downarrow R\left( \mathbb{T}\right) & \zeta
\left( \mathbb{T}\right) & L^{\prime }\uparrow \downarrow R^{\prime } \\
\mathrm{I}\left( \mathbb{T}\right) & \overset{G\left( \mathbb{T}\right) }{%
\longrightarrow } & \mathcal{B}^{\prime }%
\end{array}
\\
\xymatrixcolsep{1.5cm}\xymatrixrowsep{1.5cm}\xymatrix{&\mathcal{A}%
\ar@<.3ex>[dl]^*-<0.2cm>{^{R}}\ar@<.3ex>[dr]^*-<0.2cm>{^{R'}}\ar@{}[d]|-{%
\zeta}\\
\mathcal{B}\ar@<.3ex>@{.>}[ur]^*-<0.2cm>{^{L}}\ar[rr]^G&&\mathcal{B}'%
\ar@<.3ex>@{.>}[ul]^*-<0.2cm>{^{L'}}}\qquad \rightsquigarrow \qquad %
\xymatrixcolsep{1.5cm}\xymatrixrowsep{1.5cm}\xymatrix{&\mathcal{A}%
\ar@<.3ex>[dl]^*-<0.2cm>{^{R(\mathbb{T})}}\ar@<.3ex>[dr]^*-<0.2cm>{^{R'}}%
\ar@{}[d]|-{\zeta(\mathbb{T})}\\
\textrm{I}(\mathbb{T})\ar@<.3ex>@{.>}[ur]^*-<0.2cm>{^{L(\mathbb{T})}}%
\ar[rr]^{G(\mathbb{T})}&&\mathcal{B}'\ar@<.3ex>@{.>}[ul]^*-<0.2cm>{^{L'}}}
\end{gather*}
which are faces of a tetrahedron
\begin{equation*}
\xy 0;/r.25pc/:
(-20,-10 )*{\mathcal{B}}="1";
(16,-20)*{\mathcal{B}_{[1]}}="2";
(30,0)*{\mathcal{B}'}="3";
(2,24)*{\mathcal{A}}="4";
{\ar@{->}@<.3ex>^*-<0.2cm>{^R} "4";"1" };
{\ar@{.>}@<.3ex>^*-<0.2cm>{^L} "1";"4" };
{\ar@{->}@<.3ex>^*-<0.2cm>{^{R'}} "4";"3" };
{\ar@{.>}@<.3ex>^*-<0.2cm>{^{L'}} "3";"4" };
{\ar@{->}@<.3ex>^*-<0.2cm>{^{R_{[1]}}} "4";"2" };
{\ar@{.>}@<.3ex>^*-<0.2cm>{^{L_{[1]}}} "2";"4" };
{\ar@{.>}|(.35)G "1";"3" };
{\ar_{G_{[1]}} "2";"3" };
{\ar^{U_{[0,1]}} "2";"1" };
\endxy%
\end{equation*}
\end{invisible}

First we have to introduce the category
\begin{equation*}
\mathrm{I}\left( \mathbb{T}\right) :=\left\langle R^{\prime
}L|G\right\rangle .
\end{equation*}

For any category $\mathcal{A}$ we can consider the functor%
\begin{equation*}
D:\mathcal{A}\rightarrow \left\langle \mathrm{Id}_{\mathcal{A}}|\mathrm{Id}_{%
\mathcal{A}}\right\rangle, \qquad A\mapsto \left( A,\mathrm{Id}_{A}\right)
,\qquad h\mapsto h.
\end{equation*}

If $F,G:\mathcal{A}\rightarrow \mathcal{B}$, $H:\mathcal{B}\rightarrow
\mathcal{B}^{\prime }$ and $K:\mathcal{A}^{\prime }\rightarrow \mathcal{A}$
are functors, we can define%
\begin{eqnarray*}
S^{H} :\left\langle F|G\right\rangle \rightarrow \left\langle
HF|HG\right\rangle,&\left( A,\alpha _{A}:FA\rightarrow GA\right) \mapsto
\left( A,H\alpha _{A}:HFA\rightarrow HGA\right) ,\qquad f\mapsto f \\
D^{K} :\left\langle FK|GK\right\rangle \rightarrow \left\langle
F|G\right\rangle,&\left( A^{\prime },\alpha _{A^{\prime }}:FKA^{\prime
}\rightarrow GKA^{\prime }\right) \mapsto \left( KA^{\prime },\alpha
_{A^{\prime }}:FKA^{\prime }\rightarrow GKA^{\prime }\right) ,\quad
f\mapsto Kf.
\end{eqnarray*}%
Given $\epsilon :F\rightarrow G$ we define the functor
\begin{equation*}
\mathcal{S}\left( \epsilon \right) :=\left(\mathcal{A}\overset{D}{\rightarrow }%
\left\langle \mathrm{Id}_{\mathcal{A}}|\mathrm{Id}_{\mathcal{A}%
}\right\rangle \overset{S^{G}}{\rightarrow }\left\langle G|G\right\rangle
\overset{\left\langle \epsilon |\mathrm{Id}_{\mathcal{A}}\right\rangle }{%
\rightarrow }\left\langle F|G\right\rangle \right).
\end{equation*}
Explicitly, by the notation of Lemma \ref{lem:lift}, we have
\begin{equation*}
\mathcal{S}\left( \epsilon \right) =\mathrm{Id}_{\mathcal{A}}\left[ \epsilon %
\right] :\mathcal{A}\rightarrow \left\langle F|G\right\rangle ,\qquad A\mapsto
\left( A,\epsilon A\right) ,\qquad f\mapsto f.
\end{equation*}%
Let us show how $\mathcal{S}\left( \epsilon \right)$ relates to the functor $\mathcal{U}\left( \epsilon \right)$ of \eqref{def:Ueps} in the particular case when $G=\id_{\mathcal{A}}.$

\begin{invisible}
CONTROLLARE\ CHE LA\ COSTRUZIONE\ DI\ $\mathcal{U}\left( \epsilon \right) $
NON\ SIA\ IL\ MODO\ STANDARD\ DI\ COSTRUIRE\ L'AGGIUNTO\ SINISTRO.
\end{invisible}

\begin{lemma}
\label{lem:Ueps}Let $F:\mathcal{A}\rightarrow \mathcal{A}$ be a functor and
let $\epsilon :F\rightarrow \mathrm{Id}_{\mathcal{A}}$ be a natural
transformation. Assume $\mathcal{A}$ has coequalizers. Then $\pi \left( \epsilon \right) \mathcal{S}\left( \epsilon
\right) $ is invertible and $\left( \mathcal{U}\left( \epsilon \right) ,%
\mathcal{S}\left( \epsilon \right) ,\eta \left( \epsilon \right) ,\left( \pi
\left( \epsilon \right) \mathcal{S}\left( \epsilon \right) \right)
^{-1}\right) $ is an adjunction where $\eta \left( \epsilon \right) :\mathrm{%
Id}_{\left\langle F|\mathrm{Id}_{\mathcal{A}}\right\rangle }\rightarrow
\mathcal{S}\left( \epsilon \right) \mathcal{U}\left( \epsilon \right) $ is
uniquely determined by $P\eta \left( \epsilon \right) =\pi \left( \epsilon
\right) .$
\end{lemma}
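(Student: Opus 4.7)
The plan is to proceed in three steps: first establish the invertibility of $\pi(\epsilon)\mathcal{S}(\epsilon)$, then produce $\eta(\epsilon)$ via Lemma \ref{lem:lift}(2), and finally deduce the adjunction directly from the universal property of the coequalizer in (\ref{coeq:pieps}).

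For invertibility, I would observe that for every $A\in\mathcal{A}$ one has $\mathcal{S}(\epsilon)A=(A,\epsilon A)$, so by definition $\pi(\epsilon)\mathcal{S}(\epsilon)A$ is a coequalizer of the pair
$$FA\xrightarrow{\ \epsilon A\ }A\qquad\text{and}\qquad FA\xrightarrow{\ \epsilon A\ }A.$$
Since the two parallel arrows coincide, $\mathrm{Id}_{A}$ is also a coequalizer, and hence $\pi(\epsilon)\mathcal{S}(\epsilon)A$ is an isomorphism by the uniqueness of coequalizers.

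Next, to construct $\eta(\epsilon)$, I would use the bracket notation of Lemma \ref{lem:lift}(1) to rewrite $\mathrm{Id}_{\langle F|\mathrm{Id}_{\mathcal{A}}\rangle}=P[\psi]$ and $\mathcal{S}(\epsilon)\mathcal{U}(\epsilon)=\mathcal{U}(\epsilon)[\epsilon\mathcal{U}(\epsilon)]$. Lemma \ref{lem:lift}(2), applied to the natural transformation $\pi(\epsilon):P\to\mathcal{U}(\epsilon)$, then yields the desired $\eta(\epsilon)$ with $P\eta(\epsilon)=\pi(\epsilon)$, provided one checks the compatibility $\pi(\epsilon)\circ\psi=\epsilon\mathcal{U}(\epsilon)\circ F\pi(\epsilon)$. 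The left-hand side equals $\pi(\epsilon)\circ\epsilon P$ by the coequalizer relation (\ref{coeq:pieps}), while naturality of $\epsilon$ applied to $\pi(\epsilon)$ gives $\epsilon\mathcal{U}(\epsilon)\circ F\pi(\epsilon)=\pi(\epsilon)\circ\epsilon P$, so the two sides agree. Uniqueness of $\eta(\epsilon)$ subject to $P\eta(\epsilon)=\pi(\epsilon)$ is immediate from Lemma \ref{lem:lift}(2).

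For the adjunction, my preferred route is to exhibit directly the hom-set bijection, avoiding an explicit verification of the triangle identities. A morphism $f:(A,\alpha)\to\mathcal{S}(\epsilon)B=(B,\epsilon B)$ in $\langle F|\mathrm{Id}_{\mathcal{A}}\rangle$ is a morphism $f:A\to B$ in $\mathcal{A}$ such that $\epsilon B\circ Ff=f\circ\alpha$; by naturality of $\epsilon$ the left-hand side equals $f\circ\epsilon A$, so the condition becomes $f\circ\alpha=f\circ\epsilon A$, i.e., $f$ coequalizes the parallel pair $(\alpha,\epsilon A)$. The universal property of $\pi(\epsilon)_{(A,\alpha)}$ therefore yields a natural bijection with morphisms $\mathcal{U}(\epsilon)(A,\alpha)\to B$ in $\mathcal{A}$, establishing $\mathcal{U}(\epsilon)\dashv\mathcal{S}(\epsilon)$. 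The unit at $(A,\alpha)$ corresponds to $\mathrm{Id}_{\mathcal{U}(\epsilon)(A,\alpha)}$, which under the bijection pulls back to $\pi(\epsilon)_{(A,\alpha)}$, so the resulting unit is exactly $\eta(\epsilon)$; the counit at $B$ corresponds to $\mathrm{Id}_{\mathcal{S}(\epsilon)B}$, giving the unique morphism $\mathcal{U}(\epsilon)\mathcal{S}(\epsilon)B\to B$ whose composite with $\pi(\epsilon)\mathcal{S}(\epsilon)B$ is $\mathrm{Id}_B$, namely $(\pi(\epsilon)\mathcal{S}(\epsilon))^{-1}$.

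The only mildly delicate point is the bookkeeping in Step 2, where one must recognise the relevant functors as $P[\psi]$ and $\mathcal{U}(\epsilon)[\epsilon\mathcal{U}(\epsilon)]$ in order to invoke Lemma \ref{lem:lift}(2); everything else reduces to the standard behaviour of coequalizers and the naturality of $\epsilon$.
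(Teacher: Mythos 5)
Your proof is correct. Steps 1 and 2 coincide with the paper's argument: you evaluate the defining coequalizer (\ref{coeq:pieps}) on $\mathcal{S}(\epsilon)$ to see that $\pi(\epsilon)\mathcal{S}(\epsilon)$ coequalizes an identical parallel pair and is therefore invertible, and you obtain $\eta(\epsilon)$ from Lemma \ref{lem:lift} after the same compatibility check $\pi(\epsilon)\circ\psi=\pi(\epsilon)\circ\epsilon P=\epsilon\,\mathcal{U}(\epsilon)\circ F\pi(\epsilon)$. Where you genuinely diverge is in establishing the adjunction itself: the paper verifies the two triangle identities directly, working with natural transformations throughout and exploiting that $P$ is faithful and that $\pi(\epsilon)$ is componentwise epic to cancel it from both sides; you instead exhibit the hom-set bijection $\mathrm{Hom}((A,\alpha),\mathcal{S}(\epsilon)B)\cong\mathrm{Hom}(\mathcal{U}(\epsilon)(A,\alpha),B)$ by unwinding the morphism condition in the inserter via naturality of $\epsilon$ and invoking the universal property of the coequalizer, then reading off the unit and counit. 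Your route is arguably more conceptual and makes it transparent \emph{why} $\mathcal{U}(\epsilon)$ is left adjoint to $\mathcal{S}(\epsilon)$ (it is a pointwise reflection along the coequalizer), at the cost of having to confirm naturality of the bijection in both variables, which you assert but do not spell out; it does follow routinely from the naturality of $\pi(\epsilon)$, so this is a presentational gap rather than a mathematical one. The paper's calculation, by contrast, stays entirely at the level of whiskered natural transformations and never needs to discuss naturality of a family of bijections. Both arguments deliver the same unit and the same counit $\left(\pi(\epsilon)\mathcal{S}(\epsilon)\right)^{-1}$.
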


\begin{proof}
Set $\pi :=\pi \left( \epsilon \right) .$ Note that $\psi \mathcal{S}\left(
\epsilon \right) =\epsilon $ and $P\circ \mathcal{S}\left( \epsilon \right) =%
\mathrm{Id}_{\mathcal{A}}.$ Thus, if we evaluate the left-hand side coequalizer
below on $\mathcal{S}(\epsilon)$, we obtain the right-hand side one.%
\begin{equation*}
\xymatrixcolsep{1cm} \xymatrix{FP\ar@<.5ex>[r]^{\psi}\ar@<-.5ex>[r]_{%
\epsilon P}&P\ar[r]^-{\pi }& \mathcal{U}(\epsilon)} \qquad %
\xymatrix{F\ar@<.5ex>[r]^{\epsilon}\ar@<-.5ex>[r]_{\epsilon}&\id_{%
\mathcal{A}}\ar[r]^-{\pi \mathcal{S}(\epsilon)}&
\mathcal{U}(\epsilon)\mathcal{S}(\epsilon)}
\end{equation*}

\begin{invisible}
\begin{equation*}
\begin{array}{ccccc}
FP & \overset{\psi }{\underset{\epsilon P}{\rightrightarrows }} & P &
\overset{\pi }{\rightarrow } & \mathcal{U}\left( \epsilon \right)%
\end{array}%
\end{equation*}%
with $\mathcal{S}\left( \epsilon \right) $, we obtain
\begin{equation*}
\begin{array}{ccccc}
F & \overset{\epsilon }{\underset{\epsilon }{\rightrightarrows }} & \mathrm{%
Id}_{\mathcal{A}} & \overset{\pi \mathcal{S}\left( \epsilon \right) }{%
\rightarrow } & \mathcal{U}\left( \epsilon \right) \mathcal{S}\left(
\epsilon \right)%
\end{array}%
\end{equation*}
\end{invisible}

This means $\pi \mathcal{S}\left( \epsilon \right) $ is invertible. Let us
check that there is $\eta \left( \epsilon \right) :\mathrm{Id}_{\left\langle
F|\mathrm{Id}_{\mathcal{A}}\right\rangle }\rightarrow \mathcal{S}\left(
\epsilon \right) \mathcal{U}\left( \epsilon \right) $ such that $P\eta
\left( \epsilon \right) =\pi .$ We have%
\begin{equation*}
\pi \circ \psi =\pi \circ \epsilon P=\epsilon \mathcal{U}\left( \epsilon
\right) \circ F\pi .
\end{equation*}%
Since $\mathrm{Id}_{\left\langle F|\mathrm{Id}_{\mathcal{A}}\right\rangle }=P%
\left[ \psi \right] $ and $\mathcal{S}\left( \epsilon \right) \mathcal{U}%
\left( \epsilon \right) =\mathcal{U}\left( \epsilon \right) \left[ \psi
\mathcal{S}\left( \epsilon \right) \mathcal{U}\left( \epsilon \right) \right]
=\mathcal{U}\left( \epsilon \right) \left[ \epsilon \mathcal{U}\left(
\epsilon \right) \right] ,$ by Lemma \ref{lem:lift} there is a unique
natural transformation $\eta \left( \epsilon \right) :\mathrm{Id}%
_{\left\langle F|\mathrm{Id}_{\mathcal{A}}\right\rangle }\rightarrow
\mathcal{S}\left( \epsilon \right) \mathcal{U}\left( \epsilon \right) $ such
that $P\eta \left( \epsilon \right) =\pi .$ We have%
\begin{equation*}
P\eta \left( \epsilon \right) \mathcal{S}\left( \epsilon \right) =\pi
\mathcal{S}\left( \epsilon \right) =P\mathcal{S}\left( \epsilon \right) \pi
\mathcal{S}\left( \epsilon \right)
\end{equation*}%
so that $\eta \left( \epsilon \right) \mathcal{S}\left( \epsilon \right) =%
\mathcal{S}\left( \epsilon \right) \pi \mathcal{S}\left( \epsilon \right) .$
Moreover%
\begin{equation*}
\mathcal{U}\left( \epsilon \right) \eta \left( \epsilon \right) \circ \pi
=\pi \mathcal{S}\left( \epsilon \right) \mathcal{U}\left( \epsilon \right)
\circ P\eta \left( \epsilon \right) =\pi \mathcal{S}\left( \epsilon \right)
\mathcal{U}\left( \epsilon \right) \circ \pi
\end{equation*}%
and hence $\mathcal{U}\left( \epsilon \right) \eta \left( \epsilon \right)
=\pi \mathcal{S}\left( \epsilon \right) \mathcal{U}\left( \epsilon \right) .$
Therefore $\left( \mathcal{U}\left( \epsilon \right) ,\mathcal{S}\left(
\epsilon \right) ,\eta \left( \epsilon \right) ,\left( \pi \left( \epsilon
\right) \mathcal{S}\left( \epsilon \right) \right) ^{-1}\right) $ is an
adjunction.
\end{proof}

\begin{lemma}
\label{lem:US}Let $F,F^{\prime }:\mathcal{A}\rightarrow \mathcal{A}$ be
functors. Let $\epsilon :F\rightarrow \mathrm{Id}_{\mathcal{A}}$ and $\phi
:F^{\prime }\rightarrow F$ be natural transformations. Set $\epsilon
^{\prime }:=\epsilon \circ \phi :F^{\prime }\rightarrow \mathrm{Id}_{%
\mathcal{A}}$. If $\mathcal{A}$ has coequalizers, we have the
adjoint triangle%
\begin{equation*}
\xymatrixcolsep{1.5cm}\xymatrixrowsep{0.7cm}\xymatrix{\mathcal{A}\ar[r]^\id%
\ar@{}[dr]|{\phi^*}\ar@<.4ex>[d]^*-<0.2cm>{^{\mathcal{S}\left( \epsilon
\right)}}& \mathcal{A}\ar@<.4ex>[d]^*-<0.2cm>{^{\mathcal{S}\left( \epsilon'
\right)}}\\ \left\langle
F|\id\right\rangle\ar@<.4ex>@{.>}[u]^*-<0.2cm>{^{\mathcal{U}\left( \epsilon
\right)}}\ar[r]_{\left\langle \phi|\id\right\rangle}&\left\langle
F'|\id\right\rangle\ar@<.4ex>@{.>}[u]^*-<0.2cm>{^{\mathcal{U}\left(
\epsilon' \right)}}}
\end{equation*}
\begin{invisible}
\begin{equation*}
\begin{array}{ccc}
\mathcal{A} & \overset{\mathrm{Id}_{\mathcal{A}}}{\longrightarrow } &
\mathcal{A} \\
\mathcal{U}\left( \epsilon \right) \uparrow \downarrow \mathcal{S}\left(
\epsilon \right) & \phi ^{\ast } & \mathcal{U}\left( \epsilon ^{\prime
}\right) \uparrow \downarrow \mathcal{S}\left( \epsilon ^{\prime }\right) \\
\left\langle F|\mathrm{Id}_{\mathcal{A}}\right\rangle & \overset{%
\left\langle \phi |\mathrm{Id}\right\rangle }{\rightarrow } & \left\langle
F^{\prime }|\mathrm{Id}_{\mathcal{A}}\right\rangle%
\end{array}%
\end{equation*}
\end{invisible}
Moreover the natural transformation $\phi ^{\ast }:\mathcal{U}\left(
\epsilon ^{\prime }\right) \circ \left\langle \phi |\mathrm{Id}\right\rangle
\rightarrow \mathcal{U}\left( \epsilon \right) $ satisfies
\begin{equation*}
\phi ^{\ast }\circ \pi \left( \epsilon ^{\prime }\right) \left\langle \phi |%
\mathrm{Id}\right\rangle =\pi \left( \epsilon \right)
\end{equation*}%
and $\phi ^{\ast }\mathcal{S}\left( \epsilon \right) \circ \pi ^{\prime }%
\mathcal{S}\left( \epsilon ^{\prime }\right) =\pi \mathcal{S}\left( \epsilon
\right) .$ In particular $\phi ^{\ast }\mathcal{S}\left( \epsilon \right) $
is invertible.
\end{lemma}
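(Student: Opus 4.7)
First, since $\mathcal{A}$ has coequalizers, Lemma \ref{lem:Ueps} supplies the two adjunctions $(\mathcal{U}(\epsilon),\mathcal{S}(\epsilon))$ and $(\mathcal{U}(\epsilon'),\mathcal{S}(\epsilon'))$ together with their explicit units and counits. To fit them into an adjoint triangle in the sense of Definition \ref{def:augtriang} I need the compatibility $\left\langle \phi|\id\right\rangle\circ \mathcal{S}(\epsilon)=\mathcal{S}(\epsilon')$; this is immediate from Lemma \ref{lem:lift}(3) applied with $Q=\id_{\mathcal{A}}$, $q=\epsilon$ and trivial $\gamma$, which gives $\left\langle \phi|\id\right\rangle\circ \id_{\mathcal{A}}[\epsilon]=\id_{\mathcal{A}}[\epsilon\circ\phi]=\id_{\mathcal{A}}[\epsilon']=\mathcal{S}(\epsilon')$. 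Definition \ref{def:augtriang} then automatically produces the canonical $\phi^\ast:\mathcal{U}(\epsilon')\circ\left\langle\phi|\id\right\rangle\to\mathcal{U}(\epsilon)$, characterized uniquely by the equation $(\pi(\epsilon)\mathcal{S}(\epsilon))^{-1}\circ\phi^\ast\mathcal{S}(\epsilon)=(\pi(\epsilon')\mathcal{S}(\epsilon'))^{-1}$ coming from the defining counit condition.

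To prove the identity $\phi^\ast\circ \pi(\epsilon')\left\langle\phi|\id\right\rangle=\pi(\epsilon)$, my plan is to first build a candidate $\widetilde{\phi^\ast}$ directly from the universal property of the coequalizer. The key observation is that for $(A,\alpha)\in \left\langle F|\id\right\rangle$ the map $\pi(\epsilon')_{\left\langle\phi|\id\right\rangle(A,\alpha)}$ is by definition the coequalizer of the pair $(\alpha\circ\phi A,\,\epsilon A\circ \phi A)=(\alpha,\epsilon A)\circ \phi A$, while $\pi(\epsilon)_{(A,\alpha)}$ already coequalizes the shorter pair $(\alpha,\epsilon A)$ and therefore also its precomposition with $\phi A$. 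The universal property supplies a unique $\widetilde{\phi^\ast}_{(A,\alpha)}$ with $\widetilde{\phi^\ast}_{(A,\alpha)}\circ \pi(\epsilon')_{\left\langle\phi|\id\right\rangle(A,\alpha)}=\pi(\epsilon)_{(A,\alpha)}$, and naturality of $\widetilde{\phi^\ast}$ is a routine diagram chase using that the components of $\pi(\epsilon')\left\langle\phi|\id\right\rangle$ are regular epimorphisms.

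To identify $\widetilde{\phi^\ast}$ with $\phi^\ast$, I whisker the previous identity with $\mathcal{S}(\epsilon)$ and use the compatibility from the first step to get
\[
\widetilde{\phi^\ast}\mathcal{S}(\epsilon)\circ \pi(\epsilon')\mathcal{S}(\epsilon')=\pi(\epsilon)\mathcal{S}(\epsilon).
\]
Since Lemma \ref{lem:Ueps} ensures that $\pi(\epsilon)\mathcal{S}(\epsilon)$ and $\pi(\epsilon')\mathcal{S}(\epsilon')$ are invertible, this rearranges precisely into the characterizing equation for $\phi^\ast$, and hence $\widetilde{\phi^\ast}=\phi^\ast$ by uniqueness in the adjoint triangle. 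The same whiskered identity is the second asserted equality $\phi^\ast\mathcal{S}(\epsilon)\circ\pi(\epsilon')\mathcal{S}(\epsilon')=\pi(\epsilon)\mathcal{S}(\epsilon)$, and the formula $\phi^\ast\mathcal{S}(\epsilon)=\pi(\epsilon)\mathcal{S}(\epsilon)\circ (\pi(\epsilon')\mathcal{S}(\epsilon'))^{-1}$ makes the invertibility of $\phi^\ast\mathcal{S}(\epsilon)$ transparent. The only real obstacle is keeping the two incarnations of $\phi^\ast$ straight—the one coming from the adjoint triangle versus the one built from the coequalizer—until they are matched at the end.
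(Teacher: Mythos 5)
Your proposal is correct, and the framing steps (invoking Lemma \ref{lem:Ueps} for the two adjunctions and Lemma \ref{lem:lift} for the identity $\left\langle \phi|\mathrm{Id}\right\rangle\circ\mathcal{S}(\epsilon)=\mathcal{S}(\epsilon')$) coincide with the paper's. Where you diverge is in how the key identity $\phi^{\ast}\circ\pi(\epsilon')\left\langle \phi|\mathrm{Id}\right\rangle=\pi(\epsilon)$ is obtained. The paper works directly from the explicit formula $\phi^{\ast}=\bigl(\pi(\epsilon')\mathcal{S}(\epsilon')\mathcal{U}(\epsilon)\bigr)^{-1}\circ\mathcal{U}(\epsilon')\left\langle \phi|\mathrm{Id}\right\rangle\eta(\epsilon)$ and reduces the claim to a naturality computation with $\pi(\epsilon')$ together with $P\eta(\epsilon)=\pi(\epsilon)$. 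You instead observe that the pair coequalized by $\pi(\epsilon')\left\langle \phi|\mathrm{Id}\right\rangle$ at $(A,\alpha)$ is the pair $(\alpha,\epsilon A)$ precomposed with $\phi A$, so that $\pi(\epsilon)_{(A,\alpha)}$ already coequalizes it; the universal property then manufactures a natural transformation $\widetilde{\phi^{\ast}}$ satisfying the desired identity on the nose, and you recover $\widetilde{\phi^{\ast}}=\phi^{\ast}$ by whiskering with $\mathcal{S}(\epsilon)$ and appealing to the uniqueness of the natural transformation $\zeta$ with $\epsilon\circ\zeta R=\epsilon'$ in Definition \ref{def:augtriang} (legitimate, since $\pi(\epsilon)\mathcal{S}(\epsilon)$ and $\pi(\epsilon')\mathcal{S}(\epsilon')$ are invertible by Lemma \ref{lem:Ueps}). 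Both arguments are sound; yours makes the coequalizer-level content of $\phi^{\ast}$ more transparent and sidesteps the unit/counit manipulation, at the cost of the extra reconciliation step between the two incarnations of $\phi^{\ast}$, which you correctly identify and carry out.
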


\begin{proof}
By Lemma \ref{lem:Ueps}, we have that $\mathcal{U}\left( \epsilon \right)
\dashv \mathcal{S}\left( \epsilon \right) $ and $\mathcal{U}\left( \epsilon
^{\prime }\right) \dashv \mathcal{S}\left( \epsilon ^{\prime }\right) .$

Set $P:=P_{\left\langle F|\mathrm{Id}_{\mathcal{A}}\right\rangle }$ and $%
P^{\prime }:=P_{\left\langle F^{\prime }|\mathrm{Id}_{\mathcal{A}%
}\right\rangle }.$ Set also $\pi =\pi \left( \epsilon \right) $ and $\pi
^{\prime }=\pi \left( \epsilon ^{\prime }\right) $. By Lemma \ref{lem:lift},
we have%
\begin{equation*}
\left\langle \phi |\mathrm{Id}\right\rangle \circ \mathcal{S}\left( \epsilon
\right) =\left\langle \phi |\mathrm{Id}\right\rangle \circ \mathrm{Id}_{%
\mathcal{A}}\left[ \epsilon \right] =\mathrm{Id}_{\mathcal{A}}\left[
\epsilon \circ \phi \right] =\mathrm{Id}_{\mathcal{A}}\left[ \epsilon
^{\prime }\right] =\mathcal{S}\left( \epsilon ^{\prime }\right) .
\end{equation*}%
so that $\left\langle \phi |\mathrm{Id}\right\rangle \circ \mathcal{S}\left(
\epsilon \right) =\mathcal{S}\left( \epsilon ^{\prime }\right) $ and hence
the diagram in the statement is an adjoint triangle.

By definition $\phi ^{\ast }=\left( \pi \left( \epsilon ^{\prime }\right)
\mathcal{S}\left( \epsilon ^{\prime }\right) \mathcal{U}\left( \epsilon
\right) \right) ^{-1}\circ \mathcal{U}\left( \epsilon ^{\prime }\right)
\left\langle \phi |\mathrm{Id}\right\rangle \eta \left( \epsilon \right) .$
Then%
\begin{eqnarray*}
\phi ^{\ast }\circ \pi ^{\prime }\left\langle \phi |\mathrm{Id}\right\rangle
&=&\left( \pi ^{\prime }\mathcal{S}\left( \epsilon ^{\prime }\right)
\mathcal{U}\left( \epsilon \right) \right) ^{-1}\circ \mathcal{U}\left(
\epsilon ^{\prime }\right) \left\langle \phi |\mathrm{Id}\right\rangle \eta
\left( \epsilon \right) \circ \pi ^{\prime }\left\langle \phi |\mathrm{Id}%
\right\rangle \\
&=&\left( \pi ^{\prime }\mathcal{S}\left( \epsilon ^{\prime }\right)
\mathcal{U}\left( \epsilon \right) \right) ^{-1}\circ \pi ^{\prime
}\left\langle \phi |\mathrm{Id}\right\rangle \mathcal{S}\left( \epsilon
\right) \mathcal{U}\left( \epsilon \right) \circ P^{\prime }\left\langle
\phi |\mathrm{Id}\right\rangle \eta \left( \epsilon \right) \\
&=&\left( \pi ^{\prime }\mathcal{S}\left( \epsilon ^{\prime }\right)
\mathcal{U}\left( \epsilon \right) \right) ^{-1}\circ \pi ^{\prime }\mathcal{%
S}\left( \epsilon ^{\prime }\right) \mathcal{U}\left( \epsilon \right) \circ
P\eta \left( \epsilon \right) =\pi .
\end{eqnarray*}%
Moreover%
\begin{equation*}
\phi ^{\ast }\mathcal{S}\left( \epsilon \right) \circ \pi ^{\prime }\mathcal{%
S}\left( \epsilon ^{\prime }\right) =\phi ^{\ast }\mathcal{S}\left( \epsilon
\right) \circ \pi ^{\prime }\left\langle \phi |\mathrm{Id}\right\rangle
\mathcal{S}\left( \epsilon \right) =\left( \phi ^{\ast }\circ \pi ^{\prime
}\left\langle \phi |\mathrm{Id}\right\rangle \right) \mathcal{S}\left(
\epsilon \right) =\pi \mathcal{S}\left( \epsilon \right) .
\end{equation*}%
Since $\pi ^{\prime }\mathcal{S}\left( \epsilon ^{\prime }\right) $ and $\pi
\mathcal{S}\left( \epsilon \right) $ are invertible, then so is $\phi ^{\ast }%
\mathcal{S}\left( \epsilon \right) .$
\end{proof}

Given an adjoint triangle as in \eqref{diag:AugTr}, assume $\mathcal{A}$ has coequalizers and consider the following diagram where we apply Lemma \ref{lem:US} to the counit $%
\epsilon :LR\rightarrow \mathrm{Id}_{\mathcal{A}}$ and $\phi :=\zeta
R:L^{\prime }R^{\prime }\rightarrow LR$ to get the adjoint triangle with $%
\left( \zeta R\right) ^{\ast }$ in the middle.%
\begin{equation*}
\xymatrixcolsep{1.5cm}\xymatrixrowsep{0.7cm}\xymatrix{\mathcal{A}\ar[rr]^\id%
\ar@{}[drr]|{(\zeta R)^*}\ar@<.4ex>[d]^*-<0.2cm>{^{\mathcal{S}\left(
\epsilon \right)}}&& \mathcal{A}\ar@<.4ex>[d]^*-<0.2cm>{^{\mathcal{S}\left(
\epsilon' \right)}}\\ \left\langle
LR|\id\right\rangle\ar[rd]_{\mathcal{R}\left(
\mathbb{T}\right)}\ar@<.4ex>@{.>}[u]^*-<0.2cm>{^{\mathcal{U}\left( \epsilon
\right)}}\ar[rr]_{\left\langle \zeta R|\id\right\rangle}&&\left\langle
L'R'|\id\right\rangle\ar@<.4ex>@{.>}[u]^*-<0.2cm>{^{\mathcal{U}\left(
\epsilon' \right)}}\\ &\mathrm{I}\left( \mathbb{T}\right)=\left\langle
R'L|G\right\rangle\ar[ur]_{\mathcal{L}\left( \mathbb{T}\right)}}
\end{equation*}

\begin{invisible}
\begin{equation*}
\mathbb{T}:=%
\begin{array}{ccc}
\mathcal{A} & \overset{\mathrm{Id}_{\mathcal{A}}}{\longrightarrow } &
\mathcal{A} \\
L\uparrow \downarrow R &
\begin{array}{c}
\zeta :L^{\prime }G\rightarrow L \\
R^{\prime }=GR%
\end{array}
& L^{\prime }\uparrow \downarrow R^{\prime } \\
\mathcal{B} & \overset{G}{\longrightarrow } & \mathcal{B}^{\prime }%
\end{array}%
\quad
\begin{array}{ccc}
\mathcal{A} & \overset{\mathrm{Id}_{\mathcal{A}}}{\longrightarrow } &
\mathcal{A} \\
\mathcal{U}\left( \epsilon \right) \uparrow \downarrow \mathcal{S}\left(
\epsilon \right) & \left( \zeta R\right) ^{\ast } & \mathcal{U}\left(
\epsilon ^{\prime }\right) \uparrow \downarrow \mathcal{S}\left( \epsilon
^{\prime }\right) \\
\left\langle LR|\mathrm{Id}_{\mathcal{A}}\right\rangle & \overset{%
\left\langle \zeta R|\mathrm{Id}_{\mathcal{A}}\right\rangle }{%
\longrightarrow } & \left\langle L^{\prime }R^{\prime }|\mathrm{Id}_{%
\mathcal{A}}\right\rangle \\
\mathcal{R}\left( \mathbb{T}\right) \searrow &  & \nearrow \mathcal{L}\left(
\mathbb{T}\right) \\
& \left\langle R^{\prime }L|G\right\rangle &
\end{array}%
\end{equation*}
\end{invisible}

The functors $\mathcal{L}\left( \mathbb{T}\right) $ and $\mathcal{R}\left(
\mathbb{T}\right) $ appearing in the diagram above are defined as follows
\begin{eqnarray*}
\mathcal{L}\left( \mathbb{T}\right) =LP_{\mathrm{I}\left( \mathbb{T}\right)}\left[ \zeta P_{\mathrm{I}\left( \mathbb{T}\right)}\circ L^{\prime }\psi_{\mathrm{I}\left( \mathbb{T}\right)} %
\right] &:&\mathrm{I}\left( \mathbb{T}\right) :=\left\langle R^{\prime
}L|G\right\rangle \overset{S^{L^{\prime }}}{\rightarrow }\left\langle
L^{\prime }R^{\prime }L|L^{\prime }G\right\rangle \overset{\left\langle
\mathrm{Id}_{L^{\prime }R^{\prime }L}|\zeta \right\rangle }{\rightarrow }%
\left\langle L^{\prime }R^{\prime }L|L\right\rangle \overset{D^{L}}{%
\rightarrow }\left\langle L^{\prime }R^{\prime }|\mathrm{Id}_{\mathcal{A}%
}\right\rangle, \\
\left( B,b\right) &\mapsto &\left( LB,\alpha \left( B,b\right) :=\zeta
B\circ L^{\prime }b\right) ,\qquad h\mapsto Lh
\end{eqnarray*}%
\begin{gather*}
\mathcal{R}\left( \mathbb{T}\right) =RP_{\left\langle LR|\mathrm{Id}_{\mathcal{A}}\right\rangle}\left[ R^{\prime }\psi_{\left\langle LR|\mathrm{Id}_{\mathcal{A}}\right\rangle} \right]
\quad:\quad\left\langle LR|\mathrm{Id}_{\mathcal{A}}\right\rangle \overset{%
S^{R^{\prime }}}{\rightarrow }\left\langle R^{\prime }LR|R^{\prime
}\right\rangle =\left\langle R^{\prime }LR|GR\right\rangle \overset{D^{R}}{%
\rightarrow }\left\langle R^{\prime }L|G\right\rangle =\mathrm{I}\left(
\mathbb{T}\right), \\
\left( A,\alpha _{A}:LRA\rightarrow
A\right) \mapsto\left( RA,R^{\prime }\alpha _{A}:R^{\prime }LRA\rightarrow
R^{\prime }A=GRA\right) ,\qquad f\mapsto Rf.
\end{gather*}%
Set%
\begin{eqnarray*}
\mathrm{R}\left( \mathbb{T}\right) =R\left[ R^{\prime }\epsilon \right] =
\mathcal{R}\left( \mathbb{T}\right) \circ \mathcal{S}\left( \epsilon \right)
:&&\mathcal{A}\rightarrow \mathrm{I}\left( \mathbb{T}\right),\quad A\mapsto \left(
RA,R^{\prime }\epsilon A\right) ,\quad f\mapsto Rf, \\
\mathrm{L}\left( \mathbb{T}\right) =\mathcal{U}\left( \epsilon ^{\prime
}\right) \circ \mathcal{L}\left( \mathbb{T}\right) :&&\mathrm{I}\left( \mathbb{%
T}\right) \rightarrow \mathcal{A},\quad \left( B,b\right) \mapsto \mathcal{U}%
\left( \epsilon ^{\prime }\right) \left( LB,\zeta B\circ L^{\prime }b\right)
,\quad h\mapsto \mathcal{U}\left( \epsilon ^{\prime }\right) \left(
Lh\right) .
\end{eqnarray*}

Consider also the forgetful functor%
\begin{equation*}
P\left( \mathbb{T}\right) =P_{\mathrm{I}\left( \mathbb{T}\right) }:\mathrm{I}%
\left( \mathbb{T}\right) \rightarrow \mathcal{B},\quad\left( B,b\right) \mapsto
B,\quad f\mapsto f
\end{equation*}%
and the functor%
\begin{equation*}
G\left( \mathbb{T}\right) :=G\circ P\left( \mathbb{T}\right),\quad\mathrm{I}%
\left( \mathbb{T}\right) \rightarrow \mathcal{B}^{\prime }:\left( B,b\right)
\mapsto GB,\quad h\mapsto Gh.
\end{equation*}%
Note that,if we set $P^{\prime }:=P_{\left\langle L^{\prime }R^{\prime }|\mathrm{Id}%
_{\mathcal{A}}\right\rangle }$, we get%
\begin{eqnarray}
P\left( \mathbb{T}\right) \mathrm{R}\left( \mathbb{T}\right) &=&R,
\label{form:ptrt} \\
P^{\prime }\mathcal{L}\left( \mathbb{T}\right) \mathrm{R}\left( \mathbb{T}%
\right) &=&LR  \label{form:P'ltrt}
\end{eqnarray}%

We are now ready to construct the adjoint triangle $\mathbb{T}^2$ announced in \eqref{diag:firstT2}.

\begin{proposition}
\label{pro:adj}Assume $\mathcal{A}$ has coequalizers. Given an adjoint triangle $\mathbb{T}$ as in (\ref{diag:AugTr}%
), then%
\begin{equation*}
\mathbb{T}^2:=\ \vcenter{\xymatrixcolsep{2.5cm}\xymatrixrowsep{.8cm}%
\xymatrix{\mathcal{A}\ar[r]^\id\ar@{}[dr]|-{\pi \left( \epsilon ^{\prime
}\right) \mathcal{L}\left(
\mathbb{T}\right)}\ar@<.4ex>[d]^*-<0.2cm>{^{\mathrm{R}\left( \mathbb{T}\right)}}&
\mathcal{A}\ar@<.4ex>[d]^*-<0.2cm>{^{R}}\\
\mathrm{I}(\mathbb{T})\ar@<.4ex>@{.>}[u]^*-<0.2cm>{^{\mathrm{L}\left(
\mathbb{T}\right)}}\ar[r]_{P(\mathbb{T})}&\mathcal{B}%
\ar@<.4ex>@{.>}[u]^*-<0.2cm>{^{L}}} }
\end{equation*}

\begin{invisible}
\begin{equation*}
\mathbb{T}^{2}:=%
\begin{array}{ccc}
\mathcal{A} & \overset{\mathrm{Id}_{\mathcal{A}}}{\longrightarrow } &
\mathcal{A} \\
\mathrm{L}\left( \mathbb{T}\right) \uparrow \downarrow \mathrm{R}\left(
\mathbb{T}\right) & \pi \left( \epsilon ^{\prime }\right) \mathcal{L}\left(
\mathbb{T}\right) & L\uparrow \downarrow R \\
\mathrm{I}\left( \mathbb{T}\right) & \overset{P\left( \mathbb{T}\right) }{%
\longrightarrow } & \mathcal{B}%
\end{array}%
\end{equation*}
\end{invisible}
is an adjoint triangle where the adjunction $\left( \mathrm{L}\left( \mathbb{T}\right) ,\mathrm{R}%
\left( \mathbb{T}\right) ,\eta \left( \mathbb{T}\right) ,\epsilon \left(
\mathbb{T}\right) \right) $ is uniquely determined by the following
equalities%
\begin{gather}
P\left( \mathbb{T}\right) \eta \left( \mathbb{T}\right) =R\pi \left(
\epsilon ^{\prime }\right) \mathcal{L}\left( \mathbb{T}\right) \circ \eta
P\left( \mathbb{T}\right) ,  \label{form:defetaT} \\
\epsilon \left( \mathbb{T}\right) \circ \pi \left( \epsilon ^{\prime
}\right) \mathcal{L}\left( \mathbb{T}\right) \mathrm{R}\left( \mathbb{T}%
\right) =\epsilon .  \label{form:defepsT}
\end{gather}
\end{proposition}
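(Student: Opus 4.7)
The plan is to establish the adjunction $\mathrm{L}(\mathbb{T})\dashv\mathrm{R}(\mathbb{T})$ by constructing a natural bijection of hom-sets, and then extract $\eta(\mathbb{T})$ and $\epsilon(\mathbb{T})$ by evaluating this bijection on identity morphisms. The equality $P(\mathbb{T})\circ\mathrm{R}(\mathbb{T})=R$ is formula \eqref{form:ptrt}, so the outer shape of $\mathbb{T}^{2}$ is that of an adjoint triangle. The middle $2$-cell $\pi(\epsilon')\mathcal{L}(\mathbb{T})$ has the correct type $LP(\mathbb{T})\to\mathrm{L}(\mathbb{T})$, since $P_{\langle L'R'|\mathrm{Id}_{\mathcal{A}}\rangle}\circ\mathcal{L}(\mathbb{T})=LP(\mathbb{T})$ by inspection and $\mathcal{U}(\epsilon')\circ\mathcal{L}(\mathbb{T})=\mathrm{L}(\mathbb{T})$ by definition.

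To set up the hom-bijection I unpack both sides. Since $\mathrm{L}(\mathbb{T})(B,b)=\mathcal{U}(\epsilon')(LB,\zeta B\circ L'b)$, the coequalizer \eqref{coeq:pieps} defining $\mathcal{U}(\epsilon')$ identifies $\mathrm{Hom}_{\mathcal{A}}(\mathrm{L}(\mathbb{T})(B,b),A)$ with the set of maps $f:LB\to A$ satisfying $f\circ\zeta B\circ L'b=f\circ\epsilon'(LB)$. Dually, morphisms in $\mathrm{I}(\mathbb{T})=\langle R'L|G\rangle$ from $(B,b)$ to $\mathrm{R}(\mathbb{T})A=(RA,R'\epsilon A)$ are maps $g:B\to RA$ satisfying $Gg\circ b=R'\epsilon A\circ R'Lg$. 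The candidate bijection is $f\leftrightarrow\tilde{f}:=Rf\circ\eta B$, induced by the adjunction $(L,R)$.

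The heart of the argument, and the main obstacle, is to show that the coequalizer side condition on $f$ corresponds under this bijection to the morphism side condition on $\tilde{f}$. Since $(L',R')$ is an adjunction, the equality $f\circ\zeta B\circ L'b=f\circ\epsilon'(LB)$ of maps $L'R'LB\to A$ holds iff their $(L',R')$-transposes as maps $R'LB\to R'A$ agree. The transpose of $f\circ\epsilon'(LB)$ equals $R'f$ by the triangle identity $R'\epsilon'\circ\eta'R'=\mathrm{Id}$. The transpose of $f\circ\zeta B\circ L'b$ is $R'f\circ R'\zeta B\circ\eta'(GB)\circ b$ by naturality of $\eta'$ applied to $b$, which simplifies to $R'f\circ G\eta B\circ b$ via \eqref{form:zeta1}. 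Since $G\tilde{f}=GRf\circ G\eta B=R'f\circ G\eta B$ and $R'f=R'\epsilon A\circ R'L\tilde{f}$, the equality of transposes becomes $G\tilde{f}\circ b=R'\epsilon A\circ R'L\tilde{f}$, which is exactly the morphism condition for $\tilde{f}$ in $\mathrm{I}(\mathbb{T})$. Naturality of the bijection in $A$ and $(B,b)$ is routine, so we obtain an adjunction $\mathrm{L}(\mathbb{T})\dashv\mathrm{R}(\mathbb{T})$ with triangle identities automatic.

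It remains to read off the unit and counit. Taking $\bar{f}=\mathrm{Id}_{\mathrm{L}(\mathbb{T})(B,b)}$ gives $f=\pi(\epsilon')\mathcal{L}(\mathbb{T})(B,b)$, whose $(L,R)$-transpose is $\tilde{f}=R\pi(\epsilon')\mathcal{L}(\mathbb{T})(B,b)\circ\eta B$, which upon globalising is exactly \eqref{form:defetaT}. Dually, taking $\tilde{f}=\mathrm{Id}_{RA}$ yields $f=\epsilon A$; one verifies that $\epsilon A$ coequalizes the parallel pair because $\epsilon A\circ\zeta RA\circ L'R'\epsilon A=\epsilon'A\circ L'R'\epsilon A=\epsilon A\circ\epsilon'(LRA)$ using $\epsilon\circ\zeta R=\epsilon'$ and naturality of $\epsilon'$, so the coequalizer factorization produces $\epsilon(\mathbb{T})A$ satisfying \eqref{form:defepsT}. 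Uniqueness is then automatic: $P(\mathbb{T})$ is faithful, so \eqref{form:defetaT} pins down $\eta(\mathbb{T})$; and every component of $\pi(\epsilon')\mathcal{L}(\mathbb{T})\mathrm{R}(\mathbb{T})$ is epic, being a coequalizer, so \eqref{form:defepsT} pins down $\epsilon(\mathbb{T})$.
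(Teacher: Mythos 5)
Your proposal is correct, but it proves the adjunction by a genuinely different route than the paper. You build the natural hom-set bijection $\mathrm{Hom}_{\mathcal{A}}(\mathrm{L}(\mathbb{T})(B,b),A)\cong\mathrm{Hom}_{\mathrm{I}(\mathbb{T})}((B,b),\mathrm{R}(\mathbb{T})A)$ directly: the coequalizer presentation of $\mathrm{L}(\mathbb{T})(B,b)$ identifies the left side with the maps $f:LB\to A$ coequalizing $(\zeta B\circ L'b,\epsilon' LB)$, and your key observation is that this coequalizing condition is precisely the $(L',R')$-transpose of the inserter-morphism condition on $\tilde{f}=Rf\circ\eta B$, via \eqref{form:zeta1} and the triangle identities. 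The unit and counit then fall out of the bijection, and the triangle identities are automatic. The paper instead constructs $\eta(\mathbb{T})$ and $\epsilon(\mathbb{T})$ explicitly as lifts through the inserter using Lemma \ref{lem:lift} (first exhibiting $\mathcal{L}(\mathbb{T})\mathrm{R}(\mathbb{T})$, $\mathcal{S}(\epsilon')$, $\mathrm{R}(\mathbb{T})\mathrm{L}(\mathbb{T})$ and $\mathrm{Id}_{\mathrm{I}(\mathbb{T})}$ in the form $Q[q]$, then verifying the compatibility condition needed to lift $\alpha:=R\pi(\epsilon')\mathcal{L}(\mathbb{T})\circ\eta P(\mathbb{T})$), and checks both triangle identities by direct computation. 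What your approach buys is economy and conceptual transparency: the two defining conditions are exhibited as adjoint to one another, so no triangle-identity computations are needed. What the paper's approach buys is the explicit lifted morphisms $\widetilde{\epsilon}$ and $\eta(\mathbb{T})=\widetilde{\alpha}$ in the $Q[q]$ formalism, which is the template reused in the subsequent propositions; since your argument yields the same characterizing equations \eqref{form:defetaT} and \eqref{form:defepsT} (with the same uniqueness argument: faithfulness of $P(\mathbb{T})$ and componentwise epimorphy of $\pi(\epsilon')\mathcal{L}(\mathbb{T})\mathrm{R}(\mathbb{T})$), the downstream results are unaffected.
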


\begin{proof}
Set $\psi ^{\prime }:=\psi _{\left\langle L^{\prime }R^{\prime }|\mathrm{Id}%
_{\mathcal{A}}\right\rangle }$ and $\pi ^{\prime }:=\pi \left( \epsilon
^{\prime }\right) $ and let us construct $\epsilon \left( \mathbb{T}\right) $.

One easily checks that $\mathcal{L}\left( \mathbb{T}\right) \mathrm{R}\left(
\mathbb{T}\right) =\left( LR\right) \left[ \zeta R\circ L^{\prime }R^{\prime
}\epsilon \right] $. Moreover $\mathcal{S}\left( \epsilon ^{\prime }\right) =%
\mathrm{Id}_{\mathcal{A}}\left[ \epsilon ^{\prime }\right] .$

\begin{invisible}
$\mathcal{L}\left( \mathbb{T}\right) \mathrm{R}\left( \mathbb{T}\right) A=%
\mathcal{L}\left( \mathbb{T}\right) \left( RA,R^{\prime }\epsilon A\right)
=\left( LRA,\zeta RA\circ L^{\prime }R^{\prime }\epsilon A\right)
\Rightarrow \mathcal{L}\left( \mathbb{T}\right) \mathrm{R}\left( \mathbb{T}%
\right) =\left( LR\right) \left[ \zeta R\circ L^{\prime }R^{\prime }\epsilon %
\right] $
\end{invisible}

Since $\epsilon \circ \left( \zeta R\circ L^{\prime }R^{\prime }\epsilon
\right) =\epsilon ^{\prime }\circ L^{\prime }R^{\prime }\epsilon ,$ by Lemma %
\ref{lem:lift}, the counit $\epsilon :LR\to\id_\mathcal{A}$ induces $\widetilde{\epsilon }:\mathcal{L%
}\left( \mathbb{T}\right) \mathrm{R}\left( \mathbb{T}\right) \rightarrow
\mathcal{S}\left( \epsilon ^{\prime }\right) $ such that $P^{\prime }%
\widetilde{\epsilon }=\epsilon .$ Define
\begin{equation*}
\epsilon \left( \mathbb{T}\right) :=\left( \pi \left( \epsilon ^{\prime
}\right) \mathcal{S}\left( \epsilon ^{\prime }\right) \right) ^{-1}\circ
\mathcal{U}\left( \epsilon ^{\prime }\right) \widetilde{\epsilon }:\mathrm{L}%
\left( \mathbb{T}\right) \mathrm{R}\left( \mathbb{T}\right) \rightarrow
\mathrm{Id}_{\mathcal{A}}.
\end{equation*}%
Now we define $\eta \left( \mathbb{T}\right) :\mathrm{Id}_{\mathrm{I}\left(
\mathbb{T}\right) }\rightarrow \mathrm{R}\left( \mathbb{T}\right) \mathrm{L}%
\left( \mathbb{T}\right) .$ One easily checks that $\mathrm{R}\left( \mathbb{T}\right) \mathrm{L}\left(
\mathbb{T}\right) =\left( R\mathrm{L}\left( \mathbb{T}\right) \right) \left[
R^{\prime }\epsilon \mathrm{L}\left( \mathbb{T}\right) \right] $ and $%
\mathrm{Id}_{\mathrm{I}\left( \mathbb{T}\right) }=P\left( \mathbb{T}\right) %
\left[ \psi _{\mathrm{I}\left( \mathbb{T}\right) }\right] .$
\begin{invisible}
$\mathrm{R}\left( \mathbb{T}\right) \mathrm{L}\left( \mathbb{T}\right)
\left( B,b\right) =\left( R\mathrm{L}\left( \mathbb{T}\right) \left(
B,b\right) ,R^{\prime }\epsilon \mathrm{L}\left( \mathbb{T}\right) \left(
B,b\right) \right) \Rightarrow \mathrm{R}\left( \mathbb{T}\right) \mathrm{L}%
\left( \mathbb{T}\right) =\left( R\mathrm{L}\left( \mathbb{T}\right) \right) %
\left[ R^{\prime }\epsilon \mathrm{L}\left( \mathbb{T}\right) \right] $
\end{invisible}
Set
\begin{equation*}
\alpha :=\left( P\left( \mathbb{T}\right) \overset{\eta P\left( \mathbb{T}%
\right) }{\longrightarrow }RLP\left( \mathbb{T}\right)=RP'\mathcal{L}\left( \mathbb{T}\right) \overset{R\pi \left(
\epsilon ^{\prime }\right) \mathcal{L}\left( \mathbb{T}\right) }{%
\longrightarrow }R\mathcal{U}(\epsilon')\mathcal{L}\left( \mathbb{T}\right)=R\mathrm{L}\left( \mathbb{T}\right) \right) .
\end{equation*}

We compute%
\begin{eqnarray*}
\epsilon ^{\prime }\mathrm{L}\left( \mathbb{T}\right) \circ L^{\prime
}\left( G\alpha \circ \psi _{\mathrm{I}\left( \mathbb{T}\right) }\right)
&=&\epsilon ^{\prime }\mathrm{L}\left( \mathbb{T}\right) \circ L^{\prime
}G\alpha \circ L^{\prime }\psi _{\mathrm{I}\left( \mathbb{T}\right) } \\
&=&\epsilon ^{\prime }\mathrm{L}\left( \mathbb{T}\right) \circ L^{\prime
}GR\pi \left( \epsilon ^{\prime }\right) \mathcal{L}\left( \mathbb{T}\right)
\circ L^{\prime }G\eta P\left( \mathbb{T}\right) \circ L^{\prime }\psi _{%
\mathrm{I}\left( \mathbb{T}\right) } \\
&=&\epsilon ^{\prime }\mathrm{L}\left( \mathbb{T}\right) \circ L^{\prime
}R^{\prime }\pi \left( \epsilon ^{\prime }\right) \mathcal{L}\left( \mathbb{T%
}\right) \circ L^{\prime }G\eta P\left( \mathbb{T}\right) \circ L^{\prime
}\psi _{\mathrm{I}\left( \mathbb{T}\right) } \\
&=&\pi \left( \epsilon ^{\prime }\right) \mathcal{L}\left( \mathbb{T}\right)
\circ \epsilon ^{\prime }LP\left( \mathbb{T}\right) \circ L^{\prime }G\eta
P\left( \mathbb{T}\right) \circ L^{\prime }\psi _{\mathrm{I}\left( \mathbb{T}%
\right) } \\
&=&\pi \left( \epsilon ^{\prime }\right) \mathcal{L}\left( \mathbb{T}\right)
\circ \zeta P\left( \mathbb{T}\right) \circ L^{\prime }\psi _{\mathrm{I}%
\left( \mathbb{T}\right) } \\
&\overset{(\ast )}{=}&\pi \left( \epsilon ^{\prime }\right) \mathcal{L}%
\left( \mathbb{T}\right) \circ \psi ^{\prime }\mathcal{L}\left( \mathbb{T}%
\right) \\
&\overset{\text{def.}\pi \left( \epsilon ^{\prime }\right) }{=}&\pi \left(
\epsilon ^{\prime }\right) \mathcal{L}\left( \mathbb{T}\right) \circ
\epsilon ^{\prime }P^{\prime }\mathcal{L}\left( \mathbb{T}\right) \\
&=&\pi \left( \epsilon ^{\prime }\right) \mathcal{L}\left( \mathbb{T}\right)
\circ \epsilon ^{\prime }LP\left( \mathbb{T}\right) \\
&=&\pi \left( \epsilon ^{\prime }\right) \mathcal{L}\left( \mathbb{T}\right)
\circ \epsilon LP\left( \mathbb{T}\right) \circ L\eta P\left( \mathbb{T}%
\right) \circ \epsilon ^{\prime }LP\left( \mathbb{T}\right) \\
&=&\epsilon \mathrm{L}\left( \mathbb{T}\right) \circ LR\pi \left( \epsilon
^{\prime }\right) \mathcal{L}\left( \mathbb{T}\right) \circ L\eta P\left(
\mathbb{T}\right) \circ \epsilon ^{\prime }LP\left( \mathbb{T}\right) \\
&=&\epsilon \mathrm{L}\left( \mathbb{T}\right) \circ L\alpha \circ \epsilon
^{\prime }LP\left( \mathbb{T}\right) \\
&=&\epsilon ^{\prime }\mathrm{L}\left( \mathbb{T}\right) \circ L^{\prime
}R^{\prime }\left( \epsilon \mathrm{L}\left( \mathbb{T}\right) \circ L\alpha
\right) =\epsilon ^{\prime }\mathrm{L}\left( \mathbb{T}\right) \circ
L^{\prime }\left( R^{\prime }\epsilon \mathrm{L}\left( \mathbb{T}\right)
\circ R^{\prime }L\alpha \right)
\end{eqnarray*}%
where (*) follows by the equality $\zeta P\left( \mathbb{T}\right) \circ
L^{\prime }\psi _{\mathrm{I}\left( \mathbb{T}\right) }=\psi ^{\prime }%
\mathcal{L}\left( \mathbb{T}\right) $ that can be easily checked.

\begin{invisible}
Here is the computation%
\begin{equation*}
\zeta P\left( \mathbb{T}\right) \left( B,b\right) \circ L^{\prime }\psi _{%
\mathrm{I}\left( \mathbb{T}\right) }\left( B,b\right) =\zeta B\circ
L^{\prime }b=\psi ^{\prime }\mathcal{L}\left( \mathbb{T}\right) \left(
B,b\right) .
\end{equation*}
\end{invisible}

We have so proved that $\epsilon ^{\prime }\mathrm{L}\left( \mathbb{T}%
\right) \circ L^{\prime }\left( G\alpha \circ \psi _{\mathrm{I}\left(
\mathbb{T}\right) }\right) =\epsilon ^{\prime }\mathrm{L}\left( \mathbb{T}%
\right) \circ L^{\prime }\left( R^{\prime }\epsilon \mathrm{L}\left( \mathbb{%
T}\right) \circ R^{\prime }L\alpha \right) .$ By the adjunction this is
equivalent to $G\alpha \circ \psi _{\mathrm{I}\left( \mathbb{T}\right)
}=R^{\prime }\epsilon \mathrm{L}\left( \mathbb{T}\right) \circ R^{\prime
}L\alpha .$ By Lemma \ref{lem:lift}, the map $\alpha $ induces $\eta \left(
\mathbb{T}\right) :\mathrm{Id}_{\mathrm{I}\left( \mathbb{T}\right)
}\rightarrow \mathrm{R}\left( \mathbb{T}\right) \mathrm{L}\left( \mathbb{T}%
\right) $ such that $P\left( \mathbb{T}\right) \eta \left( \mathbb{T}\right)
=\alpha .$ We compute
\begin{eqnarray*}
&&\epsilon \left( \mathbb{T}\right) \mathrm{L}\left( \mathbb{T}\right) \circ
\mathrm{L}\left( \mathbb{T}\right) \eta \left( \mathbb{T}\right) \circ \pi
\left( \epsilon ^{\prime }\right) \mathcal{L}\left( \mathbb{T}\right) \\
&=&\left( \pi \left( \epsilon ^{\prime }\right) \mathcal{S}\left( \epsilon
^{\prime }\right) \mathrm{L}\left( \mathbb{T}\right) \right) ^{-1}\circ
\mathcal{U}\left( \epsilon ^{\prime }\right) \widetilde{\epsilon }\mathrm{L}%
\left( \mathbb{T}\right) \circ \mathcal{U}\left( \epsilon ^{\prime }\right)
\mathcal{L}\left( \mathbb{T}\right) \eta \left( \mathbb{T}\right) \circ \pi
\left( \epsilon ^{\prime }\right) \mathcal{L}\left( \mathbb{T}\right) \\
&\overset{\text{nat.}\pi \left( \epsilon ^{\prime }\right) \text{ }}{=}%
&\left( \pi \left( \epsilon ^{\prime }\right) \mathcal{S}\left( \epsilon
^{\prime }\right) \mathrm{L}\left( \mathbb{T}\right) \right) ^{-1}\circ \pi
\left( \epsilon ^{\prime }\right) \mathcal{S}\left( \epsilon ^{\prime
}\right) \mathrm{L}\left( \mathbb{T}\right) \circ P^{\prime }\widetilde{%
\epsilon }\mathrm{L}\left( \mathbb{T}\right) \circ P^{\prime }\mathcal{L}%
\left( \mathbb{T}\right) \eta \left( \mathbb{T}\right) \\
&=&P^{\prime }\widetilde{\epsilon }\mathrm{L}\left( \mathbb{T}\right) \circ
LP\left( \mathbb{T}\right) \eta \left( \mathbb{T}\right) =\epsilon \mathrm{L}%
\left( \mathbb{T}\right) \circ L\alpha \\
&=&\epsilon \mathrm{L}\left( \mathbb{T}\right) \circ LR\pi \left( \epsilon
^{\prime }\right) \mathcal{L}\left( \mathbb{T}\right) \circ L\eta P\left(
\mathbb{T}\right) \\
&=&\pi \left( \epsilon ^{\prime }\right) \mathcal{L}\left( \mathbb{T}\right)
\circ \epsilon LP\left( \mathbb{T}\right) \circ L\eta P\left( \mathbb{T}%
\right) =\pi \left( \epsilon ^{\prime }\right) \mathcal{L}\left( \mathbb{T}%
\right)
\end{eqnarray*}%
so that $\epsilon \left( \mathbb{T}\right) \mathrm{L}\left( \mathbb{T}%
\right) \circ \mathrm{L}\left( \mathbb{T}\right) \eta \left( \mathbb{T}%
\right) =\mathrm{Id}_{\mathrm{L}\left( \mathbb{T}\right) }.$

We compute%
\begin{eqnarray*}
P\left( \mathbb{T}\right) \left( \mathrm{R}\left( \mathbb{T}\right) \epsilon
\left( \mathbb{T}\right) \circ \eta \left( \mathbb{T}\right) \mathrm{R}%
\left( \mathbb{T}\right) \right) &=&P\left( \mathbb{T}\right) \mathrm{R}%
\left( \mathbb{T}\right) \epsilon \left( \mathbb{T}\right) \circ P\left(
\mathbb{T}\right) \eta \left( \mathbb{T}\right) \mathrm{R}\left( \mathbb{T}%
\right) \\
&=&R\epsilon \left( \mathbb{T}\right) \circ \alpha \mathrm{R}\left( \mathbb{T%
}\right) \\
&=&\left( R\pi \left( \epsilon ^{\prime }\right) \mathcal{S}\left( \epsilon
^{\prime }\right) \right) ^{-1}\circ R\mathcal{U}\left( \epsilon ^{\prime
}\right) \widetilde{\epsilon }\circ R\pi \left( \epsilon ^{\prime }\right)
\mathcal{L}\left( \mathbb{T}\right) \mathrm{R}\left( \mathbb{T}\right) \circ
\eta P\left( \mathbb{T}\right) \mathrm{R}\left( \mathbb{T}\right) \\
&=&\left( R\pi \left( \epsilon ^{\prime }\right) \mathcal{S}\left( \epsilon
^{\prime }\right) \right) ^{-1}\circ R\pi \left( \epsilon ^{\prime }\right)
\mathcal{S}\left( \epsilon ^{\prime }\right) \circ RP^{\prime }\widetilde{%
\epsilon }\circ \eta R \\
&=&R\epsilon \circ \eta R=\mathrm{Id}_{R}.
\end{eqnarray*}

We have so proved that $\left( \mathrm{L}\left( \mathbb{T}\right) ,\mathrm{R}%
\left( \mathbb{T}\right) ,\eta \left( \mathbb{T}\right) ,\epsilon \left(
\mathbb{T}\right) \right) $ is an adjunction. We compute%
\begin{eqnarray*}
\epsilon \mathrm{L}\left( \mathbb{T}\right) \circ LP\left( \mathbb{T}\right)
\eta \left( \mathbb{T}\right) &=&\epsilon \mathrm{L}\left( \mathbb{T}\right)
\circ L\alpha \\
&=&\epsilon \mathrm{L}\left( \mathbb{T}\right) \circ LR\pi \left( \epsilon
^{\prime }\right) \mathcal{L}\left( \mathbb{T}\right) \circ L\eta P\left(
\mathbb{T}\right) \\
&=&\pi \left( \epsilon ^{\prime }\right) \mathcal{L}\left( \mathbb{T}\right)
\circ \epsilon LP\left( \mathbb{T}\right) \circ L\eta P\left( \mathbb{T}%
\right) =\pi \left( \epsilon ^{\prime }\right) \mathcal{L}\left( \mathbb{T}%
\right) .
\end{eqnarray*}%
Thus, since $P\left( \mathbb{T}\right) \mathrm{R}\left( \mathbb{T}\right)
=R, $ the diagram in the statement is an adjoint triangle and the natural
transformation inside it is the correct one. Note that (\ref{form:defepsT})
follows by definition of adjoint triangle and, since $\pi \left( \epsilon
^{\prime }\right) \mathcal{L}\left( \mathbb{T}\right) \mathrm{R}\left(
\mathbb{T}\right) $ is an epimorphism, it uniquely determines $\epsilon
\left( \mathbb{T}\right) .$
\end{proof}

Starting from an adjunction $\left( L,R,\eta ,\epsilon \right) ,$ with $R:%
\mathcal{A}\rightarrow \mathcal{B}$ and where $\mathcal{A}$ has all
coequalizers, we are now able to construct a kind of monadic decomposition that
will be called an \textbf{adjoint decomposition }as follows, where we set $%
L_{\left[ 0\right] }:=L,R_{\left[ 0\right] }:=R$, $\eta_{[0]}:=\eta$, $\epsilon_{[0]}:=\epsilon$  and $\mathcal{B}_{\left[ 0%
\right] }:=\mathcal{B}$.
\begin{equation}\label{diag:adjdecomp}
\vcenter{\xymatrixcolsep{2cm}\xymatrix{\mathcal{A}\ar@<.5ex>[d]^{R_{[0]}}&\mathcal{A}%
\ar@<.5ex>[d]^{R_{[1]}}\ar[l]_{\mathrm{Id}}&\mathcal{A}%
\ar@<.5ex>[d]^{R_{[2]}}\ar[l]_{\mathrm{Id}}&\cdots
\ar[l]_{\mathrm{Id}}\quad\cdots&\mathcal{A}\ar@<.5ex>[d]^{R_{[n]}}\ar[l]_{%
\mathrm{Id}}\\
\mathcal{B}_{[0]}\ar@{}[ur]|{\pi_{[0,1]}}\ar@<.5ex>@{.>}[u]^{L_{[0]}}&%
\mathcal{B}_{[1]}\ar@{}[ur]|{\pi_{[1,2]}}\ar@<.5ex>@{.>}[u]^{L_{[1]}}
\ar[l]_{U_{[0,1]}}\ar@<0.3cm>@{}[l]^*+[F-,]{\mathbb{T}_{[0,1]}}&%
\mathcal{B}_{[2]} \ar@<.5ex>@{.>}[u]^{L_{[2]}}
\ar[l]_{U_{[1,2]}}\ar@<0.3cm>@{}[l]^*+[F-,]{\mathbb{T}_{[1,2]}}&\cdots%
\ar[l]_{U_{[2,3]}}\ar@<0.3cm>@{}[l]^*+[F-,]{\mathbb{T}_{[2,3]}}\quad\cdots&%
\mathcal{B}_{[n]}\ar@<.5ex>@{.>}[u]^{L_{[n]}}\ar[l]_{U_{[n-1,n]}}%
\ar@<0.3cm>@{}[l]^*+[F-,]{\mathbb{T}_{[n-1,n]}}}}
\end{equation}%
In the diagram above we label by $\mathbb{T}_{\left[ 0,1\right] }$ the first
adjoint triangle from left, by $\mathbb{T}_{\left[ 1,2\right] }$ the second
one and in general by $\mathbb{T}_{\left[ n-1,n\right] }$ the $n$-th one.
Denote by $\mathbb{T}_{\left[ n\right] }$ the composition of the first $n$
adjoint triangles. They are constructed iteratively as follows. The adjoint
triangle $\mathbb{T}_{\left[ 0\right] }$ is defined as in the following
diagram while, for $n>0,$ we set $\mathbb{T}_{\left[ n-1,n\right] }:=\left(
\mathbb{T}_{\left[ n-1\right] }\right) ^{2}$ (see Proposition \ref{pro:adj})
and $\mathbb{T}_{\left[ n\right] }:=\mathbb{T}_{\left[ n-1,n\right] }\ast
\mathbb{T}_{\left[ n-1\right] }.$
\begin{gather*}
\mathbb{T}_{[0]}:=\vcenter{\xymatrixcolsep{1.5cm}\xymatrixrowsep{.71cm}%
\xymatrix{\mathcal{A}\ar[r]^\id\ar@{}[dr]|-{\pi _{\left[ 0\right]
}=\mathrm{Id}_{L}}\ar@<.4ex>[d]^*-<0.2cm>{^{R}}&
\mathcal{A}\ar@<.4ex>[d]^*-<0.2cm>{^{R}}\\
\mathcal{B}\ar@<.4ex>@{.>}[u]^*-<0.2cm>{^{L}}\ar[r]^\id&\mathcal{B}%
\ar@<.4ex>@{.>}[u]^*-<0.2cm>{^{L}}} } \qquad\qquad \mathbb{T}_{[n]}:=%
\vcenter{\xymatrixcolsep{1.5cm}\xymatrixrowsep{.71cm}\xymatrix{\mathcal{A}%
\ar[r]^\id\ar@{}[dr]|-{\pi_{[n]}}\ar@<.4ex>[d]^*-<0.2cm>{^{R_{[n]}}}&
\mathcal{A}\ar@<.4ex>[d]^*-<0.2cm>{^{R}}\\
\mathcal{B}_{[n]}\ar@<.4ex>@{.>}[u]^*-<0.2cm>{^{L_{[n]}}}\ar[r]^{U_{[n]}}&%
\mathcal{B}\ar@<.4ex>@{.>}[u]^*-<0.2cm>{^{L}}} }
\end{gather*}
\begin{invisible}
\begin{equation*}
\mathbb{T}_{\left[ 0\right] }:=%
\begin{array}{ccc}
\mathcal{A} & \overset{\mathrm{Id}_{\mathcal{A}}}{\longrightarrow } &
\mathcal{A} \\
L\uparrow \downarrow R & \pi _{\left[ 0\right] }=\mathrm{Id}_{L} & L\uparrow
\downarrow R \\
\mathcal{B} & \overset{\mathrm{Id}_{\mathcal{B}}}{\longrightarrow } &
\mathcal{B}%
\end{array}%
\end{equation*}%
Thus%
\begin{equation*}
\mathbb{T}_{\left[ n\right] }=%
\begin{array}{ccc}
\mathcal{A} & \overset{\mathrm{Id}_{\mathcal{A}}}{\longrightarrow } &
\mathcal{A} \\
L_{\left[ n\right] }\uparrow \downarrow R_{\left[ n\right] } & \pi _{\left[ n%
\right] } & L\uparrow \downarrow R \\
\mathcal{B}_{\left[ n\right] } & \overset{U_{\left[ n\right] }}{%
\longrightarrow } & \mathcal{B}%
\end{array}%
\end{equation*}
\end{invisible}
Explicitly $\mathcal{B}_{\left[ 1\right] }=\mathrm{I}\left( \mathbb{T}_{%
\left[ 0\right] }\right) =\left\langle R^{\prime }L|G\right\rangle
=\left\langle RL|\mathrm{Id}_{\mathcal{B}}\right\rangle ,U_{\left[ 1\right]
}=U_{\left[ 0,1\right] }=P\left( \mathbb{T}_{\left[ 0\right] }\right) $ the
forgetful functor.

\begin{eqnarray*}
\mathcal{L}\left( \mathbb{T}_{\left[ 0\right] }\right) &:&\mathcal{B}_{\left[
1\right] }\rightarrow \left\langle LR|\mathrm{Id}_{\mathcal{A}}\right\rangle
,\quad\left( B,b\right) \mapsto \left( LB,\zeta B\circ Lb\right) =\left(
LB,Lb\right) ,\qquad h\mapsto Lh, \\
R_{\left[ 1\right] } &=&\mathrm{R}\left( \mathbb{T}_{\left[ 0\right]
}\right) =\mathcal{R}\left( \mathbb{T}_{\left[ 0\right] }\right) \circ
\mathcal{S}\left( \epsilon \right) :\mathcal{A}\rightarrow \mathrm{I}\left(
\mathbb{T}_{\left[ 0\right] }\right) ,\quad A\mapsto \left( RA,R\epsilon A\right)
,\quad f\mapsto Rf, \\
L_{\left[ 1\right] } &=&\mathrm{L}\left( \mathbb{T}_{\left[ 0\right]
}\right) =\mathcal{U}\left( \epsilon \right) \circ \mathcal{L}\left( \mathbb{%
T}_{\left[ 0\right] }\right) :\mathrm{I}\left( \mathbb{T}_{\left[ 0\right]
}\right) \rightarrow \mathcal{A},\quad\left( B,b\right) \mapsto \mathcal{U}\left(
\epsilon \right) \left( LB,Lb\right) ,\quad h\mapsto \mathcal{U}\left(
\epsilon \right) \left( Lh\right) , \\
\pi _{\left[ 1\right] } &=&\pi _{\left[ 0,1\right] } = \pi \left( \epsilon \right) \mathcal{L}%
\left( \mathbb{T}_{\left[ 0\right] }\right) .
\end{eqnarray*}%
The unit $\eta _{\left[ 1\right] }=\eta \left( \mathbb{T}_{\left[ 0\right]
}\right) $ and the counit $\epsilon _{\left[ 1\right] }=\epsilon \left(
\mathbb{T}_{\left[ 0\right] }\right) $ of the adjunction $\left( L_{\left[ 1%
\right] },R_{\left[ 1\right] }\right) $ are uniquely defined by
\begin{equation}
U_{\left[ 1\right] }\eta _{\left[ 1\right] }=R\pi _{\left[ 1\right] }\circ
\eta U_{\left[ 1\right] }\qquad \text{and}\qquad \epsilon _{\left[ 1\right]
}\circ \pi _{\left[ 1\right] }R_{\left[ 1\right] }=\epsilon =\epsilon _{%
\left[ 0\right] }.  \label{form:epseta[1]}
\end{equation}%
Note that for every $B_{\left[ 1\right] }:=\left( B,b\right) \in \mathcal{B}%
_{\left[ 1\right] }$ we have the following coequalizer

\begin{equation}  \label{coeq:L1}
\xymatrixcolsep{1.5cm} \xymatrix{LRLB\ar@<.5ex>[r]^{Lb}\ar@<-.5ex>[r]_{%
\epsilon LB}&LB\ar[r]^{\pi _{[1]}B_{[1]}}& L_{[1]}B_{[1]}}
\end{equation}

\begin{invisible}
\begin{equation}
LRLB\overset{Lb}{\underset{\epsilon LB}{\rightrightarrows }}LB\overset{\pi _{%
\left[ 1\right] }B_{\left[ 1\right] }}{\longrightarrow }L_{\left[ 1\right]
}B_{\left[ 1\right] }.
\end{equation}
\end{invisible}

Next $\mathcal{B}_{\left[ 2\right] }=\mathrm{I}\left( \mathbb{T}_{\left[ 1%
\right] }\right) =\left\langle RL_{\left[ 1\right] }|U_{\left[ 1\right]
}\right\rangle ,U_{\left[ 1,2\right] }:=P\left( \mathbb{T}_{\left[ 1\right]
}\right) $ and $U_{\left[ 2\right] }=U_{\left[ 0,1\right] }\circ U_{\left[
1,2\right] }.$ Moreover%
\begin{eqnarray*}
\mathcal{L}\left( \mathbb{T}_{\left[ 1\right] }\right) &:&\mathcal{B}_{\left[
2\right] }\rightarrow \left\langle LR|\mathrm{Id}_{\mathcal{A}}\right\rangle
:\quad\left( B_{\left[ 1\right] },b_{\left[ 1\right] }\right) \mapsto \left( L_{%
\left[ 1\right] }B_{\left[ 1\right] },\pi _{\left[ 1\right] }B_{\left[ 1%
\right] }\circ Lb_{\left[ 1\right] }\right) ,\quad h\mapsto L_{\left[ 1%
\right] }h, \\
R_{\left[ 2\right] } &=&\mathrm{R}\left( \mathbb{T}_{\left[ 1\right]
}\right) =\mathcal{R}\left( \mathbb{T}_{\left[ 1\right] }\right) \circ
\mathcal{S}\left( \epsilon _{\left[ 1\right] }\right) :\mathcal{A}%
\rightarrow \mathcal{B}_{\left[ 2\right] },\quad A\mapsto \left( R_{\left[ 1\right]
}A,R\epsilon _{\left[ 1\right] }A\right) ,\quad f\mapsto R_{\left[ 1\right]
}f, \\
L_{\left[ 2\right] } &=&\mathrm{L}\left( \mathbb{T}_{\left[ 1\right]
}\right) =\mathcal{U}\left( \epsilon \right) \circ \mathcal{L}\left( \mathbb{%
T}_{\left[ 1\right] }\right) :\mathcal{B}_{\left[ 2\right] }\rightarrow
\mathcal{A},\\&&\quad \left( B_{\left[ 1\right] },b_{\left[ 1\right] }\right) \mapsto
\mathcal{U}\left( \epsilon \right) \left( L_{\left[ 1\right] }B_{\left[ 1%
\right] },\pi _{\left[ 1\right] }B_{\left[ 1\right] }\circ Lb_{\left[ 1%
\right] }\right) ,\quad h\mapsto \mathcal{U}\left( \epsilon \right) \left(
L_{\left[ 1\right] }h\right) , \\
\pi _{\left[ 1,2\right] } &=&\pi \left( \epsilon \right) \mathcal{L}\left(
\mathbb{T}_{\left[ 1\right] }\right) \\
\pi _{\left[ 2\right] } &=&\pi _{\left[ 1,2\right] }\ast \pi _{\left[ 1,0%
\right] }=\pi _{\left[ 1,2\right] }\circ \pi _{\left[ 1\right] }U_{\left[ 1,2%
\right] }
\end{eqnarray*}%
The unit $\eta _{\left[ 2\right] }=\eta \left( \mathbb{T}_{\left[ 1\right]
}\right) $ and the counit $\epsilon _{\left[ 2\right] }=\epsilon \left(
\mathbb{T}_{\left[ 1\right] }\right) $ of the adjunction $\left( L_{\left[ 2%
\right] },R_{\left[ 2\right] }\right) $ are uniquely determined by
\begin{equation}
U_{\left[ 1,2\right] }\eta _{\left[ 2\right] }=R_{\left[ 1\right] }\pi _{%
\left[ 1,2\right] }\circ \eta _{\left[ 1\right] }U_{\left[ 1,2\right]
}\qquad \text{and}\qquad \epsilon _{\left[ 2\right] }\circ \pi _{\left[ 1,2%
\right] }R_{\left[ 2\right] }=\epsilon _{\left[ 1\right] }
\label{form:epseta[2]}
\end{equation}%
Note that for every $B_{\left[ 2\right] }:=\left( B_{\left[ 1\right] },b_{%
\left[ 1\right] }\right) \in \mathcal{B}_{\left[ 2\right] }$ we have the
following coequalizer%
\begin{equation*}
\xymatrixcolsep{2cm} \xymatrix{LRL_{[1]}B_{[1]}\ar@<.5ex>[r]^{%
\pi_{[1]}B_{[1]}\circ Lb_{[1]}}\ar@<-.5ex>[r]_{\epsilon
L_{[1]}B_{[1]}}&L_{[1]}B_{[1]}\ar[r]^{\pi _{[1,2]}B_{[2]}}& L_{[2]}B_{[2]}}
\end{equation*}

\begin{invisible}
\begin{equation*}
LRL_{\left[ 1\right] }B_{\left[ 1\right] }\overset{\pi _{\left[ 1\right] }B_{%
\left[ 1\right] }\circ Lb_{\left[ 1\right] }}{\underset{\epsilon L_{\left[ 1%
\right] }B_{\left[ 1\right] }}{\rightrightarrows }}L_{\left[ 1\right] }B_{%
\left[ 1\right] }\overset{\pi _{\left[ 1,2\right] }B_{\left[ 2\right] }}{%
\longrightarrow }L_{\left[ 2\right] }B_{\left[ 2\right] }
\end{equation*}
\end{invisible}

Finally $\mathcal{B}_{\left[ n+1\right] }=\mathrm{I}\left( \mathbb{T}_{\left[
n\right] }\right) =\left\langle RL_{\left[ n\right] }|U_{\left[ n\right]
}\right\rangle ,U_{\left[ n,n+1\right] }=P\left( \mathbb{T}_{\left[ n\right]
}\right) $ and $U_{\left[ n+1\right] }=U_{\left[ n\right] }\circ U_{\left[
n,n+1\right] }$. Moreover%
\begin{eqnarray*}
\mathcal{L}\left( \mathbb{T}_{\left[ n\right] }\right) &:&\mathcal{B}_{\left[
n+1\right] }\rightarrow \left\langle LR|\mathrm{Id}_{\mathcal{A}%
}\right\rangle ,\quad \left( B_{\left[ n\right] },b_{\left[ n\right] }\right)
\mapsto \left( L_{\left[ n\right] }B_{\left[ n\right] },\pi _{\left[ n\right]
}B_{\left[ n\right] }\circ Lb_{\left[ n\right] }\right) ,\qquad h\mapsto L_{%
\left[ n\right] }h, \\
R_{\left[ n+1\right] } &=&\mathrm{R}\left( \mathbb{T}_{\left[ n\right]
}\right) =\mathcal{R}\left( \mathbb{T}_{\left[ n\right] }\right) \circ
\mathcal{S}\left( \epsilon _{\left[ n\right] }\right) :\mathcal{A}%
\rightarrow \mathcal{B}_{\left[ n+1\right] },\quad A\mapsto \left( R_{\left[ n%
\right] }A,R\epsilon _{\left[ n\right] }A\right) ,\qquad f\mapsto R_{\left[ n%
\right] }f, \\
L_{\left[ n+1\right] } &=&\mathrm{L}\left( \mathbb{T}_{\left[ n\right]
}\right) =\mathcal{U}\left( \epsilon \right) \circ \mathcal{L}\left( \mathbb{%
T}_{\left[ n\right] }\right) ,\\&&\quad \mathcal{B}_{\left[ n+1\right] }\rightarrow
\mathcal{A}:\left( B_{\left[ n\right] },b_{\left[ n\right] }\right) \mapsto
\mathcal{U}\left( \epsilon \right) \left( L_{\left[ n\right] }B_{\left[ n%
\right] },\pi _{\left[ n\right] }B_{\left[ n\right] }\circ Lb_{\left[ n%
\right] }\right) ,\quad h\mapsto \mathcal{U}\left( \epsilon \right) \left(
L_{\left[ n\right] }h\right) , \\
\pi _{\left[ n,n+1\right] } &=&\pi \left( \epsilon \right) \mathcal{L}\left(
\mathbb{T}_{\left[ n\right] }\right) \\
\pi _{\left[ n+1\right] } &=&\pi _{\left[ n,n+1\right] }\ast \pi _{\left[ n%
\right] }=\pi _{\left[ n,n+1\right] }\circ \pi _{\left[ n\right] }U_{\left[
n,n+1\right] }
\end{eqnarray*}%
The unit $\eta _{\left[ n+1\right] }=\eta \left( \mathbb{T}_{\left[ n\right]
}\right) $ and the counit $\epsilon _{\left[ n+1\right] }=\epsilon \left(
\mathbb{T}_{\left[ n\right] }\right) $ of the adjunction $\left( L_{\left[
n+1\right] },R_{\left[ n+1\right] }\right) $ are uniquely determined by
\begin{equation}
U_{\left[ n,n+1\right] }\eta _{\left[ n+1\right] }=R_{\left[ n\right] }\pi _{%
\left[ n,n+1\right] }\circ \eta _{\left[ n\right] }U_{\left[ n,n+1\right]
}\qquad \text{and}\qquad \epsilon _{\left[ n+1\right] }\circ \pi _{\left[
n,n+1\right] }R_{\left[ n+1\right] }=\epsilon _{\left[ n\right] }
\label{form:epseta[n+1]}
\end{equation}%
Note that for every $B_{\left[ n+1\right] }:=\left( B_{\left[ n\right] },b_{%
\left[ n\right] }\right) \in \mathcal{B}_{\left[ n+1\right] }$ we have the
following coequalizer%
\begin{equation}  \label{coeq:Ln+1}
\xymatrixcolsep{2cm} \xymatrix{LRL_{[n]}B_{[n]}\ar@<.5ex>[r]^{%
\pi_{[n]}B_{[n]}\circ Lb_{[n]}}\ar@<-.5ex>[r]_{\epsilon
L_{[n]}B_{[n]}}&L_{[n]}B_{[n]}\ar[r]^-{\pi _{[n,n+1]}B_{[n+1]}}&
L_{[n+1]}B_{[n+1]}}
\end{equation}

\begin{invisible}
\begin{equation}
LRL_{\left[ n\right] }B_{\left[ n\right] }\overset{\pi _{\left[ n\right] }B_{%
\left[ n\right] }\circ Lb_{\left[ n\right] }}{\underset{\epsilon L_{\left[ n%
\right] }B_{\left[ n\right] }}{\rightrightarrows }}L_{\left[ n\right] }B_{%
\left[ n\right] }\overset{\pi _{\left[ n,n+1\right] }B_{\left[ n+1\right] }}{%
\longrightarrow }L_{\left[ n+1\right] }B_{\left[ n+1\right] }
\end{equation}
We have $\pi _{\left[ n\right] }B_{\left[ n\right] }\circ Lb_{\left[ n\right]
}=\epsilon L_{\left[ n\right] }B_{\left[ n\right] }\circ Le_{\left[ n\right]
}$ where
\begin{equation*}
e_{\left[ n\right] }:=U_{\left[ n\right] }\eta _{\left[ n\right] }B_{\left[ n%
\right] }\circ b_{\left[ n\right] }:RL_{\left[ n\right] }B_{\left[ n\right]
}\rightarrow RL_{\left[ n\right] }B_{\left[ n\right] }.
\end{equation*}%
Set $A:=L_{\left[ n\right] }B_{\left[ n\right] };e_{\left[ n\right]
}:RA\rightarrow RA,$
\begin{equation*}
LRA\overset{\epsilon A\circ Le_{\left[ n\right] }}{\underset{\epsilon A}{%
\rightrightarrows }}A\overset{\pi _{\left[ n,n+1\right] }B_{\left[ n+1\right]
}}{\longrightarrow }L_{\left[ n+1\right] }B_{\left[ n+1\right] }.
\end{equation*}%
$\mathcal{E}=\left\{ f:RA\rightarrow RA\mid \exists \mathrm{Coeq}\left(
\epsilon A\circ Lf,\epsilon A\right) \right\} \subseteq \mathcal{A}.$

\begin{equation*}
B_{\left[ n+1\right] }:=\left\{ \left( B_{\left[ n\right] },b_{\left[ n%
\right] }\right) \in \left\langle RL_{\left[ n\right] }|U_{\left[ n\right]
}\right\rangle \mid U_{\left[ n\right] }\eta _{\left[ n\right] }B_{\left[ n%
\right] }\circ b_{\left[ n\right] }\in \mathcal{E}\right\} .
\end{equation*}
\end{invisible}

so that%
\begin{equation}
\pi _{\left[ n+1\right] }B_{\left[ n+1\right] }\circ Lb_{\left[ n\right]
}=\pi _{\left[ n,n+1\right] }B_{\left[ n+1\right] }\circ \epsilon L_{\left[ n%
\right] }B_{\left[ n\right] }  \label{form:pieps}
\end{equation}

By composing the functors on the bottom of \eqref{diag:adjdecomp} and the corresponding natural
transformations one defines, for $0\leq t\leq n,$
\begin{eqnarray*}
U_{\left[ t,n\right] } &=&U_{\left[ t,t+1\right] }\circ U_{\left[ t+1,t+2%
\right] }\circ \cdots \circ U_{\left[ n-2,n-1\right] }\circ U_{\left[ n-1,n%
\right] }, \\
\pi _{\left[ t,n\right] } &=&\pi _{\left[ n-1,n\right] }\ast \pi _{\left[
n-2,n-1\right] }\ast \cdots \ast \pi _{\left[ t+1,t+2\right] }\ast \pi _{%
\left[ t,t+1\right] } \\
&=&\pi _{\left[ n-1,n\right] }\circ \pi _{\left[ n-2,n-1\right] }U_{\left[
n-1,n\right] }\circ \cdots \circ \pi _{\left[ t,t+1\right] }U_{\left[ t+1,n%
\right] }.
\end{eqnarray*}

\begin{invisible}
\begin{equation*}
\begin{array}{ccc}
\mathcal{A} & \overset{\mathrm{Id}_{\mathcal{A}}}{\longrightarrow } &
\mathcal{A} \\
L_{\left[ n\right] }\uparrow \downarrow R_{\left[ n\right] } & \pi _{\left[
n-1,n\right] } & L_{\left[ n-1\right] }\uparrow \downarrow R_{\left[ n-1%
\right] } \\
\mathcal{B}_{\left[ n\right] } & \overset{U_{\left[ n-1,n\right] }}{%
\longrightarrow } & \mathcal{B}_{\left[ n-1\right] }%
\end{array}%
\begin{array}{ccc}
\mathcal{A} & \overset{\mathrm{Id}_{\mathcal{A}}}{\longrightarrow } &
\mathcal{A} \\
L_{\left[ n-1\right] }\uparrow \downarrow R_{\left[ n-1\right] } & \left[
n-2,n-1\right] & L_{\left[ n-2\right] }\uparrow \downarrow R_{\left[ n-2%
\right] } \\
\mathcal{B}_{\left[ n-1\right] } & \overset{U_{\left[ n-2,n-1\right] }}{%
\longrightarrow } & \mathcal{B}_{\left[ n-2\right] }%
\end{array}%
\cdots
\begin{array}{ccc}
\mathcal{A} & \overset{\mathrm{Id}_{\mathcal{A}}}{\longrightarrow } &
\mathcal{A} \\
L_{\left[ t+1\right] }\uparrow \downarrow R_{\left[ t+1\right] } & \pi _{
\left[ t,t+1\right] } & L_{\left[ t\right] }\uparrow \downarrow R_{\left[ t%
\right] } \\
\mathcal{B}_{\left[ t+1\right] } & \overset{U_{\left[ t,t+1\right] }}{%
\longrightarrow } & \mathcal{B}_{\left[ t\right] }%
\end{array}%
\end{equation*}%
\begin{equation*}
L_{t}U_{\left[ t,n\right] }\cdots \overset{\pi _{\left[ n-3,n-2\right] }U_{%
\left[ n-2,n\right] }}{\longrightarrow }L_{\left[ n-2\right] }U_{\left[ n-2,n%
\right] }\overset{\pi _{\left[ n-2,n-1\right] }U_{\left[ n-1,n\right] }}{%
\longrightarrow }L_{\left[ n-1\right] }U_{\left[ n-1,n\right] }\overset{\pi
_{\left[ n-1,n\right] }}{\longrightarrow }L_{\left[ n\right] }
\end{equation*}
\end{invisible}

Let us give a more explicit description of objects and morphisms in the category $\mathcal{B}_{\left[ n\right] }$ for $n\in\mathbb{N}$. First $\mathcal{B}_{\left[ 0\right] }=\mathcal{B}.$ An object in $%
\mathcal{B}_{\left[ 1\right] }$ is a pair $B_{\left[ 1\right] }=\left( B,b_{%
\left[ 0\right] }:RL_{\left[ 0\right] }B\rightarrow B\right) $ where $B\in
\mathcal{B},b_{\left[ 0\right] }\in \mathcal{B}.$ An object in $\mathcal{B}_{%
\left[ 2\right] }$ is a pair $B_{\left[ 2\right] }=\left( B_{\left[ 1\right]
},b_{\left[ 1\right] }:RL_{\left[ 1\right] }B_{\left[ 1\right] }\rightarrow
B\right) ,$ where $B_{\left[ 1\right] }=\left( B,b_{\left[ 0\right] }\right)
\in \mathcal{B}_{\left[ 1\right] },b_{\left[ 1\right] }\in \mathcal{B}.$ Thus we can regard $B_{\left[ 2\right] }$ as the triple $%
\left( B,b_{\left[ 0\right] }:RL_{\left[ 0\right] }B\rightarrow B,b_{\left[ 1%
\right] }:RL_{\left[ 1\right] }B_{\left[ 1\right] }\rightarrow B\right) .$
Going on this way, an object in $\mathcal{B}_{\left[ n\right] }$ has the
form $B_{\left[ n\right] }=\left( B,b_{\left[ 0\right] },b_{\left[ 1\right]
},\ldots ,b_{\left[ n-1\right] }\right) $ where $b_{\left[ t\right] }:RL_{%
\left[ t\right] }B_{\left[ t\right] }\rightarrow B$ and $B_{\left[ t\right]
}=U_{\left[ t,n\right] }B_{\left[ n\right] }=\left( B,b_{\left[ 0\right] },b_{\left[ 1\right]
},\ldots ,b_{\left[ t-1\right] }\right)$ for each $t\in \left\{
0,\ldots ,n-1\right\} .$

The lower case $n=0$ can also be included in the notation $B_{\left[ n\right]
}=\left( B,b_{\left[ 0\right] },b_{\left[ 1\right] },\ldots ,b_{\left[ n-1%
\right] }\right) $ by thinking that the $b_{\left[ i\right] }$'s disappear.
A datum such as $\left( b_{\left[ 0\right] },b_{\left[ 1\right]
},\ldots ,b_{\left[ n-1\right] }\right) $ is called a $R$-structured sink in
the literature.

A morphism $f_{\left[ 1\right] }:B_{\left[ 1\right] }\rightarrow B_{\left[ 1%
\right] }^{\prime }$ in $\mathcal{B}_{\left[ 1\right] }$ means a morphism $%
f=U_{\left[ 1\right] }f_{\left[ 1\right] }:B\rightarrow B^{\prime }$ such
that
\begin{equation*}
\xymatrixcolsep{1.5cm}\xymatrixrowsep{0.5cm}\xymatrix{RL_{[0]
}B\ar[d]_{b_{[0] }}\ar[r]^{RL_{[0] }f}&RL_{[0] }B'\ar[d]^{b_{[0] }'}\\
B\ar[r]^{f}&B' }
\end{equation*}

\begin{invisible}
\begin{equation*}
\begin{array}{ccc}
RL_{[0] }B & \overset{b_{\left[ 0\right] }}{\longrightarrow } & B \\
RL_{\left[ 0\right] }f\downarrow &  & \downarrow f \\
RL_{\left[ 0\right] }B^{\prime } & \overset{b_{\left[ 0\right] }^{\prime }}{%
\longrightarrow } & B^{\prime }%
\end{array}%
\end{equation*}
\end{invisible}

For $n>1,$ a morphism $f_{\left[ n\right] }:B_{\left[ n\right] }\rightarrow
B_{\left[ n\right] }^{\prime }$ in $\mathcal{B}_{\left[ n\right] }$ is a
morphism $f_{\left[ n-1\right] }=U_{\left[ n-1,n\right] }f_{\left[ n\right]
}:B_{\left[ n-1\right] }\rightarrow B_{\left[ n-1\right] }^{\prime }$ such
that%
\begin{equation*}
\xymatrixcolsep{2.3cm}\xymatrixrowsep{0.5cm}\xymatrix{RL_{[n-1]
}B_{[n-1]}\ar[d]_{b_{[n-1] }}\ar[r]^{RL_{[n-1] }f_{[n-1]}}&RL_{[n-1]
}B_{[n-1]}'\ar[d]^{b_{[n-1] }'}\\ B\ar[r]^{f}&B' }
\end{equation*}

\begin{invisible}
\begin{equation*}
\begin{array}{ccc}
RL_{\left[ n-1\right] }B_{\left[ n-1\right] } & \overset{b_{\left[ n-1\right]
}}{\longrightarrow } & B \\
RL_{\left[ n-1\right] }f_{\left[ n-1\right] }\downarrow &  & \downarrow f \\
RL_{\left[ n-1\right] }B_{\left[ n-1\right] }^{\prime } & \overset{b_{\left[
n-1\right] }^{\prime }}{\longrightarrow } & B^{\prime }%
\end{array}%
\end{equation*}
\end{invisible}
where $f=U_{[n]}f_{[n]}:B\to B'$.

\section{Comparing monadic and adjoint decompositions\label{sec:3}}

Next aim is to connect the monadic and adjoint decompositions by
constructing functors $(\Lambda _{n})_{n\in \mathbb{N}}$ making commutative
the solid faces of diagram \eqref{diag:Lambda} for every $n\geq 1$.

To this aim we first prove some technical results needed to obtain
Proposition \ref{pro:5} which is the main tool to iteratively construct $%
(\Lambda _{n})_{n\in \mathbb{N}}$ in Remark \ref{rem:adj}.

\begin{proposition}
\label{pro:1}Assume $\mathcal{A}$ has coequalizers and consider the two adjoint triangles $\mathbb{T}$, $\mathbb{T}%
^{\prime }$ and their composition $\mathbb{T}^{\prime \prime }$ of Remark %
\ref{rem:comptr}. Then there is an adjoint triangle
\begin{equation}  \label{diag:Ilamb}
\xymatrixcolsep{2.3cm}\xymatrixrowsep{0.7cm}\xymatrix{\mathcal{A}\ar[r]^\id%
\ar@{}[dr]|-{\mathcal{U}\left( \epsilon ^{\prime }\right) \boldsymbol{\theta
}}\ar@<.4ex>[d]^*-<0.2cm>{^{\mathrm{R}(\mathbb{T}^{\prime \prime })}}&
\mathcal{A}\ar@<.4ex>[d]^*-<0.2cm>{^{\mathrm{R}(\mathbb{T})}}\\
\mathrm{I}(\mathbb{T}^{\prime \prime
})\ar@<.4ex>@{.>}[u]^*-<0.2cm>{^{\mathrm{L}(\mathbb{T}^{\prime \prime
})}}\ar[r]^{\mathrm{I}\left( \theta
\right)}&{\mathrm{I}(\mathbb{T})}\ar@<.4ex>@{.>}[u]^*-<0.2cm>{^{\mathrm{L}(%
\mathbb{T})}}}
\end{equation}

\begin{invisible}
\begin{equation}
\begin{array}{ccc}
\mathcal{A} & \overset{\mathrm{Id}_{\mathcal{A}}}{\longrightarrow } &
\mathcal{A} \\
\mathrm{L}(\mathbb{T}^{\prime \prime }) \uparrow \downarrow \mathrm{R}\left(
\mathbb{T}^{\prime \prime }\right) & \mathcal{U}\left( \epsilon ^{\prime
}\right) \boldsymbol{\theta } & \mathrm{L}\left( \mathbb{T}\right) \uparrow
\downarrow \mathrm{R}\left( \mathbb{T}\right) \\
\mathrm{I}\left( \mathbb{T}^{\prime \prime }\right) & \overset{\mathrm{I}%
\left( \theta \right) }{\longrightarrow } & \mathrm{I}\left( \mathbb{T}%
\right)%
\end{array}%
\end{equation}
\end{invisible}

The functor $\mathrm{I}\left( \theta \right) $ is defined by%
\begin{equation*}
\mathrm{I}\left( \theta \right) :\mathrm{I}\left( \mathbb{T}^{\prime \prime
}\right) \rightarrow \mathrm{I}\left( \mathbb{T}\right) ,\qquad \left(
B^{\prime \prime },\mu ^{\prime \prime }\right) \mapsto \left( \Theta
B^{\prime \prime },\mu ^{\prime \prime }\circ R^{\prime }\theta B^{\prime
\prime }\right) ,\qquad f\mapsto \Theta P\left( \mathbb{T}^{\prime \prime
}\right) f
\end{equation*}%
and it satisfies%
\begin{equation*}
P\left( \mathbb{T}\right) \circ \mathrm{I}\left( \theta \right) =\Theta
\circ P\left( \mathbb{T}^{\prime \prime }\right) \qquad \text{and}\qquad
G\left( \mathbb{T}\right) \circ \mathrm{I}\left( \theta \right) =G^{\prime
\prime }\circ P\left( \mathbb{T}^{\prime \prime }\right) .
\end{equation*}%
The natural transformation $\boldsymbol{\theta }:\mathcal{L}\left( \mathbb{T}%
\right) \mathrm{I}\left( \theta \right) \rightarrow \mathcal{L}\left(
\mathbb{T}^{\prime\prime }\right) $ appearing inside the adjoint triangle is
defined uniquely by $U_{\left[ 1\right] }^{\prime }\boldsymbol{\theta }=\theta
P\left( \mathbb{T}^{\prime \prime }\right) $, where $U_{[1]}^{\prime }:\langle L'R'|\id \rangle\to \id_{\mathcal{A}}$ is the forgetful functor.

If $\Theta $ is faithful, then so is $\mathrm{I}\left( \theta \right) $.

If $\theta $ is invertible (resp. the identity), then so is $\boldsymbol{\theta } $%
.
\end{proposition}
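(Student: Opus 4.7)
The plan is to build both $\mathrm{I}(\theta)$ and $\boldsymbol{\theta}$ by two applications of Lemma \ref{lem:lift}, to read off the stated equalities and faithfulness / invertibility properties directly from these constructions, and finally to verify the adjoint-triangle axiom by exploiting the coequalizer epimorphism built into Proposition \ref{pro:adj}.

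First I would define $\mathrm{I}(\theta)$ via Lemma \ref{lem:lift}(1) applied to $Q:=\Theta\circ P(\mathbb{T}'')$ and $q:=\psi_{\mathrm{I}(\mathbb{T}'')}\circ R'\theta P(\mathbb{T}''):R'L\Theta P(\mathbb{T}'')\to G''P(\mathbb{T}'')=G\Theta P(\mathbb{T}'')$. The equalities $P(\mathbb{T})\circ \mathrm{I}(\theta)=\Theta\circ P(\mathbb{T}'')$ and $G(\mathbb{T})\circ \mathrm{I}(\theta)=G''\circ P(\mathbb{T}'')$ are then built-in, and faithfulness of $\mathrm{I}(\theta)$ whenever $\Theta$ is faithful follows because $\mathrm{I}(\theta)$ acts on morphisms as $f\mapsto\Theta P(\mathbb{T}'')f$ while $P(\mathbb{T}'')$ is always faithful as a forgetful functor from an inserter category. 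The compatibility $\mathrm{I}(\theta)\circ \mathrm{R}(\mathbb{T}'')=\mathrm{R}(\mathbb{T})$ is then checked objectwise: the pair $\bigl(\Theta R''A,\, R'\epsilon''A\circ R'\theta R''A\bigr)$ collapses to $(RA,R'\epsilon A)$ by $\Theta R''=R$ together with the adjoint-triangle identity $\epsilon''\circ\theta R''=\epsilon$ of~$\mathbb{T}'$.

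To produce $\boldsymbol{\theta}$, I would apply Lemma \ref{lem:lift}(2) with the candidate $\pi:=\theta P(\mathbb{T}'')$. Writing $\mathcal{L}(\mathbb{T})\mathrm{I}(\theta)$ and $\mathcal{L}(\mathbb{T}'')$ in bracket form as
$L\Theta P(\mathbb{T}'')\bigl[\zeta\Theta P(\mathbb{T}'')\circ L'\psi_{\mathrm{I}(\mathbb{T}'')}\circ L'R'\theta P(\mathbb{T}'')\bigr]$
and
$L''P(\mathbb{T}'')\bigl[\theta P(\mathbb{T}'')\circ \zeta\Theta P(\mathbb{T}'')\circ L'\psi_{\mathrm{I}(\mathbb{T}'')}\bigr]$
respectively (using $\zeta''=\theta\circ\zeta\Theta$), the required coherence becomes a tautology, yielding a unique $\boldsymbol{\theta}:\mathcal{L}(\mathbb{T})\mathrm{I}(\theta)\to\mathcal{L}(\mathbb{T}'')$ with $U'_{[1]}\boldsymbol{\theta}=\theta P(\mathbb{T}'')$. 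Since the forgetful functor $U'_{[1]}=P_{\langle L'R'|\mathrm{Id}_\mathcal{A}\rangle}$ is faithful and reflects isomorphisms (the latter by the standard argument via the compatibility square in an inserter), invertibility (resp. being the identity) of $\theta$ transfers to $\boldsymbol{\theta}$ through this identity.

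The remaining, and principal, step is the counit compatibility $\epsilon(\mathbb{T}'')\circ \mathcal{U}(\epsilon')\boldsymbol{\theta}\,\mathrm{R}(\mathbb{T}'')=\epsilon(\mathbb{T})$ required for \eqref{diag:Ilamb} to be a genuine adjoint triangle. I would pre-compose both sides with the coequalizer epimorphism $\pi(\epsilon')\mathcal{L}(\mathbb{T})\mathrm{R}(\mathbb{T})$ arising componentwise from \eqref{coeq:pieps}. By naturality of $\pi(\epsilon')$ together with $U'_{[1]}\boldsymbol{\theta}=\theta P(\mathbb{T}'')$, the left-hand side becomes $\epsilon(\mathbb{T}'')\circ \pi(\epsilon')\mathcal{L}(\mathbb{T}'')\mathrm{R}(\mathbb{T}'')\circ \theta R''$, which by \eqref{form:defepsT} applied to $\mathbb{T}''$ and the adjoint-triangle identity of $\mathbb{T}'$ reduces to $\epsilon''\circ\theta R''=\epsilon$; the right-hand side equals $\epsilon$ directly by \eqref{form:defepsT} applied to $\mathbb{T}$. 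The principal obstacle is the bookkeeping across the three distinct inserter categories $\mathrm{I}(\mathbb{T})$, $\mathrm{I}(\mathbb{T}'')$ and $\langle L'R'|\mathrm{Id}_\mathcal{A}\rangle$; reducing the final verification to pre-composition with the canonical coequalizer is what keeps the calculation manageable.
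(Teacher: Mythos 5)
Your proposal is correct and follows essentially the same route as the paper's proof: both construct $\mathrm{I}(\theta)$ and $\boldsymbol{\theta}$ via Lemma \ref{lem:lift} (the paper writes $\mathrm{I}(\theta)$ as $D^{\Theta}\circ\langle R'\theta\mid\mathrm{Id}_{G''}\rangle$, which agrees with your bracket-form definition), verify $\mathrm{I}(\theta)\circ\mathrm{R}(\mathbb{T}'')=\mathrm{R}(\mathbb{T})$ using $\epsilon''\circ\theta R''=\epsilon$, and establish the counit identity by pre-composing with the epimorphism $\pi(\epsilon')\mathcal{L}(\mathbb{T})\mathrm{I}(\theta)\mathrm{R}(\mathbb{T}'')$ and invoking \eqref{form:defepsT}. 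The faithfulness and invertibility arguments (via faithfulness of $P(\mathbb{T}'')$ and the fact that $U'_{[1]}$ reflects isomorphisms) also match the paper's.
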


\begin{proof}
The functor $\mathrm{I}\left( \theta \right) $ can be more properly defined
as follows
\begin{eqnarray*}
\mathrm{I}\left( \theta \right) &:=&D^{\Theta }\circ \left\langle R^{\prime
}\theta \mid \mathrm{Id}_{G^{\prime \prime }}\right\rangle :\mathrm{I}\left(
\mathbb{T}^{\prime \prime }\right) =\left\langle R^{\prime }L^{\prime \prime
}|G^{\prime \prime }\right\rangle \rightarrow \mathrm{I}\left( \mathbb{T}%
\right) =\left\langle R^{\prime }L|G\right\rangle, \\
&&\left( B^{\prime \prime },R^{\prime }L^{\prime \prime }B^{\prime \prime }%
\overset{\mu ^{\prime \prime }}{\rightarrow }G^{\prime \prime }B^{\prime
\prime }\right) \mapsto \left( \Theta B^{\prime \prime },R^{\prime }L\Theta
B^{\prime \prime }\overset{R^{\prime }\theta B^{\prime \prime }}{\rightarrow
}R^{\prime }L^{\prime \prime }B^{\prime \prime }\overset{\mu ^{\prime
\prime }}{\rightarrow }G^{\prime \prime }B^{\prime \prime }=G\Theta
B^{\prime \prime }\right)
\end{eqnarray*}

\begin{invisible}
Thus $\mathrm{I}\left( \theta \right) =\left( \Theta P\left( \mathbb{T}%
^{\prime \prime }\right) \right) \left[ \psi _{\mathrm{I}\left( \mathbb{T}%
^{\prime \prime }\right) }\circ R^{\prime }\theta P\left( \mathbb{T}^{\prime
\prime }\right) \right] .$
\end{invisible}

We compute%
\begin{eqnarray*}
P\left( \mathbb{T}\right) \circ \mathrm{I}\left( \theta \right) &=&P\left(
\mathbb{T}\right) \circ D^{\Theta }\circ \left\langle R^{\prime }\theta \mid
\mathrm{Id}_{G^{\prime \prime }}\right\rangle =\Theta \circ P_{\left\langle
R^{\prime }L\Theta \mid G^{\prime \prime }\right\rangle }\circ \left\langle
R^{\prime }\theta \mid \mathrm{Id}_{G^{\prime \prime }}\right\rangle =\Theta
\circ P\left( \mathbb{T}^{\prime \prime }\right) , \\
G\left( \mathbb{T}\right) \circ \mathrm{I}\left( \theta \right) &=&G\circ
P\left( \mathbb{T}\right) \circ \mathrm{I}\left( \theta \right) =G\circ
\Theta \circ P\left( \mathbb{T}^{\prime \prime }\right) =G^{\prime \prime
}\circ P\left( \mathbb{T}^{\prime \prime }\right) .
\end{eqnarray*}%
Let us construct $\boldsymbol{\theta }:\mathcal{L}\left( \mathbb{T}\right)
\mathrm{I}\left( \theta \right) \rightarrow \mathcal{L}\left( \mathbb{T}%
^{\prime \prime}\right) .$ It is easy to check that
\begin{eqnarray*}
\mathcal{L}\left( \mathbb{T}\right) \mathrm{I}\left( \theta \right)
&=&\left( L\Theta P\left( \mathbb{T}^{\prime \prime }\right) \right) \left[
\zeta \Theta P\left( \mathbb{T}^{\prime \prime }\right) \circ L^{\prime
}\psi _{\mathrm{I}\left( \mathbb{T}^{\prime \prime }\right) }\circ L^{\prime
}R^{\prime }\theta P\left( \mathbb{T}^{\prime \prime }\right) \right] , \\
\mathcal{L}\left( \mathbb{T}^{\prime\prime }\right) &=&\left( L^{\prime \prime
}P\left( \mathbb{T}^{\prime \prime }\right) \right) \left[ \theta P\left(
\mathbb{T}^{\prime \prime }\right) \circ \zeta \Theta P\left( \mathbb{T}%
^{\prime \prime }\right) \circ L^{\prime }\psi _{\mathrm{I}\left( \mathbb{T}%
^{\prime \prime }\right) }\right]
\end{eqnarray*}

\begin{invisible}
In fact
\begin{eqnarray*}
\mathcal{L}\left( \mathbb{T}\right) \mathrm{I}\left( \theta \right) \left(
B^{\prime \prime },\beta ^{\prime \prime }\right) &=&\mathcal{L}\left(
\mathbb{T}\right) \left( \Theta B^{\prime \prime },\beta ^{\prime \prime
}\circ R^{\prime }\theta B^{\prime \prime }\right) =\left( L\Theta B^{\prime
\prime },\zeta \Theta B^{\prime \prime }\circ L^{\prime }\beta ^{\prime
\prime }\circ L^{\prime }R^{\prime }\theta _{B^{\prime \prime }}\right) , \\
\mathcal{L}\left( \mathbb{T}^{\prime }\right) \left( B^{\prime \prime
},\beta ^{\prime \prime }\right) &=&\left( L^{\prime \prime }B^{\prime
\prime },\zeta ^{\prime \prime }B^{\prime \prime }\circ L^{\prime }\beta
^{\prime \prime }\right) =\left( L^{\prime \prime }B^{\prime \prime },\theta
B^{\prime \prime }\circ \zeta \Theta B^{\prime \prime }\circ L^{\prime
}\beta ^{\prime \prime }\right) .
\end{eqnarray*}
\end{invisible}

Since
\begin{eqnarray*}
\theta P\left( \mathbb{T}^{\prime \prime }\right) \circ \left( \zeta
\Theta P\left( \mathbb{T}^{\prime \prime }\right) \circ L^{\prime }\psi _{%
\mathrm{I}\left( \mathbb{T}^{\prime \prime }\right) }\circ L^{\prime
}R^{\prime }\theta P\left( \mathbb{T}^{\prime \prime }\right) \right) =\left( \theta P\left( \mathbb{T}^{\prime \prime }\right) \circ \zeta
\Theta P\left( \mathbb{T}^{\prime \prime }\right) \circ L^{\prime }\psi _{%
\mathrm{I}\left( \mathbb{T}^{\prime \prime }\right) }\right) \circ L^{\prime
}R^{\prime }\theta P\left( \mathbb{T}^{\prime \prime }\right) ,
\end{eqnarray*}%
by Lemma \ref{lem:lift}, there is a unique $\boldsymbol{\theta }:\mathcal{L}%
\left( \mathbb{T}\right) \mathrm{I}\left( \theta \right) \rightarrow
\mathcal{L}\left( \mathbb{T}^{\prime \prime}\right) $ such that $U_{\left[ 1\right]
}^{\prime }\boldsymbol{\theta }=\theta P\left( \mathbb{T}^{\prime \prime
}\right) .$

Consider $\mathcal{U}\left( \epsilon ^{\prime }\right) \boldsymbol{\theta }:%
\mathcal{U}\left( \epsilon ^{\prime }\right) \mathcal{L}\left( \mathbb{T}%
\right) \mathrm{I}\left( \theta \right) \rightarrow \mathcal{U}\left(
\epsilon ^{\prime }\right) \mathcal{L}\left( \mathbb{T}^{\prime \prime}\right) $
i.e. $\mathcal{U}\left( \epsilon ^{\prime }\right) \boldsymbol{\theta }:%
\mathrm{L}\left( \mathbb{T}\right) \mathrm{I}\left( \theta \right)
\rightarrow \mathrm{L}\left( \mathbb{T}^{\prime \prime }\right) .$ This
gives rise the adjoint triangle (\ref{diag:Ilamb}). In fact we have
\begin{eqnarray*}
P\left( \mathbb{T}\right) \circ \mathrm{I}\left( \theta \right) \circ
\mathrm{R}\left( \mathbb{T}^{\prime \prime }\right) &=&\Theta \circ P\left(
\mathbb{T}^{\prime \prime }\right) \circ \mathrm{R}\left( \mathbb{T}^{\prime
\prime }\right) =\Theta \circ R^{\prime \prime }=R=P\left( \mathbb{T}\right)
\circ \mathrm{R}\left( \mathbb{T}\right) , \\
\psi _{\mathrm{I}\left( \mathbb{T}\right) }\mathrm{I}\left( \theta \right)
\mathrm{R}\left( \mathbb{T}^{\prime \prime }\right) &\overset{(*)}{=}&\left( \psi _{\mathrm{%
I}\left( \mathbb{T}^{\prime \prime }\right) }\circ R^{\prime }\theta P\left(
\mathbb{T}^{\prime \prime }\right) \right) \mathrm{R}\left( \mathbb{T}%
^{\prime \prime }\right) =\psi _{\mathrm{I}\left( \mathbb{T}^{\prime \prime
}\right) }\mathrm{R}\left( \mathbb{T}^{\prime \prime }\right) \circ
R^{\prime }\theta P\left( \mathbb{T}^{\prime \prime }\right) \mathrm{R}%
\left( \mathbb{T}^{\prime \prime }\right) \\
&=&R^{\prime }\epsilon ^{\prime \prime }\circ R^{\prime }\theta R^{\prime
\prime }=R^{\prime }\left( \epsilon ^{\prime \prime }\circ \theta R^{\prime
\prime }\right) =R^{\prime }\epsilon =\psi _{\mathrm{I}\left( \mathbb{T}%
\right) }\mathrm{R}\left( \mathbb{T}\right)
\end{eqnarray*}%
where in $(*)$ we used the equality $\psi _{\mathrm{I}\left( \mathbb{T}\right) }\mathrm{I}\left( \theta \right)=\psi _{\mathrm{%
I}\left( \mathbb{T}^{\prime \prime }\right) }\circ R^{\prime }\theta P\left(
\mathbb{T}^{\prime \prime }\right)$ which follows from the computation $\psi _{\mathrm{I}\left( \mathbb{T}\right) }\mathrm{I}\left( \theta \right)(B'',\mu'')=\psi _{\mathrm{I}\left( \mathbb{T}\right) }(\Theta B'',\mu''\circ R'\theta B'')=\mu''\circ R'\theta B''=\psi _{\mathrm{%
I}\left( \mathbb{T}^{\prime \prime }\right) }(B'',\mu'')\circ R^{\prime }\theta P\left(
\mathbb{T}^{\prime \prime }\right)(B'',\mu'')$.

By Lemma \ref{lem:lift}, we get $\mathrm{I}\left( \theta \right) \circ
\mathrm{R}\left( \mathbb{T}^{\prime \prime }\right) =\mathrm{R}\left(
\mathbb{T}\right) .$ Consider the morphisms in the following diagram
\begin{equation*}
\xymatrixcolsep{0.7cm}\xymatrixrowsep{0.7cm}
\xymatrix{&\mathcal{U}(\epsilon')\mathcal{L}(\mathbb{T})\mathrm{I}(\theta) \mathrm{R}(\mathbb{T}'')\ar[rr]^-{\mathcal{U}%
\left( \epsilon ^{\prime }\right) \boldsymbol{\theta }\mathrm{R}\left(
\mathbb{T}^{\prime \prime }\right)}\ar[dr]_{\epsilon \left( \mathbb{T}\right)}&&\mathcal{U}(\epsilon')\mathcal{L}(\mathbb{T}^{\prime \prime })\mathrm{R}(\mathbb{T}^{\prime \prime })=\mathrm{L}(\mathbb{T}^{\prime \prime })\mathrm{R}(\mathbb{T}^{\prime \prime })\ar[dl]^{\epsilon \left( \mathbb{T}^{\prime \prime }\right)}\\U'_{[1]}\mathcal{L}(\mathbb{T})\mathrm{I}(\theta) \mathrm{R}(\mathbb{T}'')\ar@{->>}[ur]^(.35){\pi (\epsilon^{\prime })\mathcal{L}\left( \mathbb{T}%
\right) \mathrm{I}\left( \theta \right) \mathrm{R}\left( \mathbb{T}^{\prime
\prime }\right)}&&\id_{\mathcal{A}}}
\end{equation*}
Then 
\begin{eqnarray*}
\epsilon \left( \mathbb{T}^{\prime \prime }\right) \circ \mathcal{U}\left(
\epsilon ^{\prime }\right) \boldsymbol{\theta }\mathrm{R}\left( \mathbb{T}%
^{\prime \prime }\right) \circ \pi (\epsilon^{\prime })\mathcal{L}\left( \mathbb{T}%
\right) \mathrm{I}\left( \theta \right) \mathrm{R}\left( \mathbb{T}^{\prime
\prime }\right) &=&\epsilon \left( \mathbb{T}^{\prime \prime }\right) \circ
\pi (\epsilon^{\prime })\mathcal{L}\left( \mathbb{T}^{\prime \prime }\right) \mathrm{R}%
\left( \mathbb{T}^{\prime \prime }\right) \circ U_{\left[ 1\right] }^{\prime
}\boldsymbol{\theta }\mathrm{R}\left( \mathbb{T}^{\prime \prime }\right) \\
\overset{(\ref{form:defepsT})}{=}\epsilon ^{\prime \prime }\circ \theta
P\left( \mathbb{T}^{\prime \prime }\right) \mathrm{R}\left( \mathbb{T}%
^{\prime \prime }\right) &=&\epsilon ^{\prime \prime }\circ \theta R^{\prime
\prime }=\epsilon \overset{(\ref{form:defepsT})}{=}\epsilon \left( \mathbb{T}%
\right) \circ \pi (\epsilon^{\prime })\mathcal{L}\left( \mathbb{T}\right) \mathrm{R}%
\left( \mathbb{T}\right) \\
&=&\epsilon \left( \mathbb{T}\right) \circ \pi (\epsilon^{\prime })\mathcal{L}\left(
\mathbb{T}\right) \mathrm{I}\left( \theta \right) \mathrm{R}\left( \mathbb{T}%
^{\prime \prime }\right)
\end{eqnarray*}%
so that $\epsilon \left( \mathbb{T}^{\prime \prime }\right) \circ \mathcal{U}%
\left( \epsilon ^{\prime }\right) \boldsymbol{\theta }\mathrm{R}\left(
\mathbb{T}^{\prime \prime }\right) =\epsilon \left( \mathbb{T}\right) $
which means that $\mathcal{U}\left( \epsilon ^{\prime }\right) \boldsymbol{%
\theta }$ is the correct natural transformation to put inside the adjoint
triangle.


If $\Theta $ is faithful, from $P\left( \mathbb{T}\right) \circ \mathrm{I}%
\left( \theta \right) =\Theta \circ P\left( \mathbb{T}^{\prime \prime
}\right) $ and the fact that $P\left( \mathbb{T}^{\prime \prime }\right) $
is faithful we get that $\mathrm{I}\left( \theta \right) $ is faithful too.

If $\theta $ is invertible, from $U_{\left[ 1\right] }\boldsymbol{\theta }%
=\theta P\left( \mathbb{T}^{\prime \prime }\right) $ and the fact that $U_{%
\left[ 1\right] }$ reflects isomorphisms, we deduce that $\boldsymbol{\theta
}$ is invertible as well.

If $\theta $ is the identity, then $L\Theta =L^{\prime \prime }$ so that, by
the foregoing, we get
\begin{equation*}
\mathcal{L}\left( \mathbb{T}\right) \mathrm{I}\left( \theta \right) \left(
B^{\prime \prime },\mu ^{\prime \prime }\right) =\left( L\Theta B^{\prime
\prime },\zeta \Theta B^{\prime \prime }\circ L^{\prime }\mu ^{\prime
\prime }\circ L^{\prime }R^{\prime }\theta _{B^{\prime \prime }}\right)
=\left( L^{\prime \prime }B^{\prime \prime },\theta B^{\prime \prime }\circ
\zeta \Theta B^{\prime \prime }\circ L^{\prime }\mu ^{\prime \prime
}\right) =\mathcal{L}\left( \mathbb{T}^{\prime\prime }\right) \left( B^{\prime
\prime },\mu ^{\prime \prime }\right) .
\end{equation*}%
Hence the domain and codomain of $\boldsymbol{\theta }\left( B^{\prime
\prime },\mu ^{\prime \prime }\right) $ are the same. Thus, since $U_{%
\left[ 1\right] }^{\prime }\boldsymbol{\theta }\left( B^{\prime \prime
},\mu ^{\prime \prime }\right) =\theta B^{\prime \prime }=\mathrm{Id}%
_{L^{\prime \prime }B^{\prime \prime }}=U_{\left[ 1\right] }^{\prime }%
\mathrm{Id}_{\mathcal{L}\left( \mathbb{T}^{\prime\prime }\right) \left( B^{\prime
\prime },\mu ^{\prime \prime }\right) }$ and $U_{\left[ 1\right] }^{\prime
}$ is faithful, we obtain $\boldsymbol{\theta }\left( B^{\prime \prime
},\mu ^{\prime \prime }\right) =\mathrm{Id}_{\mathcal{L}\left( \mathbb{T}%
^{\prime \prime}\right) \left( B^{\prime \prime },\mu ^{\prime \prime }\right)
}. $
\end{proof}

\begin{proposition}
\label{pro:2}Assume $\mathcal{A}$ has coequalizers. Given an adjoint triangle $\mathbb{T}$ as in (\ref{diag:AugTr}%
), then
\begin{equation*}
\xymatrixcolsep{1.5cm}\xymatrixrowsep{0.7cm}\xymatrix{\mathcal{A}\ar[r]^\id%
\ar@{}[dr]|-{\sigma _{[1]}}\ar@<.4ex>[d]^*-<0.2cm>{^{R_{[1]}}}&
\mathcal{A}\ar@<.4ex>[d]^*-<0.2cm>{^{\mathrm{R}(\mathbb{T})}}\\
\mathcal{B}_{[1]}\ar@<.4ex>@{.>}[u]^*-<0.2cm>{^{L_{[1]}}}\ar[r]^{S^G}&%
\mathrm{I}(\mathbb{T})\ar@<.4ex>@{.>}[u]^*-<0.2cm>{^{\mathrm{L}(%
\mathbb{T})}}}
\end{equation*}

\begin{invisible}
\begin{equation*}
\begin{array}{ccc}
\mathcal{A} & \overset{\mathrm{Id}_{\mathcal{A}}}{\longrightarrow } &
\mathcal{A} \\
L_{\left[ 1\right] }\uparrow \downarrow R_{\left[ 1\right] } & \sigma _{
\left[ 1\right] } & \mathrm{L}\left( \mathbb{T}\right) \uparrow \downarrow
\mathrm{R}\left( \mathbb{T}\right) \\
\mathcal{B}_{\left[ 1\right] } & \overset{S^{G}}{\longrightarrow } & \mathrm{%
I}\left( \mathbb{T}\right)%
\end{array}%
\end{equation*}
\end{invisible}
is an adjoint triangle too. If $\zeta R$ is epimorphism on each component then $\sigma _{\left[ 1\right]
}$ is invertible.
\end{proposition}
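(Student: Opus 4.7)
The plan is to verify the two defining properties of an adjoint triangle: the identity $S^{G}\circ R_{[1]}=\mathrm{R}(\mathbb{T})$ and the existence of a natural transformation $\sigma_{[1]}:\mathrm{L}(\mathbb{T})\circ S^{G}\to L_{[1]}$ compatible with the counits via $\epsilon_{[1]}\circ\sigma_{[1]}R_{[1]}=\epsilon(\mathbb{T})$. The first equality is a direct computation: since $R_{[1]}A=(RA,R\epsilon A)$ and $S^{G}(B,b)=(B,Gb)$, we obtain $S^{G}R_{[1]}A=(RA,GR\epsilon A)=(RA,R'\epsilon A)=\mathrm{R}(\mathbb{T})A$, where we have used the defining equality $GR=R'$ of an adjoint triangle; the same identification works on morphisms, so the two functors coincide.

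To construct $\sigma_{[1]}$, I apply the universal property to the two coequalizer presentations. For $B_{[1]}=(B,b)\in\mathcal{B}_{[1]}$, the object $L_{[1]}B_{[1]}$ is the coequalizer of $(Lb,\epsilon LB)$, see \eqref{coeq:L1}, with projection $\pi:=\pi(\epsilon)\mathcal{L}(\mathbb{T}_{[0]})B_{[1]}$, while $\mathrm{L}(\mathbb{T})S^{G}B_{[1]}=\mathcal{U}(\epsilon')(LB,\zeta B\circ L'Gb)$ is the coequalizer of $(\zeta B\circ L'Gb,\epsilon'LB)$, with projection $\pi':=\pi(\epsilon')\mathcal{L}(\mathbb{T})S^{G}B_{[1]}$. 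The naturality of $\zeta:L'G\to L$ applied to $b:RLB\to B$ gives $\zeta B\circ L'Gb=Lb\circ\zeta RLB$, and the identity $\epsilon\circ\zeta R=\epsilon'$ evaluated at $LB$ gives $\epsilon'LB=\epsilon LB\circ\zeta RLB$. Combining these,
\begin{equation*}
\pi\circ(\zeta B\circ L'Gb)=\pi\circ Lb\circ\zeta RLB=\pi\circ\epsilon LB\circ\zeta RLB=\pi\circ\epsilon'LB,
\end{equation*}
so $\pi$ coequalizes the pair defining $\pi'$, yielding a unique $\sigma_{[1]}B_{[1]}$ with $\sigma_{[1]}B_{[1]}\circ\pi'=\pi$. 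Naturality of $\sigma_{[1]}$ in $B_{[1]}$ follows by the usual cancellation trick from the epimorphism property of $\pi'$ together with the naturality of $\pi$ and $\pi'$.

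The counit compatibility $\epsilon_{[1]}\circ\sigma_{[1]}R_{[1]}=\epsilon(\mathbb{T})$ is proved by postcomposing with the epimorphism $\pi'R_{[1]}=\pi(\epsilon')\mathcal{L}(\mathbb{T})\mathrm{R}(\mathbb{T})$: the left-hand side reduces via the defining property of $\sigma_{[1]}$ and \eqref{form:epseta[1]} to $\epsilon$, while \eqref{form:defepsT} applied to $\mathbb{T}$ shows the same for the right-hand side. For the invertibility of $\sigma_{[1]}$ when $\zeta R$ is a componentwise epi, specializing the hypothesis at $LB$ gives that $\zeta RLB$ is epi; then
\begin{equation*}
\pi'\circ Lb\circ\zeta RLB=\pi'\circ\zeta B\circ L'Gb=\pi'\circ\epsilon'LB=\pi'\circ\epsilon LB\circ\zeta RLB
\end{equation*}
permits right-cancellation of $\zeta RLB$, showing that $\pi'$ coequalizes $(Lb,\epsilon LB)$ and hence factors uniquely through $\pi$ via a morphism inverse to $\sigma_{[1]}B_{[1]}$ by the mutual universality of the two coequalizers. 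The main conceptual step is the identification of the correct coequalizer presentations to compare; once this is done, everything reduces to formal manipulations with naturality of $\zeta$ and the defining relation $\epsilon'=\epsilon\circ\zeta R$.
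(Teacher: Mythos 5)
Your proposal is correct and follows essentially the same route as the paper: both hinge on comparing the coequalizer presentation of $L_{[1]}B_{[1]}$ (the pair $(Lb,\epsilon LB)$) with that of $\mathrm{L}(\mathbb{T})S^{G}B_{[1]}$ (the pair $(\zeta B\circ L'Gb,\epsilon'LB)$) via naturality of $\zeta$ and the relation $\epsilon'=\epsilon\circ\zeta R$, and both conclude invertibility by cancelling the componentwise epimorphism $\zeta RLB$. The only (presentational) difference is that the paper obtains $\sigma_{[1]}$ from the canonical adjoint-triangle formula $\epsilon(\mathbb{T})L_{[1]}\circ\mathrm{L}(\mathbb{T})S^{G}\eta_{[1]}$ and the composition identity $\pi_{[1]}=\sigma_{[1]}\circ\pi(\epsilon')\mathcal{L}(\mathbb{T})S^{G}$, whereas you build the same transformation directly from the universal property of the coequalizers and then verify the counit compatibility by hand.
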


\begin{proof}
Recall that $\mathcal{B}_{\left[ 1\right] }=\left\langle RL\mid \mathrm{Id}%
\right\rangle .$ Note that%
\begin{equation*}
S^{G}R_{\left[ 1\right] }A=S^{G}\left( RA,R\epsilon A\right) =\left(
RA,GR\epsilon A\right) =\left( RA,R^{\prime }\epsilon A\right) =\mathrm{R}%
\left( \mathbb{T}\right) A
\end{equation*}%
and since $S^{G}R_{\left[ 1\right] }$ and $\mathrm{R}\left( \mathbb{T}%
\right) $ coincide also on morphisms we get they are equal. Hence we have an
adjoint triangle as in the statement where $\sigma _{\left[ 1\right]
}:=\epsilon \left( \mathbb{T}\right) L_{\left[ 1\right] }\circ \mathrm{L}%
\left( \mathbb{T}\right) S^{G}\eta _{\left[ 1\right] }.$ Call $\mathbb{T}%
^{\prime }$ the diagram in the statement and let $\mathbb{T}^{2}$ be the
diagram of Proposition \ref{pro:adj}. Since $P\left( \mathbb{T}\right) \circ
S^{G}=U_{\left[ 1\right] }$ we get that $\mathbb{T}^{\prime }\ast \mathbb{T}%
^{2}=\mathbb{T}_{\left[ 1\right] }.$ Thus
\begin{equation*}
\pi _{\left[ 1\right] }=\sigma _{\left[ 1\right] }\ast \pi \left( \epsilon
^{\prime }\right) \mathcal{L}\left( \mathbb{T}\right) =\sigma _{\left[ 1%
\right] }\circ \pi \left( \epsilon ^{\prime }\right) \mathcal{L}\left(
\mathbb{T}\right) S^{G}.
\end{equation*}

\begin{invisible}
Here it is
\begin{equation*}
\begin{array}{ccc}
\mathcal{A} & \overset{\mathrm{Id}_{\mathcal{A}}}{\longrightarrow } &
\mathcal{A} \\
L_{\left[ 1\right] }\uparrow \downarrow R_{\left[ 1\right] } & \sigma _{
\left[ 1\right] } & \mathrm{L}\left( \mathbb{T}\right) \uparrow \downarrow
\mathrm{R}\left( \mathbb{T}\right) \\
\mathcal{B}_{\left[ 1\right] } & \overset{S^{G}}{\longrightarrow } & \mathrm{%
I}\left( \mathbb{T}\right)%
\end{array}%
\ast
\begin{array}{ccc}
\mathcal{A} & \overset{\mathrm{Id}_{\mathcal{A}}}{\longrightarrow } &
\mathcal{A} \\
\mathrm{L}\left( \mathbb{T}\right) \uparrow \downarrow \mathrm{R}\left(
\mathbb{T}\right) & \pi \left( \epsilon ^{\prime }\right) \mathcal{L}\left(
\mathbb{T}\right) & L\uparrow \downarrow R \\
\mathrm{I}\left( \mathbb{T}\right) & \overset{P\left( \mathbb{T}\right) }{%
\longrightarrow } & \mathcal{B}%
\end{array}%
=%
\begin{array}{ccc}
\mathcal{A} & \overset{\mathrm{Id}_{\mathcal{A}}}{\longrightarrow } &
\mathcal{A} \\
L_{\left[ 1\right] }\uparrow \downarrow R_{\left[ 1\right] } & \pi _{\left[ 1%
\right] } & L\uparrow \downarrow R \\
\mathcal{B}_{\left[ 1\right] } & \overset{P\left( \mathbb{T}\right) S^{G}}{%
\longrightarrow } & \mathcal{B}%
\end{array}%
\end{equation*}
\end{invisible}

It is easy to check that $U_{\left[ 1\right] }\circ \mathcal{L}\left(
\mathbb{T}\right) \circ S^{G}=L\circ U_{\left[ 1\right] }.$ Moreover, by
definition of $\mathcal{L}\left( \mathbb{T}\right) $ one gets
\begin{equation*}
\psi _{\left\langle L^{\prime }R^{\prime }\mid \mathrm{Id}\right\rangle }%
\mathcal{L}\left( \mathbb{T}\right) =\zeta P\left( \mathbb{T}\right) \circ
L^{\prime }\psi _{\mathrm{I}\left( \mathbb{T}\right) }.
\end{equation*}

\begin{invisible}
For every $\left( B,b\right) \in \mathrm{I}\left( \mathbb{T}\right)
=\left\langle R^{\prime }L|G\right\rangle ,$ we have
\begin{equation*}
\psi _{\left\langle L^{\prime }R^{\prime }\mid \mathrm{Id}\right\rangle }%
\mathcal{L}\left( \mathbb{T}\right) \left( B,b\right) =\psi \left( LB,\zeta
_{B}\circ L^{\prime }b\right) =\zeta _{B}\circ L^{\prime }b=\zeta _{B}\circ
L^{\prime }\psi _{\mathcal{B}\left( \mathbb{T}\right) }\left( B,b\right) .
\end{equation*}
\end{invisible}

In particular
\begin{equation*}
\psi _{\left\langle L^{\prime }R^{\prime }\mid \mathrm{Id}\right\rangle }%
\mathcal{L}\left( \mathbb{T}\right) S^{G}=\zeta P\left( \mathbb{T}\right)
S^{G}\circ L^{\prime }\psi _{\mathrm{I}\left( \mathbb{T}\right) }S^{G}=\zeta
U_{\left[ 1\right] }\circ L^{\prime }G\psi_{\left\langle RL\mid \mathrm{Id}%
\right\rangle } \overset{\text{nat.}\zeta \text{ }%
}{=}L\psi _{\left\langle RL\mid \mathrm{Id}\right\rangle }\circ \zeta RLU_{%
\left[ 1\right] }
\end{equation*}%
so that the following diagram of coequalizers serially commutes
\begin{equation*}
\xymatrixcolsep{2.3cm} \xymatrix{L^{\prime }R^{\prime }P_{\langle L'R'\mid\id\rangle}\mathcal{L}\left( \mathbb{T}\right) S^{G}\ar[d]_{\zeta
RLU_{[1]}}\ar@<.5ex>[r]^-{\psi _{\left\langle L^{\prime }R^{\prime }\mid
\mathrm{Id}\right\rangle }\mathcal{L}\left( \mathbb{T}\right)
S^{G}}\ar@<-.5ex>[r]_-{\epsilon ^{\prime }P_{\langle L'R'\mid\id\rangle}\mathcal{L}\left( \mathbb{T}\right) S^{G}}&P_{\langle L'R'\mid\id\rangle}\mathcal{L}\left( \mathbb{T}\right) S^{G}\ar[d]^{\id}\ar[r]^-{\pi \left(
\epsilon ^{\prime }\right) \mathcal{L}\left( \mathbb{T}\right) S^{G}}&
\mathcal{U}\left( \epsilon ^{\prime }\right) \mathcal{L}\left(
\mathbb{T}\right) S^{G}=\mathrm{L}\left( \mathbb{T}\right)
S^{G}\ar[d]^{\sigma_{[1]}}\\ LRLU_{[1]}\ar@<.5ex>[r]^{L\psi _{\left\langle
RL\mid \mathrm{Id}\right\rangle }}\ar@<-.5ex>[r]_{\epsilon LU_{\left[
1\right] }}&LU_{[1]}\ar[r]^{\pi _{[1]}}& L_{[1]}}
\end{equation*}

\begin{invisible}
\begin{equation*}
\begin{array}{ccccc}
L^{\prime }R^{\prime }U_{\left[ 1\right] }\mathcal{L}\left( \mathbb{T}%
\right) S^{G} & \overset{\psi _{\left\langle L^{\prime }R^{\prime }\mid
\mathrm{Id}\right\rangle }\mathcal{L}\left( \mathbb{T}\right) S^{G}}{%
\underset{\epsilon ^{\prime }U_{\left[ 1\right] }\mathcal{L}\left( \mathbb{T}%
\right) S^{G}}{\rightrightarrows }} & U_{\left[ 1\right] }\mathcal{L}\left(
\mathbb{T}\right) S^{G} & \overset{\pi \left( \epsilon ^{\prime }\right)
\mathcal{L}\left( \mathbb{T}\right) S^{G}}{\rightarrow } & \mathcal{U}\left(
\epsilon ^{\prime }\right) \mathcal{L}\left( \mathbb{T}\right) S^{G}=\mathrm{%
L}\left( \mathbb{T}\right) S^{G} \\
\zeta RLU_{\left[ 1\right] }\downarrow &  & \downarrow \mathrm{Id} &  &
\downarrow \sigma _{\left[ 1\right] } \\
LRLU_{\left[ 1\right] } & \overset{L\psi _{\left\langle RL\mid \mathrm{Id}%
\right\rangle }}{\underset{\epsilon LU_{\left[ 1\right] }}{\rightrightarrows
}} & LU_{\left[ 1\right] } & \overset{\pi _{\left[ 1\right] }}{\rightarrow }
& L_{\left[ 1\right] }%
\end{array}%
\end{equation*}
\end{invisible}

If $\zeta R$ is epimorphism on each component it is then easy to check that $%
\pi \left( \epsilon ^{\prime }\right) \mathcal{L}\left( \mathbb{T}\right)
S^{G}$ is a coequalizer for the pair $\left( L\psi _{\left\langle RL\mid
\mathrm{Id}\right\rangle },\epsilon LU_{\left[ 1\right] }\right) $ and hence
$\sigma _{\left[ 1\right] }$ is invertible.

\begin{invisible}
Consider the particular case when $\mathbb{T=T}_{\left[ n\right] }$
\begin{equation*}
\mathbb{T}_{\left[ n\right] }:=%
\begin{array}{ccc}
\mathcal{A} & \overset{\mathrm{Id}_{\mathcal{A}}}{\longrightarrow } &
\mathcal{A} \\
L_{\left[ n\right] }\uparrow \downarrow R_{\left[ n\right] } & \pi _{\left[ n%
\right] }:LU_{\left[ n\right] }\rightarrow L_{\left[ n\right] } & L\uparrow
\downarrow R \\
\mathcal{B}_{\left[ n\right] } & \overset{U_{\left[ n\right] }}{%
\longrightarrow } & \mathcal{B}%
\end{array}%
\qquad
\begin{array}{ccc}
\mathcal{A} & \overset{\mathrm{Id}_{\mathcal{A}}}{\longrightarrow } &
\mathcal{A} \\
L_{\left[ n\right] \left[ 1\right] }\uparrow \downarrow R_{\left[ n\right] %
\left[ 1\right] } & \sigma _{\left[ 1\right] } & L_{\left[ n+1\right]
}\uparrow \downarrow R_{\left[ n+1\right] } \\
\left\langle R_{\left[ n\right] }L_{\left[ n\right] }\mid \mathrm{Id}%
\right\rangle & \overset{S^{U_{\left[ n\right] }}}{\longrightarrow } &
\mathcal{B}_{\left[ n+1\right] }%
\end{array}%
\end{equation*}%
\begin{eqnarray*}
&&L_{\left[ n+1\right] }S^{U_{\left[ n\right] }}\left( B_{\left[ n\right]
},b:R_{\left[ n\right] }L_{\left[ n\right] }B_{\left[ n\right] }\rightarrow
B_{\left[ n\right] }\right) \\
&=&L_{\left[ n+1\right] }\left( B_{\left[ n\right] },U_{\left[ n\right]
}b:RL_{\left[ n\right] }B_{\left[ n\right] }\rightarrow B\right) =Coeq\left(
\pi _{\left[ n\right] }B_{\left[ n\right] }\circ LU_{\left[ n\right]
}b,\epsilon L_{\left[ n\right] }B_{\left[ n\right] }\right) \\
&=&Coeq\left( L_{\left[ n\right] }b\circ \pi _{\left[ n\right] }R_{\left[ n%
\right] }L_{\left[ n\right] }B_{\left[ n\right] },\epsilon L_{\left[ n\right]
}B_{\left[ n\right] }\right) \\
&=&Coeq\left( L_{\left[ n\right] }b\circ \pi _{\left[ n\right] }R_{\left[ n%
\right] }L_{\left[ n\right] }B_{\left[ n\right] },\epsilon _{\left[ n\right]
}L_{\left[ n\right] }B_{\left[ n\right] }\circ \pi _{\left[ n\right] }R_{%
\left[ n\right] }L_{\left[ n\right] }B_{\left[ n\right] }\right) \\
&=&Coeq\left( L_{\left[ n\right] }b,\epsilon _{\left[ n\right] }L_{\left[ n%
\right] }B_{\left[ n\right] }\right) =L_{\left[ n\right] \left[ 1\right]
}\left( B_{\left[ n\right] },b\right)
\end{eqnarray*}%
\begin{eqnarray*}
\mathrm{L}\left( \mathbb{T}\right) S^{G}\left( B,b:RLB\rightarrow B\right)
&=&\mathrm{L}\left( \mathbb{T}\right) \left( B,Gb:GRLB\rightarrow GB\right)
\\
&=&Coeq\left( \zeta B\circ L^{\prime }Gb,\epsilon ^{\prime }LB\right) \\
&=&Coeq\left( Lb\circ \zeta RLB,\epsilon LB\circ \zeta RLB\right) \\
&=&Coeq\left( Lb,\epsilon LB\right) =L_{\left[ 1\right] }\left( B,b\right)
\end{eqnarray*}
\end{invisible}
\end{proof}

\begin{proposition}
\label{pro:3} Assume $\mathcal{A}$ has coequalizers.  Consider the two adjoint triangles $\mathbb{T}$, $\mathbb{T}%
^{\prime }$ and their composition $\mathbb{T}^{\prime \prime }$ of Remark %
\ref{rem:comptr}. Then we can define a new adjoint triangle
\begin{equation}  \label{diag:pro3}
\xymatrixcolsep{1.5cm}\xymatrixrowsep{0.7cm}\xymatrix{\mathcal{A}\ar[r]^\id%
\ar@{}[dr]|-{\theta _{[1]}}\ar@<.4ex>[d]^*-<0.2cm>{^{R_{[1]}''}}&
\mathcal{A}\ar@<.4ex>[d]^*-<0.2cm>{^{\mathrm{R}(\mathbb{T})}}\\
\mathcal{B}_{[1]}''\ar@<.4ex>@{.>}[u]^*-<0.2cm>{^{L_{[1]}''}}\ar[r]^{\Theta
_{[1]}}&\mathrm{I}(\mathbb{T})\ar@<.4ex>@{.>}[u]^*-<0.2cm>{^{\mathrm{L}(%
\mathbb{T})}}}
\end{equation}

\begin{invisible}
\begin{equation*}
\begin{array}{ccc}
\mathcal{A} & \overset{\mathrm{Id}_{\mathcal{A}}}{\longrightarrow } &
\mathcal{A} \\
L_{\left[ 1\right] }^{\prime \prime }\uparrow \downarrow R_{\left[ 1\right]
}^{\prime \prime } & \theta _{\left[ 1\right] } & \mathrm{L}\left( \mathbb{T}%
\right) \uparrow \downarrow \mathrm{R}\left( \mathbb{T}\right) \\
\mathcal{B}_{\left[ 1\right] }^{\prime \prime } & \overset{\Theta _{\left[ 1%
\right] }}{\longrightarrow } & \mathrm{I}\left( \mathbb{T}\right)%
\end{array}%
\end{equation*}
\end{invisible}

where
\begin{equation*}
\Theta _{\left[ 1\right] }:\mathcal{B}_{\left[ 1\right] }^{\prime \prime
}\rightarrow \mathrm{I}\left( \mathbb{T}\right) ,\qquad \left( V^{\prime
\prime },\mu ^{\prime \prime }\right) \mapsto \left( \Theta V^{\prime \prime
},G^{\prime \prime }\mu ^{\prime \prime }\circ R^{\prime }\theta V^{\prime
\prime }\right) ,\qquad f\mapsto \Theta U_{\left[ 1\right] }^{\prime \prime
}f,
\end{equation*}%
and such that%
\begin{equation*}
P\left( \mathbb{T}\right) \Theta _{\left[ 1\right] }=\Theta U_{\left[ 1%
\right] }^{\prime \prime }\qquad \text{and}\qquad G\left( \mathbb{T}\right)
\Theta _{\left[ 1\right] }=G^{\prime \prime }U_{\left[ 1\right] }^{\prime
\prime }.
\end{equation*}

\begin{itemize}
\item[1)] If $\Theta $ is faithful, then so is $\Theta _{\left[ 1\right] }$.

\item[2)] If $\theta $ is invertible and any component of $\zeta R$ is an
epimorphism, then $\theta _{\left[ 1\right] }$ is invertible.
\end{itemize}
\end{proposition}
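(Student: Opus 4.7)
The plan is to realize $\Theta_{[1]}$ as the bottom functor of the horizontal composition of two adjoint triangles already at my disposal: the one produced by Proposition \ref{pro:2} applied to $\mathbb{T}''$ and the one produced by Proposition \ref{pro:1}. The key preliminary observation is the factorization $\Theta_{[1]}=\mathrm{I}(\theta)\circ S^{G''}$. Indeed, on an object $(V'',\mu'')\in \mathcal{B}''_{[1]}$, the functor $S^{G''}$ yields $(V'',G''\mu''\colon R'L''V''\to G''V'')\in \mathrm{I}(\mathbb{T}'')$, and then $\mathrm{I}(\theta)$ sends this to $(\Theta V'',G''\mu''\circ R'\theta V'')$, which matches the stated definition of $\Theta_{[1]}$; on morphisms both sides act as $\Theta U''_{[1]}$. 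The compatibility of sources and targets relies on $\Theta R''=R$ and $G''R''=R'$, which are part of the data of $\mathbb{T}'$ and $\mathbb{T}''$.

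Applying Proposition \ref{pro:2} to $\mathbb{T}''$ produces an adjoint triangle connecting $(L''_{[1]},R''_{[1]})$ to $(\mathrm{L}(\mathbb{T}''),\mathrm{R}(\mathbb{T}''))$ along $S^{G''}\colon\mathcal{B}''_{[1]}\to \mathrm{I}(\mathbb{T}'')$ with interior natural transformation $\sigma''_{[1]}\colon \mathrm{L}(\mathbb{T}'')S^{G''}\to L''_{[1]}$; Proposition \ref{pro:1} produces an adjoint triangle connecting $(\mathrm{L}(\mathbb{T}''),\mathrm{R}(\mathbb{T}''))$ to $(\mathrm{L}(\mathbb{T}),\mathrm{R}(\mathbb{T}))$ along $\mathrm{I}(\theta)$ with interior natural transformation $\mathcal{U}(\epsilon')\boldsymbol{\theta}$. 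Horizontally composing these two triangles in the sense of Remark \ref{rem:comptr} yields the desired adjoint triangle \eqref{diag:pro3}, with bottom functor $\mathrm{I}(\theta)\circ S^{G''}=\Theta_{[1]}$ and interior natural transformation
\[
\theta_{[1]}\;:=\;\sigma''_{[1]}\circ \bigl(\mathcal{U}(\epsilon')\boldsymbol{\theta}\bigr)S^{G''}\colon\mathrm{L}(\mathbb{T})\Theta_{[1]}\longrightarrow L''_{[1]}.
\]
The two equalities $P(\mathbb{T})\Theta_{[1]}=\Theta U''_{[1]}$ and $G(\mathbb{T})\Theta_{[1]}=G''U''_{[1]}$ will then follow at once from the corresponding identities for $\mathrm{I}(\theta)$ given in Proposition \ref{pro:1}, together with $P_{\mathrm{I}(\mathbb{T}'')}\circ S^{G''}=U''_{[1]}$.

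Item (1) is then immediate: $S^{G''}$ is faithful since it is the identity on underlying morphisms, and $\mathrm{I}(\theta)$ is faithful whenever $\Theta$ is by Proposition \ref{pro:1}, so the composition $\Theta_{[1]}$ is faithful. For item (2), assume $\theta$ invertible and $\zeta R$ componentwise epi. By Proposition \ref{pro:1} invertibility of $\theta$ forces $\boldsymbol{\theta}$ to be invertible, hence so is the second factor of $\theta_{[1]}$. To deduce that $\sigma''_{[1]}$ is invertible via Proposition \ref{pro:2}, I must check that every component of $\zeta''R''$ is an epimorphism, where $\zeta''=\theta\ast\zeta$ is the interior natural transformation of $\mathbb{T}''$. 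Using $\Theta R''=R$, one computes $\zeta''R''=\theta R''\circ \zeta R$; the first factor is invertible (hence epi) because $\theta$ is, and the second is epi by hypothesis, so $\zeta''R''$ is epi as a composition of epis. Therefore $\sigma''_{[1]}$ is invertible and $\theta_{[1]}$ is a composition of isomorphisms. The only genuinely delicate bookkeeping will be aligning the direction and order of the natural transformations in the horizontal composition of Remark \ref{rem:comptr} with the factorization of $\Theta_{[1]}$; the rest is a direct assembly of Propositions \ref{pro:1} and \ref{pro:2}.
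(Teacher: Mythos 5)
Your proposal is correct and follows essentially the same route as the paper: the factorization $\Theta_{[1]}=\mathrm{I}(\theta)\circ S^{G''}$, the horizontal composition of the triangle from Proposition \ref{pro:2} (applied to $\mathbb{T}''$) with the one from Proposition \ref{pro:1}, the resulting formula $\theta_{[1]}=\sigma''_{[1]}\circ\mathcal{U}(\epsilon')\boldsymbol{\theta}S^{G''}$, and the arguments for faithfulness and for invertibility via $\zeta''R''=\theta R''\circ\zeta R$ all match the paper's proof.
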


\begin{proof}
By composing the two adjoint triangles obtained in Proposition \ref{pro:1}
and Proposition \ref{pro:2}, the latter applied to $\mathbb{T}^{\prime
\prime }$, i.e.
\begin{equation*}
\xymatrixcolsep{2.5cm}\xymatrixrowsep{0.7cm}\xymatrix{\mathcal{A}\ar[r]^\id%
\ar@{}[dr]|{\sigma'' _{[1]}}\ar@<.4ex>[d]^*-<0.2cm>{^{R_{[1]}''}}&
\mathcal{A}\ar@{}[dr]|-{\mathcal{U}\left( \epsilon ^{\prime }\right)
\boldsymbol{\theta
}}\ar@<.4ex>[d]^*-<0.2cm>{^{\mathrm{R}(\mathbb{T''})}}\ar[r]^\id&\mathcal{A}%
\ar@<.4ex>[d]^*-<0.2cm>{^{\mathrm{R}(\mathbb{T})}}\\
\mathcal{B}_{[1]}''=\left\langle
R''L''\mid\id\right\rangle\ar@<.4ex>@{.>}[u]^*-<0.2cm>{^{L_{[1]}''}}%
\ar[r]^{S^{G''}}&\left\langle G''R''L''\mid
G''\right\rangle=\mathrm{I}(\mathbb{T''})\ar@<.4ex>@{.>}[u]^*-<0.2cm>{^{%
\mathrm{L}(\mathbb{T''})}}\ar[r]^-{\mathrm{I}(\theta)}&\mathrm{I}(%
\mathbb{T})\ar@<.4ex>@{.>}[u]^*-<0.2cm>{^{\mathrm{L}(\mathbb{T})}} }
\end{equation*}

\begin{invisible}

\begin{equation*}
\begin{array}{ccccc}
\mathcal{A} & \overset{\mathrm{Id}_{\mathcal{A}}}{\longrightarrow } &
\mathcal{A} & \overset{\mathrm{Id}_{\mathcal{A}}}{\longrightarrow } &
\mathcal{A} \\
L_{\left[ 1\right] }^{\prime \prime }\uparrow \downarrow R_{\left[ 1\right]
}^{\prime \prime } & \sigma _{\left[ 1\right] }^{\prime \prime } & \mathrm{L}%
\left( \mathbb{T}^{\prime \prime }\right) \uparrow \downarrow \mathrm{R}%
\left( \mathbb{T}^{\prime \prime }\right) & \mathcal{U}\left( \epsilon
^{\prime }\right) \boldsymbol{\theta } & \mathrm{L}\left( \mathbb{T}\right)
\uparrow \downarrow \mathrm{R}\left( \mathbb{T}\right) \\
\mathcal{B}_{\left[ 1\right] }^{\prime \prime }=\left\langle R^{\prime
\prime }L^{\prime \prime }\mid \mathrm{Id}\right\rangle & \overset{%
S^{G^{\prime \prime }}}{\longrightarrow } & \left\langle G^{\prime \prime
}R^{\prime \prime }L^{\prime \prime }\mid G^{\prime \prime }\right\rangle =%
\mathrm{I}\left( \mathbb{T}^{\prime \prime }\right) & \overset{\mathrm{I}%
\left( \theta \right) }{\longrightarrow } & \mathrm{I}\left( \mathbb{T}%
\right)%
\end{array}%
\end{equation*}
\end{invisible}

we obtain the triangle \eqref{diag:pro3} with $\theta _{\left[ 1\right]
}:=\sigma _{\left[ 1\right] }^{\prime \prime }\ast \mathcal{U}\left(
\epsilon ^{\prime }\right) \boldsymbol{\theta }=\sigma _{\left[ 1\right]
}^{\prime \prime }\circ \mathcal{U}\left( \epsilon ^{\prime }\right)
\boldsymbol{\theta }S^{G^{\prime \prime }}$ and $\Theta _{\left[ 1\right] }=%
\mathrm{I}\left( \theta \right) \circ S^{G^{\prime \prime }}.$ Explicitly
for every $\left( V^{\prime \prime },\mu ^{\prime \prime }\right) \in
\mathcal{B}_{\left[ 1\right] }^{\prime \prime }$ we have
\begin{equation*}
\Theta _{\left[ 1\right] }\left( V^{\prime \prime },\mu ^{\prime \prime
}\right) =\mathrm{I}\left( \theta \right) S^{G^{\prime \prime }}\left(
V^{\prime \prime },\mu ^{\prime \prime }\right) =\mathrm{I}\left( \theta
\right) \left( V^{\prime \prime },G^{\prime \prime }\mu ^{\prime \prime
}\right) =\left( \Theta V^{\prime \prime },G^{\prime \prime }\mu ^{\prime
\prime }\circ R^{\prime }\theta V^{\prime \prime }\right)
\end{equation*}%
and for every morphism $f\in \mathcal{B}_{\left[ 1\right]
}^{\prime \prime }$ we have $\Theta _{\left[ 1\right] }f=\mathrm{I}\left(
\theta \right) S^{G^{\prime \prime }}f=\Theta U_{\left[ 1\right] }^{\prime
\prime }f$.

If $\Theta $ is faithful, then, by Proposition \ref{pro:1}, so is $\mathrm{I}%
\left( \theta \right) $. Since $S^{G^{\prime \prime }}$ acts as the identity
on morphisms it is faithful too and we get that $\Theta _{\left[ 1\right] }$
is faithful as a composition of faithful functors.

Assume $\theta $ is invertible and that any component of $\zeta R$ is an
epimorphism. By Proposition \ref{pro:1}, $\boldsymbol{\theta }$ is
invertible. Now%
\begin{equation*}
\zeta ^{\prime \prime }R^{\prime \prime }=\left( \theta \ast \zeta \right)
R^{\prime \prime }=\theta R^{\prime \prime }\circ \zeta \Theta R^{\prime
\prime }=\theta R^{\prime \prime }\circ \zeta R
\end{equation*}%
which is an epimorphism on each component. Thus, by Proposition \ref{pro:2},
we get that $\sigma _{\left[ 1\right] }^{\prime \prime }$ is invertible.
Hence $\theta _{\left[ 1\right] }$ is invertible as a composition of
invertible natural transformations.
\begin{invisible}
Here we investigate a different way to split the adjoint triangle
involving $\Theta _{\left[ 1\right] }.$

Consider $U_{\left[ 1\right] }:\mathcal{B}_{\left[ 1\right] }=\left\langle
L^{\prime }R^{\prime },\mathrm{Id}_{\mathcal{A}}\right\rangle \rightarrow
\mathcal{A},$ $\psi ^{\prime }:=\psi _{\mathcal{B}_{\left[ 1\right]
}}:L^{\prime }R^{\prime }U_{\left[ 1\right] }\rightarrow U_{\left[ 1\right]
}.$ Note that $U_{\left[ 1\right] }\mathcal{L}\left( \mathbb{T}\right)
=LP\left( \mathbb{T}\right) $ and $\mathrm{L}\left( \mathbb{T}\right) =%
\mathcal{U}\left( \epsilon ^{\prime }\right) \circ \mathcal{L}\left( \mathbb{%
T}\right) .$

We can consider $\mathcal{L}\left( \mathbb{T}^{\prime \prime }\right)
:\left\langle R^{\prime }L^{\prime \prime }|G^{\prime \prime }\right\rangle
\rightarrow \left\langle L^{\prime }R^{\prime }|\mathrm{Id}_{\mathcal{A}%
}\right\rangle $ and we have $U_{\left[ 1\right] }\circ \mathcal{L}\left(
\mathbb{T}^{\prime \prime }\right) =L^{\prime \prime }\circ P\left( \mathbb{T%
}^{\prime \prime }\right) .$

One can also define the functor%
\begin{equation*}
\Gamma _{\left[ 1\right] }:\mathcal{B}_{\left[ 1\right] }^{\prime \prime
}=\left\langle R^{\prime \prime }L^{\prime \prime }\mid \mathrm{Id}%
\right\rangle \overset{S^{\Theta }}{\longrightarrow }\left\langle \Theta
R^{\prime \prime }L^{\prime \prime }\mid \Theta \right\rangle =\left\langle
RL^{\prime \prime }\mid \Theta \right\rangle \overset{\left\langle R^{\prime
}\theta \mid \Theta \right\rangle }{\longrightarrow }\left\langle RL\Theta
\mid \Theta \right\rangle \overset{D^{\Theta }}{\longrightarrow }%
\left\langle RL\mid \mathrm{Id}\right\rangle =\mathcal{B}_{\left[ 1\right] }.
\end{equation*}%
Explicitly
\begin{equation*}
\Gamma _{\left[ 1\right] }:\left( B^{\prime \prime },\beta ^{\prime \prime
}\right) \mapsto \left( \Theta B^{\prime \prime },\Theta \beta ^{\prime
\prime }\circ R\theta B^{\prime \prime }\right) ;\qquad f\mapsto \Theta f.
\end{equation*}%
We have $\Gamma _{\left[ 1\right] }R_{\left[ 1\right] }^{\prime \prime }$
and $R_{\left[ 1\right] }$ coincides on morphisms and
\begin{eqnarray*}
\Gamma _{\left[ 1\right] }R_{\left[ 1\right] }^{\prime \prime }A &=&\Gamma _{%
\left[ 1\right] }\left( R^{\prime \prime }A,R^{\prime \prime }\epsilon
^{\prime \prime }A\right) =\left( \Theta R^{\prime \prime }A,\Theta
R^{\prime \prime }\epsilon ^{\prime \prime }A\circ R\theta R^{\prime \prime
}A\right) \\
&=&\left( RA,R\epsilon ^{\prime \prime }A\circ R\theta R^{\prime \prime
}A\right) =\left( RA,R\left( \epsilon ^{\prime \prime }\circ \theta
R^{\prime \prime }\right) A\right) =\left( RA,R\epsilon A\right) =R_{\left[ 1%
\right] }A
\end{eqnarray*}%
so that $\Gamma _{\left[ 1\right] }R_{\left[ 1\right] }^{\prime \prime }=R_{%
\left[ 1\right] }.$ Consider the following diagram%
\begin{equation*}
\begin{array}{ccccc}
LRLU_{\left[ 1\right] }\Gamma _{\left[ 1\right] } & \overset{L\psi \Gamma _{%
\left[ 1\right] }}{\underset{\epsilon LU_{\left[ 1\right] }\Gamma _{\left[ 1%
\right] }}{\rightrightarrows }} & LU_{\left[ 1\right] }\Gamma _{\left[ 1%
\right] } & \overset{\pi _{\left[ 1\right] }\Gamma _{\left[ 1\right] }}{%
\rightarrow } & L_{\left[ 1\right] }\Gamma _{\left[ 1\right] } \\
\theta R^{\prime \prime }\theta U_{\left[ 1\right] }^{\prime \prime
}\downarrow &  & \downarrow \theta U_{\left[ 1\right] }^{\prime \prime } &
& \downarrow \exists \gamma _{\left[ 1\right] } \\
L^{\prime \prime }R^{\prime \prime }L^{\prime \prime }U_{\left[ 1\right]
}^{\prime \prime } & \overset{L^{\prime \prime }\psi ^{\prime \prime }}{%
\underset{\epsilon ^{\prime \prime }L^{\prime \prime }U_{\left[ 1\right]
}^{\prime \prime }}{\rightrightarrows }} & L^{\prime \prime }U_{\left[ 1%
\right] }^{\prime \prime } & \overset{\pi _{\left[ 1\right] }^{\prime \prime
}}{\rightarrow } & L_{\left[ 1\right] }^{\prime \prime }%
\end{array}%
\end{equation*}%
Note that $U_{\left[ 1\right] }\Gamma _{\left[ 1\right] }=\Theta U_{\left[ 1%
\right] }^{\prime \prime }$ so that $LU_{\left[ 1\right] }\Gamma _{\left[ 1%
\right] }=L\Theta U_{\left[ 1\right] }^{\prime \prime }.$ Moreover%
\begin{equation*}
\psi \Gamma _{\left[ 1\right] }\left( B^{\prime \prime },\beta ^{\prime
\prime }\right) =\psi \left( \Theta B^{\prime \prime },\Theta \beta ^{\prime
\prime }\circ R\theta B^{\prime \prime }\right) =\Theta \beta ^{\prime
\prime }\circ R\theta B^{\prime \prime }
\end{equation*}%
and hence%
\begin{equation*}
\psi \Gamma _{\left[ 1\right] }=\Theta \psi ^{\prime \prime }\circ R\theta
U_{\left[ 1\right] }^{\prime \prime }.
\end{equation*}%
Using this equality we get%
\begin{eqnarray*}
\theta U_{\left[ 1\right] }^{\prime \prime }\circ L\psi \Gamma _{\left[ 1%
\right] } &=&\theta U_{\left[ 1\right] }^{\prime \prime }\circ L\Theta \psi
^{\prime \prime }\circ LR\theta U_{\left[ 1\right] }^{\prime \prime } \\
&=&L^{\prime \prime }\psi ^{\prime \prime }\circ \theta R^{\prime \prime
}L^{\prime \prime }U_{\left[ 1\right] }^{\prime \prime }\circ L\Theta
R^{\prime \prime }\theta U_{\left[ 1\right] }^{\prime \prime }=L^{\prime
\prime }\psi ^{\prime \prime }\circ \theta R^{\prime \prime }\theta U_{\left[
1\right] }^{\prime \prime }
\end{eqnarray*}%
and
\begin{eqnarray*}
\theta U_{\left[ 1\right] }^{\prime \prime }\circ \epsilon LU_{\left[ 1%
\right] }\Gamma _{\left[ 1\right] } &=&\theta U_{\left[ 1\right] }^{\prime
\prime }\circ \epsilon L\Theta U_{\left[ 1\right] }^{\prime \prime }\overset{%
\text{nat. }\epsilon }{=}\epsilon L^{\prime \prime }U_{\left[ 1\right]
}^{\prime \prime }\circ LR\theta U_{\left[ 1\right] }^{\prime \prime
}=\left( \epsilon ^{\prime \prime }\circ \theta R^{\prime \prime }\right)
L^{\prime \prime }U_{\left[ 1\right] }^{\prime \prime }\circ LR\theta U_{%
\left[ 1\right] }^{\prime \prime } \\
&=&\epsilon ^{\prime \prime }L^{\prime \prime }U_{\left[ 1\right] }^{\prime
\prime }\circ \theta R^{\prime \prime }L^{\prime \prime }U_{\left[ 1\right]
}^{\prime \prime }\circ LR\theta U_{\left[ 1\right] }^{\prime \prime
}=\epsilon ^{\prime \prime }L^{\prime \prime }U_{\left[ 1\right] }^{\prime
\prime }\circ \theta R^{\prime \prime }\theta U_{\left[ 1\right] }^{\prime
\prime }.
\end{eqnarray*}%
Thus the diagram above serially commutes and hence there exists $\gamma _{%
\left[ 1\right] }:L_{\left[ 1\right] }G_{\left[ 1\right] }^{\prime \prime
}\rightarrow L_{\left[ 1\right] }^{\prime \prime }$ such that $\gamma _{%
\left[ 1\right] }\circ \pi _{\left[ 1\right] }\Gamma _{\left[ 1\right] }=\pi
_{\left[ 1\right] }^{\prime \prime }\circ \theta U_{\left[ 1\right]
}^{\prime \prime }.$ Hence we have the adjoint triangle in the left square
below.
\begin{equation*}
\begin{array}{ccccc}
\mathcal{A} & \overset{\mathrm{Id}_{\mathcal{A}}}{\longrightarrow } &
\mathcal{A} & \overset{\mathrm{Id}_{\mathcal{A}}}{\longrightarrow } &
\mathcal{A} \\
L_{\left[ 1\right] }^{\prime \prime }\uparrow \downarrow R_{\left[ 1\right]
}^{\prime \prime } & \gamma _{\left[ 1\right] }:L_{\left[ 1\right] }\Gamma _{%
\left[ 1\right] }\rightarrow L_{\left[ 1\right] }^{\prime \prime } & L_{
\left[ 1\right] }\uparrow \downarrow R_{\left[ 1\right] } & \sigma _{\left[ 1%
\right] }:\mathrm{L}\left( \mathbb{T}\right) S^{G}\rightarrow L_{\left[ 1%
\right] } & \mathrm{L}\left( \mathbb{T}\right) \uparrow \downarrow \mathrm{R}%
\left( \mathbb{T}\right) \\
\mathcal{B}_{\left[ 1\right] }^{\prime \prime }=\left\langle R^{\prime
\prime }L^{\prime \prime }\mid \mathrm{Id}\right\rangle & \overset{\Gamma _{%
\left[ 1\right] }}{\longrightarrow } & \mathcal{B}_{\left[ 1\right]
}=\left\langle RL\mid \mathrm{Id}\right\rangle & \overset{S^{G}}{%
\longrightarrow } & \mathrm{I}\left( \mathbb{T}\right)%
\end{array}%
\end{equation*}%
If we compose it with the right one we expect to get the adjoint triangle
involving $\Theta _{\left[ 1\right] }.$ Indeed we have%
\begin{eqnarray*}
S^{G}\Gamma _{\left[ 1\right] }\left( B^{\prime \prime },\beta ^{\prime
\prime }\right) &=&S^{G}\left( \Theta B^{\prime \prime },\Theta \beta
^{\prime \prime }\circ R\theta B^{\prime \prime }\right) =\left( \Theta
B^{\prime \prime },G\Theta \beta ^{\prime \prime }\circ GR\theta B^{\prime
\prime }\right) \\
&=&\left( \Theta B^{\prime \prime },G^{\prime \prime }\beta ^{\prime \prime
}\circ R^{\prime }\theta B^{\prime \prime }\right) =\Theta _{\left[ 1\right]
}\left( B^{\prime \prime },\beta ^{\prime \prime }\right) .
\end{eqnarray*}%
On morphisms they clearly coincide so that $S^{G}\Gamma _{\left[ 1\right]
}=\Theta _{\left[ 1\right] }.$ Moreover%
\begin{equation*}
\gamma _{\left[ 1\right] }\ast \sigma _{\left[ 1\right] }=\gamma _{\left[ 1%
\right] }\circ \sigma _{\left[ 1\right] }\Gamma _{\left[ 1\right] }
\end{equation*}%
so that%
\begin{eqnarray*}
\left( \gamma _{\left[ 1\right] }\ast \sigma _{\left[ 1\right] }\right)
\circ \pi \mathcal{L}\left( \mathbb{T}\right) S^{G}\Gamma _{\left[ 1\right]
} &=&\gamma _{\left[ 1\right] }\circ \left( \sigma _{\left[ 1\right] }\circ
\pi \mathcal{L}\left( \mathbb{T}\right) S^{G}\right) \Gamma _{\left[ 1\right]
} \\
&=&\gamma _{\left[ 1\right] }\circ \pi _{\left[ 1\right] }\Gamma _{\left[ 1%
\right] }=\pi _{\left[ 1\right] }^{\prime \prime }\circ \theta U_{\left[ 1%
\right] }^{\prime \prime }.
\end{eqnarray*}%
Let us check that this coincide with
\begin{eqnarray*}
&&\theta _{\left[ 1\right] }\circ \pi \mathcal{L}\left( \mathbb{T}\right)
S^{G}\Gamma _{\left[ 1\right] } \\
&=&\sigma _{\left[ 1\right] }^{\prime \prime }\circ \mathcal{U}\left(
\epsilon ^{\prime }\right) \boldsymbol{\theta }S^{G^{\prime \prime }}\circ
\pi \mathcal{L}\left( \mathbb{T}\right) \Theta _{\left[ 1\right] } \\
&=&\sigma _{\left[ 1\right] }^{\prime \prime }\circ \mathcal{U}\left(
\epsilon ^{\prime }\right) \boldsymbol{\theta }S^{G^{\prime \prime }}\circ
\pi \mathcal{L}\left( \mathbb{T}\right) \mathcal{B}\left( \theta \right)
S^{G^{\prime \prime }} \\
&=&\sigma _{\left[ 1\right] }^{\prime \prime }\circ \pi ^{\prime }\mathcal{L}%
\left( \mathbb{T}^{\prime \prime }\right) S^{G^{\prime \prime }}\circ U_{%
\left[ 1\right] }\boldsymbol{\theta }S^{G^{\prime \prime }} \\
&=&\pi _{\left[ 1\right] }^{\prime \prime }\circ U_{\left[ 1\right] }%
\boldsymbol{\theta }S^{G^{\prime \prime }}=\pi _{\left[ 1\right] }^{\prime
\prime }\circ \theta P\left( \mathbb{T}^{\prime \prime }\right) S^{G^{\prime
\prime }}=\pi _{\left[ 1\right] }^{\prime \prime }\circ \theta U_{\left[ 1%
\right] }^{\prime \prime }
\end{eqnarray*}%
As a consequence $\theta _{\left[ 1\right] }=\gamma _{\left[ 1\right] }\ast
\sigma _{\left[ 1\right] }.$ Note that we have the following two pyramids%
\begin{equation*}
\begin{array}{ccccc}
\mathrm{I}\left( \mathbb{T}^{\prime \prime }\right) & \longleftarrow &
S^{G^{\prime \prime }} & - & \mathcal{B}_{\left[ 1\right] }^{\prime \prime }
\\
| & \nwarrow \mathrm{R}\left( \mathbb{T}^{\prime \prime }\right) & \sigma _{
\left[ 1\right] }^{\prime \prime } & R_{\left[ 1\right] }^{\prime \prime
}\nearrow & | \\
\mathrm{I}\left( \theta \right) & \mathcal{U}\left( \epsilon ^{\prime
}\right) \boldsymbol{\theta } & \mathcal{A} & \theta & \Gamma _{\left[ 1%
\right] } \\
\downarrow & \swarrow \mathrm{R}\left( \mathbb{T}\right) & \sigma _{\left[ 1%
\right] } & R_{\left[ 1\right] }\searrow & \downarrow \\
\mathrm{I}\left( \mathbb{T}\right) & \longleftarrow & S^{G} & - & \mathcal{B}%
_{\left[ 1\right] }%
\end{array}%
\qquad
\begin{array}{ccccc}
\mathcal{B}_{\left[ 1\right] }^{\prime \prime } & - & U_{\left[ 1\right]
}^{\prime \prime } & \longrightarrow & \mathcal{B}^{\prime \prime } \\
| & \nwarrow R_{\left[ 1\right] }^{\prime \prime } & \pi _{\left[ 1\right]
}^{\prime \prime } & R^{\prime \prime }\nearrow & | \\
\Gamma _{\left[ 1\right] } & \gamma _{\left[ 1\right] } & \mathcal{A} &
\theta & \Theta \\
\downarrow & \swarrow R_{\left[ 1\right] } & \pi _{\left[ 1\right] } &
R\searrow & \downarrow \\
\mathcal{B}_{\left[ 1\right] } & - & U_{\left[ 1\right] } & \longrightarrow
& \mathcal{B}%
\end{array}%
\end{equation*}
\end{invisible}
\end{proof}

\begin{notation}\label{not:Thetaprime}
By applying Proposition \ref{pro:3} in the particular case when $G=\mathrm{Id},$ $L^{\prime }=L,R^{\prime }=R$, we get the functor
\begin{equation*}
\Theta' _{\left[ 1\right] }:\mathcal{B}_{\left[ 1\right] }^{\prime \prime }\rightarrow \mathcal{B}_{%
\left[ 1\right] } ,\qquad \left( V^{\prime
\prime },\mu ^{\prime \prime }\right) \mapsto \left( \Theta V^{\prime \prime
},\Theta\mu ^{\prime \prime }\circ R\theta V^{\prime
\prime }\right) ,\qquad f\mapsto \Theta U_{\left[ 1\right] }^{\prime \prime
}f.
\end{equation*}%
This functor will be used in the following section.
\end{notation}

Recall from Definition \ref{def:MonDec} that given an adjunction $L\dashv R:\mathcal{A}\rightarrow \mathcal{B}$ we can consider the category $\mathcal{B}_{1}$ of $RL$-algebras over $\mathcal{B}$ and the corresponding forgetful functor $U_{0,1 }:\mathcal{B}_{1}\to \mathcal{B}$. The functor $R=R_0$ induces the comparison functor $R_1:\mathcal{A}\to \mathcal{B}_{1}$.

\begin{proposition}
\label{pro:4}Assume $\mathcal{A}$ has coequalizers. Consider an adjunction $L\dashv R:\mathcal{A}\rightarrow \mathcal{B}$. Then the inclusion functor $%
\Lambda _{1}:\mathcal{B}_{1}\rightarrow \mathcal{B}_{\left[ 1\right] }$
gives rise to an adjoint triangle
\begin{equation*}
\xymatrixcolsep{1.5cm}\xymatrixrowsep{0.7cm}\xymatrix{\mathcal{A}\ar[r]^\id%
\ar@{}[dr]|-{\lambda_1=\id_{L_1}}\ar@<.4ex>[d]^*-<0.2cm>{^{R_1}}&
\mathcal{A}\ar@<.4ex>[d]^*-<0.2cm>{^{R_{[1]}}}\\
\mathcal{B}_1\ar@<.4ex>@{.>}[u]^*-<0.2cm>{^{L_1}}\ar[r]^-{\Lambda_1}&%
\mathcal{B}_{[1]}=\left\langle
RL\mid\id\right\rangle\ar@<.4ex>@{.>}[u]^*-<0.2cm>{^{L_{[1]}}}}
\end{equation*}

\begin{invisible}
\begin{equation*}
\begin{array}{ccc}
\mathcal{A} & \overset{\mathrm{Id}_{\mathcal{A}}}{\longrightarrow } &
\mathcal{A} \\
L_{1}\uparrow \downarrow R_{1} & \lambda _{1}=\mathrm{Id}_{L_{1}} & L_{\left[
1\right] }\uparrow \downarrow R_{\left[ 1\right] } \\
\mathcal{B}_{1} & \overset{\Lambda _{1}}{\longrightarrow } & \mathcal{B}_{%
\left[ 1\right] }=\left\langle RL|\mathrm{Id}_{\mathcal{B}}\right\rangle%
\end{array}%
\end{equation*}
\end{invisible}

such that $U_{\left[ 1\right] }\circ \Lambda _{1}=U_{0,1}$ and $L_{\left[ 1%
\right] }\circ \Lambda _{1}=L_{1}$
\end{proposition}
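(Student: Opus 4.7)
The plan is to unpack the definitions and check that everything lines up so that the identity natural transformation $\mathrm{Id}_{L_1}$ serves as the $\zeta$ inside an adjoint triangle. First, I would define $\Lambda_1$ on an object $(B,\mu)\in\mathcal{B}_1$ by $\Lambda_1(B,\mu):=(B,\mu)$, now regarded as a pair in $\mathcal{B}_{[1]}=\langle RL\mid\mathrm{Id}_{\mathcal{B}}\rangle$, and on morphisms by $\Lambda_1 f:=f$. Since an algebra morphism between $Q_0:=RL$-algebras is exactly a morphism that makes the relevant square commute, $\Lambda_1$ is a well-defined functor. By construction $U_{[1]}\circ \Lambda_1=U_{0,1}$ and $\Lambda_1$ is injective on objects and fully faithful (in fact it identifies $\mathcal{B}_1$ with the full subcategory of $\mathcal{B}_{[1]}$ cut out by the monad-algebra axioms).

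Next I would verify the two commuting relations on the vertical edges:
\begin{itemize}
\item $\Lambda_1\circ R_1=R_{[1]}$: by the definition of the comparison functor, $R_1 A=(RA,R\epsilon A)$ and $R_1 f=Rf$, while from the construction of $\mathbb{T}_{[0]}$ one has $R_{[1]}=\mathrm{R}(\mathbb{T}_{[0]})$, sending $A\mapsto (RA,R\epsilon A)$ and $f\mapsto Rf$. These agree objectwise and on morphisms.
\item $L_{[1]}\circ \Lambda_1=L_1$: given $(B,\mu)\in\mathcal{B}_1$, the coequalizer \eqref{coeq:L1} defining $L_{[1]}(B,\mu)$ becomes
\[
\xymatrixcolsep{1.5cm}\xymatrix{LRLB\ar@<.5ex>[r]^{L\mu}\ar@<-.5ex>[r]_{\epsilon LB}&LB\ar[r]^-{\pi_{[1]}(B,\mu)}&L_{[1]}(B,\mu)},
\]
which is precisely the coequalizer presentation of $L_1(B,\mu)$ as the left adjoint to the comparison functor $R_1$ (existing by the standing assumption that $\mathcal{A}$ has coequalizers). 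On a morphism $f:(B,\mu)\to(B',\mu')$ the induced arrow agrees with $L_1 f$ by the universal property. Hence $L_{[1]}\Lambda_1=L_1$, and moreover $\pi_{[1]}\Lambda_1$ is a coequalizer presentation of $L_1$.
\end{itemize}

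With these identifications the diagram in the statement is a well-formed adjoint triangle, and I would take $\lambda_1:=\mathrm{Id}_{L_1}:L_{[1]}\Lambda_1\to L_1$. What remains is the compatibility $\epsilon_1\circ \lambda_1 R_1=\epsilon_{[1]}$ from Definition \ref{def:augtriang}, i.e.\ $\epsilon_1=\epsilon_{[1]}$ as morphisms $L_1R_1=L_{[1]}R_{[1]}\to\mathrm{Id}_{\mathcal{A}}$. Both counits are characterized by factoring $\epsilon:LR\to\mathrm{Id}_{\mathcal{A}}$ through the common coequalizer $\pi_{[1]}R_{[1]}=\pi_{[1]}\Lambda_1 R_1$; the former by the standard construction of the counit for the Eilenberg--Moore left adjoint and the latter by \eqref{form:epseta[1]}. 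Uniqueness of this factorization forces $\epsilon_1=\epsilon_{[1]}$, completing the verification.

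The only real point of care is the identification $L_{[1]}\circ\Lambda_1=L_1$: one must check that the reflexive pair $(L\mu,\epsilon LB)$ appearing in both coequalizer presentations truly produces the same object and that the action on morphisms coincides via the universal property of the coequalizer. Once this is set, the remaining equalities are automatic, and the natural transformation filling the triangle is literally the identity.
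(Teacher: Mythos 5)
Your proposal is correct and follows essentially the same route as the paper: identify $\Lambda_1\circ R_1=R_{[1]}$ directly, match the coequalizer \eqref{coeq:L1} defining $L_{[1]}\Lambda_1(B,\mu)$ with the standard coequalizer presentation of $L_1(B,\mu)$, and deduce $\epsilon_1=\epsilon_{[1]}$ from the fact that both factor $\epsilon$ uniquely through the epimorphism $\pi_{[1]}\Lambda_1R_1=\pi_1R_1$. The only cosmetic difference is that the paper computes the canonical $\lambda_1=\epsilon_{[1]}L_1\circ L_{[1]}\Lambda_1\eta_1$ and simplifies it to $\mathrm{Id}_{L_1}$ via the triangle identity, whereas you posit $\lambda_1=\mathrm{Id}_{L_1}$ and verify the defining compatibility $\epsilon_1\circ\lambda_1R_1=\epsilon_{[1]}$; by the uniqueness clause in Definition \ref{def:augtriang} these are equivalent.
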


\begin{proof}
Clearly $\mathcal{B}_{1}$ is a full subcategory of $\mathcal{B}_{\left[ 1%
\right] }$ through $\Lambda _{1}$ and one has $\Lambda _{1}\circ R_{1}=R_{%
\left[ 1\right] }.$ By construction of $L_1$ and $L_{[1]}$, for every $(B,b)$ in $\mathcal{B}_1$, we have that
$L_{\left[ 1\right]}\Lambda _{1}(B,b)=L_{\left[ 1\right]}(B,b)=\mathcal{U}(\epsilon)(LB,Lb)$ which is the coequalizer of the pair $(Lb,\epsilon LB)$ namely $L_1(B,b)$. Similarly $L_{\left[ 1\right]}\circ \Lambda _{1}$ and $L_{1}$ agree on morphisms and hence $L_{1}=L_{\left[ 1\right]
}\circ \Lambda _{1}$. Moreover the components of $\pi _{\left[ 1\right] }\Lambda _{1}=\pi
_{1}:LU_{1}\rightarrow L_{1}$ are the universal morphisms defining the coequalizer $L_1(B,b)$. Note that $L_{%
\left[ 1\right] }R_{\left[ 1\right] }=L_{\left[ 1\right] }\Lambda
_{1}R_{1}=L_{1}R_{1}$ and%
\begin{equation*}
\epsilon _{\left[ 1\right] }\circ \pi _{1}R_{1}=\epsilon _{\left[ 1\right]
}\circ \pi _{\left[ 1\right] }\Lambda _{1}R_{1}=\epsilon _{\left[ 1\right]
}\circ \pi _{\left[ 1\right] }R_{\left[ 1\right] }\overset{(\ref%
{form:epseta[1]})}{=}\epsilon =\epsilon _{1}\circ \pi _{1}R_{1}
\end{equation*}%
so that $\epsilon _{\left[ 1\right] }=\epsilon _{1}.$ Note that the last
equality, in the above displayed formula, is just the definition of the
counit $\epsilon _{1}$ of $\left( L_{1},R_{1}\right) .$ As a consequence $%
\lambda _{1}=\epsilon _{\left[ 1\right] }L_{1}\circ L_{\left[ 1\right]
}\Lambda _{1}\eta _{1}=\epsilon _{1}L_{1}\circ L_{1}\eta _{1}=\mathrm{Id}%
_{L_{1}}.$
\end{proof}

\begin{proposition}
\label{pro:5}Assume $\mathcal{A}$ has coequalizers. Consider the two adjoint triangles $\mathbb{T}$, $\mathbb{T}%
^{\prime }$ and their composition $\mathbb{T}^{\prime \prime }$ of Remark %
\ref{rem:comptr}. Then we can define a new adjoint triangle
\begin{equation*}
\xymatrixcolsep{1.5cm}\xymatrixrowsep{0.7cm}\xymatrix{\mathcal{A}\ar[r]^\id%
\ar@{}[dr]|-{\theta_1}\ar@<.4ex>[d]^*-<0.2cm>{^{R''_1}}&
\mathcal{A}\ar@<.4ex>[d]^*-<0.2cm>{^{\mathrm{R}(\mathbb{T})}}\\
\mathcal{B}''_1\ar@<.4ex>@{.>}[u]^*-<0.2cm>{^{L''_1}}\ar[r]^-{\Theta_1}&%
\mathrm{I}(\mathbb{T})\ar@<.4ex>@{.>}[u]^*-<0.2cm>{^{\mathrm{L}(%
\mathbb{T})}}}
\end{equation*}

\begin{invisible}
\begin{equation*}
\begin{array}{ccc}
\mathcal{A} & \overset{\mathrm{Id}_{\mathcal{A}}}{\longrightarrow } &
\mathcal{A} \\
L_{1}^{\prime \prime }\uparrow \downarrow R_{1}^{\prime \prime } & \theta
_{1} & \mathrm{L}\left( \mathbb{T}\right) \uparrow \downarrow \mathrm{R}%
\left( \mathbb{T}\right) \\
\mathcal{B}_{1}^{\prime \prime } & \overset{\Theta _{1}}{\longrightarrow } &
\mathrm{I}\left( \mathbb{T}\right)%
\end{array}%
\end{equation*}
\end{invisible}

where
\begin{equation*}
\Theta _{1}:\mathcal{B}_{1}^{\prime \prime }\rightarrow \mathrm{I}\left(
\mathbb{T}\right) ,\qquad \left( V^{\prime \prime },\mu ^{\prime \prime
}\right) \mapsto \left( \Theta V^{\prime \prime },G^{\prime \prime }\mu
^{\prime \prime }\circ R^{\prime }\theta V^{\prime \prime }\right) ,\qquad
f\mapsto \Theta U_{0,1}^{\prime \prime }f,
\end{equation*}%
is such that%
\begin{equation*}
P\left( \mathbb{T}\right) \Theta _{1}=\Theta U_{0,1}^{\prime \prime }\qquad
\text{and}\qquad G\left( \mathbb{T}\right) \Theta _{1}=G^{\prime \prime
}U_{0,1}^{\prime \prime }.
\end{equation*}

\begin{itemize}
\item[1)] If $\Theta $ is faithful, then so is $\Theta _{1}$.

\item[2)] Assume that $\theta $ is invertible and that any component of $%
\zeta R$ is an epimorphism.

Then $\theta _{1}$ is invertible. Moreover if $\Theta $ is full, then so is $%
\Theta _{1}$ and, if $\Theta $ is injective on objects, then so is $\Theta _{1}$.
\end{itemize}
\end{proposition}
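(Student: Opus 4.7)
The plan is to construct $\Theta_1$ together with its adjoint triangle by horizontally composing two adjoint triangles already at our disposal. Applying Proposition \ref{pro:4} to the composed triangle $\mathbb{T}''$ yields the adjoint triangle whose bottom is the inclusion $\Lambda_1'':\mathcal{B}_1''\to \mathcal{B}_{[1]}''$ and whose natural transformation is $\lambda_1''=\mathrm{Id}_{L_1''}$. Applying Proposition \ref{pro:3} to the triple $\mathbb{T},\mathbb{T}',\mathbb{T}''$ yields the adjoint triangle whose bottom is $\Theta_{[1]}:\mathcal{B}_{[1]}''\to \mathrm{I}(\mathbb{T})$ and whose natural transformation is $\theta_{[1]}$. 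Concatenating them via the horizontal composition $\ast$ of Remark \ref{rem:comptr} produces the required adjoint triangle, in which $\Theta_1:=\Theta_{[1]}\circ \Lambda_1''$ and $\theta_1:=\theta_{[1]}\ast\lambda_1''=\theta_{[1]}\Lambda_1''$ (the second equality using $\lambda_1''=\mathrm{Id}$).

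The explicit formula for $\Theta_1$ then drops out of Proposition \ref{pro:3}: on objects, $\Theta_1(V'',\mu'')=\Theta_{[1]}(V'',\mu'')=(\Theta V'',\,G''\mu''\circ R'\theta V'')$, and on morphisms $\Theta_1 f=\Theta U_{0,1}''f$. The displayed identities $P(\mathbb{T})\Theta_1=\Theta U_{0,1}''$ and $G(\mathbb{T})\Theta_1=G''U_{0,1}''$ follow from the analogous ones for $\Theta_{[1]}$ combined with $U_{[1]}''\circ\Lambda_1''=U_{0,1}''$ from Proposition \ref{pro:4}.

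For claim (1), $\Theta_1$ is a composition of faithful functors: $\Lambda_1''$ is faithful as the inclusion of a full subcategory and $\Theta_{[1]}$ is faithful by Proposition \ref{pro:3}(1). For claim (2), under the hypothesis that $\theta$ is invertible and $\zeta R$ has epimorphic components, Proposition \ref{pro:3}(2) gives that $\theta_{[1]}$ is invertible, whence $\theta_1=\theta_{[1]}\Lambda_1''$ is invertible by whiskering.

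The remaining fullness and injectivity-on-objects statements in claim (2) are the main technical content. For fullness, given a morphism $h:\Theta_1(V'',\mu'')\to \Theta_1(W'',\nu'')$ in $\mathrm{I}(\mathbb{T})$, fullness of $\Theta$ produces $f:V''\to W''$ in $\mathcal{B}''$ with $\Theta f=P(\mathbb{T})h$. Writing out the inserter compatibility for $h$, using naturality of $\theta:L\Theta\to L''$ together with the identity $G''R''=R'$, and then cancelling the iso $R'\theta V''$ (invertible because $\theta$ is), reduces the requirement that $f$ define a morphism in $\mathcal{B}_1''$ to the equality $G''(f\circ\mu'')=G''(\nu''\circ R''L''f)$ in $\mathcal{B}'$. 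The hard part is to promote this to the corresponding equality in $\mathcal{B}''$; this will rely on the precise way $G''$ interacts with the adjunction $L''\dashv R''$ via $G''R''=R'$ and the triangular identities, exactly as in the similar arguments already exploited in the proofs of Propositions \ref{pro:1}--\ref{pro:3}. Injectivity on objects is handled by the same pattern: the equality $\Theta_1(V'',\mu'')=\Theta_1(W'',\nu'')$ gives $V''=W''$ by injectivity of $\Theta$, and then the equality of the second components, after cancelling the iso $R'\theta V''$, collapses to $G''\mu''=G''\nu''$, from which $\mu''=\nu''$ is to be deduced by the same key step.
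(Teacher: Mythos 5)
Your construction of $\Theta_1$ and $\theta_1$ as the horizontal composite of the triangle from Proposition \ref{pro:4} (applied to $(L'',R'')$) with the one from Proposition \ref{pro:3} is exactly the paper's route, and the explicit formulas, the faithfulness of $\Theta_1$, and the invertibility of $\theta_1$ are all handled correctly and as in the paper.

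The gap is in the fullness and injectivity-on-objects claims. You correctly reduce fullness to the equality $G''(g\circ\mu_{V''})=G''(\mu_{W''}\circ R''L''g)$, but then defer the passage to $g\circ\mu_{V''}=\mu_{W''}\circ R''L''g$ to arguments ``exactly as in the proofs of Propositions \ref{pro:1}--\ref{pro:3}.'' No such argument occurs there, and $G''$ cannot simply be cancelled since it is not assumed faithful. The actual step is the technical core of this proposition and uses two ingredients you never invoke at this point: (i) the hypothesis that $\zeta R$ is componentwise epimorphic, used as follows: apply $L'$ to both sides, whisker with $\zeta''R''L''V''$ and use naturality of $\zeta''$ to obtain $L''(g\circ\mu_{V''})\circ\zeta''R''L''V''=L''(\mu_{W''}\circ R''L''g)\circ\zeta''R''L''V''$; since $\zeta''R''=\theta R''\circ\zeta R$ is componentwise epi ($\theta$ invertible, $\zeta R$ epi), this gives $L''(g\circ\mu_{V''})=L''(\mu_{W''}\circ R''L''g)$; and (ii) the Eilenberg--Moore algebra axioms, in particular $\mu_{W''}\circ\eta''W''=\mathrm{Id}$, which together with naturality of $\eta''$ let one write $g\circ\mu_{V''}=\mu_{W''}\circ R''L''(g\circ\mu_{V''})\circ\eta''R''L''V''$ and conclude $g\circ\mu_{V''}=\mu_{W''}\circ R''L''g$. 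This second ingredient is precisely why fullness holds for $\Theta_1$ on $\mathcal{B}_1''$ but is not claimed for $\Theta_{[1]}$ on $\mathcal{B}_{[1]}''$, so it cannot be elided. The same missing step is needed to conclude $\mu''=\nu''$ in your injectivity-on-objects argument.
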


\begin{proof}
Compose the adjoint triangles of Proposition \ref{pro:4} (applied to the
adjunction $\left( L^{\prime \prime },R^{\prime \prime }\right) $) and
Proposition \ref{pro:3}%
\begin{equation*}
\xymatrixcolsep{2cm}\xymatrixrowsep{0.7cm}\xymatrix{\mathcal{A}\ar[r]^\id%
\ar@{}[dr]|-{\lambda_1=\id_{L_1''}}\ar@<.4ex>[d]^*-<0.2cm>{^{R''_1}}&%
\mathcal{A}\ar@{}[dr]|-{\theta_{[1]}}\ar@<.4ex>[d]^*-<0.2cm>{^{R''_{[1]}}}%
\ar[r]^\id & \mathcal{A}\ar@<.4ex>[d]^*-<0.2cm>{^{\mathrm{R}(\mathbb{T})}}\\
\mathcal{B}''_1\ar@<.4ex>@{.>}[u]^*-<0.2cm>{^{L''_1}}\ar[r]^-{\Lambda_1}&%
\mathcal{B}''_{[1]}=\left\langle
R''L''\mid\id\right\rangle\ar@<.4ex>@{.>}[u]^*-<0.2cm>{^{L''_{[1]}}}%
\ar[r]^-{\Theta_{[1]}}&\mathrm{I}(\mathbb{T})\ar@<.4ex>@{.>}[u]^*-<0.2cm>{^{%
\mathrm{L}(\mathbb{T})}}}
\end{equation*}

\begin{invisible}

\begin{equation*}
\begin{array}{ccc}
\mathcal{A} & \overset{\mathrm{Id}_{\mathcal{A}}}{\longrightarrow } &
\mathcal{A} \\
L_{1}^{\prime \prime }\uparrow \downarrow R_{1}^{\prime \prime } & \lambda
_{1}=\mathrm{Id}_{L_{1}^{\prime \prime }} & L_{\left[ 1\right] }^{\prime
\prime }\uparrow \downarrow R_{\left[ 1\right] }^{\prime \prime } \\
\mathcal{B}_{1}^{\prime \prime } & \overset{\Lambda _{1}}{\longrightarrow }
& \mathcal{B}_{\left[ 1\right] }^{\prime \prime }=\left\langle R^{\prime
\prime }L^{\prime \prime }|\mathrm{Id}_{\mathcal{B}^{\prime \prime
}}\right\rangle%
\end{array}%
\ast
\begin{array}{ccc}
\mathcal{A} & \overset{\mathrm{Id}_{\mathcal{A}}}{\longrightarrow } &
\mathcal{A} \\
L_{\left[ 1\right] }^{\prime \prime }\uparrow \downarrow R_{\left[ 1\right]
}^{\prime \prime } & \theta _{\left[ 1\right] } & \mathrm{L}\left( \mathbb{T}%
\right) \uparrow \downarrow \mathrm{R}\left( \mathbb{T}\right) \\
\mathcal{B}_{\left[ 1\right] }^{\prime \prime } & \overset{\Theta _{\left[ 1%
\right] }}{\longrightarrow } & \mathrm{I}\left( \mathbb{T}\right)%
\end{array}%
\end{equation*}
\end{invisible}

to get the adjoint triangle in the present statement. Thus $\Theta
_{1}=\Theta _{\left[ 1\right] }\circ \Lambda _{1}$ and $\theta _{1}=\lambda
_{1}\ast \theta _{\left[ 1\right] }=\lambda _{1}\circ \theta _{\left[ 1%
\right] }\Lambda _{1}=\theta _{\left[ 1\right] }\Lambda _{1}.$

If $\Theta $ is faithful, then,  by Proposition \ref{pro:3}, so is $\Theta _{\left[
1\right] }$. Since the inclusion functor $\Lambda _{1}$ is faithful it is
then clear that $\Theta _{1}$ is faithful too as a composition of faithful
functors.

Assume that $\theta $ is invertible and that any component of $\zeta R$ is
an epimorphism. Still by Proposition \ref{pro:3}, we deduce that $\theta _{%
\left[ 1\right] }$ is invertible. Thus $\theta _{1}=\theta _{\left[ 1\right]
}\Lambda _{1}$ is invertible too.

It remains to prove that if $\Theta $ is full, then so is $\Theta _{1}$. Let $\xi
:\Theta _{1}\left( V^{\prime \prime },\mu _{V^{\prime \prime }}\right)
\rightarrow \Theta _{1}\left( W^{\prime \prime },\mu _{W^{\prime \prime
}}\right) $ be a morphism in $\mathrm{I}\left( \mathbb{T}\right)
=\left\langle R^{\prime }L|G\right\rangle .$ This means to have a morphism $%
h:=P\left( \mathbb{T}\right) \xi :\Theta V^{\prime \prime }\rightarrow
\Theta W^{\prime \prime }$ such that
\begin{equation*}
Gh\circ \left( G^{\prime \prime }\mu _{V^{\prime \prime }}\circ R^{\prime
}\theta V^{\prime \prime }\right) =\left( G^{\prime \prime }\mu _{W^{\prime
\prime }}\circ R^{\prime }\theta W^{\prime \prime }\right) \circ R^{\prime
}Lh.
\end{equation*}%
Since $G^{\prime \prime }=G\Theta ,$ $R=\Theta R^{\prime \prime }$ and $%
R^{\prime }=GR$ we can rewrite this equality as%
\begin{equation*}
G\left( h\circ \Theta \mu _{V^{\prime \prime }}\circ \Theta R^{\prime \prime
}\theta V^{\prime \prime }\right) =G\left( \Theta \mu _{W^{\prime \prime
}}\circ \Theta R^{\prime \prime }\theta W^{\prime \prime }\circ \Theta
R^{\prime \prime }Lh\right) .
\end{equation*}%
Since $\Theta $ is full, there is a morphism $g:V^{\prime \prime
}\rightarrow W^{\prime \prime }$ such that $h=\Theta g$ so that, using $%
G^{\prime \prime }=G\Theta ,$ we can further rewrite%
\begin{equation*}
G^{\prime \prime }\left( g\circ \mu _{V^{\prime \prime }}\circ R^{\prime
\prime }\theta V^{\prime \prime }\right) =G^{\prime \prime }\left( \mu
_{W^{\prime \prime }}\circ R^{\prime \prime }\theta W^{\prime \prime }\circ
R^{\prime \prime }L\Theta g\right) .
\end{equation*}%
By naturality of $\theta $ we have $\mu _{W^{\prime \prime }}\circ R^{\prime
\prime }\theta W^{\prime \prime }\circ R^{\prime \prime }L\Theta g=\mu
_{W^{\prime \prime }}\circ R^{\prime \prime }L^{\prime \prime }g\circ
R^{\prime \prime }\theta V^{\prime \prime }$ so that, since $\theta $ is
invertible, we obtain%
\begin{equation*}
G^{\prime \prime }\left( g\circ \mu _{V^{\prime \prime }}\right) =G^{\prime
\prime }\left( \mu _{W^{\prime \prime }}\circ R^{\prime \prime }L^{\prime
\prime }g\right)
\end{equation*}%
and hence%
\begin{eqnarray*}
&&L^{\prime \prime }\left( g\circ \mu _{V^{\prime \prime }}\right) \circ
\zeta ^{\prime \prime }R^{\prime \prime }L^{\prime \prime }V^{\prime \prime }%
\overset{\text{nat. }\zeta ^{\prime \prime }}{=}\zeta ^{\prime \prime
}W^{\prime \prime }\circ L^{\prime }G^{\prime \prime }\left( g\circ \mu
_{V^{\prime \prime }}\right) \\
&=&\zeta ^{\prime \prime }W^{\prime \prime }\circ L^{\prime }G^{\prime
\prime }\left( \mu _{W^{\prime \prime }}\circ R^{\prime \prime }L^{\prime
\prime }g\right) \overset{\text{nat. }\zeta ^{\prime \prime }}{=}L^{\prime
\prime }\left( \mu _{W^{\prime \prime }}\circ R^{\prime \prime }L^{\prime
\prime }g\right) \circ \zeta ^{\prime \prime }R^{\prime \prime }L^{\prime
\prime }V^{\prime \prime }.
\end{eqnarray*}%
Now $\zeta ^{\prime \prime }=\theta \ast \zeta =\theta \circ \zeta \Theta $
so that $\zeta ^{\prime \prime }R^{\prime \prime }=\theta R^{\prime \prime
}\circ \zeta \Theta R^{\prime \prime }=\theta R^{\prime \prime }\circ \zeta
R $ which is an epimorphism on each component. Thus we arrive at%
\begin{equation*}
L^{\prime \prime }\left( g\circ \mu _{V^{\prime \prime }}\right) =L^{\prime
\prime }\left( \mu _{W^{\prime \prime }}\circ R^{\prime \prime }L^{\prime
\prime }g\right) .
\end{equation*}

Using this equality we compute
\begin{eqnarray*}
g\circ \mu _{V^{\prime \prime }} &=&\mu _{W^{\prime \prime }}\circ \eta
^{\prime \prime }W^{\prime \prime }\circ g\circ \mu _{V^{\prime \prime
}}=\mu _{W^{\prime \prime }}\circ R^{\prime \prime }L^{\prime \prime }\left(
g\circ \mu _{V^{\prime \prime }}\right) \circ \eta ^{\prime \prime
}R^{\prime \prime }L^{\prime \prime }V^{\prime \prime } \\
&=&\mu _{W^{\prime \prime }}\circ R^{\prime \prime }L^{\prime \prime }\left(
\mu _{W^{\prime \prime }}\circ R^{\prime \prime }L^{\prime \prime }g\right)
\circ \eta ^{\prime \prime }R^{\prime \prime }L^{\prime \prime }V^{\prime
\prime } \\
&=&\mu _{W^{\prime \prime }}\circ \eta ^{\prime \prime }W^{\prime \prime
}\circ \mu _{W^{\prime \prime }}\circ R^{\prime \prime }L^{\prime \prime
}g=\mu _{W^{\prime \prime }}\circ R^{\prime \prime }L^{\prime \prime }g
\end{eqnarray*}%
This means there is a morphism $g_{1}:\left( V^{\prime \prime },\mu
_{V^{\prime \prime }}\right) \rightarrow \left( W^{\prime \prime },\mu
_{W^{\prime \prime }}\right) $ such that $U_{0,1}^{\prime \prime }g_{1}=g.$
We have%
\begin{equation*}
P\left( \mathbb{T}\right) \Theta _{1}g_{1}=\Theta U_{0,1}^{\prime \prime
}g_{1}=\Theta g=h=P\left( \mathbb{T}\right) \xi .
\end{equation*}%
Since $P\left( \mathbb{T}\right) $ is faithful, we deduce that $\Theta
_{1}g_{1}=\xi .$ Thus $\Theta _{1}$ is full.

Assume that $\Theta $ is injective on objects and $\Theta _{1}\left(
V^{\prime \prime },\mu _{V^{\prime \prime }}\right) =\Theta _{1}\left(
W^{\prime \prime },\mu _{W^{\prime \prime }}\right) .$ Then we can apply the
above argument for $\xi :=\mathrm{Id.}$ In this case $h:=P\left( \mathbb{T}%
\right) \xi =\mathrm{Id}:\Theta V^{\prime \prime }\rightarrow \Theta
W^{\prime \prime }.$ The fact that $\Theta $ is injective on objects tells
that $V^{\prime \prime }=W^{\prime \prime }$ so that we write $h=\Theta g$
for $g=\mathrm{Id}\ ($and the above assumption that $\Theta $ is full can be
dropped out). As above we arrive at $g\circ \mu _{V^{\prime \prime }}=\mu
_{W^{\prime \prime }}\circ R^{\prime \prime }L^{\prime \prime }g$ i.e. $\mu
_{V^{\prime \prime }}=\mu _{W^{\prime \prime }}.$ We have so proved that $%
\left( V^{\prime \prime },\mu _{V^{\prime \prime }}\right) =\left( W^{\prime
\prime },\mu _{W^{\prime \prime }}\right) $ and hence $\Theta _{1}\ $is
injective on objects.
\end{proof}

\begin{remark}
\label{rem:adj}Consider an adjunction $\left( L,R,\eta ,\epsilon \right) $ and assume $\mathcal{A}$ has coequalizers.

Apply Proposition \ref{pro:4} to  obtain the adjoint triangle $\boldsymbol{\Lambda }%
_{[1] }$
\begin{equation*}
\boldsymbol{\Lambda }_{[1] }:=\ \vcenter{\xymatrixcolsep{1.7cm}%
\xymatrixrowsep{0.7cm}\xymatrix{\mathcal{A}\ar[r]^\id\ar@{}[dr]|-{\lambda_1=\id_{L_1}}%
\ar@<.4ex>[d]^*-<0.2cm>{^{R_1}}&
\mathcal{A}\ar@<.4ex>[d]^*-<0.2cm>{^{R_{[1]}}}\\
\mathcal{B}_1\ar@<.4ex>@{.>}[u]^*-<0.2cm>{^{L_1}}\ar[r]^{\Lambda_1}&%
\mathcal{B}_{[1]}\ar@<.4ex>@{.>}[u]^*-<0.2cm>{^{L_{[1]}}}} } \qquad
\boldsymbol{\Lambda }_{[2] }:=\ \vcenter{\xymatrixcolsep{1.7cm}%
\xymatrixrowsep{0.7cm}\xymatrix{\mathcal{A}\ar[r]^\id\ar@{}[dr]|-{%
\lambda_2:=(\lambda_1)_1}\ar@<.4ex>[d]^*-<0.2cm>{^{R_2}}&
\mathcal{A}\ar@<.4ex>[d]^*-<0.2cm>{^{R_{[2]}}}\\
\mathcal{B}_2\ar@<.4ex>@{.>}[u]^*-<0.2cm>{^{L_2}}\ar[r]^{\Lambda_2:=(%
\Lambda_1)_1}&\mathcal{B}_{[2]}\ar@<.4ex>@{.>}[u]^*-<0.2cm>{^{L_{[2]}}}} }
\end{equation*}

\begin{invisible}
\begin{equation*}
\boldsymbol{\Lambda }_{\left[ 1\right] }:=%
\begin{array}{ccc}
\mathcal{A} & \overset{\mathrm{Id}_{\mathcal{A}}}{\longrightarrow } &
\mathcal{A} \\
L_{1}\uparrow \downarrow R_{1} & \lambda _{1} & L_{\left[ 1\right] }\uparrow
\downarrow R_{\left[ 1\right] } \\
\mathcal{B}_{1} & \overset{\Lambda _{1}}{\longrightarrow } & \mathcal{B}_{%
\left[ 1\right] }%
\end{array}%
\end{equation*}
\end{invisible}

where $U_{\left[ 0,1\right] }\Lambda
_{1}=U_{0,1}$ and $\Lambda _{1}$ is fully faithful and injective on objects.
Recall that any component of $\pi _{\left[ 1\right] }R_{\left[ 1\right] }$
is an epimorphism.

Hence all the conditions in Proposition \ref{pro:5} are verified for $%
\mathbb{T}^{\prime }=\boldsymbol{\Lambda }_{\left[ 1\right] }$ and $\mathbb{T%
}=\mathbb{T}_{\left[ 1\right] }$ and we obtain the adjoint triangle $%
\boldsymbol{\Lambda }_{[2] }$
\begin{invisible}
\begin{equation*}
\boldsymbol{\Lambda }_{\left[ 2\right] }:=%
\begin{array}{ccc}
\mathcal{A} & \overset{\mathrm{Id}_{\mathcal{A}}}{\longrightarrow } &
\mathcal{A} \\
L_{2}\uparrow \downarrow R_{2} & \lambda _{2}:=\left( \lambda _{1}\right)
_{1} & L_{\left[ 2\right] }\uparrow \downarrow R_{\left[ 2\right] } \\
\mathcal{B}_{2} & \overset{\Lambda _{2}:=\left( \Lambda _{1}\right) _{1}}{%
\longrightarrow } & \mathcal{B}_{\left[ 2\right] }%
\end{array}%
\end{equation*}
\end{invisible}
where $\lambda _{2}:L_{[2]}\Lambda_2\to L_2$ is invertible. Moreover $U_{\left[ 1,2\right] }\Lambda
_{2}=\Lambda _{1}U_{1,2}$ and $U_{\left[ 2\right] }\Lambda _{2}=U_{\left[ 0,1%
\right] }\Lambda _{1}U_{1,2}$ i.e. $U_{\left[ 0,2\right] }\Lambda
_{2}=U_{0,1}U_{1,2}=U_{0,2}.$ Furthermore $\Lambda _{2}$ is fully faithful
and injective on objects. Recall that any component of $\pi _{\left[ 2\right]
}R_{\left[ 2\right] }$ is an epimorphism.

Going on this way we construct iteratively $\left( \Lambda _{n}\right)
_{n\in \mathbb{N}}$ such that $\Lambda _{0}:=\mathrm{Id}$ and $U_{\left[
n-1,n\right] }\Lambda _{n}=\Lambda _{n-1}U_{n-1,n}$, for every $n\geq 1.$
Moreover $\lambda _{n}:L_{[n]}\Lambda_n\to L_n$ is invertible, $U_{\left[ 0,n\right] }\Lambda
_{n}=U_{0,n}$ for every $n\in \mathbb{N}$ and $\Lambda _{n}$ is fully
faithful and injective on objects.
\end{remark}

\begin{remark}
Note that, by construction $\Lambda _{n}$ is defined as follows
\begin{equation*}
\Lambda _{n}:\mathcal{B}_{n}\rightarrow \mathcal{B}_{\left[ n\right] },\
\left( V_{n-1},\mu _{n-1}\right) \mapsto \left( \Lambda _{n-1}V_{n-1},U_{%
\left[ n-1\right] }\Lambda _{n-1}\mu _{n-1}\circ R\lambda
_{n-1}V_{n-1}\right) ,\  f\mapsto \Lambda _{n-1}U_{n-1,n}f
\end{equation*}%
i.e.
\begin{equation*}
\Lambda _{n}:\mathcal{B}_{n}\rightarrow \mathcal{B}_{\left[ n\right]
},\qquad \left( V_{n-1},\mu _{n-1}\right) \mapsto \left( \Lambda
_{n-1}V_{n-1},U_{n-1}\mu _{n-1}\circ R\lambda _{n-1}V_{n-1}\right) ,\qquad
f\mapsto \Lambda _{n-1}U_{n-1,n}f.
\end{equation*}

\begin{invisible}
At the beginning we thought that $\lambda _{n}=\mathrm{Id}$ and
\begin{eqnarray*}
\Lambda _{n} &:&\left( V,\mu _{0}:RLV\rightarrow V,\mu
_{1}:R_{1}L_{1}V_{1}\rightarrow V_{1},\ldots ,\mu
_{n-1}:R_{n-1}L_{n-1}V_{n-1}\rightarrow V_{n-1}\right) \\
&\mapsto &\left( V,\mu _{0}:RLV\rightarrow V,U_{1}\mu _{1}:RL_{\left[ 1%
\right] }V_{\left[ 1\right] }\rightarrow V,\ldots ,U_{n-1}\mu _{n-1}:RL_{%
\left[ n-1\right] }V_{\left[ n-1\right] }\rightarrow V\right) .
\end{eqnarray*}%
because we know that $\lambda _{1}=\mathrm{Id.}$ Let us consider $\lambda
_{2}.$ Take
\begin{equation*}
B_{\left[ 2\right] }:=\Lambda _{2}\left( V_{1},\mu _{1}\right) =\left(
\Lambda _{1}V_{1},U_{1}\mu _{1}\circ R\lambda _{1}V_{1}\right) =\left(
\Lambda _{1}V_{1},U_{1}\mu _{1}\right) =:\left( B_{\left[ 1\right] },b_{%
\left[ 1\right] }\right) .
\end{equation*}
Then $L_{\left[ 2\right] }B_{\left[ 2\right] }$ is defined by the
coequalizer
\begin{equation*}
LRL_{\left[ 1\right] }B_{\left[ 1\right] }\overset{\pi _{\left[ 1\right] }B_{%
\left[ 1\right] }\circ Lb_{\left[ 1\right] }}{\underset{\epsilon L_{\left[ 1%
\right] }B_{\left[ 1\right] }}{\rightrightarrows }}L_{\left[ 1\right] }B_{%
\left[ 1\right] }\overset{\pi _{\left[ 1,2\right] }B_{\left[ 2\right] }}{%
\longrightarrow }L_{\left[ 2\right] }B_{\left[ 2\right] }
\end{equation*}%
i.e.%
\begin{equation*}
LRL_{\left[ 1\right] }\Lambda _{1}V_{1}\overset{\pi _{\left[ 1\right]
}\Lambda _{1}V_{1}\circ LU_{1}\mu _{1}}{\underset{\epsilon L_{\left[ 1\right]
}\Lambda _{1}V_{1}}{\rightrightarrows }}L_{\left[ 1\right] }\Lambda _{1}V_{1}%
\overset{\pi _{\left[ 1,2\right] }\Lambda _{2}\left( V_{1},\mu _{1}\right) }{%
\longrightarrow }L_{\left[ 2\right] }\Lambda _{2}\left( V_{1},\mu
_{1}\right) .
\end{equation*}%
By the proof of Proposition \ref{pro:4}, we know that $L_{\left[ 1\right]
}\Lambda _{1}=L_{1}$ and $\pi _{\left[ 1\right] }\Lambda _{1}=\pi _{1}$ so
that we obtain%
\begin{equation*}
LRL_{1}V_{1}\overset{\pi _{1}V_{1}\circ LU_{1}\mu _{1}}{\underset{\epsilon
L_{1}V_{1}}{\rightrightarrows }}L_{1}V_{1}\overset{\pi _{\left[ 1,2\right]
}\Lambda _{2}\left( V_{1},\mu _{1}\right) }{\longrightarrow }L_{\left[ 2%
\right] }\Lambda _{2}\left( V_{1},\mu _{1}\right) .
\end{equation*}%
Note that we necessarily have%
\begin{eqnarray*}
&&%
\begin{array}{ccc}
\mathcal{A} & \overset{\mathrm{Id}_{\mathcal{A}}}{\longrightarrow } &
\mathcal{A} \\
L_{2}\uparrow \downarrow R_{2} & \lambda _{2} & L_{\left[ 2\right] }\uparrow
\downarrow R_{\left[ 2\right] } \\
\mathcal{B}_{2} & \overset{\Lambda _{2}}{\longrightarrow } & \mathcal{B}_{%
\left[ 2\right] }%
\end{array}%
\ast
\begin{array}{ccc}
\mathcal{A} & \overset{\mathrm{Id}_{\mathcal{A}}}{\longrightarrow } &
\mathcal{A} \\
L_{\left[ 2\right] }\uparrow \downarrow R_{\left[ 2\right] } & \pi _{\left[
1,2\right] } & L_{\left[ 1\right] }\uparrow \downarrow R_{\left[ 1\right] }
\\
\mathcal{B}_{\left[ 2\right] } & \overset{U_{\left[ 1,2\right] }}{%
\longrightarrow } & \mathcal{B}_{\left[ 1\right] }%
\end{array}
\\
&=&%
\begin{array}{ccc}
\mathcal{A} & \overset{\mathrm{Id}_{\mathcal{A}}}{\longrightarrow } &
\mathcal{A} \\
L_{2}\uparrow \downarrow R_{2} & \pi _{1,2} & L_{1}\uparrow \downarrow R_{1}
\\
\mathcal{B}_{2} & \overset{U_{1,2}}{\longrightarrow } & \mathcal{B}_{1}%
\end{array}%
\ast
\begin{array}{ccc}
\mathcal{A} & \overset{\mathrm{Id}_{\mathcal{A}}}{\longrightarrow } &
\mathcal{A} \\
L_{1}\uparrow \downarrow R_{1} & \lambda _{1} & L_{\left[ 1\right] }\uparrow
\downarrow R_{\left[ 1\right] } \\
\mathcal{B}_{1} & \overset{\Lambda _{1}}{\longrightarrow } & \mathcal{B}_{%
\left[ 1\right] }%
\end{array}%
\end{eqnarray*}%
so that $\lambda _{2}\ast \pi _{\left[ 1,2\right] }=\pi _{1,2}\ast \lambda
_{1}$ i.e. $\lambda _{2}\circ \pi _{\left[ 1,2\right] }\Lambda _{2}=\pi
_{1,2}\circ \lambda _{1}U_{1,2}.$ Since $\lambda _{1}=\mathrm{I,}$ we arrive
at $\lambda _{2}\circ \pi _{\left[ 1,2\right] }\Lambda _{2}=\pi _{1,2}.$
Hence the following diagram serially commutes
\begin{equation*}
\begin{array}{ccccc}
LRL_{1}V_{1} & \overset{\pi _{1}V_{1}\circ LU_{1}\mu _{1}}{\underset{%
\epsilon L_{1}V_{1}}{\rightrightarrows }} & L_{1}V_{1} & \overset{\pi _{%
\left[ 1,2\right] }\Lambda _{2}\left( V_{1},\mu _{1}\right) }{%
\longrightarrow } & L_{\left[ 2\right] }\Lambda _{2}\left( V_{1},\mu
_{1}\right) \\
\pi _{1}R_{1}L_{1}V_{1}\downarrow &  & \downarrow \mathrm{Id} &  &
\downarrow \lambda _{2}\left( V_{1},\mu _{1}\right) \\
L_{1}R_{1}L_{1}V_{1} & \overset{L_{1}\mu _{1}}{\underset{\epsilon
_{1}L_{1}V_{1}}{\rightrightarrows }} & L_{1}V_{1} & \overset{\pi
_{1,2}\left( V_{1},\mu _{1}\right) }{\longrightarrow } & L_{2}\left(
V_{1},\mu _{1}\right)%
\end{array}%
\end{equation*}%
Since $\pi _{1}R_{1}L_{1}V_{1}$ is an epimorphism, it is once more clear
that $\lambda _{2}\left( V_{1},\mu _{1}\right) $ must be invertible. But it
is now an identification and no longer an identity as in case of $\lambda
_{1}.$
\end{invisible}
\end{remark}

Since $\Lambda _{n}:\mathcal{B}_{n}\rightarrow \mathcal{B}_{\left[ n\right]
} $ is fully faithful, we get that $\mathcal{B}_{n}$ is equivalent to the
essential image of $\Lambda _{n}.$ Later on we will look for handy criteria
for an object in $\mathcal{B}_{\left[ n\right] }$ to belong to the image of $%
\Lambda _{n}.$

\section{Relative Grothendieck fibrations\label{sec:4}}

In order to deduce properties of the functors $\Lambda_n$, a relative
version of the notion of Grothendieck fibration is needed. We collect here
its definition and properties.

\begin{definition}
Let $F:\mathcal{A}\rightarrow \mathcal{B}$ be a functor and let $\mathsf{M}$
be a class of morphisms in $\mathcal{B}$.

We say that a morphism $f\in \mathcal{A}$ is \textbf{cartesian} (with
respect to $F$) over a morphism $f^{\prime }\in \mathcal{B}$ whenever $%
Ff=f^{\prime }$ and given $g\in \mathcal{A}$ and $h\in \mathcal{B}$ such
that $Ff\circ h=Fg,$ then there exists a unique morphism $k\in \mathcal{A}$
such that $Fk=h$ and $f\circ k=g,$ \cite[Definition 8.1.2]{Borceux2}.
\begin{equation*}
\xymatrixcolsep{1.5cm}\xymatrixrowsep{0.5cm}\xymatrix{&FZ\ar[dl]_h\ar[d]^{Fg}\\ FX\ar[r]|{Ff}&FY }\qquad\qquad
\xymatrixcolsep{1.5cm}\xymatrixrowsep{0.5cm}\xymatrix{&Z\ar@{.>}[dl]_k\ar[d]^{g}\\ X\ar[r]|{f}&Y }
\end{equation*}
We say that $F$ is an $\mathsf{M}$-\textbf{fibration} if every morphism $%
f^{\prime }:B\rightarrow FA$ in $\mathsf{M}$ there is $f:A^{\prime
}\rightarrow A\ $which is cartesian over $f^{\prime }$. When $\mathsf{M}$ is
the class of all morphisms in $\mathcal{B}$ we recover the notion of
fibration, see \cite[Definition 8.1.3]{Borceux2}.
\end{definition}

\begin{remark}\label{rem:cartpullb}
A morphism $f:X\to Y$ is cartesian over $Ff$ if and only if following
diagram is a pullback for every object $Z$, where the vertical maps are obtained by evaluating $F$.
\begin{equation*}
\xymatrixcolsep{1.5cm}\xymatrixrowsep{0.5cm}\xymatrix{\mathrm{Hom}\left(
Z,X\right)\ar[d]_{F_{Z,X}}\ar[r]^{\mathrm{Hom}\left(
Z,f\right)}&\mathrm{Hom}\left( Z,Y\right)\ar[d]^{F_{Z,Y}}\\
\mathrm{Hom}\left( FZ,FX\right)\ar[r]^{\mathrm{Hom}\left(
FZ,Ff\right)}&\mathrm{Hom}\left( FZ,FY\right) }
\end{equation*}
In fact the map $\mathrm{Hom}(Z,X)\to \mathrm{Hom}(Z,Y)\times_{\mathrm{Hom}(FZ,FY)}\mathrm{Hom}(FZ,FX):k\mapsto(f\circ k,Fk)$ into the pullback becomes bijective. This fact is well-known, see e.g. \cite[Definition 4.32.1]{stacks}.
\end{remark}

\begin{lemma}[Cf. {\cite[Proposition 3.4(ii)]{Vistoli}}]
\label{lem:transitive}Being cartesian is transitive.
\end{lemma}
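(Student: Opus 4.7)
The plan is to prove transitivity directly from the universal property defining cartesian morphisms. Suppose $f:X\to Y$ and $g:Y\to Z$ are morphisms in $\mathcal{A}$ which are cartesian over $Ff$ and $Fg$ respectively. I want to show that $g\circ f$ is cartesian over $F(g\circ f)=Fg\circ Ff$. The strategy is simply to apply the universal property of each cartesian morphism in turn, first $g$ and then $f$.

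In detail, I would take an arbitrary morphism $\beta:W\to Z$ in $\mathcal{A}$ together with a morphism $h:FW\to FX$ in $\mathcal{B}$ satisfying $Fg\circ Ff\circ h=F\beta$. Since $g$ is cartesian over $Fg$ and $Fg\circ (Ff\circ h)=F\beta$, there exists a unique morphism $\gamma:W\to Y$ in $\mathcal{A}$ with $F\gamma=Ff\circ h$ and $g\circ\gamma=\beta$. Now, since $f$ is cartesian over $Ff$ and $Ff\circ h=F\gamma$, there exists a unique morphism $\alpha:W\to X$ in $\mathcal{A}$ with $F\alpha=h$ and $f\circ\alpha=\gamma$. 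Then $F\alpha=h$ and $(g\circ f)\circ\alpha=g\circ\gamma=\beta$, as required. Uniqueness of $\alpha$ follows by reversing the argument: any $\alpha'$ satisfying both conditions produces $\gamma':=f\circ\alpha'$ which, by uniqueness of $\gamma$ via $g$, must equal $\gamma$, and then by uniqueness of the lift via $f$, we conclude $\alpha'=\alpha$.

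The argument is essentially routine once the universal properties are invoked in the right order; no step presents a real obstacle. As an alternative, one could argue via the pullback reformulation recorded in Remark \ref{rem:cartpullb}: being cartesian over $Ff$ means that the corresponding naturality square for $\mathrm{Hom}(Z,-)$ is a pullback for every $Z$, and transitivity of cartesian morphisms is then just the pasting lemma for pullbacks applied to the stacked squares associated with $f$ and $g$. Either presentation yields a short proof that fits in a few lines.
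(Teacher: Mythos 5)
Your direct argument is correct: lifting first through $g$ and then through $f$, and running the same two uniqueness statements in reverse for the uniqueness of the lift, is exactly the right diagram chase, and no step fails. The paper, however, takes the other route you mention only as an alternative: its entire proof is the observation that, by Remark \ref{rem:cartpullb}, $f$ being cartesian over $Ff$ means a certain $\mathrm{Hom}$-square is a pullback for every test object, so transitivity is immediate from the fact that the vertical pasting of two pullback squares is a pullback (citing \cite[Proposition 2.5.9]{Borceux1}). The trade-off is the usual one: your element-free chase through the universal property is self-contained and does not presuppose the Remark or the pasting lemma, at the cost of a few more lines; the paper's version is a one-liner but outsources the work to the pullback reformulation it has already set up. Since you supply both, nothing is missing --- just note that the paper treats the pullback argument as the primary proof rather than the afterthought.
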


\begin{proof}
Since the vertical composition of pullbacks is a pullback (\cite[Proposition 2.5.9]{Borceux1}), it follows from Remark \ref{rem:cartpullb}. 
\end{proof}

Recall that an \textbf{isofibration} (called \emph{transportable functor} in
\cite[Corollaire 4.4]{Gro-SGA1}) is a functor $F:\mathcal{A}\rightarrow
\mathcal{B}$ such that for any object $A\in \mathcal{A}$ and any isomorphism $%
f^{\prime }:B\rightarrow FA$, there exists an isomorphism $f:A^{\prime
}\rightarrow A$ such that $Ff=f^{\prime }$. A \textbf{discrete isofibration}
is an isofibration such that $f$ is unique (see \cite[page 13]{LP}).

It is known that every fibration is an isofibration. Let us prove a relative
version of this result.

\begin{proposition}
\label{pro:isofibr}Let $\mathsf{Iso}$ be the class of all isomorphisms in $%
\mathcal{B}$. Then $F:\mathcal{A}\rightarrow \mathcal{B}$ is an $\mathsf{Iso}
$-fibration if and only if it is an isofibration.
\end{proposition}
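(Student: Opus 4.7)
The plan is to prove both implications by straightforward manipulation using the cartesian lifting property; the main work consists in showing that a cartesian lift of an isomorphism is itself invertible, and conversely that an invertible lift is automatically cartesian.

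For the forward direction, suppose $F$ is an $\mathsf{Iso}$-fibration and let $f':B\to FA$ be an isomorphism in $\mathcal{B}$. Pick a cartesian morphism $f:A'\to A$ over $f'$. The goal is to build a two-sided inverse of $f$. I would first apply the cartesian property to the data $g=\mathrm{Id}_{A}:A\to A$ and $h=(f')^{-1}:FA\to B=FA'$: since $Ff\circ h=f'\circ (f')^{-1}=\mathrm{Id}_{FA}=Fg$, there is a unique $k:A\to A'$ with $Fk=(f')^{-1}$ and $f\circ k=\mathrm{Id}_{A}$, giving a right inverse of $f$. To obtain that $k$ is also a left inverse, I would appeal to the uniqueness clause of the cartesian property: the morphism $k\circ f:A'\to A'$ satisfies $f\circ(k\circ f)=f=f\circ \mathrm{Id}_{A'}$ and $F(k\circ f)=(f')^{-1}\circ f'=\mathrm{Id}_{B}=F\mathrm{Id}_{A'}$, so applying the cartesian lifting with $g=f$ and $h=\mathrm{Id}_{B}$ forces $k\circ f=\mathrm{Id}_{A'}$. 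Hence $f$ is an isomorphism.

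For the backward direction, suppose $F$ is an isofibration and let $f':B\to FA$ be an isomorphism in $\mathcal{B}$. By hypothesis there is an isomorphism $f:A'\to A$ in $\mathcal{A}$ with $Ff=f'$. It remains to verify that $f$ is cartesian. Given $g:Z\to A$ in $\mathcal{A}$ and $h:FZ\to B$ in $\mathcal{B}$ with $Ff\circ h=Fg$, set $k:=f^{-1}\circ g:Z\to A'$. Then $f\circ k=g$, and $Fk=(Ff)^{-1}\circ Fg=(f')^{-1}\circ f'\circ h=h$. Uniqueness is immediate: any $k'$ with $f\circ k'=g$ satisfies $k'=f^{-1}\circ g=k$, because $f$ is invertible in $\mathcal{A}$. (Note that for this direction the uniqueness of the isomorphism $f$ in the definition of isofibration is not needed.)

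No serious obstacle is anticipated; the only subtlety is remembering that one must verify the lift $f$ is cartesian in the reverse direction, rather than merely producing an isomorphism above $f'$, and that in the forward direction one needs both the existence and the uniqueness parts of the cartesian property to upgrade the right inverse to a two-sided one.
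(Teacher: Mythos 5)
Your proof is correct and follows essentially the same route as the paper: the forward direction extracts a right inverse from the existence clause and then uses the uniqueness clause (applied to the lift of $\mathrm{Id}_{B}$ over $g=f$) to upgrade it to a two-sided inverse, while the backward direction checks directly that an invertible lift is cartesian via $k=f^{-1}\circ g$. The only cosmetic difference is that the paper names the unique lift $\lambda$ and shows both $k\circ f$ and $\mathrm{Id}_{A'}$ coincide with it, whereas you invoke uniqueness directly; the content is identical.
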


\begin{proof}
$\left( \Rightarrow \right) $ Let $f^{\prime }:B\rightarrow FA$ be any
isomorphism. Then $f^{\prime }\in \mathsf{Iso.}$ Since $F$ is an $\mathsf{Iso%
}$-fibration we have that there is $f:A^{\prime }\rightarrow A\ $which is
cartesian over $f^{\prime }$. In particular $Ff=f^{\prime }$ and $FA^{\prime
}=B.$ From $Ff\circ \left( f^{\prime }\right) ^{-1}=F\mathrm{Id}_{A}$ we
deduce that there exists a unique morphism $k:A\rightarrow A^{\prime }$ such
that $Fk=\left( f^{\prime }\right) ^{-1}$ and $f\circ k=\mathrm{Id}_{A}.$
Similarly, from $Ff\circ \mathrm{Id}_{B}=Ff,$ we get a unique morphism $%
\lambda :A^{\prime }\rightarrow A^{\prime }$ such that $F\lambda =\mathrm{Id}%
_{B}$ and $f\circ \lambda =f.$ Since $F\left( k\circ f\right) =Fk\circ
Ff=\left( f^{\prime }\right) ^{-1}\circ f^{\prime }=\mathrm{Id}$ and $f\circ
\left( k\circ f\right) =f$, we get $\lambda =k\circ f.$ On the other hand
since $F\mathrm{Id}_{A^{\prime }}=\mathrm{Id}_{B}$ and $f\circ \mathrm{Id}%
_{A^{\prime }}=f$ we also have $\lambda =\mathrm{Id}_{A^{\prime }}.$ Hence $%
k\circ f=\mathrm{Id.}$ We have so proved that $f$ is an isomorphism. Thus $F$
is an isofibration.

$\left( \Leftarrow \right) $ Let $f^{\prime }:B\rightarrow FA$ be in $%
\mathsf{Iso}$. Then it is an isomorphism. Since $F$ is an isofibration there
is there exists an isomorphism $f:A^{\prime }\rightarrow A$ such that $%
Ff=f^{\prime }$. Let $g\in \mathcal{A}$ and $h\in \mathcal{B}$ such that $%
Ff\circ h=Fg.$ Then we can take $k=f^{-1}\circ g$ to get $Fk=F\left(
f^{-1}\right) \circ F\left( g\right) =F\left( f^{-1}\right) \circ Ff\circ
h=h $ and $f\circ k=g.$ On the other hand any morphism $k$ such that $Fk=h$
and $f\circ k=g,$ from the latter equality is $f^{-1}\circ g.$ We have so
proved that $f$ is cartesian over $f^{\prime }.$ Hence $F$ is an $\mathsf{M}$%
-fibration.
\end{proof}

\begin{corollary}
\label{coro:isofibr}If $\mathsf{M}\supseteq \mathsf{Iso}$ and $F:\mathcal{A}%
\rightarrow \mathcal{B}$ is an $\mathsf{M}$-fibration then $F$ is an
isofibration.
\end{corollary}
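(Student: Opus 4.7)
The plan is to observe that this corollary is an almost immediate consequence of Proposition \ref{pro:isofibr}, so I would simply reduce the claim to that proposition. The key is to notice that the $\mathsf{M}$-fibration property is monotone in $\mathsf{M}$ with respect to reverse inclusion: if $F$ is an $\mathsf{M}$-fibration and $\mathsf{M}'\subseteq \mathsf{M}$, then $F$ is an $\mathsf{M}'$-fibration, since every morphism $f'\in \mathsf{M}'$ is in particular in $\mathsf{M}$, and therefore admits a cartesian lift.

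Applying this observation with $\mathsf{M}'=\mathsf{Iso}$ (which is allowed precisely because $\mathsf{Iso}\subseteq \mathsf{M}$ by hypothesis), we deduce that $F$ is an $\mathsf{Iso}$-fibration. Then Proposition \ref{pro:isofibr} yields at once that $F$ is an isofibration, concluding the proof.

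There is essentially no obstacle here: the argument consists of a single short observation together with an invocation of Proposition \ref{pro:isofibr}. In particular, no new cartesian lifting needs to be constructed, and no properties of $\mathcal{A}$ or $\mathcal{B}$ are required beyond the ones built into the definitions.
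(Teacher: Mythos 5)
Your proposal is correct and follows exactly the paper's argument: restrict the $\mathsf{M}$-fibration property along the inclusion $\mathsf{Iso}\subseteq\mathsf{M}$ to conclude that $F$ is an $\mathsf{Iso}$-fibration, then invoke Proposition \ref{pro:isofibr}. Nothing further is needed.
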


\begin{proof}
Clearly from $\mathsf{M}\supseteq \mathsf{Iso}$ we deduce that $F$ is $%
\mathsf{M}$-fibration implies $F$ is an $\mathsf{Iso}$-fibration. By
Proposition \ref{pro:isofibr}, $F$ is an isofibration.
\end{proof}

\begin{remark}
\label{rem:isofibr}If $F$ is an isofibration which is faithful and injective
on objects then $F$ is a discrete isofibration. In fact, if there is another
$t:A^{\prime \prime }\rightarrow A$ such that $Ft=f^{\prime },$ then $%
FA^{\prime \prime }=B=FA^{\prime }$ so that $A=A^{\prime }.$ Moreover $%
Ft=f^{\prime }=Ff$ so that $t=f.$ Hence $f$ is unique.
\end{remark}

\begin{definition}
\label{def:images}Given a functor $F:\mathcal{A}\rightarrow \mathcal{B}$, if we define the \textbf{image} of $F$, denoted by  $\mathrm{Im}\left( F\right) $, as the class of objects in $\mathcal{B}$ of the form $FA$ for some $A\in \mathcal{A}$ together with the class of morphisms in $\mathcal{B}$ of the form $Ff$ for some $f\in \mathcal{A}$, it is not necessarily true that $\mathrm{Im}\left( F\right) $ is a subcategory of $\mathcal{B}$ in general. However this holds in some particular cases e.g. when $F$ is either injective on objects, see e.g.  \cite[page 62]{Mitchell}, or full, see Lemma \ref{lem:isofibfull} below.

In general we can consider the following categories.
\end{definition}

\begin{itemize}

\item $\mathrm{\mathrm{Eim}}\left( F\right) ,$ the \textbf{essential image} of $F,$
i.e. the full subcategory of $\mathcal{B}$ whose objects are those isomorphic to $FA$ for
some $A\in \mathcal{A}$.

\item $\mathrm{Im}^{\prime }\left( F\right) ,$ i.e. the full subcategory of $\mathcal{B}$ whose objects are of the form $FA$ for some $A\in \mathcal{A}$.
\end{itemize}

Clearly $\mathrm{Im}\left( F\right) \subseteq \mathrm{Im}^{\prime
}\left( F\right) \subseteq \mathrm{\mathrm{Eim}}\left(
F\right) $ hold always.

\begin{lemma}
\label{lem:isofibfull}Let $F:\mathcal{A}\rightarrow \mathcal{B}$ be a
functor.

\begin{enumerate}
\item[1)] If $F$ is an isofibration then $\mathrm{\mathrm{Eim}}\left(
F\right) \subseteq \mathrm{Im}^{\prime }\left( F\right) .$

\item[2)] If $F$ is full then $\mathrm{Im}\left( F\right)$ is a category and $\mathrm{Im}^{\prime }\left( F\right) =%
\mathrm{Im}\left( F\right) .$
\end{enumerate}
\end{lemma}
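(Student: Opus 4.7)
The plan is to handle the two parts separately, exploiting in each case the defining universal property (isofibration, fullness) to convert ``abstract'' membership to ``concrete'' membership.

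For part 1, I would take an object $B\in \mathrm{Eim}\left( F\right) $, which by definition means there exist $A\in \mathcal{A}$ and an isomorphism $f^{\prime }:B\rightarrow FA$ in $\mathcal{B}$. Applying the isofibration hypothesis to the pair $(A,f^{\prime })$ produces an isomorphism $f:A^{\prime }\rightarrow A$ in $\mathcal{A}$ with $Ff=f^{\prime }$. Reading off the source gives $FA^{\prime }=B$, so $B$ is an object of $\mathrm{Im}^{\prime }\left( F\right) $. Since both $\mathrm{Eim}(F)$ and $\mathrm{Im}'(F)$ are defined as full subcategories of $\mathcal{B}$, this containment on objects automatically promotes to a containment of subcategories.

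For part 2, I would first verify that $\mathrm{Im}\left( F\right) $ really is a subcategory of $\mathcal{B}$. Identities are immediate since $\mathrm{Id}_{FA}=F\mathrm{Id}_{A}$. Closure under composition is the step that genuinely uses fullness: given two composable arrows $Ff:FA\rightarrow X$ and $Fg:X\rightarrow FD$ lying in $\mathrm{Im}(F)$, the composite $Fg\circ Ff$ is a morphism in $\mathcal{B}$ from $FA$ to $FD$, and fullness supplies some $h:A\rightarrow D$ in $\mathcal{A}$ with $Fh=Fg\circ Ff$, whence the composite lies in $\mathrm{Im}\left( F\right) $. (Note the subtle point that one cannot just take $h=g\circ f$, because $Ff$ and $Fg$ being composable does not force $f$ and $g$ to be composable in $\mathcal{A}$; this is precisely why fullness is needed.)

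Once $\mathrm{Im}\left( F\right) $ is known to be a category, it remains to show $\mathrm{Im}^{\prime }\left( F\right) =\mathrm{Im}\left( F\right) $. The inclusion $\mathrm{Im}\left( F\right) \subseteq \mathrm{Im}^{\prime }\left( F\right) $ is already noted in Definition \ref{def:images}. For the converse, the two categories share the same class of objects by definition, so the only content is on morphisms: a morphism in $\mathrm{Im}^{\prime }\left( F\right) $ from $FA$ to $FA^{\prime }$ is, by fullness of the subcategory inclusion, an arbitrary morphism $u:FA\rightarrow FA^{\prime }$ in $\mathcal{B}$; applying fullness of $F$ writes $u=Ff$ for some $f:A\rightarrow A^{\prime }$ in $\mathcal{A}$, so $u\in \mathrm{Im}\left( F\right) $. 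No genuine obstacle arises; the only thing to be careful about is not to conflate $\mathrm{Im}(F)$ with $\mathrm{Im}'(F)$ before fullness is invoked.
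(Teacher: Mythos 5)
Your proof is correct and follows essentially the same route as the paper's: part 1 is the identical isofibration-lifting argument, and part 2 rests on the same key observation that fullness of $F$ makes every $\mathcal{B}$-morphism $FA\rightarrow FA'$ of the form $Ff$, which simultaneously gives closure of $\mathrm{Im}(F)$ under composition and the equality $\mathrm{Im}'(F)=\mathrm{Im}(F)$. Your explicit remark that composability of $Ff$ and $Fg$ does not force composability of $f$ and $g$ is a worthwhile clarification of exactly why fullness is needed, but it does not change the argument.
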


\begin{proof}
$1).$ Given an object $B$ in $\mathrm{\mathrm{Eim}}\left( F\right) $ then $%
B\in \mathcal{B}$ is endowed with an isomorphism $f^{\prime }:B\rightarrow
FA $ for some $A\in \mathcal{A}$. Since $F$ is an isofibration we get an
isomorphism $f:A^{\prime }\rightarrow A$ such that $Ff=f^{\prime }.$ In
particular $FA^{\prime }=B$ whence $B\in \mathrm{Im}\left( F\right) .$ Since
$\mathrm{\mathrm{Eim}}\left( F\right) $ and $\mathrm{Im}^{\prime }\left(
F\right) $ ar both full subcategories of $\mathcal{B}$ we get $\mathrm{%
\mathrm{Eim}}\left( F\right) \subseteq \mathrm{Im}^{\prime }\left( F\right)
. $

$2).$ Let $g:FA^{\prime
}\rightarrow FA$ be a morphism in $\mathrm{Im}^{\prime }\left( F\right) .$
Since $F$ is full, there is $f:A^{\prime }\rightarrow A$ such that $g=Ff.$
Then $g$ is a morphism in $\mathrm{Im}\left( F\right) $. As a consequence, the composition in $\mathrm{Im}^{\prime }\left( F\right) $ of two morphisms in $\mathrm{Im}\left( F\right) $ lies in $\mathrm{Im}\left( F\right) $ too. Since
$\mathrm{Im}^{\prime }\left( F\right) $ and $\mathrm{Im}\left( F\right) $ have the same objects we get $\mathrm{Im}^{\prime }\left(
F\right) \subseteq \mathrm{Im}\left( F\right) $ and hence the equality.
\end{proof}

\begin{lemma}
\label{lem:fibrfulfaith}Let $F:\mathcal{A}\rightarrow \mathcal{B}$ be a fully faithful functor. Then $F$ is an $\mathsf{M}$-fibration if and only if
for every morphism $f^{\prime }:B\rightarrow FA$ in $\mathsf{M}$ there is $%
A^{\prime }\in \mathcal{A}\ $such that $FA^{\prime }=B.$
\end{lemma}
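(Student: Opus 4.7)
The plan is direct, and the proof should be short because both fullness and faithfulness of $F$ do most of the work. I would split the argument into the two implications and treat them in order.

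For the forward implication, assume $F$ is an $\mathsf{M}$-fibration and let $f':B\to FA$ be in $\mathsf{M}$. By the definition of $\mathsf{M}$-fibration there exists a cartesian morphism $f:A'\to A$ with $Ff=f'$. Matching domains forces $FA'=B$, which is exactly what the statement asks for. This direction needs no appeal to full faithfulness.

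For the converse, assume the stated lifting-on-objects property and fix $f':B\to FA$ in $\mathsf{M}$. The plan is to produce an explicit cartesian lift of $f'$. First, use the hypothesis to pick $A'\in\mathcal{A}$ with $FA'=B$. Since $F$ is full, there is $f:A'\to A$ with $Ff=f'$. I would then verify directly that $f$ is cartesian: given $g:Z\to A$ in $\mathcal{A}$ and $h:FZ\to FA'$ in $\mathcal{B}$ with $Ff\circ h=Fg$, fullness of $F$ supplies some $k:Z\to A'$ with $Fk=h$, and then
\[
F(f\circ k)=Ff\circ Fk=Ff\circ h=Fg
\]
together with faithfulness of $F$ gives $f\circ k=g$. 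Uniqueness of $k$ is immediate from faithfulness, since any $k'$ with $Fk'=h$ satisfies $Fk'=Fk$ and hence $k'=k$.

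There is no real obstacle here: the only thing to be careful about is which of the two conditions (full, faithful) is used at which step. Fullness is needed twice (to produce $f$ from $f'$, and to produce $k$ from $h$), while faithfulness is needed to get $f\circ k=g$ and the uniqueness of $k$. I expect the whole argument to fit in a handful of lines.
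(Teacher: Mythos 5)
Your proof is correct and follows essentially the same route as the paper: the forward implication is the trivial observation that a cartesian lift of $f'$ has domain mapping to $B$, and the converse uses fullness to lift $f'$ to $f$ and then full faithfulness to produce and uniquely determine the comparison morphism $k$. No gaps.
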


\begin{proof}
Assume that for every morphism $f^{\prime }:B\rightarrow FA$ in $\mathsf{M}$
there is $A^{\prime }\in \mathcal{A}\ $such that $FA^{\prime }=B.$

Since $F$ is fully faithful there is $f:A^{\prime }\rightarrow A$ such
that $Ff=f^{\prime }.$ In order to prove that $f:A^{\prime }\rightarrow A\ $%
is cartesian over $f^{\prime },$ let $g:C\rightarrow A$ in $\mathcal{A}$ and
$h:FC\rightarrow FA^{\prime }$ in $\mathcal{B}$ such that $Ff\circ h=Fg.$
Since $F$ is fully faithful there exists a unique morphism $k\in \mathcal{%
A}$ such that $Fk=h$ and from $Ff\circ h=Fg$ and faithfulness of $F$ we
conclude that $f\circ k=g$ as desired.
\end{proof}

\begin{remark}
\label{rem:P&U}Proposition \ref{pro:aureo} states that a morphism $g\in
\left\langle F|G\right\rangle $ is cartesian over $Pg$ whenever $GPg$ is a
monomorphism. In other words any morphism $g\in \mathsf{M}\left( GP\right) $
is cartesian over $Pg$ where
\begin{equation*}
\mathsf{M}\left( F\right) =\left\{ f\in \mathcal{A}\mid Ff\text{ is a
monomorphism}\right\} .
\end{equation*}
\end{remark}

\begin{proposition}
\label{pro:Unfibr}For every $n\in \mathbb{N}$, every morphism $g\in \mathsf{M%
}\left( U_{\left[ n\right] }\right) $ is cartesian over $U_{\left[ n\right]
}g.$
\end{proposition}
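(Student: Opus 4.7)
The proof proceeds by induction on $n$. The base case $n=0$ is trivial, since $U_{[0]}=\mathrm{Id}_{\mathcal{B}}$ and any morphism $g$ with $g=U_{[0]}g$ a monomorphism is cartesian over itself (the unique lift of any $h$ is $h$ itself). The heart of the argument is the inductive step.

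For the inductive step, assume every morphism in $\mathsf{M}(U_{[n]})$ is cartesian over its image with respect to $U_{[n]}$, and take $g\in\mathsf{M}(U_{[n+1]})$ in $\mathcal{B}_{[n+1]}$. Set $g':=U_{[n,n+1]}g$, so that $U_{[n+1]}g=U_{[n]}g'$ is a monomorphism, i.e.\ $g'\in\mathsf{M}(U_{[n]})$. Recall $\mathcal{B}_{[n+1]}=\langle RL_{[n]}\,|\,U_{[n]}\rangle$ with forgetful functor $P(\mathbb{T}_{[n]})=U_{[n,n+1]}$, so that $U_{[n]}\circ U_{[n,n+1]}=U_{[n+1]}$ plays the role of $GP$ in Proposition~\ref{pro:aureo}. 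Since $U_{[n+1]}g$ is a monomorphism, Proposition~\ref{pro:aureo} (as reformulated in Remark~\ref{rem:P&U}) yields that $g$ is cartesian over $g'$ with respect to $U_{[n,n+1]}$. By the inductive hypothesis, $g'$ is cartesian over $U_{[n]}g'=U_{[n+1]}g$ with respect to $U_{[n]}$.

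It remains to combine these two cartesianness properties into cartesianness of $g$ over $U_{[n+1]}g$ with respect to the composite $U_{[n+1]}=U_{[n]}\circ U_{[n,n+1]}$. Given a test morphism $z\colon Z\to B'_{[n+1]}$ in $\mathcal{B}_{[n+1]}$ and $h\colon U_{[n+1]}Z\to U_{[n+1]}B_{[n+1]}$ in $\mathcal{B}$ with $U_{[n+1]}g\circ h=U_{[n+1]}z$, first apply cartesianness of $g'$ with respect to $U_{[n]}$ to obtain a unique $k'\colon U_{[n,n+1]}Z\to U_{[n,n+1]}B_{[n+1]}$ in $\mathcal{B}_{[n]}$ with $U_{[n]}k'=h$ and $g'\circ k'=U_{[n,n+1]}z$. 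Then apply cartesianness of $g$ with respect to $U_{[n,n+1]}$ to $k'$ and $z$ to obtain a unique $k\colon Z\to B_{[n+1]}$ with $U_{[n,n+1]}k=k'$ and $g\circ k=z$; this $k$ satisfies $U_{[n+1]}k=U_{[n]}k'=h$. Uniqueness of $k$ with respect to $U_{[n+1]}$ is derived from the two uniqueness clauses: any other lift $k''$ has $U_{[n,n+1]}k''$ satisfying the defining conditions of $k'$, forcing $U_{[n,n+1]}k''=k'$, and then the uniqueness clause for $g$ over $U_{[n,n+1]}$ forces $k''=k$.

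Conceptually, this is just the vertical pasting of two pullback squares of hom-sets from Remark~\ref{rem:cartpullb}, in the spirit of Lemma~\ref{lem:transitive}; the only mildly subtle point is that one is pasting along two different functors rather than iterating cartesianness with respect to a fixed functor. This is precisely why the argument is done by unwinding the universal property rather than by a direct appeal to Lemma~\ref{lem:transitive}, and it is the only step requiring any care; everything else is a direct application of Proposition~\ref{pro:aureo}.
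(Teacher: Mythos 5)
Your proof is correct and follows essentially the same route as the paper's: induction on $n$, the factorization $U_{[n+1]}=U_{[n]}\circ U_{[n,n+1]}$, Proposition \ref{pro:aureo} (via Remark \ref{rem:P&U}) for cartesianness with respect to $U_{[n,n+1]}$, and the inductive hypothesis for $U_{[n]}$. The only difference is presentational: where you unwind the universal property to paste the two cartesianness statements along the two different functors, the paper simply invokes Lemma \ref{lem:transitive}, whose proof by vertical composition of the hom-set pullbacks of Remark \ref{rem:cartpullb} already covers exactly this two-functor situation.
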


\begin{proof}
\ We proceed by induction on $n\in \mathbb{N}$. The first step is trivially
true since $U_{\left[ 0\right] }=\mathrm{Id}_{\mathcal{B}}$.

Let $n\geq 1$ and assume the statement true for $n-1.$ Let $g$ be a morphism
in $\mathsf{M}\left( U_{\left[ n\right] }\right) .$ Since $U_{\left[ n\right]
}=U_{\left[ n-1\right] }U_{\left[ n-1,n\right] }$ we get that $U_{\left[
n-1,n\right] }g\in \mathsf{M}\left( U_{\left[ n-1\right] }\right) .$ By
inductive hypothesis we have that $U_{\left[ n-1,n\right] }g$ is cartesian
over $U_{\left[ n-1\right] }U_{\left[ n-1,n\right] }g=U_{\left[ n\right] }g.$
By Lemma \ref{lem:transitive}, it remains to prove that $g$ is cartesian
over $U_{\left[ n-1,n\right] }g$ in order to conclude. To this aim observe
that $U_{\left[ n-1,n\right] }=P\left( \mathbb{T}_{\left[ n-1\right]
}\right) .$ Thus, by Remark \ref{rem:P&U} applied to $P=P\left( \mathbb{T}_{%
\left[ n-1\right] }\right) ,F=RL_{\left[ n-1\right] },G=U_{\left[ n-1\right]
}$, we get that $g$ is cartesian over $U_{\left[ n-1,n\right] }g$ as $g\in
\mathsf{M}\left( U_{\left[ n-1\right] }U_{\left[ n-1,n\right] }\right) .$
\end{proof}

The following result provides a characterization in term of suitable pullbacks for a morphism to be cartesian with respect to an adjoint functor. It will not be used in the sequel but we think it might be of some intrinsic interest.

\begin{proposition}
Let $\left( L,R\right) ,$ with $R:\mathcal{A}\rightarrow \mathcal{B}$, be an
adjunction with unit $\eta $ and counit $\epsilon .$

\begin{enumerate}
\item A morphism $f$ is cartesian with respect to $L$ over $Lf$ if and only
if the following diagram is a pullback.%
\begin{equation}  \label{diag:carteta}
\xymatrixcolsep{1.5cm}\xymatrixrowsep{0.5cm}\xymatrix{X\pulb \ar[d]_{\eta
X}\ar[r]^{f}&Y\ar[d]^{\eta Y}\\ RLX\ar[r]^{RLf}&RLY }
\end{equation}

\begin{invisible}
\begin{equation}
\begin{array}{ccc}
X & \overset{\eta X}{\longrightarrow } & RLX \\
f\downarrow & \lrcorner & \downarrow RLf \\
Y & \overset{\eta Y}{\longrightarrow } & RLY%
\end{array}%
\end{equation}
\end{invisible}

\item A morphism $f$ is cartesian with respect to $R$ over $Rf$ if and only
if $\epsilon Z\bot f$ (that is $\epsilon Z$ and $f$ are orthogonal) for every $Z\in
\mathcal{A}$ i.e. any commutative diagram as follows admits a unique
diagonal filler $k$ making both triangles commute.
\begin{equation}  \label{diag:ortogeps}
\xymatrixcolsep{1.5cm}\xymatrixrowsep{0.5cm}\xymatrix{LRZ
\ar[d]_{u}\ar[r]^{\epsilon Z}&Z\ar[d]^{v}\ar@{.>}[dl]_k\\ X\ar[r]_{f}&Y }
\end{equation}

\begin{invisible}
\begin{equation}
\begin{array}{ccc}
LRZ & \overset{\epsilon Z}{\longrightarrow } & Z \\
u\downarrow & k\swarrow & \downarrow v \\
X & \overset{f}{\longrightarrow } & Y%
\end{array}%
\end{equation}
\end{invisible}
\end{enumerate}
\end{proposition}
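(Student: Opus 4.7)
The plan is to treat the two parts separately, translating each into a statement at the level of Hom-sets via the adjunction $L\dashv R$.

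For part (1), I would first invoke Remark \ref{rem:cartpullb} applied to $F=L$: this reduces the claim ``$f$ is cartesian over $Lf$'' to the statement that, for every object $Z\in\mathcal{B}$, the commutative square obtained by applying $\mathrm{Hom}(Z,-)$ on top and $\mathrm{Hom}(LZ,L-)$ on bottom to $f:X\to Y$ is a pullback in $\mathrm{Set}$. I would then use the adjunction bijection $\mathrm{Hom}_{\mathcal{A}}(LZ,W)\cong\mathrm{Hom}_{\mathcal{B}}(Z,RW)$, $\phi\mapsto R\phi\circ\eta Z$, to rewrite the bottom row, and use naturality of $\eta$ to identify the vertical map $L_{Z,-}:\mathrm{Hom}(Z,-)\to\mathrm{Hom}(LZ,L-)$ with post-composition by $\eta$. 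The resulting square is exactly the one obtained by applying $\mathrm{Hom}(Z,-)$ to \eqref{diag:carteta}. A standard Yoneda-style argument (pullbacks in $\mathcal{B}$ are detected pointwise by representables) then concludes the equivalence.

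For part (2), my plan is to transport the cartesian condition across the adjunction bijection $\mathrm{Hom}_{\mathcal{B}}(RZ,RX)\cong\mathrm{Hom}_{\mathcal{A}}(LRZ,X)$, which sends $h$ to $u:=\epsilon X\circ Lh$ (with inverse $u\mapsto Ru\circ\eta RZ$). Given a cartesian test datum $(g:Z\to Y,\,h:RZ\to RX)$ satisfying $Rf\circ h=Rg$, I would set $u:=\epsilon X\circ Lh$ and $v:=g$; a short calculation using naturality of $\epsilon$ yields $f\circ u=v\circ\epsilon Z$, producing an orthogonality test datum for $\epsilon Z$ against $f$. The converse is obtained by running the same computation backwards, taking $g:=v$ and $h:=Ru\circ\eta RZ$, so that $Rf\circ h=R(f\circ u)\circ\eta RZ=R(v\circ\epsilon Z)\circ\eta RZ=Rv=Rg$. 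Finally, I would observe that, by the triangle identity $\epsilon X\circ LRk=k\circ\epsilon Z$, the condition $Rk=h$ on a morphism $k:Z\to X$ is equivalent to $k\circ\epsilon Z=\epsilon X\circ Lh=u$, while $f\circ k=g$ matches $f\circ k=v$ verbatim. Hence existence and uniqueness of a filler on one side correspond bijectively to the same on the other.

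The only mildly delicate point will be bookkeeping the transport of the vertical Hom-set maps across the adjunction in part (1), and verifying that the unique filler $k$ is literally the same morphism under the bijection in part (2); both reduce to routine applications of the triangle identities and the naturality of $\eta$ and $\epsilon$.
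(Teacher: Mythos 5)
Your proposal is correct. For part (1) you take the representable route: Remark \ref{rem:cartpullb} plus the adjunction bijection $\mathrm{Hom}(LZ,W)\cong\mathrm{Hom}(Z,RW)$ turns the cartesian condition into the statement that $\mathrm{Hom}(Z,-)$ applied to \eqref{diag:carteta} is a pullback of sets for every $Z$, and then Yoneda-style detection of pullbacks by representables finishes. This is precisely the ``somewhat faster proof'' that the paper only gestures at in its final two sentences; the paper's actual written-out argument is instead a direct diagram chase, constructing the mediating morphism $k$ explicitly from the cartesian property (and conversely) and translating the condition $Lk=\epsilon LX\circ Lv$ into $\eta X\circ k=v$ by hand. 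Your route is cleaner and unifies the two parts under one mechanism, at the cost of invoking the (standard but unproved-in-the-paper) fact that representables jointly detect pullbacks; the paper's route is longer but entirely self-contained. For part (2) your bijective transport of test data $(g,h)\leftrightarrow(u,v)$ across the adjunction, with the verifications $f\circ u=v\circ\epsilon Z$ (naturality of $\epsilon$) and $Rf\circ h=Rg$ (naturality plus a triangle identity), is essentially the paper's computation reorganized so that both implications and the matching of fillers are handled at once — a genuine streamlining. One small slip: the identity $\epsilon X\circ LRk=k\circ\epsilon Z$ that you use to match the filler conditions is the naturality of $\epsilon$, not a triangle identity; the triangle identity enters only when you check that $h\mapsto\epsilon X\circ Lh$ and $u\mapsto Ru\circ\eta RZ$ are mutually inverse. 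This does not affect correctness.
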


\begin{proof}
$\left( 1\right) .$ Assume that $f:X\rightarrow Y$ is cartesian over $Lf$
and let us prove that (\ref{diag:carteta}) is a pullback. Let $%
u:Z\rightarrow Y$ and $v:Z\rightarrow RLX$ be such that $\eta Y\circ
u=RLf\circ v.$ We have
\begin{equation*}
Lf\circ \epsilon LX\circ Lv=\epsilon LY\circ LRLf\circ Lv=\epsilon LY\circ
L\left( RLf\circ v\right) =\epsilon LY\circ L\left( \eta Y\circ u\right)
=\epsilon LY\circ L\eta Y\circ Lu=Lu
\end{equation*}%
so that the following diagram commutes.
\begin{equation*}
\xymatrixcolsep{1.5cm}\xymatrixrowsep{0.5cm}\xymatrix{&LZ\ar[d]^{Lu}%
\ar@{.>}[dl]_{\epsilon LX\circ Lv}\\ LX\ar[r]_{Lf}&LRLY }
\end{equation*}

\begin{invisible}
\begin{equation*}
\begin{array}{ccc}
&  & LZ \\
& \epsilon LX\circ Lv\swarrow & \downarrow Lu \\
LX & \overset{Lf}{\longrightarrow } & LRLY%
\end{array}%
\end{equation*}
\end{invisible}

Since $f$ is cartesian over $Lf,$ there is a unique $k:Z\rightarrow X$ such
that $Lk=\epsilon LX\circ Lv$ and $f\circ k=u.$ The condition $Lk=\epsilon
LX\circ Lv,$ via the adjunction isomorphism, is equivalent to $RLk\circ \eta
Z=R\left( \epsilon LX\circ Lv\right) \circ \eta Z$ i.e. $\eta X\circ
k=R\epsilon LX\circ RLv\circ \eta Z$ i.e. $\eta X\circ k=R\epsilon LX\circ
\eta RLX\circ v$ i.e. $\eta X\circ k=v.$ Thus there is a unique $%
k:Z\rightarrow X$ such that $\eta X\circ k=v$ and $f\circ k=u.$ In other
words (\ref{diag:carteta}) is a pullback.

Conversely assume that (\ref{diag:carteta}) is a pullback and let us prove
that $f:X\rightarrow Y$ is cartesian over $Lf$. Let $u:Z\rightarrow Y$ and $%
h:LZ\rightarrow LX$ be such that $Lf\circ h=Lu.$ Set $v:=Rh\circ \eta Z.$
Then
\begin{equation*}
RLf\circ v=RLf\circ Rh\circ \eta Z=RLu\circ \eta Z=\eta Y\circ u.
\end{equation*}%
By the universal property of the pullback there is a unique morphism $%
k:Z\rightarrow X$ such that $\eta X\circ k=v$ and $f\circ k=u.$ The
condition $\eta X\circ k=v,$ via the adjunction isomorphism, is equivalent
to $\epsilon LX\circ L\left( \eta X\circ k\right) =\epsilon LX\circ Lv$ i.e.
$\epsilon LX\circ L\eta X\circ Lk=\epsilon LX\circ L\left( Rh\circ \eta
Z\right) $ i.e. $Lk=\epsilon LX\circ LRh\circ L\eta Z=h\circ \epsilon
LZ\circ L\eta Z=h.$ Thus there is a unique morphism $k:Z\rightarrow X$ such
that $Lk=h$ and $f\circ k=u.$ In other words $f$ is cartesian over $Lf.$

$\left( 2\right) .$ Assume that $f:X\rightarrow Y$ is cartesian over $Rf$
and let us prove that $\epsilon Z\bot f$. Consider a commutative square as
in (\ref{diag:ortogeps}). Set $h:=Ru\circ \eta RZ.$ Then $Rf\circ h=Rf\circ
Ru\circ \eta RZ=Rv\circ R\epsilon Z\circ \eta RZ=Rv.$ Since $f$ is cartesian
over $Rf,$ there is a unique $k:Z\rightarrow X$ such that $Rk=h$ and $f\circ
k=v.$ By the adjunction, the condition $Rk=h$ is equivalent to $\epsilon
X\circ LRk=\epsilon X\circ Lh$ i.e. $k\circ \epsilon Z=\epsilon X\circ
L\left( Ru\circ \eta RZ\right) $ i.e. $k\circ \epsilon Z=u\circ \epsilon
LRZ\circ L\eta RZ=u.$ Thus there is a unique $k:Z\rightarrow X$ such that $%
k\circ \epsilon Z=u$ and $f\circ k=v.$ Hence $\epsilon Z$ and $f$ are
orthogonal.

Conversely suppose $\epsilon Z\bot f$ and let us prove  $f$ is cartesian
over $Rf.$ Let $v:Z\rightarrow Y$ and $h:RZ\rightarrow RX$ be such that $%
Rf\circ h=Rv.$ By the adjunction the later equality is equivalent to $%
\epsilon Y\circ L\left( Rf\circ h\right) =\epsilon Y\circ LRv$ i.e. $f\circ
\epsilon X\circ Lh=v\circ \epsilon Z.$ Thus, if we set $u:=\epsilon X\circ
Lh $, by the orthogonality there is a unique morphism $k:Z\rightarrow X$
such that $k\circ \epsilon Z=u$ and $f\circ k=v.$ By the adjunction the
condition $k\circ \epsilon Z=u$ is equivalent to $R\left( k\circ \epsilon
Z\right) \circ \eta RZ=Ru\circ \eta RZ$ i.e. $Rk\circ R\epsilon Z\circ \eta
RZ=R\left( \epsilon X\circ Lh\right) \circ \eta RZ$ i.e. $Rk=R\epsilon
X\circ RLh\circ \eta RZ=R\epsilon X\circ \eta RX\circ h=h.$ Thus there is a
unique $k:Z\rightarrow X$ such that $k\circ \epsilon Z=u$ and $Rk=h$
i.e. $f$ is cartesian over $Rf.$

A somewhat faster proof could be obtained by applying Remark \ref{rem:cartpullb}.
\begin{invisible}
The fact that $f$ is cartesian over $Gf$ means that the following
diagram is a pullback%
\begin{equation*}
\xymatrixcolsep{1.5cm}\xymatrixrowsep{0.5cm}\xymatrix{\mathrm{Hom}\left(
Z,X\right)\ar[d]_{G_{Z,X}}\ar[r]^{\mathrm{Hom}\left(
Z,f\right)}&\mathrm{Hom}\left( Z,Y\right)\ar[d]^{G_{Z,Y}}\\
\mathrm{Hom}\left( GZ,GX\right)\ar[r]^{\mathrm{Hom}\left(
GZ,Gf\right)}&\mathrm{Hom}\left( GZ,GY\right) }
\end{equation*}
\begin{equation*}
\begin{array}{ccc}
\mathrm{Hom}\left( Z,X\right) & \overset{\mathrm{Hom}\left( Z,f\right) }{%
\longrightarrow } & \mathrm{Hom}\left( Z,Y\right) \\
G_{Z,X}\downarrow & \lrcorner & G_{Z,Y} \\
\mathrm{Hom}\left( GZ,GX\right) & \overset{\mathrm{Hom}\left( GZ,Gf\right) }{%
\longrightarrow } & \mathrm{Hom}\left( GZ,GY\right)%
\end{array}%
\end{equation*}
\end{invisible}

If $F=L\ $or $F=R$ we can rewrite the corresponding pullback by means of the
adjunction obtaining respectively the diagrams%
\begin{equation*}
\xymatrixcolsep{1.5cm}\xymatrixrowsep{0.5cm}\xymatrix{\mathrm{Hom}\left(
Z,X\right)\ar[d]_{\mathrm{Hom}\left( Z,\eta
X\right)}\ar[r]^{\mathrm{Hom}\left( Z,f\right)}&\mathrm{Hom}\left(
Z,Y\right)\ar[d]^{\mathrm{Hom}\left( Z,\eta Y\right)}\\ \mathrm{Hom}\left(
Z,RLX\right)\ar[r]^{\mathrm{Hom}\left( Z,RLf\right)}&\mathrm{Hom}\left(
Z,RLY\right) } \qquad \xymatrixcolsep{1.5cm}\xymatrixrowsep{0.5cm}%
\xymatrix{\mathrm{Hom}\left( Z,X\right)\ar[d]_{{\mathrm{Hom}\left( \epsilon
Z,X\right)}}\ar[r]^{\mathrm{Hom}\left( Z,f\right)}&\mathrm{Hom}\left(
Z,Y\right)\ar[d]^{{\mathrm{Hom}\left( \epsilon Z,Y\right)}}\\
\mathrm{Hom}\left( LRZ,X\right)\ar[r]^{\mathrm{Hom}\left(
LRZ,f\right)}&\mathrm{Hom}\left( LRZ,Y\right) }
\end{equation*}

\begin{invisible}
\begin{equation*}
\begin{array}{ccc}
\mathrm{Hom}\left( Z,X\right) & \overset{\mathrm{Hom}\left( Z,f\right) }{%
\longrightarrow } & \mathrm{Hom}\left( Z,Y\right) \\
\mathrm{Hom}\left( Z,\eta X\right) \downarrow & \lrcorner & \downarrow
\mathrm{Hom}\left( Z,\eta Y\right) \\
\mathrm{Hom}\left( Z,RLX\right) & \overset{\mathrm{Hom}\left( Z,RLf\right) }{%
\longrightarrow } & \mathrm{Hom}\left( Z,RLY\right)%
\end{array}%
\qquad
\begin{array}{ccc}
\mathrm{Hom}\left( Z,X\right) & \overset{\mathrm{Hom}\left( Z,f\right) }{%
\longrightarrow } & \mathrm{Hom}\left( Z,Y\right) \\
\mathrm{Hom}\left( \epsilon Z,X\right) \downarrow & \lrcorner & \downarrow
\mathrm{Hom}\left( \epsilon Z,Y\right) \\
\mathrm{Hom}\left( LRZ,X\right) & \overset{\mathrm{Hom}\left( LRZ,f\right) }{%
\longrightarrow } & \mathrm{Hom}\left( LRZ,Y\right)%
\end{array}%
\end{equation*}
\end{invisible}

The fact that the left-hand side diagram is a pullback means that (\ref%
{diag:carteta}) is a pullback, while the fact that the right-hand side
diagram is a pullback means that $\epsilon Z\bot f.$
\end{proof}

Next lemma will be used in order to prove Theorem \ref{teo:servaureo} that is our main tool to get Theorem \ref{teo:LambdaFibr} where the embedding $\Lambda_n$ is shown to be an $\mathsf{M}\left( U_{\left[ n\right] }\right) $-fibration.

\begin{lemma}
\label{lem:fibr1}The functor $\Lambda _{1}:\mathcal{B}_{1}\rightarrow
\mathcal{B}_{\left[ 1\right] }$ of Remark \ref{rem:adj} is an $\mathsf{M}%
\left( U_{\left[ 1\right] }\right) $-fibration.
\end{lemma}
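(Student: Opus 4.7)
Since $\Lambda_1$ is fully faithful (Proposition \ref{pro:4} and Remark \ref{rem:adj}), by Lemma \ref{lem:fibrfulfaith} it suffices to prove that for every morphism $f_{[1]}': B_{[1]} \to \Lambda_1 A_1$ in $\mathsf{M}(U_{[1]})$, there exists $A_1' \in \mathcal{B}_1$ with $\Lambda_1 A_1' = B_{[1]}$. So I will start from such an $f_{[1]}'$, write $B_{[1]} = (B, b)$ with $b: RLB \to B$ and $\Lambda_1 A_1 = (A, a)$ with $a: RLA \to A$ the structure map of the $RL$-algebra $A_1$. Set $f := U_{[1]} f_{[1]}' : B \to A$, which by hypothesis is a monomorphism in $\mathcal{B}$.

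The plan is then to verify that the pair $(B, b)$ actually satisfies the two $RL$-algebra axioms, so that it lifts to an object $A_1' \in \mathcal{B}_1$ with $\Lambda_1 A_1' = (B, b) = B_{[1]}$. The fact that $f_{[1]}'$ is a morphism in $\mathcal{B}_{[1]}$ gives the compatibility
\[
f \circ b = a \circ RLf.
\]
For the unit axiom, I would precompose with $\eta B$ and use naturality of $\eta$ together with the unit axiom of $a$:
\[
f \circ b \circ \eta B = a \circ RLf \circ \eta B = a \circ \eta A \circ f = f = f \circ \mathrm{Id}_B,
\]
and since $f$ is mono, $b \circ \eta B = \mathrm{Id}_B$. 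For the associativity axiom, I would compute
\[
f \circ b \circ RLb = a \circ RL(f \circ b) = a \circ RLa \circ RLRLf = a \circ R\epsilon LA \circ RLRLf = a \circ RLf \circ R\epsilon LB = f \circ b \circ R\epsilon LB,
\]
using the compatibility of $f_{[1]}'$ twice, the associativity of $a$, and naturality of $\epsilon$; since $f$ is mono, $b \circ RLb = b \circ R\epsilon LB$.

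There is no real obstacle here: the point is simply that the $RL$-algebra equations for $b$ are reflected from those for $a$ along the monomorphism $f$, exactly the kind of situation captured by Proposition \ref{pro:aureo} (which is the prototype for cartesian lifts along inserter projections). Thus $(B, b)$ is an $RL$-algebra, hence equal to $\Lambda_1 A_1'$ for $A_1' := (B, b) \in \mathcal{B}_1$, and Lemma \ref{lem:fibrfulfaith} concludes the proof.
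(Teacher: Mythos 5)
Your proposal is correct and follows essentially the same route as the paper: reduce via Lemma \ref{lem:fibrfulfaith} to surjectivity on objects over the relevant morphisms, then reflect the two $RL$-algebra axioms for $b$ from those for the target along the monomorphism $U_{[1]}f_{[1]}'$ using naturality of $\eta$ and $\epsilon$. The computations you sketch are exactly the ones the paper packages into its two serially commuting diagrams.
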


\begin{proof}
Let $f_{\left[ 1\right] }:B_{\left[ 1\right] }\rightarrow \Lambda _{1}C_{1}$
be a morphism in $\mathsf{M}\left( U_{\left[ 1\right] }\right) $ i.e. such
that $U_{\left[ 1\right] }f_{\left[ 1\right] }$ is a monomorphism. Since $%
\Lambda _{1}$ is fully faithful, in order to conclude, by Lemma \ref%
{lem:fibrfulfaith}, it suffices to prove that there is $B_{1}\in \mathcal{B}%
_{1}$ such that $\Lambda _{1}B_{1}=B_{\left[ 1\right] }$.

Write $B_{\left[ 1\right] }=\left( B,b:RLB\rightarrow B\right) ,$ $%
C_{1}=\left( C,c:RLC\rightarrow C\right) $ and note that $C_{\left[ 1\right]
}:=\Lambda _{1}C_{1}=\left( C,c:RLC\rightarrow C\right) $ this time regarded
as an object in $\mathcal{B}_{\left[ 1\right] }.$ Set $f:=U_{\left[ 1\right]
}f_{\left[ 1\right] }:B\rightarrow C$ and consider the following diagrams.
\begin{equation*}
\xymatrixcolsep{1.5cm} \xymatrix{RLRLB\ar[d]_{RLRLf}\ar@<.5ex>[r]^{RLb}%
\ar@<-.5ex>[r]_{R\epsilon LB}&RLB\ar[d]^{RLf}\ar[r]^{b}& B\ar[d]^{f}\\
RLRLC\ar@<.5ex>[r]^{RLc}\ar@<-.5ex>[r]_{R\epsilon LC}&RLC\ar[r]^{c}& C}
\qquad
\xymatrixcolsep{1.5cm}\xymatrix{B\ar[d]_{f}\ar[r]^{%
\eta B}&RLB\ar[d]^{RLf}\ar[r]^b&B\ar[d]^f\\ C\ar[r]^{\eta C}&RLC\ar[r]^c&C }
\end{equation*}
\begin{invisible}
\begin{equation*}
\begin{array}{ccccc}
RLRLB & \overset{RLb}{\underset{R\epsilon LB}{\rightrightarrows }} & RLB &
\overset{b}{\longrightarrow } & B \\
RLRLf\downarrow &  & \downarrow RLf &  & \downarrow f \\
RLRLC & \overset{RLc}{\underset{R\epsilon LC}{\rightrightarrows }} & RLC &
\overset{c}{\longrightarrow } & C%
\end{array}%
\end{equation*}
\end{invisible}
The left-hand side one serially commutes since $f$ induces the morphism $f_{\left[ 1\right] }$
and by naturality of $\epsilon .$ Since $C_{1}\in \mathcal{B}_{1},$ we also
have that $c\circ RLc=c\circ R\epsilon LC$. Since $f$ is a monomorphism, we
deduce that $b\circ RLb=b\circ R\epsilon LB.$
\begin{invisible}
\begin{equation*}
\begin{array}{ccccc}
B & \overset{\eta B}{\longrightarrow } & RLB & \overset{b}{\longrightarrow }
& B \\
f\downarrow &  & \downarrow RLf &  & \downarrow f \\
C & \overset{\eta C}{\longrightarrow } & RLC & \overset{c}{\longrightarrow }
& C%
\end{array}%
\end{equation*}
\end{invisible}
A similar argument as above, but applied on the right-hand side diagram, shows that $b\circ \eta B=\mathrm{Id}_{B}.$ This
means that $B_{1}:=\left( B,b:RLB\rightarrow B\right) \in \mathcal{B}_{1}.$
By definition of $\Lambda _{1},$ we have that $\Lambda _{1}B_{1}=B_{\left[ 1%
\right] }.$
\end{proof}

The following lemma will be central in order to prove Propositions \ref{pro:serv0} and Proposition \ref{pro:serv1}.

\begin{lemma}
\label{lem:mono1}1)\ Let $F:\mathcal{A}\rightarrow \mathcal{B}$ be a functor
and let $f\in \mathsf{M}\left( F\right) $ be cartesian over $Ff.$ Then $f$
is a monomorphism.

2)\ Let $F:\mathcal{A}\rightarrow \mathcal{B}$ be an $\mathsf{M}\left(
G\right) $-fibration. Then any morphism $f\in \mathsf{M}\left( F\right) \cap
\mathsf{M}\left( GF\right) $ factors as $f=g\circ k,$ where $g$ is a
monomorphism and $Fk=\mathrm{Id}.$
\end{lemma}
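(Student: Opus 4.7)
The plan for part 1) is to combine the uniqueness clause in the cartesian condition with the hypothesis that $Ff$ is a monomorphism. Given two morphisms $a,b:Z\rightarrow X$ with $f\circ a=f\circ b$, I apply $F$ to obtain $Ff\circ Fa=Ff\circ Fb$; since $Ff$ is a monomorphism, this forces $Fa=Fb$. Setting $g:=f\circ a$ and $h:=Fa$, the identity $Ff\circ h=Fg$ is satisfied trivially, so by cartesianness of $f$ there is a unique $k:Z\rightarrow X$ such that $Fk=h$ and $f\circ k=g$. Both $a$ and $b$ satisfy these two requirements, so uniqueness yields $a=b$.

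For part 2), the plan is to apply the $\mathsf{M}(G)$-fibration property to the morphism $Ff:FX\rightarrow FY$ itself. This morphism lies in $\mathsf{M}(G)$ precisely because $GFf$ is a monomorphism by the hypothesis $f\in \mathsf{M}(GF)$. The fibration property therefore produces a morphism $g:X'\rightarrow Y$ cartesian over $Ff$; in particular $FX'=FX$ and $Fg=Ff$. Using cartesianness of $g$ applied to the trivial identity $Ff\circ \mathrm{Id}_{FX}=Fg\circ \mathrm{Id}_{FX}=Ff$ together with $f:X\rightarrow Y$, I obtain a unique $k:X\rightarrow X'$ with $Fk=\mathrm{Id}_{FX}$ and $g\circ k=f$, which is the desired factorization.

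It remains to check that $g$ is a monomorphism, and for this I invoke part 1) already established: indeed $Fg=Ff$ is a monomorphism by assumption $f\in \mathsf{M}(F)$, so $g\in \mathsf{M}(F)$; as $g$ is cartesian over $Fg$ by construction, part 1) applies to give that $g$ is a monomorphism. No substantial obstacle is expected here, since both parts are direct unpackings of the cartesian universal property, the only care needed being to verify that the data assembled on the nose meets the hypotheses required to invoke either the cartesian lifting or part 1).
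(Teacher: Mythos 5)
Your proof is correct and follows essentially the same argument as the paper's: part 1) uses the uniqueness clause of the cartesian property after cancelling the monomorphism $Ff$, and part 2) lifts $Ff\in\mathsf{M}(G)$ to a cartesian $g$, factors $f$ through it via the lift of $\mathrm{Id}_{FX}$, and applies part 1) to conclude that $g$ is a monomorphism. No gaps.
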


\begin{proof}
1)\ Let $f:A\rightarrow A^{\prime }$ in $\mathsf{M}\left( F\right) $ be
cartesian over $Ff.$ By definition of $\mathsf{M}\left( F\right) $ we have that $Ff$ is a monomorphism. Let $a,b:A^{\prime \prime }\rightarrow A$ be
such that $f\circ a=f\circ b.$ Then $Ff\circ Fa=Ff\circ Fb.$ Since $Ff$ is a
monomorphism, we get $Fa=Fb.$ Call $h$ this morphism and $g:=f\circ a.$ Then
$Ff\circ h=Fg.$ Since $f$ is cartesian over $Ff$, there exists a unique
morphism $k\in \mathcal{A}$ such that $Fk=h$ and $f\circ k=g.$ Hence $a=k=b.$

2)\ Let $f:A\rightarrow A^{\prime }$ be in $\mathsf{M}\left( F\right) \cap
\mathsf{M}\left( GF\right) $. Then $Ff$ and $GFf$ are both monomorphisms.

In particular $f^{\prime }:=Ff:FA\rightarrow FA^{\prime }$ is in $\mathsf{M}%
\left( G\right) $ so that, by definition of $\mathsf{M}\left( G\right) $%
-fibration, there is $g:E\rightarrow A^{\prime }$ which is cartesian over $%
f^{\prime }.$ Since $Fg\circ \mathrm{Id}_{FA}=Ff,$ there is a unique $%
k:A\rightarrow E$ such that $Fk=\mathrm{Id}_{FA}$ and $g\circ k=f.$

Moreover $g\in \mathsf{M}\left( F\right) $ is cartesian over $Fg$ so that,
by 1), we get that $g$ is a monomorphism.
\end{proof}

We are now going to prove Propositions \ref{pro:serv0}, Proposition \ref{pro:serv1} and Theorem \ref{teo:serv1}. These results will be used to obtain Theorem \ref{teo:servaureo}.

\begin{proposition}
\label{pro:serv0}In the setting of Proposition \ref{pro:2}, assume that any
morphism $g\in \mathsf{M}\left( G\right) $ is cartesian over $Gg.$ Then for
every morphism $f^{\prime }:\left( B,\beta \right) \rightarrow S^{G}C_{\left[
1\right] }$ in $\mathsf{M}\left( GP\left( \mathbb{T}\right) \right) $ there
is a morphism $f:B_{\left[ 1\right] }\rightarrow C_{\left[ 1\right] }$ in $%
\mathsf{M}\left( GU_{\left[ 1\right] }\right) $ which is cartesian with
respect to $S^{G}:\mathcal{B}_{\left[ 1\right] }=\left\langle RL\mid \mathrm{%
Id}\right\rangle \rightarrow \left\langle GRL\mid G\right\rangle =\mathrm{I}%
\left( \mathbb{T}\right) $ over $f^{\prime }.$ In particular $S^{G}$ is an $%
\mathsf{M}\left( G\circ P\left( \mathbb{T}\right) \right) $-fibration.
\end{proposition}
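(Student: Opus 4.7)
The plan is to explicitly lift $f':(B,\beta)\to S^{G}C_{[1]}$ to a cartesian morphism in $\mathcal{B}_{[1]}$ by exploiting cartesianness of the underlying $\mathcal{B}$-arrow. Write $C_{[1]}=(C,c:RLC\to C)$ so that $S^{G}C_{[1]}=(C,Gc)$, and set $h:=P(\mathbb{T})f':B\to C$. The hypothesis $f'\in\mathsf{M}(GP(\mathbb{T}))$ says $Gh$ is a monomorphism, i.e.\ $h\in\mathsf{M}(G)$; by the standing assumption, $h$ is cartesian over $Gh$. The condition that $f'$ is a morphism in $\mathrm{I}(\mathbb{T})=\langle GRL\mid G\rangle$ reads $Gc\circ GRLh=Gh\circ\beta$, so the pair $(c\circ RLh:RLB\to C,\ \beta:GRLB\to GB)$ is exactly the input to the cartesian lifting property of $h$.

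Applying cartesianness of $h$ then produces a unique $b:RLB\to B$ with $Gb=\beta$ and $h\circ b=c\circ RLh$. Set $B_{[1]}:=(B,b)\in\mathcal{B}_{[1]}$; the second equality promotes $h$ to a morphism $f:B_{[1]}\to C_{[1]}$ in $\mathcal{B}_{[1]}$ with $U_{[1]}f=h$. Because $Gb=\beta$, we have $S^{G}B_{[1]}=(B,\beta)$ and $P(\mathbb{T})S^{G}f=h=P(\mathbb{T})f'$, so faithfulness of $P(\mathbb{T})$ gives $S^{G}f=f'$. Moreover $GU_{[1]}f=Gh$ is a monomorphism, whence $f\in\mathsf{M}(GU_{[1]})$.

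To verify $f$ is cartesian with respect to $S^{G}$, take any $g:D_{[1]}\to C_{[1]}$ in $\mathcal{B}_{[1]}$ (write $D_{[1]}=(D,\delta)$) together with $h':S^{G}D_{[1]}\to(B,\beta)$ in $\mathrm{I}(\mathbb{T})$ satisfying $S^{G}f\circ h'=S^{G}g$, and put $p:=P(\mathbb{T})h':D\to B$. Applying $P(\mathbb{T})$ yields $h\circ p=U_{[1]}g$. The morphism condition on $h'$ gives $\beta\circ GRLp=Gp\circ G\delta$, i.e.\ $G(b\circ RLp)=G(p\circ\delta)$. On the other hand, using that $g$ is a morphism in $\mathcal{B}_{[1]}$, one computes $h\circ(b\circ RLp)=c\circ RL(h\circ p)=c\circ RLU_{[1]}g=U_{[1]}g\circ\delta=h\circ(p\circ\delta)$. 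Here comes the key step: since $h$ is cartesian over $Gh$ and lies in $\mathsf{M}(G)$, Lemma~\ref{lem:mono1}(1) forces $h$ to be a monomorphism, so $b\circ RLp=p\circ\delta$. Hence $p$ lifts to a morphism $k:D_{[1]}\to B_{[1]}$ with $U_{[1]}k=p$, which by construction satisfies $S^{G}k=h'$ and $f\circ k=g$; uniqueness of $k$ is automatic because $S^{G}$ acts as the identity on morphisms. The particular statement that $S^{G}$ is an $\mathsf{M}(G\circ P(\mathbb{T}))$-fibration is then immediate.

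The substantive obstacle is the first step: producing a structure map $b:RLB\to B$ whose $G$-image is the prescribed $\beta$ and which is simultaneously $h$-compatible with $c$. This is exactly what the cartesianness hypothesis on $\mathsf{M}(G)$-morphisms buys us; the rest is bookkeeping in the inserter category, with Lemma~\ref{lem:mono1} converting cartesianness of $h$ into the monicity needed to verify the universal property.
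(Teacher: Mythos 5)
Your proof is correct and follows essentially the same route as the paper's: you produce the structure map $b$ by applying the cartesian lifting property of $P(\mathbb{T})f'$ over $GP(\mathbb{T})f'$ to the pair $(c\circ RLh,\beta)$, and you verify cartesianness of the resulting $f$ by reducing to the monicity of $h=U_{[1]}f$ obtained from Lemma \ref{lem:mono1}(1). The only cosmetic difference is that the paper delegates the final lifting step to Proposition \ref{pro:Unfibr} (cartesianness of $\mathsf{M}(U_{[1]})$-morphisms over their $U_{[1]}$-images), whereas you unfold that same computation inline in the inserter category.
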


\begin{proof}
Let $f^{\prime }:\left( B,\beta \right) \rightarrow S^{G}C_{\left[ 1\right]
} $ be a morphism in $\mathsf{M}\left( GP\left( \mathbb{T}\right) \right) $,
in particular it is a morphism in $\mathrm{I}\left( \mathbb{T}\right) .$ Write $%
C_{\left[ 1\right] }=\left( C,c:RLC\rightarrow C\right) $ so that $S^{G}C_{%
\left[ 1\right] }=\left( C,Gc\right) .$ The fact that $f^{\prime }\ $is a
morphism $\mathrm{I}\left( \mathbb{T}\right) $ means that the following
left-hand side diagram commutes
\begin{equation}  \label{diag:serv0}
\xymatrixcolsep{1.5cm}\xymatrixrowsep{0.5cm}\xymatrix{GRLB\ar[d]_{\beta}%
\ar[r]^{GRLP\left( \mathbb{T}\right) f^{\prime }}&GRLC\ar[d]^{Gc}\\
GB\ar[r]^{GP\left( \mathbb{T}\right) f^{\prime }}&GC } \qquad %
\xymatrixcolsep{1.5cm}\xymatrixrowsep{0.5cm}\xymatrix{RLB\ar[d]_{b}%
\ar[r]^{RLP\left( \mathbb{T}\right) f^{\prime }}&RLC\ar[d]^{c}\\
B\ar[r]^{P\left( \mathbb{T}\right) f^{\prime }}&C }
\end{equation}

\begin{invisible}
\begin{equation*}
\begin{array}{ccc}
GRLB & \overset{GRLP\left( \mathbb{T}\right) f^{\prime }}{\longrightarrow }
& GRLC \\
\beta \downarrow &  & \downarrow Gc \\
GB & \overset{GP\left( \mathbb{T}\right) f^{\prime }}{\longrightarrow } & GC%
\end{array}%
\end{equation*}
\end{invisible}

Since $f^{\prime }\in \mathsf{M}\left( GP\left( \mathbb{T}\right) \right) ,$
we get $P\left( \mathbb{T}\right) f^{\prime }\in \mathsf{M}\left( G\right) .$
By hypothesis $P\left( \mathbb{T}\right) f^{\prime }$ is cartesian with
respect to $G$ over $GP\left( \mathbb{T}\right) f^{\prime }.$ As a
consequence the diagram on the left above implies there is a unique morphism $%
b:RLB\rightarrow B$ such that $Gb=\beta $ and the right-hand side diagram in %
\eqref{diag:serv0} commutes.

\begin{invisible}
\begin{equation*}
\begin{array}{ccc}
RLB & \overset{RLP\left( \mathbb{T}\right) f^{\prime }}{\longrightarrow } &
RLC \\
b\downarrow &  & \downarrow c \\
B & \overset{P\left( \mathbb{T}\right) f^{\prime }}{\longrightarrow } & C%
\end{array}%
\end{equation*}
\end{invisible}

Set $B_{\left[ 1\right] }=\left( B,b\right) \in \mathcal{B}_{\left[ 1\right]
}.$ Then the last diagram means that there is a unique morphism $f:B_{\left[ 1%
\right] }\rightarrow C_{\left[ 1\right] }$ such that $U_{\left[ 1\right]
}f=P\left( \mathbb{T}\right) f^{\prime }.$ Hence $GU_{\left[ 1\right]
}f=GP\left( \mathbb{T}\right) f^{\prime }$ is a monomorphism so that $f\in
\mathsf{M}\left( GU_{\left[ 1\right] }\right) .$

Let us check that $f$ is cartesian with respect to $S^{G}$ over $f^{\prime
}. $

Note that $S^{G}B_{\left[ 1\right] }=\left( B,Gb\right) =\left( B,\beta
\right) $ so that $S^{G}f$ has the same domain and codomain of $f^{\prime
}:\left( B,\beta \right) \rightarrow S^{G}C_{\left[ 1\right] }.$ Thus we get
the equality $S^{G}f=f^{\prime }$ by the following computation%
\begin{equation*}
P\left( \mathbb{T}\right) S^{G}f=U_{\left[ 1\right] }f=P\left( \mathbb{T}%
\right) f^{\prime }
\end{equation*}%
and the fact that $P\left( \mathbb{T}\right) $ is faithful. Consider $g_{%
\left[ 1\right] }$ and $h$ as in the following left-hand side diagram.
\begin{equation*}
\xymatrixcolsep{1.5cm}\xymatrixrowsep{0.5cm}\xymatrix{&S^{G}D_{[1]}%
\ar[dl]_{h}\ar[d]^{S^{G}g_{[1]}}\\ S^{G}B_{[1]}\ar[r]^-{S^{G}f}&S^{G}C_{[1]}
} \qquad \xymatrixcolsep{1.5cm}\xymatrixrowsep{0.5cm}%
\xymatrix{&U_{[1]}D_{[1]}\ar[dl]_{P\left( \mathbb{T}\right)
h}\ar[d]^{U_{[1]}g_{[1]}}\\ U_{[1]}B_{[1]}\ar[r]^-{U_{[1]}f}&U_{[1]}C_{[1]} }
\end{equation*}

\begin{invisible}
\begin{equation*}
\begin{array}{ccc}
&  & S^{G}D_{\left[ 1\right] } \\
& h\swarrow & \downarrow S^{G}g_{\left[ 1\right] } \\
S^{G}B_{\left[ 1\right] } & \overset{S^{G}f}{\longrightarrow } & S^{G}C_{
\left[ 1\right] }%
\end{array}%
\qquad
\begin{array}{ccc}
&  & U_{\left[ 1\right] }D_{\left[ 1\right] } \\
& P\left( \mathbb{T}\right) h\swarrow & \downarrow U_{\left[ 1\right] }g_{%
\left[ 1\right] } \\
U_{\left[ 1\right] }B_{\left[ 1\right] } & \overset{U_{\left[ 1\right] }f}{%
\longrightarrow } & U_{\left[ 1\right] }C_{\left[ 1\right] }%
\end{array}%
\end{equation*}
\end{invisible}

By applying $P\left( \mathbb{T}\right) $ we get the right-hand side diagram
above. We know that $U_{\left[ 1\right] }f=P\left( \mathbb{T}\right)
f^{\prime }\in \mathsf{M}\left( G\right) .$ Then, by hypothesis $U_{\left[ 1\right]
}f$ is then cartesian with respect to $G$ over $GU_{\left[ 1\right] }f.$
Thus, by Lemma \ref{lem:mono1}, we get that $U_{\left[ 1\right] }f$ is a
monomorphism. Thus $f\in \mathsf{M}\left( U_{\left[ 1\right] }\right) .$ By
Proposition \ref{pro:Unfibr}, we get that $f$ is cartesian over $U_{\left[ 1%
\right] }f.$ As a consequence, the right-hand side diagram above implies
there is a unique morphism $d_{\left[ 1\right] }:D_{\left[ 1\right]
}\rightarrow B_{\left[ 1\right] }$ such that $U_{\left[ 1\right] }d_{\left[ 1%
\right] }=P\left( \mathbb{T}\right) h$ and $f\circ d_{\left[ 1\right] }=g_{%
\left[ 1\right] }.$ Note that%
\begin{equation*}
P\left( \mathbb{T}\right) S^{G}d_{\left[ 1\right] }=U_{\left[ 1\right] }d_{%
\left[ 1\right] }=P\left( \mathbb{T}\right) h
\end{equation*}
and hence $S^{G}d_{\left[ 1\right] }=h.$ It remains to prove the uniqueness
of $d_{\left[ 1\right] }.$ If there is another $k_{\left[ 1\right] }:D_{%
\left[ 1\right] }\rightarrow B_{\left[ 1\right] }$ such that $S^{G}k_{\left[
1\right] }=h$ and $f\circ k_{\left[ 1\right] }=g_{\left[ 1\right] }.$ Then
\begin{equation*}
U_{\left[ 1\right] }k_{\left[ 1\right] }=P\left( \mathbb{T}\right) S^{G}k_{%
\left[ 1\right] }=P\left( \mathbb{T}\right) h=U_{\left[ 1\right] }d_{\left[ 1%
\right] }
\end{equation*}%
so that $k_{\left[ 1\right] }=d_{\left[ 1\right] }$ as $U_{\left[ 1\right] }$
is faithful.
\end{proof}

Consider now the functor  $\Theta _{\left[ 1\right] }^{\prime }:%
\mathcal{B}_{\left[ 1\right] }^{\prime \prime }\rightarrow \mathcal{B}_{%
\left[ 1\right] }$ introduced in Notation \ref{not:Thetaprime}.

\begin{proposition}
\label{pro:serv1} Consider two adjoint triangles $\mathbb{T}$ and $\mathbb{T}'$ as in Remark \ref{rem:comptr}. Assume that

\begin{itemize}
\item any morphism $g\in \mathsf{M}\left( G\right) $ is cartesian over $Gg;$

\item $\Theta :\mathcal{B}^{\prime \prime }\rightarrow \mathcal{B}$ is an $%
\mathsf{M}\left( G\right) $-fibration,

\item $\theta $ is invertible.
\end{itemize}

Then for every morphism $f:B_{\left[ 1%
\right] }\rightarrow \Theta _{\left[ 1\right] }^{\prime }C_{\left[ 1\right]
}^{\prime \prime }$ in $\mathsf{M}\left( GU_{\left[ 1\right] }\right) $
there is $f_{\left[ 1\right] }^{\prime \prime }:B_{\left[ 1\right] }^{\prime
\prime }\rightarrow C_{\left[ 1\right] }^{\prime \prime }$ in $\mathsf{M}%
\left( U_{\left[ 1\right] }^{\prime \prime }\right) $ which is cartesian
with respect to $\Theta _{\left[ 1\right] }^{\prime }$ over $f.$ In
particular $\Theta _{\left[ 1\right] }^{\prime }:\mathcal{B}_{\left[ 1\right]
}^{\prime \prime }\rightarrow \mathcal{B}_{\left[ 1\right] }$ is an $\mathsf{%
M}\left( GU_{\left[ 1\right] }\right) $-fibration.
\end{proposition}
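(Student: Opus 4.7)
The plan is to construct the desired cartesian lift $f''_{[1]}$ in three stages: first descend to a cartesian lift $f''$ of the underlying morphism through $\Theta$; second, promote $B''$ to an object of $\mathcal{B}''_{[1]}$ by lifting the structure map along the cartesian property of $f''$; third, verify the cartesian property of $f''_{[1]}$ over $f$ with respect to $\Theta'_{[1]}$. Throughout, write $B_{[1]}=(B,b)$, $C''_{[1]}=(C'',c'')$ and $f_B:=U_{[1]}f$, so that the morphism condition on $f$ reads $f_B\circ b=\Theta c''\circ R\theta C''\circ RLf_B$.

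First, since $f\in\mathsf{M}(GU_{[1]})$, we have $Gf_B\in\mathsf{M}$, so by the first hypothesis $f_B$ is cartesian over $Gf_B$ with respect to $G$, and Lemma \ref{lem:mono1}(1) applied with $F=G$ shows that $f_B$ is a monomorphism. The $\mathsf{M}(G)$-fibration hypothesis on $\Theta$ then yields a morphism $f'':B''\to C''$ in $\mathcal{B}''$, cartesian with respect to $\Theta$ over $f_B$ (so $\Theta B''=B$ and $\Theta f''=f_B$). Applying Lemma \ref{lem:mono1}(1) once more with $F=\Theta$, we obtain that $f''$ is a monomorphism as well.

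Next, using that $\theta B''$ is invertible, set $\beta:=b\circ R(\theta B'')^{-1}:RL''B''\to B$. The identity $\Theta R''L''B''=RL''B''$ places $\beta$ in the domain of the cartesian property of $f''$, so the plan is to produce $\mu'':R''L''B''\to B''$ by lifting $\beta$ along with a suitable morphism into $C''$. The consistency equation $\Theta f''\circ\beta=\Theta(c''\circ R''L''f'')$ is verified by substituting the morphism condition on $f$ and then invoking the naturality of $\theta$ at $f''$, namely $\theta C''\circ L\Theta f''=L''f''\circ\theta B''$. The cartesian property of $f''$ then gives a unique $\mu''$ with $\Theta\mu''=\beta$ and $f''\circ\mu''=c''\circ R''L''f''$. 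Setting $B''_{[1]}:=(B'',\mu'')\in\mathcal{B}''_{[1]}$, a direct computation shows $\Theta'_{[1]}B''_{[1]}=(B,\Theta\mu''\circ R\theta B'')=(B,b)=B_{[1]}$, and $f''$ defines a morphism $f''_{[1]}:B''_{[1]}\to C''_{[1]}$ satisfying $\Theta'_{[1]}f''_{[1]}=f$ (using faithfulness of $U_{[1]}$) and lying in $\mathsf{M}(U''_{[1]})$ because $f''$ is a monomorphism.

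For the cartesian property, given $g_{[1]}:D''_{[1]}=(D'',d'')\to C''_{[1]}$ and $h_{[1]}:\Theta'_{[1]}D''_{[1]}\to B_{[1]}$ with $f\circ h_{[1]}=\Theta'_{[1]}g_{[1]}$, set $h:=U_{[1]}h_{[1]}$ and $g:=U''_{[1]}g_{[1]}$ so that $f_B\circ h=\Theta g$. The cartesian property of $f''$ produces a unique $k:D''\to B''$ with $\Theta k=h$ and $f''\circ k=g$. The crucial step---and the main obstacle---is to show that $k$ commutes with structure maps, i.e.\ $k\circ d''=\mu''\circ R''L''k$, so that it lifts to $k_{[1]}:D''_{[1]}\to B''_{[1]}$ in $\mathcal{B}''_{[1]}$. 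For this one checks both sides agree after applying $f''$, using the morphism condition $g\circ d''=c''\circ R''L''g$ for $g_{[1]}$, and agree after applying $\Theta$: exploiting $\Theta\mu''=\beta$, the naturality of $\theta$ at $k$, and the morphism condition $h\circ(\Theta d''\circ R\theta D'')=b\circ RLh$ for $h_{[1]}$, both expressions reduce to $b\circ RLh\circ R(\theta D'')^{-1}$. The uniqueness in the cartesian property of $f''$ then forces equality. Finally, $k_{[1]}$ satisfies $\Theta'_{[1]}k_{[1]}=h_{[1]}$ and $f''_{[1]}\circ k_{[1]}=g_{[1]}$, and is unique because $U''_{[1]}$ is faithful. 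The ``In particular'' statement is then immediate from the definition.
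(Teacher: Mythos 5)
Your proposal is correct and follows essentially the same route as the paper's proof: obtain the cartesian lift $f''$ of $U_{[1]}f$ through $\Theta$, use its cartesian property to transport the structure map, and verify cartesianness of $f''_{[1]}$ by lifting through $f''$ again. The only (immaterial) differences are that you lift $b\circ R(\theta B'')^{-1}$ directly where the paper lifts $b$ and then corrects by $(R''\theta B'')^{-1}$, and that in the final compatibility check you invoke the uniqueness clause of cartesianness of $f''$ where the paper simply uses that $f''$ is a monomorphism.
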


\begin{proof}
Let $f:B_{\left[ 1\right] }\rightarrow \Theta _{\left[ 1\right] }^{\prime
}C_{\left[ 1\right] }^{\prime \prime }$ be a morphism in $\mathsf{M}\left( GU_{\left[ 1%
\right] }\right) .$ Write $B_{\left[ 1\right] }=\left( B,b:RLB\rightarrow
B\right) $ and $C_{\left[ 1\right] }^{\prime \prime }=\left( C^{\prime
\prime },c^{\prime \prime }:R^{\prime \prime }L^{\prime \prime }C^{\prime
\prime }\rightarrow C^{\prime \prime }\right) .$ By definition of $\Theta _{\left[ 1\right] }^\prime$, we have%
\begin{equation*}
\Theta _{\left[ 1\right] }^{\prime }C_{\left[ 1\right] }^{\prime \prime
}=\Theta _{\left[ 1\right] }^{\prime }\left( C^{\prime \prime },c^{\prime \prime
}\right)  =\left( \Theta C^{\prime \prime },\Theta c^{\prime \prime }\circ
R\theta C^{\prime \prime }\right) .
\end{equation*}%
The fact that $f\in \mathcal{B}_{\left[ 1\right] }$ means that the first
diagram in \eqref{diag:serv1} commutes.
\begin{equation}  \label{diag:serv1}
\xymatrixcolsep{1.5cm}\xymatrixrowsep{0.5cm}\xymatrix{RLB\ar[d]_{b}%
\ar[r]^{RLU_{[1]}f}&RL\Theta C''\ar[d]_{\Theta \left(c''\circ R''\theta
C''\right)}\\ B\ar[r]_{U_{[1]}f}&\Theta C'' } \quad \xymatrixcolsep{1.5cm}%
\xymatrixrowsep{0.5cm}\xymatrix{\Theta R''L\Theta
B''\ar[d]_{b}\ar[r]^{\Theta R''L\Theta f''}&\Theta R''L\Theta
C''\ar[d]_{\Theta \left(c''\circ R''\theta C''\right)}\\ \Theta
B''\ar[r]_{\Theta f''}&\Theta C'' } \quad \xymatrixcolsep{1.5cm}%
\xymatrixrowsep{0.5cm}\xymatrix{ R''L\Theta B''\ar[d]_{\tau ''}\ar[r]^{
R''L\Theta f''}& R''L\Theta C''\ar[d]_{ c''\circ R''\theta C''}\\
B''\ar[r]_{f''}& C'' }
\end{equation}

\begin{invisible}
\begin{equation*}
\begin{array}{ccc}
RLB & \overset{RLU_{\left[ 1\right] }f}{\longrightarrow } & RL\Theta
C^{\prime \prime } \\
b\downarrow &  & \downarrow \Theta \left( c^{\prime \prime }\circ R^{\prime
\prime }\theta C^{\prime \prime }\right) \\
B & \overset{U_{\left[ 1\right] }f}{\longrightarrow } & \Theta C^{\prime
\prime }%
\end{array}%
\end{equation*}
\end{invisible}

Since $f\in \mathsf{M}\left( GU_{\left[ 1\right] }\right) $, we have that $%
U_{\left[ 1\right] }f\in \mathsf{M}\left( G\right) \mathsf{.}$ Since $\Theta
:\mathcal{B}^{\prime \prime }\rightarrow \mathcal{B}$ is an $\mathsf{M}%
\left( G\right) $-fibration, there is a morphism $f^{\prime \prime
}:B^{\prime \prime }\rightarrow C^{\prime \prime }$ which is cartesian (with
respect to $\Theta $) over $U_{\left[ 1\right] }f.$ In particular $\Theta
B^{\prime \prime }=B$ and $\Theta f^{\prime \prime }=U_{\left[ 1\right] }f.$
Note that $R=\Theta R^{\prime \prime }\ $so that the first diagram in %
\eqref{diag:serv1} rewrites as the second one therein.
\begin{invisible}
\begin{equation*}
\begin{array}{ccc}
\Theta R^{\prime \prime }L\Theta B^{\prime \prime } & \overset{\Theta
R^{\prime \prime }L\Theta f^{\prime \prime }}{\longrightarrow } & \Theta
R^{\prime \prime }L\Theta C^{\prime \prime } \\
b\downarrow &  & \downarrow \Theta \left( c^{\prime \prime }\circ R^{\prime
\prime }\theta C^{\prime \prime }\right) \\
\Theta B^{\prime \prime } & \overset{\Theta f^{\prime \prime }}{%
\longrightarrow } & \Theta C^{\prime \prime }%
\end{array}%
\end{equation*}
\end{invisible}
Since $f^{\prime \prime }$ is cartesian (with respect to $\Theta $) over $U_{%
\left[ 1\right] }f=\Theta f^{\prime \prime },$ this diagram implies
there is a unique morphism $\tau ^{\prime \prime }:R^{\prime \prime
}LB\rightarrow B^{\prime \prime }$ such that $\Theta\tau''=b$ and the third diagram in %
\eqref{diag:serv1} commutes.

\begin{invisible}
\begin{equation*}
\begin{array}{ccc}
R^{\prime \prime }L\Theta B^{\prime \prime } & \overset{R^{\prime \prime
}L\Theta f^{\prime \prime }}{\longrightarrow } & R^{\prime \prime }L\Theta
C^{\prime \prime } \\
\tau ^{\prime \prime }\downarrow &  & \downarrow c^{\prime \prime }\circ
R^{\prime \prime }\theta C^{\prime \prime } \\
B^{\prime \prime } & \overset{f^{\prime \prime }}{\longrightarrow } &
C^{\prime \prime }%
\end{array}%
\end{equation*}
\end{invisible}

Set $b^{\prime \prime }:=\tau ^{\prime \prime }\circ \left( R^{\prime \prime
}\theta B^{\prime \prime }\right) ^{-1}$ and $B_{\left[ 1\right] }^{\prime
\prime }:=\left( B^{\prime \prime },b^{\prime \prime }\right) $ Then%
\begin{equation*}
f^{\prime \prime }\circ b^{\prime \prime }\circ R^{\prime \prime }\theta
B^{\prime \prime }=f^{\prime \prime }\circ \tau ^{\prime \prime }=c^{\prime
\prime }\circ R^{\prime \prime }\theta C^{\prime \prime }\circ R^{\prime
\prime }L\Theta f^{\prime \prime }=c^{\prime \prime }\circ R^{\prime \prime
}L^{\prime \prime }f^{\prime \prime }\circ R^{\prime \prime }\theta
B^{\prime \prime }
\end{equation*}%
and hence $f^{\prime \prime }\circ b^{\prime \prime }=c^{\prime \prime
}\circ R^{\prime \prime }L^{\prime \prime }f^{\prime \prime }.$ As a
consequence $f^{\prime \prime }$ induces a morphism $f_{\left[ 1\right]
}^{\prime \prime }:\left( B^{\prime \prime },b^{\prime \prime }\right)
\rightarrow \left( C^{\prime \prime },c^{\prime \prime }\right) $ such that $%
U_{\left[ 1\right] }^{\prime \prime }f_{\left[ 1\right] }^{\prime \prime
}=f^{\prime \prime }.$ We compute
\begin{equation*}
U_{\left[ 1\right] }\Theta _{\left[ 1\right] }^{\prime }f_{\left[ 1\right]
}^{\prime \prime }=\Theta U_{\left[ 1\right] }^{\prime \prime }f_{\left[ 1%
\right] }^{\prime \prime }=\Theta f^{\prime \prime }=U_{\left[ 1\right] }f
\end{equation*}%
Since
\begin{equation*}
\Theta _{\left[ 1\right] }^{\prime }B_{\left[ 1\right] }^{\prime \prime
}=\left( \Theta B^{\prime \prime },\Theta b^{\prime \prime }\circ R\theta
B^{\prime \prime }\right) =\left( \Theta B^{\prime \prime },\Theta \left(
b^{\prime \prime }\circ R^{\prime \prime }\theta B^{\prime \prime }\right)
\right) =\left( \Theta B^{\prime \prime },\Theta \tau ^{\prime \prime
}\right) =\left( B,b\right) =B_{\left[ 1\right] }
\end{equation*}%
we have that $\Theta _{\left[ 1\right] }^{\prime }f_{\left[ 1\right]
}^{\prime \prime }$ and $f$ have the same domain (and codomain). Since $U_{%
\left[ 1\right] }$ is faithful, we get $\Theta _{\left[ 1\right] }^{\prime
}f_{\left[ 1\right] }^{\prime \prime }=f.$

Since $U_{\left[ 1\right] }\Theta _{\left[ 1\right] }^{\prime }f_{\left[ 1%
\right] }^{\prime \prime }=U_{\left[ 1\right] }f\in \mathsf{M}\left(
G\right) ,$ by hypothesis $U_{\left[ 1\right] }\Theta _{\left[ 1\right]
}^{\prime }f_{\left[ 1\right] }^{\prime \prime }$ is cartesian over $GU_{%
\left[ 1\right] }\Theta _{\left[ 1\right] }^{\prime }f_{\left[ 1\right]
}^{\prime \prime }.$ By Lemma \ref{lem:mono1}, we deduce that $U_{\left[ 1%
\right] }\Theta _{\left[ 1\right] }^{\prime }f_{\left[ 1\right] }^{\prime
\prime }$ is a monomorphism. Since $U_{\left[ 1\right] }\Theta _{\left[ 1%
\right] }^{\prime }=\Theta U_{\left[ 1\right] }^{\prime \prime },$ we get
that $U_{\left[ 1\right] }^{\prime \prime }f_{\left[ 1\right] }^{\prime
\prime }\in \mathsf{M}\left( \Theta \right) .$ The latter morphism is $%
f^{\prime \prime }$ which is cartesian (with respect to $\Theta $) over $%
\Theta f^{\prime \prime }.$ Again, by Lemma \ref{lem:mono1}, we deduce that $%
f^{\prime \prime }=U_{\left[ 1\right] }^{\prime \prime }f_{\left[ 1\right]
}^{\prime \prime }$ is a monomorphism i.e. $f_{\left[ 1\right] }^{\prime
\prime }:B_{\left[ 1\right] }^{\prime \prime }\rightarrow C_{\left[ 1\right]
}^{\prime \prime }$ in $\mathsf{M}\left( U_{\left[ 1\right] }^{\prime \prime
}\right) $ as desired.

Let us check that $f_{\left[ 1\right] }^{\prime \prime }:B_{\left[ 1\right]
}^{\prime \prime }\rightarrow C_{\left[ 1\right] }^{\prime \prime }$ is
cartesian with respect to $\Theta _{\left[ 1\right] }^{\prime }$ over $f.$

Let $g_{\left[ 1\right] }^{\prime \prime }:D_{\left[ 1\right] }^{\prime
\prime }\rightarrow C_{\left[ 1\right] }^{\prime \prime }$ in $\mathcal{B}_{%
\left[ 1\right] }^{\prime \prime }$ and $h:\Theta _{\left[ 1\right]
}^{\prime }D_{\left[ 1\right] }^{\prime \prime }\rightarrow B_{\left[ 1%
\right] }$ in $\mathcal{B}_{\left[ 1\right] }$ be such that $\Theta _{\left[
1\right] }^{\prime }f_{\left[ 1\right] }^{\prime \prime }\circ h=\Theta _{%
\left[ 1\right] }^{\prime }g_{\left[ 1\right] }^{\prime \prime }.$ By
applying on both sides $U_{\left[ 1\right] }$, we get $U_{\left[ 1\right]
}\Theta _{\left[ 1\right] }^{\prime }f_{\left[ 1\right] }^{\prime \prime
}\circ U_{\left[ 1\right] }h=U_{\left[ 1\right] }\Theta _{\left[ 1\right]
}^{\prime }g_{\left[ 1\right] }^{\prime \prime }$ i.e. $\Theta f^{\prime
\prime }\circ U_{\left[ 1\right] }h=\Theta U_{\left[ 1\right]
}^{\prime \prime }g_{\left[ 1\right] }^{\prime \prime }.$ Since $f^{\prime
\prime }:B^{\prime \prime }\rightarrow C^{\prime \prime }$ is cartesian over
$U_{\left[ 1\right] }f=\Theta f^{\prime \prime },$ we get that there is a
unique morphism $k^{\prime \prime }:D^{\prime \prime }\rightarrow B^{\prime
\prime }$ in $\mathcal{B}^{\prime \prime }$ such that $U_{\left[ 1\right]
}h=\Theta k^{\prime \prime }$ and $f^{\prime \prime }\circ k^{\prime \prime
}=U_{\left[ 1\right] }^{\prime \prime }g_{\left[ 1\right] }^{\prime \prime
}. $ Let us check that $k^{\prime \prime }$ induces a morphism $k_{\left[ 1%
\right] }^{\prime \prime }:D_{\left[ 1\right] }^{\prime \prime }\rightarrow
B_{\left[ 1\right] }^{\prime \prime }$ such that $\Theta _{\left[ 1\right]
}^{\prime }k_{\left[ 1\right] }=h.$ Write $D_{\left[ 1\right] }^{\prime
\prime }=\left( D^{\prime \prime },d^{\prime \prime }:R^{\prime \prime
}L^{\prime \prime }D^{\prime \prime }\rightarrow D^{\prime \prime }\right) .$
Knowing that $f^{\prime \prime }\circ b^{\prime \prime }=c^{\prime \prime
}\circ R^{\prime \prime }L^{\prime \prime }f^{\prime \prime }$ and that $g_{%
\left[ 1\right] }^{\prime \prime }:D_{\left[ 1\right] }^{\prime \prime
}\rightarrow C_{\left[ 1\right] }^{\prime \prime }$ belongs to $\mathcal{B}_{%
\left[ 1\right] }^{\prime \prime },$ we obtain%
\begin{equation*}
f^{\prime \prime }\circ b^{\prime \prime }\circ R^{\prime \prime }L^{\prime
\prime }k^{\prime \prime }=c^{\prime \prime }\circ R^{\prime \prime
}L^{\prime \prime }f^{\prime \prime }\circ R^{\prime \prime }L^{\prime
\prime }k^{\prime \prime }=c^{\prime \prime }\circ R^{\prime \prime
}L^{\prime \prime }U_{\left[ 1\right] }^{\prime \prime }g_{\left[ 1\right]
}^{\prime \prime }=U_{\left[ 1\right] }^{\prime \prime }g_{\left[ 1\right]
}^{\prime \prime }\circ d^{\prime \prime }=f^{\prime \prime }\circ k^{\prime
\prime }\circ d^{\prime \prime }.
\end{equation*}%
Since we proved that $f^{\prime \prime }\ is$ a monomorphism, we get that $%
b^{\prime \prime }\circ R^{\prime \prime }L^{\prime \prime }k^{\prime \prime
}=k^{\prime \prime }\circ d^{\prime \prime }$ i.e. that $k$ induces a
morphism $k_{\left[ 1\right] }^{\prime \prime }:D_{\left[ 1\right] }^{\prime
\prime }\rightarrow B_{\left[ 1\right] }^{\prime \prime }$ such that $U_{%
\left[ 1\right] }^{\prime \prime }k_{\left[ 1\right] }^{\prime \prime
}=k^{\prime \prime }.$ We have
\begin{equation*}
U_{\left[ 1\right] }\Theta _{\left[ 1\right] }^{\prime }k_{\left[ 1\right]
}^{\prime \prime }=\Theta U_{\left[ 1\right] }^{\prime \prime }k_{\left[ 1%
\right] }^{\prime \prime }=\Theta k^{\prime \prime }=U_{\left[ 1\right] }h.
\end{equation*}%
Since $\Theta _{\left[ 1\right] }^{\prime }B_{\left[ 1\right] }^{\prime
\prime }=B_{\left[ 1\right] }$ we have that $\Theta _{\left[ 1\right]
}^{\prime }k_{\left[ 1\right] }^{\prime \prime }$ and $h$ have the same
domain and codomain. Since $U_{\left[ 1\right] }$ is faithful, we obtain
that $\Theta _{\left[ 1\right] }k_{\left[ 1\right] }^{\prime \prime }=h.$
Moreover%
\begin{equation*}
U_{\left[ 1\right] }^{\prime \prime }\left( f_{\left[ 1\right] }^{\prime
\prime }\circ k_{\left[ 1\right] }^{\prime \prime }\right) =U_{\left[ 1%
\right] }^{\prime \prime }f_{\left[ 1\right] }^{\prime \prime }\circ U_{%
\left[ 1\right] }^{\prime \prime }k_{\left[ 1\right] }^{\prime \prime
}=f^{\prime \prime }\circ k^{\prime \prime }=U_{\left[ 1\right] }^{\prime
\prime }g_{\left[ 1\right] }^{\prime \prime }.
\end{equation*}%
Since $U_{\left[ 1\right] }^{\prime \prime }$ is faithful, we get $f_{\left[
1\right] }^{\prime \prime }\circ k_{\left[ 1\right] }^{\prime \prime }=g_{%
\left[ 1\right] }^{\prime \prime }$. Moreover $k_{\left[ 1\right] }^{\prime
\prime }$ is unique since $U_{\left[ 1\right] }^{\prime \prime }$ is
faithful and $f^{\prime \prime }\ $is a monomorphism. We have so proved that
$f_{\left[ 1\right] }^{\prime \prime }:B_{\left[ 1\right] }^{\prime \prime
}\rightarrow C_{\left[ 1\right] }^{\prime \prime }$ is cartesian over $f.$
\end{proof}

\begin{theorem}
\label{teo:serv1}In the setting of Proposition \ref{pro:5}, assume that

\begin{itemize}
\item any morphism $g\in \mathsf{M}\left( G\right) $ is cartesian over $Gg;$

\item $\Theta :\mathcal{B}^{\prime \prime }\rightarrow \mathcal{B}$ is an $%
\mathsf{M}\left( G\right) $-fibration,

\item $\theta $ is invertible.
\end{itemize}

Then for every morphism $f^{\prime }:\left( B,\beta \right) \rightarrow
\Theta _{\left[ 1\right] }C_{\left[ 1\right] }^{\prime \prime }$ in $\mathsf{%
M}\left( G\circ P\left( \mathbb{T}\right) \right) $ there is $f_{\left[ 1%
\right] }:B_{\left[ 1\right] }^{\prime \prime }\rightarrow C_{\left[ 1\right]
}^{\prime \prime }$ in $\mathsf{M}\left( U_{\left[ 1\right] }^{\prime \prime
}\right) $ which is cartesian with respect to $\Theta _{\left[ 1\right] }$
over $f^{\prime }.$ In particular $\Theta _{\left[ 1\right] }:\mathcal{B}_{%
\left[ 1\right] }^{\prime \prime }\rightarrow \mathrm{I}\left( \mathbb{T}%
\right) $ is an $\mathsf{M}\left( G\circ P\left( \mathbb{T}\right) \right) $%
-fibration.
\end{theorem}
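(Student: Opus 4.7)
The plan is to factor $\Theta_{[1]}=S^{G}\circ \Theta'_{[1]}$, where $\Theta'_{[1]}:\mathcal{B}_{[1]}''\to \mathcal{B}_{[1]}$ is the functor of Notation \ref{not:Thetaprime} and $S^{G}:\mathcal{B}_{[1]}=\langle RL\mid \mathrm{Id}\rangle \to \langle GRL\mid G\rangle =\mathrm{I}(\mathbb{T})$ is the functor used in Proposition \ref{pro:2}, and then apply Propositions \ref{pro:serv0} and \ref{pro:serv1} in cascade.

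First I would verify the factorization. For $(V'',\mu'')\in \mathcal{B}_{[1]}''$ one has
\begin{equation*}
S^{G}\bigl(\Theta'_{[1]}(V'',\mu'')\bigr)=S^{G}\bigl(\Theta V'',\;\Theta \mu''\circ R\theta V''\bigr)=\bigl(\Theta V'',\;G\Theta \mu''\circ GR\theta V''\bigr),
\end{equation*}
which coincides with $\Theta_{[1]}(V'',\mu'')=(\Theta V'',G''\mu''\circ R'\theta V'')$ via the relations $G''=G\Theta$ and $R'=GR$. Agreement on morphisms is automatic since both $\Theta'_{[1]}$ and $\Theta_{[1]}$ act as $\Theta \circ U_{[1]}''$ on morphisms and $S^{G}$ is the identity on morphisms.

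Next, given $f':(B,\beta)\to \Theta_{[1]}C_{[1]}''=S^{G}(\Theta'_{[1]}C_{[1]}'')$ in $\mathsf{M}(G\circ P(\mathbb{T}))$, Proposition \ref{pro:serv0} produces a morphism $f:B_{[1]}\to \Theta'_{[1]}C_{[1]}''$ in $\mathsf{M}(GU_{[1]})$ which is cartesian with respect to $S^{G}$ over $f'$. Since $f\in \mathsf{M}(GU_{[1]})$ is exactly the class of morphisms accepted as input by Proposition \ref{pro:serv1}, that proposition applies (its three standing hypotheses are those of the present statement) and yields a morphism $f_{[1]}:B_{[1]}''\to C_{[1]}''$ in $\mathsf{M}(U_{[1]}'')$ which is cartesian with respect to $\Theta'_{[1]}$ over $f$.

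It remains to conclude that $f_{[1]}$ is cartesian with respect to $\Theta_{[1]}=S^{G}\circ \Theta'_{[1]}$ over $S^{G}f=f'$, from which the ``in particular'' statement is immediate by definition of $\mathsf{M}(G\circ P(\mathbb{T}))$-fibration. This is a cross-functor variant of Lemma \ref{lem:transitive}: if $h$ is cartesian with respect to $F_{2}$ and $F_{2}h$ is cartesian with respect to $F_{1}$, then $h$ is cartesian with respect to $F_{1}\circ F_{2}$; it follows from the same pullback-pasting argument used for Lemma \ref{lem:transitive}, via Remark \ref{rem:cartpullb}, observing that the horizontal composite of two pullback squares is a pullback. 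The main obstacle is purely bookkeeping: making sure that the class $\mathsf{M}(GU_{[1]})$ produced by Proposition \ref{pro:serv0} matches the input class required by Proposition \ref{pro:serv1}, which the formulations of those two results are tailored to guarantee.
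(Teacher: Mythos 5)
Your proposal is correct and follows essentially the same route as the paper's own proof: factor $\Theta_{[1]}=S^{G}\circ\Theta'_{[1]}$, apply Propositions \ref{pro:serv0} and \ref{pro:serv1} in succession, and conclude by transitivity of cartesianness (the paper invokes Lemma \ref{lem:transitive} for the final step, implicitly in the cross-functor form you spell out). Your explicit remark that the needed transitivity is the composite-functor variant, justified by the same pullback-pasting argument of Remark \ref{rem:cartpullb}, is a welcome clarification but not a different argument.
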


\begin{proof}
First note that $\Theta _{\left[ 1\right] }=S^{G}\Theta _{\left[ 1\right]
}^{\prime }$ as they coincide on morphisms and for every $C_{\left[ 1\right]
}^{\prime \prime }=\left( C^{\prime \prime },c^{\prime \prime }:R^{\prime
\prime }L^{\prime \prime }C^{\prime \prime }\rightarrow C^{\prime \prime
}\right) \in \mathcal{B}_{\left[ 1\right] }^{\prime \prime },$ we have%
\begin{eqnarray*}
\Theta _{\left[ 1\right] }C_{\left[ 1\right] }^{\prime \prime } &=&\Theta _{%
\left[ 1\right] }\left( C^{\prime \prime },c^{\prime \prime }\right) =\left(
\Theta C^{\prime \prime },G^{\prime \prime }c^{\prime \prime }\circ
R^{\prime }\theta C^{\prime \prime }\right) =\left( \Theta C^{\prime \prime
},G\Theta c^{\prime \prime }\circ GR\theta C^{\prime \prime }\right) \\
&=&S^{G}\left( \Theta C^{\prime \prime },\Theta c^{\prime \prime }\circ
R\theta C^{\prime \prime }\right) =S^{G}\Theta _{\left[ 1\right] }^{\prime
}C_{\left[ 1\right] }^{\prime \prime }.
\end{eqnarray*}

Let $f^{\prime }:\left( B,\beta \right) \rightarrow \Theta _{\left[ 1\right]
}C_{\left[ 1\right] }^{\prime \prime }$ in $\mathsf{M}\left( G\circ P\left(
\mathbb{T}\right) \right) $. Since $\Theta _{\left[ 1\right] }C_{\left[ 1%
\right] }^{\prime \prime }=S^{G}\Theta _{\left[ 1\right] }^{\prime }C_{\left[
1\right] }^{\prime \prime }$, by Proposition \ref{pro:serv0},
there is a morphism $f:B_{\left[ 1\right] }\rightarrow \Theta _{\left[ 1%
\right] }^{\prime }C_{\left[ 1\right] }^{\prime \prime }$ in $\mathsf{M}%
\left( GU_{\left[ 1\right] }\right) $ which is cartesian with respect to $%
S^{G}$ over $f^{\prime }.$ By Proposition \ref{pro:serv1}, there
is $f_{\left[ 1\right] }^{\prime \prime }:B_{\left[ 1\right] }^{\prime
\prime }\rightarrow C_{\left[ 1\right] }^{\prime \prime }$ in $\mathsf{M}%
\left( U_{\left[ 1\right] }^{\prime \prime }\right) $ which is cartesian
with respect to $\Theta _{\left[ 1\right] }^{\prime }$ over $f.$ By Lemma %
\ref{lem:transitive}, the morphism $f_{\left[ 1\right] }^{\prime \prime }$
is cartesian with respect to $\Theta _{\left[ 1\right] }=S^{G}\Theta _{\left[
1\right] }^{\prime }$ over $f^{\prime }.$
\end{proof}

\begin{theorem}
\label{teo:servaureo}In the setting of Proposition \ref{pro:5}, assume that

\begin{itemize}
\item any morphism $g\in \mathsf{M}\left( G\right) $ is cartesian over $Gg;$

\item $\Theta :\mathcal{B}^{\prime \prime }\rightarrow \mathcal{B}$ is an $%
\mathsf{M}\left( G\right) $-fibration,

\item $\theta $ is invertible.
\end{itemize}

Then $\Theta _{1}:\mathcal{B}_{1}^{\prime \prime }\rightarrow \mathrm{I}%
\left( \mathbb{T}\right) $ is an $\mathsf{M}\left( G\circ P\left( \mathbb{T}%
\right) \right) $-fibration.
\end{theorem}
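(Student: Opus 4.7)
The plan is to exploit the factorization $\Theta_{1}=\Theta_{[1]}\circ \Lambda_{1}$ established in the proof of Proposition \ref{pro:5} and reduce the statement to combining Theorem \ref{teo:serv1} with Lemma \ref{lem:fibr1}, using transitivity of cartesian morphisms (Lemma \ref{lem:transitive}) to glue the two cartesian lifts together.

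Concretely, I would start with an arbitrary morphism $f^{\prime}:\left(B,\beta\right)\rightarrow \Theta_{1}C^{\prime\prime}_{1}$ in $\mathsf{M}(G\circ P(\mathbb{T}))$, where $C^{\prime\prime}_{1}\in\mathcal{B}^{\prime\prime}_{1}$. Since $\Theta_{1}C^{\prime\prime}_{1}=\Theta_{[1]}\Lambda_{1}C^{\prime\prime}_{1}$, the three hypotheses of Theorem \ref{teo:serv1} (which are exactly the three assumptions in the present statement) apply and yield a morphism $f^{\prime\prime}_{[1]}:B^{\prime\prime}_{[1]}\rightarrow \Lambda_{1}C^{\prime\prime}_{1}$ in $\mathsf{M}(U^{\prime\prime}_{[1]})$ which is cartesian with respect to $\Theta_{[1]}$ over $f^{\prime}$.

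Next, I would apply Lemma \ref{lem:fibr1}, taken relative to the adjunction $(L^{\prime\prime},R^{\prime\prime})$ instead of $(L,R)$, which guarantees that $\Lambda_{1}:\mathcal{B}^{\prime\prime}_{1}\rightarrow \mathcal{B}^{\prime\prime}_{[1]}$ is an $\mathsf{M}(U^{\prime\prime}_{[1]})$-fibration. Since the morphism $f^{\prime\prime}_{[1]}$ produced above lies in $\mathsf{M}(U^{\prime\prime}_{[1]})$ and has codomain $\Lambda_{1}C^{\prime\prime}_{1}$, this provides a cartesian lift $f^{\prime\prime}_{1}:B^{\prime\prime}_{1}\rightarrow C^{\prime\prime}_{1}$ of $f^{\prime\prime}_{[1]}$ with respect to $\Lambda_{1}$, in particular $\Lambda_{1}f^{\prime\prime}_{1}=f^{\prime\prime}_{[1]}$, so that $\Theta_{1}f^{\prime\prime}_{1}=\Theta_{[1]}\Lambda_{1}f^{\prime\prime}_{1}=\Theta_{[1]}f^{\prime\prime}_{[1]}=f^{\prime}$.

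Finally, since $f^{\prime\prime}_{1}$ is cartesian (with respect to $\Lambda_{1}$) over $f^{\prime\prime}_{[1]}$ and $f^{\prime\prime}_{[1]}$ is cartesian (with respect to $\Theta_{[1]}$) over $f^{\prime}$, transitivity of cartesian morphisms (Lemma \ref{lem:transitive}) applied to the composite $\Theta_{[1]}\circ \Lambda_{1}=\Theta_{1}$ yields that $f^{\prime\prime}_{1}$ is cartesian with respect to $\Theta_{1}$ over $f^{\prime}$, which is exactly the defining property of $\Theta_{1}$ being an $\mathsf{M}(G\circ P(\mathbb{T}))$-fibration. The main point to verify carefully is that the membership condition $f^{\prime\prime}_{[1]}\in\mathsf{M}(U^{\prime\prime}_{[1]})$ supplied by Theorem \ref{teo:serv1} matches exactly the class for which Lemma \ref{lem:fibr1} ensures cartesian lifts exist; once this alignment is in place, no additional calculation is needed.
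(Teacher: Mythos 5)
Your proposal is correct and follows essentially the same route as the paper's own proof: factor $\Theta_{1}=\Theta_{[1]}\circ\Lambda_{1}$, obtain a cartesian lift in $\mathsf{M}(U_{[1]}'')$ via Theorem \ref{teo:serv1}, lift again through $\Lambda_{1}$ using Lemma \ref{lem:fibr1} applied to the adjunction $(L'',R'')$, and conclude by transitivity (Lemma \ref{lem:transitive}). The alignment of classes you flag at the end is indeed exactly what makes the two steps compose, and it holds as stated.
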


\begin{proof}
Let $f^{\prime }:\left( B,\beta :R^{\prime }LB\rightarrow GB\right)
\rightarrow \Theta _{1}C_{1}^{\prime \prime }$ be a morphism in $\mathsf{M}%
\left( G\circ P\left( \mathbb{T}\right) \right) $ i.e. a morphism in $%
\mathrm{I}\left( \mathbb{T}\right) $ such that $GP\left( \mathbb{T}\right)
f^{\prime }:GB\rightarrow GP\left( \mathbb{T}\right) \Theta
_{1}C_{1}^{\prime \prime }=G\Theta C^{\prime \prime }$ is a monomorphism,
where $C_{1}^{\prime \prime }=\left( C^{\prime \prime },c^{\prime \prime
}:R^{\prime \prime }L^{\prime \prime }C^{\prime \prime }\rightarrow
C^{\prime \prime }\right) .$
Since $\Theta _{1}=\Theta _{\left[ 1\right] }\Lambda _{1}$, by Theorem \ref%
{teo:serv1}, there is a morphism $f_{\left[ 1\right] }:\left( B^{\prime
\prime },b^{\prime \prime }\right) \rightarrow \left( C^{\prime \prime
},c^{\prime \prime }\right) $ in $\mathsf{M}\left( U_{\left[ 1\right]
}^{\prime \prime }\right) $ which is cartesian with respect to $\Theta _{%
\left[ 1\right] }$ over $f^{\prime }.$ In particular $\Theta _{\left[ 1%
\right] }\left( B^{\prime \prime },b^{\prime \prime }\right) =\left( B,\beta
\right) $ and $\Theta _{\left[ 1\right] }f_{\left[ 1\right] }=f^{\prime }$.
Since $f_{\left[ 1\right] }\in \mathsf{M}\left( U_{\left[ 1\right] }^{\prime
\prime }\right) $, by Lemma \ref{lem:fibr1}, there is $f_{1}:B_{1}^{\prime
\prime }\rightarrow C_{1}^{\prime \prime }$ which is cartesian with respect
to $\Lambda _{1}:\mathcal{B}_{1}\rightarrow \mathcal{B}_{\left[ 1\right] }$
over $f_{\left[ 1\right] }.$ We compute%
\begin{equation*}
\Theta _{1}f_{1}=\Theta _{\left[ 1\right] }\Lambda _{1}f_{1}=\Theta _{\left[
1\right] }f_{\left[ 1\right] }=f^{\prime }.
\end{equation*}
Since $f_{1}$ is cartesian with respect to $\Lambda _{1}$ over $f_{\left[ 1%
\right] }$ and $f_{\left[ 1\right] }$ is cartesian with respect to $\Theta _{%
\left[ 1\right] }$ over $f^{\prime },$ by Lemma \ref{lem:transitive}, $f_{1}$
is cartesian with respect to $\Theta _{1}=\Theta _{\left[ 1\right] }\Lambda
_{1}$ over $f^{\prime }.$
\end{proof}

\begin{theorem}
\label{teo:LambdaFibr}Let $n\in \mathbb{N}$. The functor $\Lambda _{n}:%
\mathcal{B}_{n}\rightarrow \mathcal{B}_{\left[ n\right] }$ of Remark \ref%
{rem:adj} is an $\mathsf{M}\left( U_{\left[ n\right] }\right) $-fibration.
Moreover it is a discrete isofibration and $\mathrm{\mathrm{Eim}}\left(
\Lambda _{n}\right) =\mathrm{Im}\left( \Lambda _{n}\right) =\mathrm{Im}%
^{\prime }\left( \Lambda _{n}\right) $ (see Definition \ref{def:images}).
\end{theorem}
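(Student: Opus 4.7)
The plan is to prove the $\mathsf{M}(U_{[n]})$-fibration statement by induction on $n$, feeding Theorem \ref{teo:servaureo} at each step, and then to deduce the remaining two claims (discrete isofibration and equality of the three notions of image) from the general results of Section \ref{sec:4} together with the fact, recorded in Remark \ref{rem:adj}, that $\Lambda_n$ is fully faithful and injective on objects.

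For the base case, $\Lambda_0 = \mathrm{Id}_{\mathcal{B}}$ and $U_{[0]} = \mathrm{Id}_{\mathcal{B}}$, so there is nothing to check; alternatively one can take $n=1$ as the base, for which the statement is exactly Lemma \ref{lem:fibr1}. For the inductive step, suppose $\Lambda_{n-1}$ is an $\mathsf{M}(U_{[n-1]})$-fibration. By the iterative construction in Remark \ref{rem:adj}, $\Lambda_n$ arises as $(\Lambda_{n-1})_1$ in the sense of Proposition \ref{pro:5} applied to the composition $\boldsymbol{\Lambda}_{[n-1]} \ast \mathbb{T}_{[n-1]}$, so $\Lambda_n$ plays the role of $\Theta_1$ in Theorem \ref{teo:servaureo} with $\Theta = \Lambda_{n-1}$, $G = U_{[n-1]}$ and $\theta = \lambda_{n-1}$. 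I now verify the three hypotheses of that theorem: (i) any morphism $g \in \mathsf{M}(U_{[n-1]})$ is cartesian over $U_{[n-1]}g$ by Proposition \ref{pro:Unfibr}; (ii) $\Lambda_{n-1}$ is an $\mathsf{M}(U_{[n-1]})$-fibration by the inductive hypothesis; (iii) $\lambda_{n-1}$ is invertible by Remark \ref{rem:adj}. Applying Theorem \ref{teo:servaureo} then yields that $\Lambda_n = \Theta_1$ is an $\mathsf{M}(G \circ P(\mathbb{T}_{[n-1]}))$-fibration, and since $G \circ P(\mathbb{T}_{[n-1]}) = U_{[n-1]} \circ U_{[n-1,n]} = U_{[n]}$, this is precisely the $\mathsf{M}(U_{[n]})$-fibration property required.

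For the discrete isofibration claim, observe that $\mathsf{M}(U_{[n]}) \supseteq \mathsf{Iso}$ since every isomorphism in $\mathcal{B}_{[n]}$ is sent by $U_{[n]}$ to an isomorphism in $\mathcal{B}$, which is in particular a monomorphism. Corollary \ref{coro:isofibr} then guarantees that $\Lambda_n$ is an isofibration. Combined with the fact that $\Lambda_n$ is faithful and injective on objects (Remark \ref{rem:adj}), Remark \ref{rem:isofibr} gives that $\Lambda_n$ is a discrete isofibration.

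Finally, for the chain of equalities of images, the inclusion $\mathrm{Im}(\Lambda_n) \subseteq \mathrm{Im}^{\prime}(\Lambda_n) \subseteq \mathrm{Eim}(\Lambda_n)$ holds in complete generality. Since $\Lambda_n$ is an isofibration, Lemma \ref{lem:isofibfull}(1) provides $\mathrm{Eim}(\Lambda_n) \subseteq \mathrm{Im}^{\prime}(\Lambda_n)$, whence $\mathrm{Im}^{\prime}(\Lambda_n) = \mathrm{Eim}(\Lambda_n)$; since $\Lambda_n$ is full, Lemma \ref{lem:isofibfull}(2) gives $\mathrm{Im}^{\prime}(\Lambda_n) = \mathrm{Im}(\Lambda_n)$. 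The three classes therefore coincide. The only delicate point in the whole argument is the careful matching of data needed to invoke Theorem \ref{teo:servaureo} in the inductive step, in particular identifying $G \circ P(\mathbb{T}_{[n-1]})$ with $U_{[n]}$; the rest is a straightforward assembly of previous results.
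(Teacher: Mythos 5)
Your proposal is correct and follows essentially the same route as the paper's own proof: induction on $n$ with the base case $\Lambda_0=\mathrm{Id}_{\mathcal{B}}$, the inductive step via Theorem \ref{teo:servaureo} applied to $\Theta=\Lambda_{n-1}$, $G=U_{[n-1]}$, $\theta=\lambda_{n-1}$ (with the hypotheses supplied by Proposition \ref{pro:Unfibr}, the inductive hypothesis, and Remark \ref{rem:adj}), and the remaining claims deduced from Corollary \ref{coro:isofibr}, Remark \ref{rem:isofibr} and Lemma \ref{lem:isofibfull}. The identification $G\circ P(\mathbb{T}_{[n-1]})=U_{[n]}$ that you single out is indeed the only point requiring care, and you handle it correctly.
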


\begin{proof}
We proceed by induction on $n\in \mathbb{N}$. The first step is trivially
true since $\Lambda _{0}=\mathrm{Id}_{\mathcal{B}}=U_{\left[ 0\right] }$.

Let $n\geq 1$ and assume the statement true for $n-1.$ Apply Theorem \ref%
{teo:servaureo} to $\Theta :=\Lambda _{n-1},G:=U_{\left[ n-1\right] },%
\mathbb{T=T}_{\left[ n-1\right] }$, $\mathbb{T}'=\boldsymbol{\Lambda}_{[n-1]}$ by noting that $\Lambda _{n}=\left(
\Lambda _{n-1}\right) _{1}$, that $\Theta $ fulfills the required conditions
by inductive hypothesis, $G$ also fulfills them  by Proposition \ref{pro:Unfibr} and $\theta =\lambda _{n-1}$ is
invertible by Remark \ref{rem:adj}. Thus $\Lambda _{n}\ $is an $\mathsf{M}%
\left( U_{\left[ n\right] }\right) $-fibration.
i
Any isomorphism in $\mathcal{B}_{\left[ n\right] }$ belongs trivially to $%
\mathsf{M}\left( U_{\left[ n\right] }\right).$ Moreover, by Remark \ref%
{rem:adj}, we know that $\Lambda _{n}$ is (fully) faithful and injective on
objects. Thus we can apply Corollary \ref{coro:isofibr} and Remark \ref%
{rem:isofibr} to obtain that $\Lambda _{n}$ is a discrete isofibration. From
Lemma \ref{lem:isofibfull} and the fact that $\Lambda _{n}$ is full, we get
that $\mathrm{\mathrm{Eim}}\left( \Lambda _{n}\right) \subseteq \mathrm{Im}%
^{\prime }\left( \Lambda _{n}\right) =\mathrm{Im}\left( \Lambda _{n}\right)
. $ We know that $\mathrm{Im}\left( \Lambda _{n}\right) \subseteq \mathrm{%
\mathrm{Eim}}\left( \Lambda _{n}\right) $ holds always.
\end{proof}

The following result gives conditions for an object in $\mathcal{B}_{\left[ n\right] }$ to be image via $\Lambda_n$ of an object in $\mathcal{B}_{n}$.

\begin{theorem}
\label{teo:main}Fix $n\in \mathbb{N}$ consider the functors $\Lambda _{n}:%
\mathcal{B}_{n}\rightarrow \mathcal{B}_{\left[ n\right] }$ and $U_{\left[ n%
\right] }:\mathcal{B}_{\left[ n\right] }\rightarrow \mathcal{B}.$

\begin{itemize}
\item[1)] For every morphism $B_{\left[ n\right] }\rightarrow \Lambda
_{n}C_{n}$ in $\mathsf{M}\left( U_{\left[ n\right] }\right) $ we have $B_{%
\left[ n\right] }\in \mathrm{Im}\left( \Lambda _{n}\right) .$

\item[2)] Let $B_{\left[ n\right] }\in \mathcal{B}_{\left[ n\right] }$ be
such that $\eta _{\left[ n\right] }B_{\left[ n\right] }$ is in $\mathsf{M}%
\left( U_{\left[ n\right] }\right) .$ Then $B_{\left[ n\right] }\in \mathrm{%
Im}\left( \Lambda _{n}\right) .$

\item[3)] Let $\left( B_{\left[ n\right] },b_{\left[ n\right] }\right) \in
\mathrm{Im}\left( \Lambda _{n+1}\right) .$ Then $\eta _{\left[ n\right] }B_{%
\left[ n\right] }$ is in $\mathsf{M}\left( U_{\left[ n\right] }\right) .$
\end{itemize}
\end{theorem}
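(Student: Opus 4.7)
The plan is to derive parts (2) and (3) from part (1) by producing suitable morphisms into the image of $\Lambda_n$, using the identity $\Lambda_n\circ R_n=R_{[n]}$ and the invertibility of $\lambda_n:L_{[n]}\Lambda_n\to L_n$ established in Remark \ref{rem:adj}.

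For part (1), the key input is Theorem \ref{teo:LambdaFibr}, which asserts that $\Lambda_n$ is an $\mathsf{M}(U_{[n]})$-fibration. Given any $f:B_{[n]}\to\Lambda_n C_n$ in $\mathsf{M}(U_{[n]})$, lifting $f$ to a cartesian morphism with respect to $\Lambda_n$ yields an object $A\in\mathcal{B}_n$ with $\Lambda_n A=B_{[n]}$, so $B_{[n]}\in\mathrm{Im}(\Lambda_n)$. Part (2) is then immediate: since $\Lambda_n R_n=R_{[n]}$, the object $R_{[n]}L_{[n]}B_{[n]}=\Lambda_n(R_nL_{[n]}B_{[n]})$ already lies in $\mathrm{Im}(\Lambda_n)$, and the assumption that $\eta_{[n]}B_{[n]}:B_{[n]}\to R_{[n]}L_{[n]}B_{[n]}$ belongs to $\mathsf{M}(U_{[n]})$ puts us exactly in the situation of part (1), with $C_n:=R_nL_{[n]}B_{[n]}$.

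For part (3), suppose $\Lambda_{n+1}(B_n,b_n)=(B_{[n]},b_{[n]})$ for some $(B_n,b_n)\in\mathcal{B}_{n+1}$, so in particular $\Lambda_nB_n=B_{[n]}$ and $(B_n,b_n)$ is an $R_nL_n$-algebra, whence $b_n\circ\eta_nB_n=\mathrm{Id}_{B_n}$. Applying the forgetful functor $U_{0,n}=U_{[n]}\Lambda_n$ (see Remark \ref{rem:adj}) gives $U_{0,n}b_n\circ U_{0,n}\eta_nB_n=\mathrm{Id}$. Next, from formula (\ref{form:zeta1}) applied to the adjoint triangle $\boldsymbol{\Lambda}_{[n]}$ one obtains $R_{[n]}\lambda_n\circ\eta_{[n]}\Lambda_n=\Lambda_n\eta_n$; evaluating at $B_n$ and applying $U_{[n]}$ (noting $U_{[n]}R_{[n]}=R$ because $U_{[n]}\Lambda_n=U_{0,n}$ and $U_{0,n}R_n=R$) yields
\begin{equation*}
R\lambda_nB_n\circ U_{[n]}\eta_{[n]}B_{[n]}=U_{0,n}\eta_nB_n.
\end{equation*}
Since $\lambda_n$ is invertible, $R\lambda_nB_n$ is an isomorphism, and hence $U_{0,n}b_n\circ R\lambda_nB_n$ is a retraction of $U_{[n]}\eta_{[n]}B_{[n]}$. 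In particular $U_{[n]}\eta_{[n]}B_{[n]}$ is a (split) monomorphism, i.e.\ $\eta_{[n]}B_{[n]}\in\mathsf{M}(U_{[n]})$.

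The main obstacle is bookkeeping in part (3): one must carefully track the identifications $\Lambda_nR_n=R_{[n]}$, $U_{0,n}=U_{[n]}\Lambda_n$ and $U_{0,n}R_n=R$ in order to transport the unitality of the $R_nL_n$-algebra $(B_n,b_n)$ into an actual retraction of $U_{[n]}\eta_{[n]}B_{[n]}$. Once the compatibility between $\eta_{[n]}$ and $\eta_n$ through $\lambda_n$ is in place, the three statements fall out cleanly.
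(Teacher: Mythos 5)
Your proposal is correct and follows essentially the same route as the paper: part (1) via the $\mathsf{M}(U_{[n]})$-fibration property of $\Lambda_n$ from Theorem \ref{teo:LambdaFibr}, part (2) by applying (1) to $\eta_{[n]}B_{[n]}$ using $R_{[n]}=\Lambda_nR_n$, and part (3) by combining the unitality of the $R_nL_n$-algebra with the identity $R_{[n]}\lambda_n\circ\eta_{[n]}\Lambda_n=\Lambda_n\eta_n$ to exhibit a retraction of $U_{[n]}\eta_{[n]}B_{[n]}$. The only cosmetic difference is that the paper builds the retraction inside $\mathcal{B}_{[n]}$ before observing it lands in $\mathsf{M}(U_{[n]})$, whereas you apply $U_{[n]}$ first; the content is identical.
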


\begin{proof}
1) By Theorem \ref{teo:LambdaFibr}, the functor $\Lambda _{n}:\mathcal{B}%
_{n}\rightarrow \mathcal{B}_{\left[ n\right] }$ is an $\mathsf{M}\left( U_{%
\left[ n\right] }\right) $-fibration. Thus, for every morphism $B_{\left[ n%
\right] }\rightarrow \Lambda _{n}C_{n}$ in $\mathsf{M}\left( U_{\left[ n%
\right] }\right) $ there is $B_{n}\in \mathcal{B}_{n}\ $such that $\Lambda
_{n}B_{n}=B_{\left[ n\right] }.$

2) Since $\eta _{\left[ n\right] }B_{\left[ n\right] }$ is a morphism $B_{%
\left[ n\right] }\rightarrow R_{\left[ n\right] }L_{\left[ n\right] }B_{%
\left[ n\right] }=\Lambda _{n}R_{n}L_{\left[ n\right] }B_{\left[ n\right] }$%
, we conclude by 1).

3)\ Since $\left( B_{\left[ n\right] },b_{\left[ n\right] }\right) \in
\mathrm{Im}\left( \Lambda _{n+1}\right) ,$ there is $B_{n+1}=\left(
B_{n},\mu _{n}:R_{n}L_{n}B_{n}\rightarrow B_{n}\right) \in \mathcal{B}_{n+1}$
such that $\left( B_{\left[ n\right] },b_{\left[ n\right] }\right) =\Lambda
_{n+1}B_{n+1}.$ Then $\mu _{n}\circ \eta _{n}B_{n}=\mathrm{Id}_{B_{n}}$ and $$B_{[n]}=U_{[n,n+1]}(B_{[n]},b_{[n]})=U_{[n,n+1]}\Lambda
_{n+1}B_{n+1} =\Lambda_{n} U_{n,n+1}B_{n+1}=\Lambda_{n} B_{n}.$$
\begin{invisible}
MI PARE UNUTILE QUANTO SEGUE:
We have  $\left( B_{\left[ n\right] },b_{\left[ n\right] }\right) =\left(
\Lambda _{n}B_{n},U_{n}\mu _{n}\circ R\lambda _{n}B_{n}\right) .$
Thus $B_{%
\left[ n\right] }=\Lambda _{n}B_{n}$ and
\begin{equation*}
b_{\left[ n\right] }=U_{n}\mu _{n}\circ R\lambda _{n}B_{n}=U_{n}\left( \mu
_{n}\circ R_{n}\lambda _{n}B_{n}\right) =U_{\left[ n\right] }\Lambda
_{n}\left( \mu _{n}\circ R_{n}\lambda _{n}B_{n}\right) =U_{\left[ n\right]
}\beta _{\left[ n\right] },
\end{equation*}%
where we set $\beta _{\left[ n\right] }:=\Lambda _{n}\left( \mu _{n}\circ
R_{n}\lambda _{n}B_{n}\right) :R_{\left[ n\right] }L_{\left[ n\right] }B_{%
\left[ n\right] }\rightarrow B_{\left[ n\right] }.$
\end{invisible}
Since $\lambda _{n}$ is the natural transformation inside the adjoint triangle $\boldsymbol{\Lambda}_{[n]}$, see Remark \ref{rem:adj}, we have $%
\lambda _{n}=\epsilon _{\left[ n\right] }L_{n}\circ L_{\left[ n\right]
}\Lambda _{n}\eta _{n}$ so that
\begin{eqnarray*}
R_{\left[ n\right] }\lambda _{n}\circ \eta _{\left[ n\right] }\Lambda _{n}
&=&R_{\left[ n\right] }\epsilon _{\left[ n\right] }L_{n}\circ R_{\left[ n%
\right] }L_{\left[ n\right] }\Lambda _{n}\eta _{n}\circ \eta _{\left[ n%
\right] }\Lambda _{n} \\
&=&R_{\left[ n\right] }\epsilon _{\left[ n\right] }L_{n}\circ \eta _{\left[ n%
\right] }\Lambda _{n}R_{n}L_{n}\circ \Lambda _{n}\eta _{n} =R_{\left[ n\right] }\epsilon _{\left[ n\right] }L_{n}\circ \eta _{\left[ n%
\right] }R_{\left[ n\right] }L_{n}\circ \Lambda _{n}\eta _{n}=\Lambda
_{n}\eta _{n}.
\end{eqnarray*}%
As a consequence we obtain
\begin{equation*}
\Lambda _{n}\mu _{n}\circ R_{\left[ n\right] }\lambda _{n}B_{n}\circ \eta _{%
\left[ n\right] }\Lambda _{n}B_{n}=\Lambda _{n}\mu _{n}\circ \Lambda
_{n}\eta _{n}B_{n}=\mathrm{Id}_{\Lambda _{n}B_{n}}.
\end{equation*}%
In particular $\eta _{\left[ n\right] }B_{\left[ n\right] }=\eta _{\left[ n%
\right] }\Lambda _{n}B_{n}$ is in $\mathsf{M}\left( U_{\left[ n\right]
}\right) .$
\end{proof}

%
%

\begin{corollary}
\label{coro:ff}Fix $n\in \mathbb{N}.$ If the functor $L_{\left[ n\right] }$
is fully faithful, then so is $L_{n}$ and $\Lambda _{n}:\mathcal{B}_{n}\rightarrow
\mathcal{B}_{\left[ n\right] }$ is a category isomorphism. In particular $R$
has a monadic decomposition of monadic length at most $n.$
\end{corollary}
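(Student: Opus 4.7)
The claim has three parts: $L_n$ is fully faithful, $\Lambda_n$ is a category isomorphism, and $R$ has a monadic decomposition of monadic length at most $n$. I would address them in this order.

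For the first part, I would invoke the natural isomorphism $\lambda_n : L_{[n]}\Lambda_n \to L_n$ from Remark \ref{rem:adj}. Since $\Lambda_n$ is fully faithful (again by Remark \ref{rem:adj}) and $L_{[n]}$ is fully faithful by hypothesis, the composite $L_{[n]}\Lambda_n$ is fully faithful as a composition of fully faithful functors. Full faithfulness being invariant under natural isomorphism, it follows that $L_n$ is fully faithful.

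For the second part, $\Lambda_n$ is already known to be fully faithful and injective on objects by Remark \ref{rem:adj}, so to conclude that it is an isomorphism of categories it suffices to establish surjectivity on objects. The plan is to use the standard fact that $L_{[n]}$ being fully faithful is equivalent to the unit $\eta_{[n]}: \mathrm{Id}_{\mathcal{B}_{[n]}} \to R_{[n]}L_{[n]}$ being a natural isomorphism. Hence for every $B_{[n]} \in \mathcal{B}_{[n]}$ the component $\eta_{[n]} B_{[n]}$ is invertible, and therefore so is its image $U_{[n]}\eta_{[n]} B_{[n]}$, which is in particular a monomorphism. This gives $\eta_{[n]} B_{[n]} \in \mathsf{M}(U_{[n]})$. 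By Theorem \ref{teo:main}(2) we then conclude that $B_{[n]} \in \mathrm{Im}(\Lambda_n)$, yielding the required surjectivity on objects.

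For the third part, once $L_n$ is known to be fully faithful, Definition \ref{def:MonDec} immediately delivers that the monadic length of $R$ exists and is at most $n$. The conceptual core of the argument is the second part, where the key tool is Theorem \ref{teo:main}(2); the rest is essentially bookkeeping with the properties of $\Lambda_n$ and $\lambda_n$ already established in Remark \ref{rem:adj}. I do not expect a genuine obstacle: the hard technical work was packaged into Theorem \ref{teo:main} via the relative Grothendieck fibration machinery of Section \ref{sec:4}.
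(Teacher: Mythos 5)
Your proposal is correct and follows essentially the same route as the paper's proof: full faithfulness of $L_n$ via the isomorphism $\lambda_n$ and the full faithfulness of $\Lambda_n$, then surjectivity of $\Lambda_n$ on objects by noting that $\eta_{[n]}$ is invertible (so each $\eta_{[n]}B_{[n]}$ lies in $\mathsf{M}(U_{[n]})$) and applying Theorem \ref{teo:main}(2). The only cosmetic difference is that the paper cites the dual of a result in Borceux for the equivalence between $L_{[n]}$ fully faithful and $\eta_{[n]}$ invertible, which you invoke as a standard fact.
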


\begin{proof}
We have the isomorphism $\lambda _{n}:L_{\left[ n\right] }\circ \Lambda
_{n}\rightarrow L_{n}.$ Thus if $L_{\left[ n\right] }$ is fully faithful we
get that $L_{n}$ is fully faithful being isomorphic to a composition of
fully faithful functors. By the dual version of \cite[Proposition 3.4.1 ]%
{Borceux1}, we have that $\eta _{\left[ n\right] }$ is invertible and hence $%
\eta _{\left[ n\right] }B_{\left[ n\right] }$ is in $\mathsf{M}\left( U_{%
\left[ n\right] }\right) $ for every $B_{\left[ n\right] }\in \mathcal{B}_{%
\left[ n\right] }.$ By Theorem \ref{teo:main}, $B_{\left[ n\right] }\in
\mathrm{Im}\left( \Lambda _{n}\right) .$ Thus $\Lambda _{n}$ is surjective
on objects. We already know that $\Lambda _{n}$ is injective on objects, see
Remark \ref{rem:adj}, thus it is bijective on objects. Since we know it is
also fully faithful, we deduce that it is an isomorphism.
\begin{invisible}
Since $L_{n}$ is fully faithful, we know that $U_{n,n+1}$ is an isomorphism
of categories. Since $U_{\left[ n,n+1\right] }\circ \Lambda _{n+1}=\Lambda
_{n}\circ U_{n,n+1},$ we deduce that $U_{\left[ n,n+1\right] }\circ \Lambda
_{n+1}$ is an isomorphism of categories.
\end{invisible}
\end{proof}

\begin{corollary}
\label{coro:monadic}Consider an adjunction $\left( L,R\right) $ such that $%
L_{\left[ 1\right] }$ and $L_{1}$ exist. If $R_{\left[ 1\right] }$ is an
equivalence of categories then $R$ is monadic. Moreover $\Lambda _{1}:\mathcal{B}%
_{1}\rightarrow \mathcal{B}_{\left[ 1\right] }$ is a category isomorphism.
\end{corollary}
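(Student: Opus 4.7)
The plan is to exploit the equality $R_{[1]}=\Lambda_1 R_1$ together with the structural properties of $\Lambda_1$ established in the preceding sections. First I would observe that, since $R_{[1]}$ is an equivalence, it is essentially surjective: for every $B_{[1]}\in\mathcal{B}_{[1]}$ there exists $A\in\mathcal{A}$ and an isomorphism $R_{[1]}A\cong B_{[1]}$. As $R_{[1]}=\Lambda_1 R_1$ by Proposition \ref{pro:4}, this forces $B_{[1]}$ into the essential image $\mathrm{Eim}(\Lambda_1)$.

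Next I would invoke Theorem \ref{teo:LambdaFibr}, which gives the equalities $\mathrm{Eim}(\Lambda_n)=\mathrm{Im}(\Lambda_n)=\mathrm{Im}'(\Lambda_n)$ for all $n$ and in particular for $n=1$. Combined with the previous step, this yields $B_{[1]}\in\mathrm{Im}(\Lambda_1)$, i.e.\ $\Lambda_1$ is surjective on objects. Since $\Lambda_1$ is also injective on objects and fully faithful by Remark \ref{rem:adj}, it is an isomorphism of categories, proving the second assertion.

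For the monadicity claim, I would then simply write $R_1=\Lambda_1^{-1}\circ R_{[1]}$, which exhibits the comparison functor $R_1$ as a composition of an isomorphism of categories and an equivalence; hence $R_1$ is an equivalence, which by definition means that $R$ is monadic.

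The only genuinely delicate point is the passage from essential image to image, which is precisely what Theorem \ref{teo:LambdaFibr} provides via the $\mathsf{M}(U_{[1]})$-fibration property of $\Lambda_1$; the remainder of the argument is a formal assembly of facts already available in the excerpt, so no further technical obstacle arises.
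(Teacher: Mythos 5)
Your proof is correct, but it takes a genuinely different route from the paper's. The paper passes to left adjoints: from $R_{[1]}$ being an equivalence it deduces that $L_{[1]}$ is an equivalence, hence fully faithful, and then invokes Corollary \ref{coro:ff} — which itself goes through invertibility of the unit $\eta_{[1]}$ and Theorem \ref{teo:main}(2) — to conclude that $\Lambda_1$ is a category isomorphism and $L_1$ is fully faithful; the identity $R_{[1]}=\Lambda_1\circ R_1$ then finishes as in your last step. You instead stay entirely on the right-adjoint side: essential surjectivity of $R_{[1]}$ places every object of $\mathcal{B}_{[1]}$ in $\mathrm{Eim}(\Lambda_1)$, and the equality $\mathrm{Eim}(\Lambda_1)=\mathrm{Im}(\Lambda_1)$ from Theorem \ref{teo:LambdaFibr} (i.e.\ the discrete isofibration property of $\Lambda_1$) upgrades this to surjectivity on objects. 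Both arguments ultimately rest on Theorem \ref{teo:LambdaFibr}, but yours uses only its isofibration consequence and needs only essential surjectivity of $R_{[1]}$ to obtain the isomorphism claim for $\Lambda_1$, which is slightly more economical and isolates exactly where the fibration machinery enters; the paper's route, by contrast, also records along the way that $L_1$ is fully faithful (equivalently, that $R$ has monadic decomposition of length at most $1$), a byproduct your argument recovers only after the fact from $R_1$ being an equivalence.
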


\begin{proof}
If $R_{\left[ 1\right] }$ is an equivalence of categories then $L_{\left[ 1%
\right] }$ is an equivalence of categories and hence, by Corollary \ref%
{coro:ff}, $L_{1}$ is fully faithful and $\Lambda _{1}:\mathcal{B}%
_{1}\rightarrow \mathcal{B}_{\left[ 1\right] }$ is a category isomorphism. Since $R_{\left[ 1\right] }=\Lambda _{1}\circ R_{1}$ we get that
$R_{1}$ is an equivalence of categories. Equivalently $R$ is monadic.
\end{proof}

\begin{example}
\label{ex:tomega}Let us show that the converse of Corollary \ref{coro:monadic} is not true, in general. Let $\mathcal{B}=\mathrm{Vec}_{\Bbbk }.$ As a starting
adjunction consider $\left( T,\Omega \right) $ where $T:\mathcal{B}%
\rightarrow \mathrm{Alg}_{\Bbbk }$ is the tensor algebra functor and $\Omega
:\mathrm{Alg}_{\Bbbk }\rightarrow \mathcal{B}$ is the forgetful functor.  It is well-known that $\Omega $
is strictly monadic i.e. the comparison functor $\Omega _{1}:%
\mathrm{Alg}_{\Bbbk }\rightarrow \mathcal{B}_{1}$ is a
category isomorphism, see \cite[Theorem A.6]{AM-MM}. Given $B\in \mathcal{B},$ consider the zero map $b:\Omega TB\rightarrow B.$
Then $\left( B,b\right) \in \left\langle \Omega T|\mathrm{Id}\right\rangle =%
\mathcal{B}_{\left[ 1\right] }$ but $\left( B,b\right) \notin \mathrm{Im}%
\left( \Lambda _{1}\right) $ since $b\circ \eta B\neq \mathrm{Id}_{B}$, where
$\eta $ is the unit of
the adjunction $\left( T,\Omega \right) .$
Thus $\Lambda _{1}$ is not surjective whence not even a category isomorphism. By Corollary \ref{coro:monadic}, we conclude that
$\Omega _{\left[ 1\right] }$ is not an equivalence.
\begin{equation*}
\xymatrixcolsep{1.5cm}\xymatrixrowsep{0.7cm} \xymatrix{\Alg_{\Bbbk
}\ar[r]^\id\ar@{}[dr]|-{\lambda_1}\ar@<.5ex>[d]^{{\Omega_1}}&
\Alg_{\Bbbk }\ar@<.4ex>[d]^{\Omega_{[1]}}\\
\mathcal{B}_1\ar@<.4ex>@{.>}[u]^{{T_1}}\ar[r]^-{\Lambda_1}&%
\mathcal{B}_{[1]}=\langle\Omega
T,\id\rangle\ar@<.4ex>@{.>}[u]^{{T_{[1]}}}}
\end{equation*}
We have so proved that $R$ is monadic although $R_{\left[ 1\right] }$ is not an equivalence for $R=\Omega $.

\begin{invisible}
\begin{equation*}
\begin{array}{ccc}
\mathrm{Alg}_{\Bbbk } & \overset{\mathrm{Id}}{\longrightarrow } & \mathrm{Alg%
}_{\Bbbk } \\
T_{1}\uparrow \downarrow \Omega _{1} & \lambda _{1} & T_{\left[ 1\right]
}\uparrow \downarrow \Omega _{\left[ 1\right] } \\
\mathcal{B}_{1} & \overset{\Lambda _{1}}{\longrightarrow } & \mathcal{B}_{%
\left[ 1\right] }=\left\langle \Omega T|\mathrm{Id}\right\rangle%
\end{array}%
\end{equation*}
\end{invisible}
\end{example}

\section{Connection to augmented monads\label{sec:5}}

As an application of Theorem \ref{teo:main}, in this section we show
how to construct some functors $\Gamma_n:\mathcal{B}\to\mathcal{B}_{[n]}$
that factor through $\Lambda_n :\mathcal{B}_n\to\mathcal{B}_{[n]}$. The
existence of $\Gamma_n$ is related to the notion of augmented monad.

Recall that an \textbf{augmentation} for a monad $(M,m:MM\to M,u:\id\to M)$ is a natural transformation $\gamma:M\to \id$ such that $\gamma\circ u=\id$ and $\gamma\gamma=\gamma\circ m$. We will also say that the monad $M$ is \textbf{augmented} via the morphism $\gamma:M\to \id$.

We will mainly focus on the existence of an augmentation for the monad $\left(
RL,R\epsilon L,\eta \right) $ associated to a given adjunction $\left( L,R\right) $. We point out that such a monad  has an augmentation if and only if the left adjoint $L$ is h-separable, see \cite[Corollary 2.7]{AM-heavy}.

\begin{theorem}
\label{thm:main}Consider a diagram%
\begin{equation*}
\xymatrixcolsep{1.5cm}\xymatrixrowsep{0.7cm}\xymatrix{\mathcal{A}\ar[r]^{F}%
\ar@<.5ex>@{.>}[d]^{R}&\mathcal{A'}\ar@<.5ex>@{.>}[d]^{R'}\\
\mathcal{B}\ar[r]^{\id}\ar@<.5ex>[u]^{L}&\mathcal{B}\ar@<.5ex>[u]^{L'}}
\end{equation*}
\begin{invisible}
\begin{equation*}
\begin{array}{ccc}
\mathcal{A} & \overset{F}{\longrightarrow } & \mathcal{A}^{\prime } \\
L\uparrow \downarrow R &  & L^{\prime }\uparrow \downarrow R^{\prime } \\
\mathcal{B} & \overset{\mathrm{Id}}{\longrightarrow } & \mathcal{B}%
\end{array}%
\end{equation*}
\end{invisible}
where $\left( L,R,\eta ,\epsilon \right) $ and $\left( L^{\prime },R^{\prime
},\eta ^{\prime },\epsilon ^{\prime }\right) $ are adjunctions such that $%
F\circ L=L^{\prime }.$ Define $\xi
:R\rightarrow R^{\prime }F$ by%
\begin{equation}
R\overset{\eta ^{\prime }R}{\rightarrow }R^{\prime }L^{\prime }R=R^{\prime
}FLR\overset{R^{\prime }F\epsilon }{\rightarrow }R^{\prime }F.
\label{def:xi}
\end{equation}%
Then $\xi L:RL\rightarrow R^{\prime }L^{\prime }$ is a morphism of monads
such that
\begin{equation}
\epsilon ^{\prime }F\circ L^{\prime }\xi =F\epsilon .  \label{form:efeps}
\end{equation}

Assume that:

\begin{enumerate}
\item[1)] $\mathcal{A}$ has all coequalizers and that $F$ preserves them;

\item[2)] $R^{\prime }$ preserves coequalizers of pairs $\left( fe,f\right) $
where $f$ is composition of regular epimorphisms and $e$ is an idempotent
morphism;

\item[3)] $R^{\prime }$ preserves regular epimorphisms;

\item[4)] the monad $R^{\prime }L^{\prime }$ has an augmentation $\gamma
^{\prime }:R^{\prime }L^{\prime }\rightarrow \mathrm{Id}_{\mathcal{B}}.$
\end{enumerate}

Then the monad $RL$ is augmented via $\gamma :=\gamma ^{\prime }\circ \xi L:RL\rightarrow \mathrm{Id}_{%
\mathcal{B}}.$ For every $%
n\in \mathbb{N}$, there are a functor $\Gamma _{\left[ n\right] }:\mathcal{B}%
\rightarrow \mathcal{B}_{\left[ n\right] }$ and a natural transformation $%
\gamma _{\left[ n\right] }:RL_{\left[ n\right] }\Gamma _{\left[ n\right]
}\rightarrow \mathrm{Id}_{\mathcal{B}},$ such that $\Gamma _{\left[ 0\right] }:=\mathrm{Id}%
_{\mathcal{B}},\gamma _{\left[ 0\right] }:=\gamma $ and, for $n\geq 0,$%
\begin{equation*}
\Gamma _{\left[ n+1\right] }B=\left( \Gamma _{\left[ n\right] }B,\gamma _{%
\left[ n\right] }B\right) \in \mathcal{B}_{\left[ n+1\right] },\qquad \gamma
_{\left[ n\right] }\circ U_{\left[ n\right] }\eta _{\left[ n\right] }\Gamma
_{\left[ n\right] }=\mathrm{Id}_{\id_{\mathcal{B}}},\qquad \gamma _{\left[ n+1\right] }\circ
R\pi _{\left[ n,n+1\right] }\Gamma _{\left[ n+1\right] }=\gamma _{\left[ n%
\right] }.
\end{equation*}%
Moreover $U_{\left[ n,n+1\right] }\circ \Gamma _{\left[ n+1\right] }=\Gamma
_{\left[ n\right] }.$
\end{theorem}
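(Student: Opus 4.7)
The plan is to handle the preliminary assertions about $\xi$ first and then bootstrap the construction of $(\Gamma_{[n]},\gamma_{[n]})$ by induction on $n$, with the routine bookkeeping concentrated in the base case and the genuine content in the inductive step.

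First, the auxiliary identities around $\xi$ are direct diagram chases: the formula $\epsilon'F\circ L'\xi=F\epsilon$ unwinds via a triangle identity for $(L',R',\eta',\epsilon')$ together with naturality of $\epsilon'$ and the equality $L'=FL$, while the statement that $\xi L\colon RL\to R'L'$ is a morphism of monads splits into compatibility with units (equivalent to $\xi L\circ\eta=\eta'$, a triangle identity for $(L,R,\eta,\epsilon)$) and compatibility with multiplications ($\xi L\circ R\epsilon L=R'\epsilon'L'\circ(\xi L)(\xi L)$, obtained from the interchange law and naturality of $\epsilon'$). Once this is in place, the augmentation property of $\gamma:=\gamma'\circ\xi L$ is formal: $\gamma\circ\eta=\gamma'\circ\eta'=\mathrm{Id}$ and
\[
\gamma\circ R\epsilon L=\gamma'\circ R'\epsilon'L'\circ(\xi L)(\xi L)=\gamma'\circ R'L'\gamma'\circ(\xi L)(\xi L)=\gamma\gamma,
\]
the middle equality being the multiplicativity of the augmentation $\gamma'$.

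Next I would set up the induction with $\Gamma_{[0]}:=\mathrm{Id}_\mathcal{B}$ and $\gamma_{[0]}:=\gamma$, so that $\gamma_{[0]}\circ U_{[0]}\eta_{[0]}\Gamma_{[0]}=\gamma\circ\eta=\mathrm{Id}$. Assume inductively that $\Gamma_{[n]}$, $\gamma_{[n]}$, the unitality $\gamma_{[n]}\circ U_{[n]}\eta_{[n]}\Gamma_{[n]}=\mathrm{Id}$, and the compatibilities $U_{[m,n]}\circ\Gamma_{[n]}=\Gamma_{[m]}$ for $m\le n$ have been built; in particular $U_{[n]}\Gamma_{[n]}=\mathrm{Id}_{\mathcal{B}}$, so that $\gamma_{[n]}B\colon RL_{[n]}\Gamma_{[n]}B\to B=U_{[n]}\Gamma_{[n]}B$ has the correct codomain to allow the definition $\Gamma_{[n+1]}B:=(\Gamma_{[n]}B,\gamma_{[n]}B)\in\mathcal{B}_{[n+1]}$, with functoriality coming from naturality of $\gamma_{[n]}$ and the identity $U_{[n,n+1]}\circ\Gamma_{[n+1]}=\Gamma_{[n]}$ being automatic. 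To construct $\gamma_{[n+1]}\colon RL_{[n+1]}\Gamma_{[n+1]}\to\mathrm{Id}_\mathcal{B}$ with $\gamma_{[n+1]}\circ R\pi_{[n,n+1]}\Gamma_{[n+1]}=\gamma_{[n]}$, I would apply $R$ to the coequalizer \eqref{coeq:Ln+1} for $B_{[n+1]}=\Gamma_{[n+1]}B$, show that $\gamma_{[n]}B$ coequalizes the image of the pair, and extract $\gamma_{[n+1]}B$ as the unique factoring morphism; the coequalizing identity
\[
\gamma_{[n]}B\circ R\pi_{[n]}\Gamma_{[n]}B\circ RL\gamma_{[n]}B=\gamma_{[n]}B\circ R\epsilon L_{[n]}\Gamma_{[n]}B
\]
I would derive by combining the augmentation identity $\gamma\circ R\epsilon L=\gamma\gamma$ with the inductive identities $\gamma_{[t]}\circ R\pi_{[t-1,t]}\Gamma_{[t]}=\gamma_{[t-1]}$ for $t\le n$ and the usual naturalities. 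The required unitality $\gamma_{[n+1]}\circ U_{[n+1]}\eta_{[n+1]}\Gamma_{[n+1]}=\mathrm{Id}$ then follows by composition with the epimorphism $\pi_{[n,n+1]}\Gamma_{[n+1]}$, using \eqref{form:epseta[n+1]} and the inductive hypothesis for $\gamma_{[n]}$.

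The main obstacle is translating hypotheses (1)--(3), which are phrased in terms of $R'$ alone, into the existence of the factoring of $\gamma_{[n]}B$ through $R\pi_{[n,n+1]}\Gamma_{[n+1]}B$, because $R$ itself need not preserve the relevant coequalizer. The strategy is to push \eqref{coeq:Ln+1} first through $F$ (which preserves it by (1)) and then through $R'$; the section $U_{[n]}\eta_{[n]}\Gamma_{[n]}B$ provided by the inductive unitality plays the role of the idempotent/retraction required to put the pair in the class of coequalizers preserved by $R'$ via (2)--(3). Once $R'F$ preserves the coequalizer, the natural transformation $\xi L_{[n+1]}$ together with the augmentation $\gamma'$ lets one transport the factoring from the $(L',R')$-world back to $\gamma_{[n+1]}B$. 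The most delicate bookkeeping is precisely the verification that the pair to be coequalized, after composition with $F$, fits the $(fe,f)$-form required by assumption (2); once that verification is carried out at step $n$, the inductive hypotheses on $\gamma_{[t]}$ supply all remaining data to close the induction.
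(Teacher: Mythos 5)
Your treatment of $\xi$, of the augmentation $\gamma=\gamma'\circ\xi L$, of the definition $\Gamma_{[n+1]}B=(\Gamma_{[n]}B,\gamma_{[n]}B)$ via the lifting lemma, and of the reduction of the pair in \eqref{coeq:Ln+1} to the form $(fe,f)$ with $f=\epsilon S_{[n]}B$ a composite of regular epimorphisms and $e=Le_{[n]}B$ idempotent (where $e_{[n]}=U_{[n]}\eta_{[n]}\Gamma_{[n]}\circ\gamma_{[n]}$) all match the paper's argument. The gap is in the step you yourself flag as the main obstacle: the ``transport'' of the factorization from the $R'F$-world back to $\gamma_{[n+1]}B$. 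After $F$ and $R'$ have been applied, the coequalizer you control is $R'F\pi_{[n,n+1]}\Gamma_{[n+1]}B$, so to invoke its universal property you need a morphism $R'FS_{[n]}B\rightarrow B$ forming a fork with the $R'F$-image of the pair. You propose to use ``the augmentation $\gamma'$'', but $\gamma'$ has domain $R'L'=R'FS_{[0]}$ only; for $n\geq 1$ the required input is a descended transformation $\gamma'_{[n]}:R'FS_{[n]}\rightarrow\mathrm{Id}_{\mathcal{B}}$, which does not exist a priori and must itself be built inductively. This is exactly how the paper proceeds: it strengthens the induction hypothesis to carry the family $\gamma'_{[n]}$ subject to $\gamma'_{[n]}\circ R'F\pi_{[n]}\Gamma_{[n]}=\gamma'$ and recovers $\gamma_{[n]}$ as $\gamma'_{[n]}\circ\xi S_{[n]}$; the identity $\gamma_{[n+1]}\circ R\pi_{[n,n+1]}\Gamma_{[n+1]}=\gamma_{[n]}$ then falls out of naturality of $\xi$. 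Your stated inductive data ($\Gamma_{[n]}$, $\gamma_{[n]}$, unitality, compatibility with the forgetful functors) omit $\gamma'_{[n]}$, so the induction as you set it up cannot close.

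A second, related defect: you propose to derive the coequalizing identity on the $R$-side, from $\gamma\circ R\epsilon L=\gamma\gamma$ and the identities $\gamma_{[t]}\circ R\pi_{[t-1,t]}\Gamma_{[t]}=\gamma_{[t-1]}$. Even if that identity held, it is useless there because $R$ does not preserve the coequalizer; the fork condition that actually feeds the universal property is
\begin{equation*}
\gamma'_{[n]}\circ R'F\pi_{[n]}\Gamma_{[n]}\circ R'FL\gamma_{[n]}=\gamma'_{[n]}\circ R'F\epsilon S_{[n]},
\end{equation*}
and its proof in the paper is not a formal consequence of the $R$-side identities: one first establishes $\gamma'_{[n]}\circ R'F\pi_{[n]}\Gamma_{[n]}\circ R'FL\gamma'_{[n]}=\gamma'_{[n]}\circ R'\epsilon'FS_{[n]}$ by cancelling the regular epimorphism $R'FLR'F\pi_{[n]}\Gamma_{[n]}$ --- this is precisely where hypothesis 3) (preservation of regular epimorphisms by $R'$) enters --- and then composes with $R'FL\xi S_{[n]}$ and uses \eqref{form:efeps}. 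Your proposal never locates the use of hypothesis 3), which is a symptom of the missing $\gamma'_{[n]}$: the whole verification lives on the $R'F$-side.
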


\begin{proof}
First we have%
\begin{equation*}
\epsilon ^{\prime }F\circ L^{\prime }\xi =\epsilon ^{\prime }F\circ
L^{\prime }R^{\prime }F\epsilon \circ L^{\prime }\eta ^{\prime }R=F\epsilon
\circ \epsilon ^{\prime }L^{\prime }R\circ L^{\prime }\eta ^{\prime
}R=F\epsilon
\end{equation*}%
so that (\ref{form:efeps}) holds true.
It is easy to check that $\xi L:RL\rightarrow R^{\prime }FL=R^{\prime
}L^{\prime }$ is a morphism of monads.

\begin{invisible}
In fact%
\begin{eqnarray*}
\xi L\circ R\epsilon L &=&R^{\prime }F\epsilon L\circ \eta ^{\prime }RL\circ
R\epsilon L \\
&=&R^{\prime }F\epsilon L\circ R^{\prime }L^{\prime }R\epsilon L\circ \eta
^{\prime }RLRL \\
&=&R^{\prime }F\epsilon L\circ R^{\prime }FLR\epsilon L\circ \eta ^{\prime
}RLRL \\
&=&R^{\prime }F\epsilon L\circ R^{\prime }F\epsilon LRL\circ \eta ^{\prime
}RLRL \\
&=&R^{\prime }F\epsilon L\circ R^{\prime }\epsilon ^{\prime }L^{\prime
}RL\circ R^{\prime }L^{\prime }\eta ^{\prime }RL\circ R^{\prime }F\epsilon
LRL\circ \eta ^{\prime }RLRL \\
&=&R^{\prime }F\epsilon L\circ R^{\prime }\epsilon ^{\prime }FLRL\circ
R^{\prime }L^{\prime }\eta ^{\prime }RL\circ R^{\prime }F\epsilon LRL\circ
\eta ^{\prime }RLRL \\
&=&R^{\prime }\epsilon ^{\prime }L^{\prime }\circ R^{\prime }L^{\prime
}R^{\prime }F\epsilon L\circ R^{\prime }L^{\prime }\eta ^{\prime }RL\circ
R^{\prime }F\epsilon LRL\circ \eta ^{\prime }RLRL \\
&=&R^{\prime }\epsilon ^{\prime }L^{\prime }\circ R^{\prime }L^{\prime }\xi
L\circ R^{\prime }F\epsilon LRL\circ \eta ^{\prime }RLRL \\
&=&R^{\prime }\epsilon ^{\prime }L^{\prime }\circ R^{\prime }L^{\prime }\xi
L\circ \xi LRL=R^{\prime }\epsilon ^{\prime }L^{\prime }\circ \xi L\xi L
\end{eqnarray*}%
and
\begin{eqnarray*}
\xi L\circ \eta &=&R^{\prime }F\epsilon L\circ \eta ^{\prime }RL\circ \eta
=R^{\prime }F\epsilon L\circ R^{\prime }L^{\prime }\eta \circ \eta ^{\prime }
\\
&=&R^{\prime }F\epsilon L\circ R^{\prime }FL\eta \circ \eta ^{\prime }=\eta
^{\prime }
\end{eqnarray*}%
so that $\xi L:RL\rightarrow R^{\prime }FL=R^{\prime }L^{\prime }$ is a
morphism of monads.
\end{invisible}

Since $\xi L:RL\rightarrow R^{\prime }L^{\prime }$ is a morphism of monads
and $R^{\prime }L^{\prime }$ is augmented via $\gamma ^{\prime }:R^{\prime
}L^{\prime }\rightarrow \mathrm{Id}_{\mathcal{B}},$ we get that $\gamma
:=\gamma ^{\prime }\circ \xi L:RL\rightarrow \mathrm{Id}_{\mathcal{B}}$ is
an augmentation for $RL.$

\begin{invisible}
In fact%
\begin{eqnarray*}
\gamma \circ R\epsilon L &=&\gamma ^{\prime }\circ \xi L\circ R\epsilon
L=\gamma ^{\prime }\circ R^{\prime }\epsilon ^{\prime }L^{\prime }\circ \xi
L\xi L=\gamma ^{\prime }\gamma ^{\prime }\circ \xi L\xi L=\gamma \gamma , \\
\gamma \circ \eta &=&\gamma ^{\prime }\circ \xi L\circ \eta =\gamma ^{\prime
}\circ \eta ^{\prime }=\mathrm{Id}.
\end{eqnarray*}
\end{invisible}

We set $S_{\left[ n\right] }:=L_{\left[ n\right] }\Gamma _{\left[ n\right] }$
and we define iteratively $\Gamma _{\left[ n\right] },\gamma _{\left[ n%
\right] }^{\prime }:R^{\prime }FS_{\left[ n\right] }\rightarrow \mathrm{Id}_{\mathcal{B}}$
and
\begin{equation*}
\gamma _{\left[ n\right] }:=\gamma _{\left[ n\right] }^{\prime }\circ \xi S_{%
\left[ n\right] }:RS_{\left[ n\right] }\rightarrow \mathrm{Id}_{\mathcal{B}}
\end{equation*}%
such that
\begin{equation}\label{form:thmain1}
U_{[n]}\circ\Gamma_{[n]}=\id_{\mathcal{B}},\quad\gamma _{\left[ n\right] }\circ U_{\left[ n\right] }\eta _{\left[ n\right]
}\Gamma _{\left[ n\right] }=\mathrm{Id}_{\id_{\mathcal{B}}}\text{\qquad and\qquad }\gamma _{%
\left[ n\right] }^{\prime }\circ R^{\prime }F\pi _{\left[ n\right] }\Gamma _{%
\left[ n\right] }=\gamma ^{\prime }
\end{equation}%
as follows.

For $n=0,$ we set $\Gamma _{\left[ 0\right] }:=\mathrm{Id}_{\mathcal{B}%
},\gamma _{\left[ 0\right] }^{\prime }:=\gamma ^{\prime },\gamma _{\left[ 0%
\right] }:=\gamma $ as required.

Let $n\geq 0$. Suppose that $\Gamma _{\left[ n\right] },\gamma _{\left[ n%
\right] }^{\prime }$ such that \eqref{form:thmain1} hold are given and let us construct $%
\Gamma _{\left[ n+1\right] },\gamma _{\left[ n+1\right] }^{\prime }$ such
that $\gamma _{\left[ n+1\right] }\circ U_{\left[ n+1\right] }\eta _{\left[
n+1\right] }\Gamma _{\left[ n+1\right] }=\mathrm{Id}$ and $\gamma _{\left[
n+1\right] }^{\prime }\circ R^{\prime }F\pi _{\left[ n+1\right] }\Gamma _{%
\left[ n+1\right] }=\gamma ^{\prime }.$

Since $\mathcal{B}_{\left[ n+1\right] }=\left\langle RL_{\left[ n\right]
}|U_{\left[ n\right] }\right\rangle $ we can apply Lemma \ref{lem:lift},
taking $Q=\Gamma _{\left[ n\right] }$ and $q=\gamma _{\left[ n\right] }$, to
construct a unique functor $\Gamma _{\left[ n+1\right] }=\widetilde{\Gamma _{%
\left[ n\right] }}:\mathcal{B}\rightarrow \mathcal{B}_{\left[ n+1\right] }$
such that $U_{\left[ n,n+1\right] }\circ \Gamma _{\left[ n+1\right] }=\Gamma
_{\left[ n\right] }$ and $\psi \Gamma _{\left[ n+1\right] }=\gamma _{\left[ n%
\right] }.$ Explicitly $\Gamma _{\left[ n+1\right] }B=\left( \Gamma _{\left[
n\right] }B,\gamma _{\left[ n\right] }B\right) $ as desired.

For $B\in \mathcal{B}$ consider the coequalizer (\ref{coeq:Ln+1}) taking $B_{%
\left[ n+1\right] }:=\Gamma _{\left[ n+1\right] }B=\left( \Gamma _{\left[ n%
\right] }B,\gamma _{[n]}B\right) :$%
\begin{equation}  \label{coeq:main1}
\xymatrixcolsep{3cm} \xymatrix{LRS_{[n]}B\ar@<.5ex>[r]^{\pi_{[n]}%
\Gamma_{[n]}B\circ L\gamma_{[n]}B}\ar@<-.5ex>[r]_{\epsilon
S_{[n]}B}&S_{[n]}B\ar[r]^-{\pi _{[n,n+1]}\Gamma_{[n+1]}B}& S_{[n+1]}B}
\end{equation}

\begin{invisible}
\begin{equation*}
LRS_{\left[ n\right] }B\overset{\pi _{\left[ n\right] }\Gamma _{\left[ n%
\right] }B\circ L\gamma _{\left[ n\right] }B}{\underset{\epsilon S_{\left[ n%
\right] }B}{\rightrightarrows }}S_{\left[ n\right] }B\overset{\pi _{\left[
n,n+1\right] }\Gamma _{\left[ n+1\right] }B}{\longrightarrow }S_{\left[ n+1%
\right] }B.
\end{equation*}
\end{invisible}

Set $e_{\left[ n\right] }:=U_{\left[ n\right] }\eta _{\left[ n\right]
}\Gamma _{\left[ n\right] }\circ \gamma _{\left[ n\right] }.$ Then $e_{\left[
n\right] }$ is an idempotent natural transformation. Moreover, since $%
\mathbb{T}_{\left[ n\right] }$ is an adjoint triangle, we have
$\epsilon =\epsilon _{\left[ n\right] }\circ \pi _{\left[ n\right] }R_{\left[
n\right] }$
so that
\begin{eqnarray*}
\epsilon S_{\left[ n\right] }\circ Le_{\left[ n\right] } &=&\epsilon _{\left[
n\right] }S_{\left[ n\right] }\circ \pi _{\left[ n\right] }R_{\left[ n\right]
}S_{\left[ n\right] }\circ LU_{\left[ n\right] }\eta _{\left[ n\right]
}\Gamma _{\left[ n\right] }\circ L\gamma _{\left[ n\right] } \\
&=&\epsilon _{\left[ n\right] }S_{\left[ n\right] }\circ L_{\left[ n\right]
}\eta _{\left[ n\right] }\Gamma _{\left[ n\right] }\circ \pi _{\left[ n%
\right] }\Gamma _{\left[ n\right] }\circ L\gamma _{\left[ n\right] }=\pi _{%
\left[ n\right] }\Gamma _{\left[ n\right] }\circ L\gamma _{\left[ n\right] }
\end{eqnarray*}%
and hence $\left( \pi _{\left[ n\right] }\Gamma _{\left[ n\right] }B\circ
L\gamma _{\left[ n\right] }B,\epsilon S_{\left[ n\right] }B\right) =\left(
\epsilon S_{\left[ n\right] }B\circ Le_{\left[ n\right] }B,\epsilon S_{\left[
n\right] }B\right) .$ Moreover
\begin{equation*}
\epsilon S_{\left[ n\right] }B=\epsilon _{\left[ n\right] }S_{\left[ n\right]
}B\circ \pi _{\left[ n\right] }R_{\left[ n\right] }S_{\left[ n\right] }B
\end{equation*}%
is a composition of regular epimorphisms as $\epsilon _{\left[ n\right] }S_{%
\left[ n\right] }B$ is the coequalizer of the parallel pair of morphisms $%
\left( L_{\left[ n\right] }R_{\left[ n\right] }\epsilon _{\left[ n\right]
}S_{\left[ n\right] }B,\epsilon _{\left[ n\right] }L_{\left[ n\right] }R_{%
\left[ n\right] }S_{\left[ n\right] }B\right) $ (a split coequalizer, as $S_{[n]}=L_{[n]}\Gamma_{[n]}$) and
\begin{equation*}
\pi _{\left[ n\right] }R_{\left[ n\right] }S_{\left[ n\right] }B=\pi _{\left[
0,n\right] }R_{\left[ n\right] }S_{\left[ n\right] }B=\pi _{\left[ n-1,n%
\right] }R_{\left[ n\right] }S_{\left[ n\right] }B\circ \pi _{\left[ n-2,n-1%
\right] }R_{\left[ n-1\right] }S_{\left[ n\right] }B\circ \cdots \circ \pi _{%
\left[ 0,1\right] }R_{\left[ 1\right] }S_{\left[ n\right] }B
\end{equation*}

Since, by hypothesis, $F$ preserves coequalizers, we get that \thinspace $%
\left( FS_{\left[ n+1\right] }B,F\pi _{\left[ n,n+1\right] }\Gamma _{\left[
n+1\right] }B\right) $ is the coequalizer of $\left( F\left( \pi _{\left[ n%
\right] }\Gamma _{\left[ n\right] }B\circ L\gamma _{\left[ n\right]
}B\right) ,F\epsilon S_{\left[ n\right] }B\right) =\left( F\epsilon S_{\left[
n\right] }B\circ FLe_{\left[ n\right] }B,F\epsilon S_{\left[ n\right]
}B\right) $ where $FLe_{\left[ n\right] }B$ is still idempotent and $%
F\epsilon S_{\left[ n\right] }B$ is still a composition of regular epimorphisms.

By the hypothesis, the latter coequalizer is preserved by $R^{\prime }.$ Thus
we get the coequalizer%
\begin{equation*}
\xymatrixcolsep{3.5cm} \xymatrix{R'FLRS_{[n]}B\ar@<.5ex>[r]^{R'F\pi_{[n]}%
\Gamma_{[n]}B\circ R'FL\gamma_{[n]}B}\ar@<-.5ex>[r]_{R'F\epsilon
S_{[n]}B}&R'FS_{[n]}B\ar[r]^-{R'F\pi _{[n,n+1]}\Gamma_{[n+1]}B}&
R'FS_{[n+1]}B}
\end{equation*}

\begin{invisible}
\begin{equation*}
R^{\prime }FLRS_{\left[ n\right] }B\overset{R^{\prime }F\pi _{\left[ n\right]
}\Gamma _{\left[ n\right] }B\circ R^{\prime }FL\gamma _{\left[ n\right] }B}{%
\underset{R^{\prime }F\epsilon S_{\left[ n\right] }B}{\rightrightarrows }}%
R^{\prime }FS_{\left[ n\right] }B\overset{R^{\prime }F\pi _{\left[ n,n+1%
\right] }\Gamma _{\left[ n+1\right] }B}{\longrightarrow }R^{\prime }FS_{%
\left[ n+1\right] }B.
\end{equation*}
\end{invisible}

Let us check that $\gamma _{\left[ n\right] }^{\prime }B:R^{\prime }FS_{%
\left[ n\right] }B\rightarrow B$ together with the parallel pair above is a fork i.e.
\begin{equation} \label{form:thmain2}
\gamma _{\left[ n\right] }^{\prime }\circ R^{\prime }F\pi _{\left[ n\right]
}\Gamma _{\left[ n\right] }\circ R^{\prime }FL\gamma _{\left[ n\right]
}=\gamma _{\left[ n\right] }^{\prime }\circ R^{\prime }F\epsilon S_{\left[ n%
\right] }.
\end{equation}%
To this aim we first compute
\begin{eqnarray*}
&&\gamma _{\left[ n\right] }^{\prime }\circ R^{\prime }F\pi _{\left[ n\right]
}\Gamma _{\left[ n\right] }\circ R^{\prime }FL\gamma _{\left[ n\right]
}^{\prime }\circ R^{\prime }FLR^{\prime }F\pi _{\left[ n\right] }\Gamma _{%
\left[ n\right] } \\
&\overset{\eqref{form:thmain1}}{=}&\gamma ^{\prime }\circ R^{\prime }FL\gamma ^{\prime }=\gamma ^{\prime
}\gamma ^{\prime } \\
&=&\gamma ^{\prime }\circ R^{\prime }\epsilon ^{\prime }L^{\prime } \\
&\overset{\eqref{form:thmain1}}{=}&\gamma _{\left[ n\right] }^{\prime }\circ R^{\prime }F\pi _{\left[ n%
\right] }\Gamma _{\left[ n\right] }\circ R^{\prime }\epsilon ^{\prime
}L^{\prime } \\
&=&\gamma _{\left[ n\right] }^{\prime }\circ R^{\prime }F\pi _{\left[ n%
\right] }\Gamma _{\left[ n\right] }\circ R^{\prime }\epsilon ^{\prime }FL \\
&\overset{\text{nat. }\epsilon ^{\prime }}{=}&\gamma _{\left[ n\right]
}^{\prime }\circ R^{\prime }\epsilon ^{\prime }FS_{\left[ n\right] }\circ
R^{\prime }L^{\prime }R^{\prime }F\pi _{\left[ n\right] }\Gamma _{\left[ n%
\right] } \\
&=&\gamma _{\left[ n\right] }^{\prime }\circ R^{\prime }\epsilon ^{\prime
}FS_{\left[ n\right] }\circ R^{\prime }FLR^{\prime }F\pi _{\left[ n\right]
}\Gamma _{\left[ n\right] }
\end{eqnarray*}

Since $R^{\prime },F$ and $L$ preserve regular epimorphisms and $\pi _{\left[
n\right] }\Gamma _{\left[ n\right] }$ is a regular epimorphism, we get that $%
R^{\prime }FLR^{\prime }F\pi _{\left[ n\right] }\Gamma _{\left[ n\right] }$
is a regular epimorphism and hence%
\begin{equation*}
\gamma _{\left[ n\right] }^{\prime }\circ R^{\prime }F\pi _{\left[ n\right]
}\Gamma _{\left[ n\right] }\circ R^{\prime }FL\gamma _{\left[ n\right]
}^{\prime }=\gamma _{\left[ n\right] }^{\prime }\circ R^{\prime }\epsilon
^{\prime }FS_{\left[ n\right] }.
\end{equation*}%
Coming back to the equality \eqref{form:thmain2}, we compute%
\begin{eqnarray*}
\gamma _{\left[ n\right] }^{\prime }\circ R^{\prime }F\pi _{\left[ n\right]
}\Gamma _{\left[ n\right] }\circ R^{\prime }FL\gamma _{\left[ n\right] }
&=&\gamma _{\left[ n\right] }^{\prime }\circ R^{\prime }F\pi _{\left[ n%
\right] }\Gamma _{\left[ n\right] }\circ R^{\prime }FL\gamma _{\left[ n%
\right] }^{\prime }\circ R^{\prime }FL\xi S_{\left[ n\right] } \\
&=&\gamma _{\left[ n\right] }^{\prime }\circ R^{\prime }\epsilon ^{\prime
}FS_{\left[ n\right] }\circ R^{\prime }FL\xi S_{\left[ n\right] } \\
&=&\gamma _{\left[ n\right] }^{\prime }\circ R^{\prime }\epsilon ^{\prime
}FS_{\left[ n\right] }\circ R^{\prime }L^{\prime }\xi S_{\left[ n\right] }
\overset{(\ref{form:efeps})}{=}\gamma _{\left[ n\right] }^{\prime }\circ
R^{\prime }F\epsilon S_{\left[ n\right] }
\end{eqnarray*}%
Thus $\gamma _{\left[ n\right] }^{\prime }$ together with the parallel pair $(R^{\prime }F\pi _{\left[ n\right]
}\Gamma _{\left[ n\right] }\circ R^{\prime }FL\gamma _{\left[ n\right] },R^{\prime }F\epsilon S_{\left[ n\right] } )$ is a fork and hence, the
universal property of the above coequalizer yields a unique natural
transformation $\gamma _{\left[ n+1\right] }^{\prime }:R^{\prime }FS_{\left[
n+1\right] }\rightarrow \mathrm{Id}$ such that
\begin{equation*}
\gamma _{\left[ n+1\right] }^{\prime }\circ R^{\prime }F\pi _{\left[ n,n+1%
\right] }\Gamma _{\left[ n+1\right] }=\gamma _{\left[ n\right] }^{\prime }
\end{equation*}

We compute%
\begin{eqnarray*}
\gamma _{\left[ n+1\right] }^{\prime }\circ R^{\prime }F\pi _{\left[ n+1%
\right] }\Gamma _{\left[ n+1\right] } &=&\gamma _{\left[ n+1\right]
}^{\prime }\circ R^{\prime }F\left( \pi _{\left[ n,n+1\right] }\circ \pi _{%
\left[ n\right] }U_{\left[ n,n+1\right] }\right) \Gamma _{\left[ n+1\right] }
\\
&=&\gamma _{\left[ n+1\right] }^{\prime }\circ R^{\prime }F\pi _{\left[ n,n+1%
\right] }\Gamma _{\left[ n+1\right] }\circ R^{\prime }F\pi _{\left[ n\right]
}U_{\left[ n,n+1\right] }\Gamma _{\left[ n+1\right] } \\
&=&\gamma _{\left[ n\right] }^{\prime }\circ R^{\prime }F\pi _{\left[ n%
\right] }\Gamma _{\left[ n\right] }\overset{\eqref{form:thmain1}}{=}\gamma ^{\prime }.
\end{eqnarray*}%
We also have%
\begin{eqnarray*}
\gamma _{\left[ n+1\right] }\circ U_{\left[ n+1\right] }\eta _{\left[ n+1%
\right] }\Gamma _{\left[ n+1\right] } &=&\gamma _{\left[ n+1\right]
}^{\prime }\circ \xi S_{\left[ n+1\right] }\circ U_{\left[ n+1\right] }\eta
_{\left[ n+1\right] }\Gamma _{\left[ n+1\right] } \\
&\overset{(\ref{form:epseta[n+1]})}{=}&\gamma _{\left[ n+1\right] }^{\prime
}\circ \xi S_{\left[ n+1\right] }\circ U_{\left[ n\right] }\left( R_{\left[ n%
\right] }\pi _{\left[ n,n+1\right] }\circ \eta _{\left[ n\right] }U_{\left[
n,n+1\right] }\right) \Gamma _{\left[ n+1\right] } \\
&=&\gamma _{\left[ n+1\right] }^{\prime }\circ \xi S_{\left[ n+1\right]
}\circ U_{\left[ n\right] }R_{\left[ n\right] }\pi _{\left[ n,n+1\right]
}\Gamma _{\left[ n+1\right] }\circ U_{\left[ n\right] }\eta _{\left[ n\right]
}U_{\left[ n,n+1\right] }\Gamma _{\left[ n+1\right] } \\
&=&\gamma _{\left[ n+1\right] }^{\prime }\circ \xi S_{\left[ n+1\right]
}\circ R\pi _{\left[ n,n+1\right] }\Gamma _{\left[ n+1\right] }\circ U_{%
\left[ n\right] }\eta _{\left[ n\right] }\Gamma _{\left[ n\right] } \\
&=&\gamma _{\left[ n+1\right] }^{\prime }\circ R^{\prime }F\pi _{\left[ n,n+1%
\right] }\Gamma _{\left[ n+1\right] }\circ \xi S_{\left[ n\right] }\circ U_{%
\left[ n\right] }\eta _{\left[ n\right] }\Gamma _{\left[ n\right] } \\
&=&\gamma _{\left[ n\right] }^{\prime }\circ \xi S_{\left[ n\right] }\circ
U_{\left[ n\right] }\eta _{\left[ n\right] }\Gamma _{\left[ n\right]
}=\gamma _{\left[ n\right] }\circ U_{\left[ n\right] }\eta _{\left[ n\right]
}\Gamma _{\left[ n\right] }\overset{\eqref{form:thmain1}}{=}\mathrm{Id.}
\end{eqnarray*}%
Finally
\begin{eqnarray*}
\gamma _{\left[ n+1\right] }\circ R\pi _{\left[ n,n+1\right] }\Gamma _{\left[
n+1\right] } &=&\gamma _{\left[ n+1\right] }^{\prime }\circ \xi S_{\left[ n+1%
\right] }\circ R\pi _{\left[ n,n+1\right] }\Gamma _{\left[ n+1\right] } \\
&=&\gamma _{\left[ n+1\right] }^{\prime }\circ R^{\prime }F\pi _{\left[ n,n+1%
\right] }\Gamma _{\left[ n+1\right] }\circ \xi S_{\left[ n\right] }=\gamma _{%
\left[ n\right] }^{\prime }\circ \xi S_{\left[ n\right] }=\gamma _{\left[ n%
\right] }.
\end{eqnarray*}
\end{proof}

\begin{proposition}
\label{pro:Gamman}The functor $\Gamma _{\left[ n\right] }:\mathcal{B}%
\rightarrow \mathcal{B}_{\left[ n\right] }$ induces a functor $\Gamma _{n}:%
\mathcal{B}\rightarrow \mathcal{B}_{n}$ such that $\Lambda _{n}\circ \Gamma
_{n}=\Gamma _{\left[ n\right] }$ and $U_{n,n+1}\circ \Gamma _{n+1}=\Gamma
_{n}.$ Moreover there is $\gamma _{n}:R_{n}L_{n}\Gamma _{n}\rightarrow
\Gamma _{n}$ such that $\Gamma _{n+1}B=\left( \Gamma _{n}B,\gamma
_{n}B\right) ,$ for all $B\in \mathcal{B}$, and $U_{n}\gamma _{n}\circ
R\lambda _{n}\Gamma _{n}=\gamma _{\left[ n\right] }$. Note that $L_{\left[ n%
\right] }\Gamma _{\left[ n\right] }=L_{\left[ n\right] }\Lambda _{n}\Gamma
_{n}\overset{\lambda _{n}\Gamma _{n}}{\rightarrow }L_{n}\Gamma _{n}$ is
invertible.
\end{proposition}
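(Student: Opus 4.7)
The plan is to lift $\Gamma_{[n]}$ through the embedding $\Lambda_n$ using the criterion provided by Theorem \ref{teo:main}. The crux of the argument, and really the only place where the augmentation hypothesis enters, is the verification that $\Gamma_{[n]}B$ lies in the image of $\Lambda_n$ for every $B\in\mathcal{B}$: the identity $\gamma_{[n]}\circ U_{[n]}\eta_{[n]}\Gamma_{[n]}=\mathrm{Id}$ granted by Theorem \ref{thm:main} exhibits $U_{[n]}\eta_{[n]}\Gamma_{[n]}B$ as a split monomorphism, so $\eta_{[n]}\Gamma_{[n]}B\in\mathsf{M}(U_{[n]})$ and Theorem \ref{teo:main}(2) yields $\Gamma_{[n]}B\in\mathrm{Im}(\Lambda_n)$. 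Since $\Lambda_n$ is fully faithful and injective on objects by Remark \ref{rem:adj}, for every $B$ there is a unique object $\Gamma_n B\in\mathcal{B}_n$ with $\Lambda_n\Gamma_n B=\Gamma_{[n]}B$ and, for every morphism $f:B\to B'$, a unique morphism $\Gamma_n f$ in $\mathcal{B}_n$ with $\Lambda_n\Gamma_n f=\Gamma_{[n]}f$; uniqueness immediately yields functoriality of $\Gamma_n$ together with the equality $\Lambda_n\circ\Gamma_n=\Gamma_{[n]}$.

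To derive $U_{n,n+1}\circ\Gamma_{n+1}=\Gamma_n$ I would combine the relation $U_{[n,n+1]}\Lambda_{n+1}=\Lambda_n U_{n,n+1}$ from Remark \ref{rem:adj} with $U_{[n,n+1]}\circ\Gamma_{[n+1]}=\Gamma_{[n]}$ from Theorem \ref{thm:main} to obtain $\Lambda_n\circ(U_{n,n+1}\Gamma_{n+1})=U_{[n,n+1]}\Lambda_{n+1}\Gamma_{n+1}=U_{[n,n+1]}\Gamma_{[n+1]}=\Gamma_{[n]}=\Lambda_n\Gamma_n$; since $\Lambda_n$ is faithful and injective on objects, this forces the desired equality.

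Next I would read off the natural transformation $\gamma_n$ directly from the preceding identity: $\Gamma_{n+1}B$ is an object of $\mathcal{B}_{n+1}$ whose underlying object in $\mathcal{B}_n$ is $U_{n,n+1}\Gamma_{n+1}B=\Gamma_n B$, so $\Gamma_{n+1}B=(\Gamma_n B,\gamma_n B)$ for a unique morphism $\gamma_n B:R_n L_n\Gamma_n B\to\Gamma_n B$, and the naturality of $\gamma_n$ is a restatement of the fact that each $\Gamma_{n+1}f$ is a morphism in $\mathcal{B}_{n+1}$. Applying $\Lambda_{n+1}$ to $\Gamma_{n+1}B=(\Gamma_n B,\gamma_n B)$ and using the explicit description of $\Lambda_{n+1}$ recorded after Remark \ref{rem:adj}, I obtain $(\Gamma_{[n]}B,\gamma_{[n]}B)=\Gamma_{[n+1]}B=\Lambda_{n+1}(\Gamma_n B,\gamma_n B)=(\Lambda_n\Gamma_n B,U_n\gamma_n B\circ R\lambda_n\Gamma_n B)$, whence equality of the second components gives precisely $U_n\gamma_n\circ R\lambda_n\Gamma_n=\gamma_{[n]}$. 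The invertibility of $\lambda_n\Gamma_n:L_{[n]}\Lambda_n\Gamma_n\to L_n\Gamma_n$ is then immediate from the invertibility of $\lambda_n$ itself, already recorded in Remark \ref{rem:adj}, so everything after the first step reduces to formal bookkeeping anchored on full faithfulness and injectivity on objects of the $\Lambda_n$'s.
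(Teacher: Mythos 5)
Your proposal is correct and follows essentially the same route as the paper: split monomorphism of $U_{[n]}\eta_{[n]}\Gamma_{[n]}B$ via the augmentation, Theorem \ref{teo:main}(2) to land in $\mathrm{Im}(\Lambda_n)$, then full faithfulness and injectivity on objects of $\Lambda_n$ to lift $\Gamma_{[n]}$ to $\Gamma_n$ and to transport all remaining identities. The only cosmetic difference is that the paper invokes a quoted lemma for the existence of the lift where you spell out the construction by hand, which amounts to the same argument.
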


\begin{proof}
The condition $\gamma _{\left[ n\right] }\circ U_{\left[ n\right] }\eta _{%
\left[ n\right] }\Gamma _{\left[ n\right] }=\mathrm{Id}$, given in Theorem \ref{thm:main}, implies that $\eta
_{\left[ n\right] }\Gamma _{\left[ n\right] }B\in \mathsf{M}\left( U_{\left[
n\right] }\right) $ for every $B\in\mathcal{B}.$ By Theorem \ref{teo:main}, we have that $\Gamma _{%
\left[ n\right] }B\in \mathrm{Im}\left( \Lambda _{n}\right) =\mathrm{Im}%
^{\prime }\left( \Lambda _{n}\right) .$ Thus $\mathrm{Im}^{\prime }\left(
\Gamma _{\left[ n\right] }\right) \subseteq \mathrm{Im}^{\prime }\left(
\Lambda _{n}\right) .$ Since $\Lambda _{n}$ is fully faithful and injective
on objects, by \cite[Lemma 1.12]{AM-MM}, there is a functor $\Gamma _{n}:%
\mathcal{B}\rightarrow \mathcal{B}_{n}$ such that $\Lambda _{n}\circ \Gamma
_{n}=\Gamma _{\left[ n\right] }.$
\begin{invisible}
From $\Gamma _{\left[ n\right] }B\in \mathrm{Im}\left( \Lambda _{n}\right) =%
\mathrm{\mathrm{Eim}}\left( \Lambda _{n}\right) $ we get that there is $%
B_{n}\in \mathcal{B}_{n}$ such that $\Gamma _{\left[ n\right] }B=\Lambda
_{n}B_{n}.$ Since $\Lambda _{n}$ i s injective on objects the object $B_{n}$
is unique and we can denote it by $\Gamma _{n}B.$ Similarly, since $\Lambda
_{n}$ is full and faithful, for every morphism $f$ there is a unique
morphism, denoted by $\Gamma _{n}f$ such that $\Gamma _{\left[ n\right]
}f=\Lambda _{n}\Gamma _{n}f.$ Moreover%
\begin{eqnarray*}
\Lambda _{n}\Gamma _{n}\left( f\circ g\right) &=&\Gamma _{\left[ n\right]
}\left( f\circ g\right) =\Gamma _{\left[ n\right] }\left( f\right) \circ
\Gamma _{\left[ n\right] }\left( g\right) =\Lambda _{n}\Gamma _{n}\left(
f\right) \circ \Lambda _{n}\Gamma _{n}\left( g\right) =\Lambda _{n}\left(
\Gamma _{n}\left( f\right) \circ \Gamma _{n}\left( g\right) \right) , \\
\Lambda _{n}\Gamma _{n}\left( \mathrm{Id}\right) &=&\Gamma _{\left[ n\right]
}\left( \mathrm{Id}_{\Gamma _{n}}\right) =\mathrm{Id}_{\Lambda _{n}\Gamma
_{n}}=\Lambda _{n}\left( \mathrm{Id}_{\Gamma _{n}}\right)
\end{eqnarray*}%
and since $\Lambda _{n}$ is faithful we obtain that $\Gamma _{n}$ is a
functor. By construction $\Lambda _{n}\circ \Gamma _{n}=\Gamma _{\left[ n%
\right] }.$
\end{invisible}
We compute%
\begin{equation*}
\Lambda _{n}\circ U_{n,n+1}\circ \Gamma _{n+1}=U_{\left[ n,n+1\right] }\circ
\Lambda _{n+1}\circ \Gamma _{n+1}=U_{\left[ n,n+1\right] }\circ \Gamma _{%
\left[ n+1\right] }=\Gamma _{\left[ n\right] }=\Lambda _{n}\circ \Gamma _{n}.
\end{equation*}%
Since $\Lambda _{n}$ is faithful and injective on objects, we get
$
U_{n,n+1}\circ \Gamma _{n+1}=\Gamma _{n}.
$
Moreover since $\Gamma _{n+1}B\in \mathcal{B}_{n+1}$ and $U_{n,n+1}\Gamma
_{n+1}B=\Gamma _{n}B$, there is $\gamma _{n}B:R_{n}L_{n}\Gamma
_{n}B\rightarrow \Gamma _{n}B$ such that $\Gamma _{n+1}B=\left( \Gamma
_{n}B,\gamma _{n}B\right) .$ From $\Lambda _{n+1}\circ \Gamma _{n+1}=\Gamma
_{\left[ n+1\right] },$ we get
$$(\Gamma_{[n]}B,\gamma_{[n]})=\Gamma_{[n+1]}B=\Lambda _{n+1} \Gamma _{n+1}B=\Lambda _{n+1}\left( \Gamma
_{n}B,\gamma _{n}B\right) =\left( \Lambda _{n}\Gamma
_{n}B,U_{n}\gamma _{n}B\circ R\lambda
_{n}\Gamma _{n}B\right)$$
and hence  $U_{n}\gamma _{n}\circ R\lambda
_{n}\Gamma _{n}=\gamma _{\left[ n\right] }$.
The last part follows by Remark \ref{rem:adj}.
\end{proof}

\begin{invisible}
\begin{equation*}
\begin{array}{ccccc}
RLRL_{\left[ n\right] }\Gamma _{\left[ n\right] } & \overset{R\pi _{\left[ n%
\right] }\Gamma _{\left[ n\right] }\circ RL\gamma _{\left[ n\right] }}{%
\underset{R\epsilon L_{\left[ n\right] }\Gamma _{\left[ n\right] }}{%
\rightrightarrows }} & RL_{\left[ n\right] }\Gamma _{\left[ n\right] } &
\overset{\gamma _{\left[ n\right] }}{\longrightarrow } & \mathrm{\mathrm{Id}}
\\
RLR\lambda _{n}\Gamma _{n}\downarrow &  & \downarrow R\lambda _{n}\Gamma _{n}
&  &  \\
RLRL_{n}\Gamma _{n} & \overset{R\pi _{n}\Gamma _{n}\circ RLU_{n}\gamma _{n}}{%
\underset{R\epsilon L_{n}\Gamma _{n}}{\rightrightarrows }} & RL_{n}\Gamma
_{n} & \overset{U_{n}\gamma _{n}}{\longrightarrow } & \mathrm{\mathrm{Id}}%
\end{array}%
\end{equation*}
\end{invisible}

\begin{lemma}
\label{lem:rank}In the setting of Theorem \ref{thm:main}, define $S_{\left[ n%
\right] }:=L_{\left[ n\right] }\Gamma _{\left[ n\right] }:\mathcal{B}%
\rightarrow \mathcal{A}.$ Given $B\in \mathcal{B},$ a morphism $f:S_{\left[ n%
\right] }B\rightarrow A$ in $\mathcal{A}$ together with the pair $\left( \pi _{%
\left[ n\right] }\Gamma _{\left[ n\right] }B\circ L\gamma _{\left[ n\right]
}B,\epsilon S_{\left[ n\right] }B\right) $ is a fork if and only if $Rf$ together with
the pair $\left( e_{\left[ n\right] }B,\mathrm{Id}_{RS_{\left[ n\right]
}B}\right) $ is a fork, where $e_{\left[ n\right] }:=U_{\left[ n\right] }\eta _{\left[
n\right] }\Gamma _{\left[ n\right] }\circ \gamma _{\left[ n\right] }.$

As a consequence, $\pi _{\left[ n,n+1\right] }\Gamma _{\left[
n+1\right] }B:S_{\left[ n\right] }B\rightarrow S_{\left[ n+1\right] }B$ is
invertible if and only if either $\gamma _{\left[ n\right] }B$ or $\eta _{%
\left[ n\right] }\Gamma _{\left[ n\right] }B$ is invertible. If $%
\pi _{\left[ n,n+1\right] }\Gamma _{\left[ n+1\right] }B$ is invertible so
is $\pi _{\left[ m,m+1\right] }\Gamma _{\left[ m+1\right] }B$
for all $m\geq n.$
\end{lemma}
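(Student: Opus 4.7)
The plan is to prove the three assertions in order: the fork equivalence, the invertibility characterisation, and the stability under increasing $m$.

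For the first assertion I would exploit the adjunction $L \dashv R$. The computation carried out in the proof of Theorem \ref{thm:main} already supplies the identity $\epsilon S_{[n]} \circ Le_{[n]} = \pi_{[n]}\Gamma_{[n]} \circ L\gamma_{[n]}$, so that a morphism $f:S_{[n]}B \to A$ forks the pair $(\pi_{[n]}\Gamma_{[n]}B \circ L\gamma_{[n]}B,\, \epsilon S_{[n]}B)$ if and only if $f \circ \epsilon S_{[n]}B \circ Le_{[n]}B = f \circ \epsilon S_{[n]}B$. Applying the adjunction bijection $\mathrm{Hom}(LRS_{[n]}B,A) \to \mathrm{Hom}(RS_{[n]}B,RA)$, $g \mapsto Rg \circ \eta RS_{[n]}B$, and using the naturality of $\eta$ together with the triangular identity $R\epsilon \circ \eta R = \mathrm{Id}_R$, this converts into $Rf \circ e_{[n]}B = Rf$, namely the fork condition for $Rf$ against $(e_{[n]}B, \mathrm{Id}_{RS_{[n]}B})$. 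Since the adjunction bijection is invertible, the implication runs in both directions.

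For the second assertion I would specialise the first one to $f = \mathrm{Id}_{S_{[n]}B}$. As a coequalizer, $\pi_{[n,n+1]}\Gamma_{[n+1]}B$ is invertible if and only if the parallel pair it coequalizes is already equal, equivalently the identity of $S_{[n]}B$ forks it; by the first step this amounts to $e_{[n]}B = \mathrm{Id}_{RS_{[n]}B}$. Theorem \ref{thm:main} furnishes $\gamma_{[n]} \circ U_{[n]}\eta_{[n]}\Gamma_{[n]} = \mathrm{Id}$, which exhibits $\gamma_{[n]}B$ as a split epimorphism with section $U_{[n]}\eta_{[n]}\Gamma_{[n]}B$; the condition $e_{[n]}B = U_{[n]}\eta_{[n]}\Gamma_{[n]}B \circ \gamma_{[n]}B = \mathrm{Id}$ is then equivalent to either of these maps being an isomorphism. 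A straightforward induction on the inserter-category description of $\mathcal{B}_{[n]}$ shows that each $U_{[k,k+1]}$, and hence $U_{[n]}$, reflects isomorphisms, so the invertibility of $U_{[n]}\eta_{[n]}\Gamma_{[n]}B$ is equivalent to that of $\eta_{[n]}\Gamma_{[n]}B$.

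For the propagation statement I would induct on $m \geq n$. Suppose $\pi_{[m,m+1]}\Gamma_{[m+1]}B$ is invertible; then by the second step $\gamma_{[m]}B$ is invertible. The relation $\gamma_{[m+1]} \circ R\pi_{[m,m+1]}\Gamma_{[m+1]} = \gamma_{[m]}$ from Theorem \ref{thm:main}, combined with the fact that $R$ preserves isomorphisms, forces $\gamma_{[m+1]}B$ to be invertible, and applying the second step one level up yields the invertibility of $\pi_{[m+1,m+2]}\Gamma_{[m+2]}B$, closing the induction.

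I do not anticipate any serious obstacle. The key technical input, the identity $\epsilon S_{[n]} \circ Le_{[n]} = \pi_{[n]}\Gamma_{[n]} \circ L\gamma_{[n]}$, is already present in the proof of Theorem \ref{thm:main}, and the reflection of isomorphisms by $U_{[n]}$ is a standard inserter-category calculation. The only bookkeeping care needed is to keep the natural transformation $\eta_{[n]}\Gamma_{[n]}B$, which lives in $\mathcal{B}_{[n]}$, distinct from its image $U_{[n]}\eta_{[n]}\Gamma_{[n]}B$ in $\mathcal{B}$ when invoking the split-retraction identity.
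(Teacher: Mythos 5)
Your proposal is correct and follows essentially the same route as the paper's proof: the identity $\epsilon S_{[n]}\circ Le_{[n]}=\pi_{[n]}\Gamma_{[n]}\circ L\gamma_{[n]}$ from the proof of Theorem \ref{thm:main}, transfer of the fork condition across the adjunction, specialisation to $f=\mathrm{Id}$ combined with the retraction identity $\gamma_{[n]}\circ U_{[n]}\eta_{[n]}\Gamma_{[n]}=\mathrm{Id}$, and the induction via $\gamma_{[m+1]}\circ R\pi_{[m,m+1]}\Gamma_{[m+1]}=\gamma_{[m]}$. No gaps.
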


\begin{proof}
In the proof of Theorem \ref{thm:main}, we have seen that $\pi _{\left[ n%
\right] }\Gamma _{\left[ n\right] }\circ L\gamma _{\left[ n\right]
}=\epsilon L_{\left[ n\right] }\Gamma _{\left[ n\right] }\circ Le_{\left[ n%
\right] }=\epsilon S_{\left[ n\right] }\circ Le_{\left[ n\right] }$ where $%
e_{\left[ n\right] }:=U_{\left[ n\right] }\eta _{\left[ n\right] }\Gamma _{%
\left[ n\right] }\circ \gamma _{\left[ n\right] }.$ As a consequence, $f:S_{%
\left[ n\right] }B\rightarrow A$ together with the pair $\left( \pi _{\left[ n%
\right] }\Gamma _{\left[ n\right] }B\circ L\gamma _{\left[ n\right]
}B,\epsilon S_{\left[ n\right] }B\right) $ is a fork if and only if $f\circ \epsilon
S_{\left[ n\right] }B\circ Le_{\left[ n\right] }B=f\circ \epsilon S_{\left[ n%
\right] }B$ if and only if $Rf\circ R\epsilon S_{\left[ n\right] }B\circ
RLe_{\left[ n\right] }B\circ \eta RS_{\left[ n\right] }B=Rf\circ R\epsilon
S_{\left[ n\right] }B\circ \eta RS_{\left[ n\right] }B$ if and only if $%
Rf\circ R\epsilon S_{\left[ n\right] }B\circ \eta RS_{\left[ n\right]
}B\circ e_{\left[ n\right] }B=Rf$ if and only if $Rf\circ e_{\left[ n\right]
}B=Rf$ if and only if $Rf$ together with the pair $\left( e_{\left[ n\right]
}B,\mathrm{Id}_{RS_{\left[ n\right] }B}\right)$ is a fork. Since $\pi _{\left[ n,n+1%
\right] }\Gamma _{\left[ n+1\right] }B$ is the coequalizer of the pair $%
\left( \pi _{\left[ n\right] }\Gamma _{\left[ n\right] }B\circ L\gamma _{%
\left[ n\right] }B,\epsilon S_{\left[ n\right] }B\right)$, we get that $\pi
_{\left[ n,n+1\right] }\Gamma _{\left[ n+1\right] }B$ is invertible if and
only if $\pi _{\left[ n\right] }\Gamma _{\left[ n\right] }B\circ L\gamma _{%
\left[ n\right] }B=\epsilon S_{\left[ n\right] }B$ if and only if $f=\mathrm{%
Id}$ together with the pair $\left( \pi _{\left[ n\right] }\Gamma _{\left[ n%
\right] }B\circ L\gamma _{\left[ n\right] }B,\epsilon S_{\left[ n\right]
}B\right) $ is a fork. By the foregoing this is equivalent to $Rf\circ e_{\left[ n%
\right] }B=Rf$ that is $e_{\left[ n\right] }B=\mathrm{Id}$ i.e. $U_{\left[
0,n\right] }\eta _{\left[ n\right] }\Gamma _{\left[ n\right] }B\circ \gamma
_{\left[ n\right] }B=\mathrm{Id.}$

Since $\gamma _{\left[ n\right] }B\circ U_{\left[ n\right] }\eta _{\left[ n%
\right] }\Gamma _{\left[ n\right] }B=\mathrm{Id},$ we conclude that $\pi _{%
\left[ n,n+1\right] }\Gamma _{\left[ n+1\right] }B$ is invertible if and
only if either $\gamma _{\left[ n\right] }B$ or $U_{\left[ n\right] }\eta _{%
\left[ n\right] }\Gamma _{\left[ n\right] }B$ is invertible. Since $U_{\left[
n\right] }$ reflects isomorphism, we have that $U_{\left[ n\right] }\eta _{%
\left[ n\right] }\Gamma _{\left[ n\right] }B$ is invertible if and only if $%
\eta _{\left[ n\right] }\Gamma _{\left[ n\right] }B$ is invertible.

If $\pi _{\left[ n,n+1\right] }\Gamma _{\left[ n+1\right] }B$ is invertible,
then $\gamma _{\left[ n\right] }B$ is invertible. Since $\gamma _{\left[ n+1%
\right] }B\circ R\pi _{\left[ n,n+1\right] }\Gamma _{\left[ n+1\right]
}B=\gamma _{\left[ n\right] }B,$ we obtain that $\gamma _{\left[ n+1\right]
}B$ is invertible. As a consequence $\pi _{\left[ n+1,n+2\right] }\Gamma _{%
\left[ n+2\right] }B$ is invertible. Going on this way, we obtain that $\pi
_{\left[ m,m+1\right] }\Gamma _{\left[ m+1\right] }B$ is invertible for all $%
m\geq n.$
\end{proof}

\section{Example on monoidal categories\label{sec:6}}

Given a category $\mathcal{A}$ and an object $X\in \mathcal{A}$ we denote by
$\mathcal{A}/ X$ the correspondent slice category consisting of pairs $%
\left( A,tA:A\rightarrow X\right) $ and where a morphism $f:\left(
A,tA\right) \rightarrow \left( B,tB\right) $ is a morphism $f:A\rightarrow B$
such that $tB\circ f=tA.$

Let $\mathcal{B}$ be a category with pullbacks. It is known that any
adjunction $\left( L,R\right) $ with unit $\eta $ and counit $\epsilon $ and
an object $\mathbf{1}\in \mathcal{B}$ induces an adjunction $\left( L/%
\mathbf{1},R/\mathbf{1}\right) $ as in the following left-hand side diagram
where $U_{\mathcal{A}}$ and $U_{\mathcal{B}}$ are the obvious forgetful
functors and $U_{\mathcal{A}}\circ L/\mathbf{1}=L\circ U_{\mathcal{B}}.$
\begin{equation*}
\xymatrixcolsep{1.5cm}\xymatrixrowsep{0.7cm}\xymatrix{\mathcal{A}/
L\mathbf{1}\ar[r]^-{U_{\mathcal{A}}}\ar@<.5ex>@{.>}[d]^{R/
\mathbf{1}}&\mathcal{A}\ar@<.5ex>@{.>}[d]^{R}\\ \mathcal{B}/ \mathbf{1}
\ar[r]^{U_{\mathcal{B}}}\ar@<.5ex>[u]^{L/
\mathbf{1}}&\mathcal{B}\ar@<.5ex>[u]^{L}}\qquad \xymatrixcolsep{1.5cm}%
\xymatrixrowsep{0.7cm}\xymatrix{KA\pulb\ar[r]^-{tKA}\ar@<.5ex>[d]_{kA}&%
\mathbf{1}\ar@<.5ex>[d]^{\eta\mathbf{1}}\\ R A\ar[r]^{RtA}&RL\mathbf{1}}
\end{equation*}

\begin{invisible}
\begin{equation*}
\begin{array}{ccc}
\mathcal{A}/ L\mathbf{1} & \overset{U_{\mathcal{A}}}{\longrightarrow } &
\mathcal{A} \\
L/ \mathbf{1}\uparrow \downarrow R/ \mathbf{1} &  & L\uparrow \downarrow R
\\
\mathcal{B}/ \mathbf{1} & \overset{U_{\mathcal{B}}}{\longrightarrow } &
\mathcal{B}%
\end{array}%
\end{equation*}
\end{invisible}

Explicitly $\left( L/\mathbf{1}\right) \left( B,tB:B\rightarrow \mathbf{1}%
\right) :=\left( LB,LtB\right) $ and $\left( L/\mathbf{1}\right) f=Lf.$ The
functor $R/\mathbf{1}$ associates to an object $\left( A,tA:A\rightarrow L%
\mathbf{1}\right) $ the pair $\left( KA,tKA\right) $ given by the
pullback in the right-hand side diagram above.

\begin{invisible}
\begin{equation*}
\begin{array}{ccc}
KA & \overset{tKA}{\longrightarrow } & \mathbf{1} \\
kA\downarrow & \lrcorner & \downarrow \eta \mathbf{1} \\
RA & \overset{RtA}{\longrightarrow } & RL\mathbf{1}%
\end{array}%
\end{equation*}
\end{invisible}

Given a morphism $f:\left( A,tA:A\rightarrow L\mathbf{1}\right) \rightarrow
\left( A^{\prime },tA^{\prime }:A^{\prime }\rightarrow L\mathbf{1}\right) $
then $\left( R/ \mathbf{1}\right) f:\left( KA,tKA\right) \rightarrow \left(
KA^{\prime },tKA^{\prime }\right) $ is defined by the universal property of
the pullback as the unique morphism such that
\begin{equation}
kA^{\prime }\circ U_{\mathcal{B}}\left( R/ \mathbf{1}\right) f=RU_{\mathcal{A%
}}f\circ kA.  \label{form:R/1morph}
\end{equation}

\begin{invisible}
We have $RtA^{\prime }\circ RU_{\mathcal{A}}f\circ kA=R\left( tA^{\prime
}\circ U_{\mathcal{A}}f\right) \circ kA=RtA\circ kA=\eta \mathbf{1}\circ
tKA. $ By pullback there is a unique morphism $g:KA\rightarrow KA^{\prime }$
such that $kA^{\prime }\circ g=RU_{\mathcal{A}}f\circ kA$ and $tKA^{\prime
}\circ g=tKA.$ The latter equality tells that there is a unique morphism $%
\left( R/ \mathbf{1}\right) f:\left( KA,tKA\right) \rightarrow \left(
KA^{\prime },tKA^{\prime }\right) $ such that $U_{\mathcal{B}}\left( R/
\mathbf{1}\right) f=g$ so that $kA^{\prime }\circ U_{\mathcal{B}}\left( R/
\mathbf{1}\right) f=RU_{\mathcal{A}}f\circ kA.$
\end{invisible}

The unit $\eta / \mathbf{1}$ and counit $\epsilon / \mathbf{1}$ of the adjunction
\begin{invisible}
\begin{equation*}
\eta / \mathbf{1}:\mathrm{Id}\rightarrow \left( R/ \mathbf{1}%
\right) \left( L/ \mathbf{1}\right) \qquad\epsilon / \mathbf{1}%
 :\left( L/ \mathbf{1}\right) \left( R/ \mathbf{1}\right) \rightarrow
\mathrm{Id}
\end{equation*}%
\end{invisible}
are uniquely defined by the following equalities%
\begin{equation*}
kLB\circ U_{\mathcal{B}}\left( \eta / \mathbf{1}\right) \left( B,tB\right)
=\eta B\qquad U_{\mathcal{A}}\left( \epsilon / \mathbf{1}\right) \left(
A,tA\right) =\epsilon A\circ LkA.
\end{equation*}

\begin{invisible}
First note that%
\begin{eqnarray*}
\left( R/ \mathbf{1}\right) \left( L/ \mathbf{1}\right) \left( B,tB\right)
&=&\left( R/ \mathbf{1}\right) \left( LB,LtB\right) =\left( KLB,tKLB\right) ,
\\
\left( L/ \mathbf{1}\right) \left( R/ \mathbf{1}\right) \left( A,tA\right)
&=&\left( L/ \mathbf{1}\right) \left( KA,tKA\right) =\left( LKA,LtKA\right) .
\end{eqnarray*}%
The object $\left( R/ \mathbf{1}\right) \left( LB,LtB\right) =\left(
KLB,tKLB\right) $ is given by the following pullback.%
\begin{equation*}
\begin{array}{ccc}
KLB & \overset{tKLB}{\longrightarrow } & \mathbf{1} \\
kLB\downarrow & \lrcorner & \downarrow \eta \mathbf{1} \\
RLB & \overset{RLtB}{\longrightarrow } & RL\mathbf{1}%
\end{array}%
\end{equation*}%
Note that $RLtB\circ \eta B=\eta \mathbf{1}\circ tB.$ By the universal
property of the pullback there is a unique morphism $f:B\rightarrow KLB$
such that $kLB\circ f=\eta B$ and $tKLB\circ f=tB.$ The last equality tells
that there is a unique morphism $\left( \eta / \mathbf{1}\right) \left(
B,tB\right) :\left( B,tB\right) \rightarrow \left( KLB,tKLB\right) $ such
that $U_{\mathcal{B}}\left( \eta / \mathbf{1}\right) \left( B,tB\right) =f.$
Thus $kLB\circ U_{\mathcal{B}}\left( \eta / \mathbf{1}\right) \left(
B,tB\right) =\eta B.$

Concerning $\epsilon / \mathbf{1},$ consider the diagram%
\begin{equation*}
\begin{array}{ccc}
KA & \overset{tKA}{\longrightarrow } & \mathbf{1} \\
kA\downarrow & \lrcorner & \downarrow \eta \mathbf{1} \\
RA & \overset{RtA}{\longrightarrow } & RL\mathbf{1}%
\end{array}%
\end{equation*}%
We have
\begin{eqnarray*}
tA\circ \left( \epsilon A\circ LkA\right) &=&tA\circ \epsilon A\circ
LkA=\epsilon L\mathbf{1}\circ LRtA\circ LkA \\
&=&\epsilon L\mathbf{1}\circ L\left( RtA\circ kA\right) =\epsilon L\mathbf{1}%
\circ L\left( \eta \mathbf{1}\circ tKA\right) \\
&=&\epsilon L\mathbf{1}\circ L\eta \mathbf{1}\circ LtKA=LtKA.
\end{eqnarray*}%
This means that there is a unique morphism $\left( \epsilon / \mathbf{1}%
\right) \left( A,tA\right) :\left( LKA,LtKA\right) \rightarrow \left(
A,tA\right) $ such that $U_{\mathcal{A}}\left( \epsilon / \mathbf{1}\right)
\left( A,tA\right) =\epsilon A\circ LkA.$

We compute%
\begin{eqnarray*}
&&U_{\mathcal{A}}\left[ \left( \epsilon / \mathbf{1}\right) \left( L/
\mathbf{1}\right) \left( B,tB\right) \circ \left( L/ \mathbf{1}\right)
\left( \eta / \mathbf{1}\right) \left( B,tB\right) \right] \\
&=&U_{\mathcal{A}}\left( \epsilon / \mathbf{1}\right) \left( LB,LtB\right)
\circ U_{\mathcal{A}}\left( L/ \mathbf{1}\right) \left( \eta / \mathbf{1}%
\right) \left( B,tB\right) \\
&=&\epsilon LB\circ LkLB\circ LU_{\mathcal{B}}\left( \eta / \mathbf{1}%
\right) \left( B,tB\right) \\
&=&\epsilon LB\circ L\left[ kLB\circ U_{\mathcal{B}}\left( \eta / \mathbf{1}%
\right) \left( B,tB\right) \right] =\epsilon LB\circ L\eta B=\mathrm{Id}=U_{%
\mathcal{A}}\left( \mathrm{Id}_{\left( B,tB\right) }\right)
\end{eqnarray*}%
so that $\left( \epsilon / \mathbf{1}\right) \left( L/ \mathbf{1}\right)
\left( B,tB\right) \circ \left( L/ \mathbf{1}\right) \left( \eta / \mathbf{1}%
\right) \left( B,tB\right) =\mathrm{Id}_{\left( B,tB\right) }.$ We also have%
\begin{eqnarray*}
&&kA\circ U_{\mathcal{B}}\left[ \left( R/ \mathbf{1}\right) \left( \epsilon
/ \mathbf{1}\right) \left( A,tA\right) \circ \left( \eta / \mathbf{1}\right)
\left( R/ \mathbf{1}\right) \left( A,tA\right) \right] \\
&=&kA\circ U_{\mathcal{B}}\left( R/ \mathbf{1}\right) \left( \epsilon /
\mathbf{1}\right) \left( A,tA\right) \circ U_{\mathcal{B}}\left( \eta /
\mathbf{1}\right) \left( R/ \mathbf{1}\right) \left( A,tA\right) \\
&&\overset{(\ref{form:R/1morph})}{=}RU_{\mathcal{A}}\left( \epsilon /
\mathbf{1}\right) \left( A,tA\right) \circ kLKA\circ U_{\mathcal{B}}\left(
\eta / \mathbf{1}\right) \left( KA,tKA\right) \\
&=&R\left( \epsilon A\circ LkA\right) \circ \eta KA=R\epsilon A\circ
RLkA\circ \eta KA=R\epsilon A\circ \eta RA\circ kA=kA
\end{eqnarray*}%
On the other hand $tKA\circ U_{\mathcal{B}}\left[ \left( R/ \mathbf{1}%
\right) \left( \epsilon / \mathbf{1}\right) \left( A,tA\right) \circ \left(
\eta / \mathbf{1}\right) \left( R/ \mathbf{1}\right) \left( A,tA\right) %
\right] =tKA$ as $\left( R/ \mathbf{1}\right) \left( \epsilon / \mathbf{1}%
\right) \left( A,tA\right) \circ \left( \eta / \mathbf{1}\right) \left( R/
\mathbf{1}\right) \left( A,tA\right) $ a morphism in $\mathcal{B}/ \mathbf{1}
$. The universal property of pullbacks tells that
\begin{equation*}
U_{\mathcal{B}}\left[ \left( R/ \mathbf{1}\right) \left( \epsilon / \mathbf{1%
}\right) \left( A,tA\right) \circ \left( \eta / \mathbf{1}\right) \left( R/
\mathbf{1}\right) \left( A,tA\right) \right] =\mathrm{Id}
\end{equation*}%
as we also have $kA\circ \mathrm{Id}=kA$ and $tKA\circ \mathrm{Id}=tKA.$
Since $\mathrm{Id}=U_{\mathcal{B}}\left( \mathrm{Id}_{\left( A,tA\right)
}\right) ,$ we obtain
\begin{equation*}
\left( R/ \mathbf{1}\right) \left( \epsilon / \mathbf{1}\right) \left(
A,tA\right) \circ \left( \eta / \mathbf{1}\right) \left( R/ \mathbf{1}%
\right) \left( A,tA\right) =\mathrm{Id}_{\left( A,tA\right) }.
\end{equation*}
\end{invisible}

\begin{remark}
As mentioned, the construction above is well-known. It can be recovered as follows. For
every morphism $f:X\rightarrow Y$ in a category $\mathcal{C}$ with pullbacks consider the
functor $\mathcal{C}/X\overset{f_{\ast }}{\mathbf{\rightarrow }}\mathcal{C}/Y$ defined on objects by $\left( C,g\right) :=\left( C,f\circ
g\right) $ and as the identity on morphisms. It is well-known that this
functor has a right adjoint $f^{\ast }$ given by pullbacks along $f$ in the
underlying category (see e.g. \cite[16.8.5]{Schubert}). Now note that
the functor $L/\mathbf{1}:\mathcal{B}/\mathbf{1\rightarrow }\mathcal{A}/L%
\mathbf{1}\ $can be written as the composition
\begin{equation*}
\mathcal{B}/\mathbf{1}\overset{\left( \eta \mathbf{1}\right) _{\ast }}{%
\mathbf{\rightarrow }}\mathcal{B}/RL\mathbf{1}\overset{L/RL\mathbf{1}}{%
\mathbf{\rightarrow }}\mathcal{A}/LRL\mathbf{1}\overset{\left( \epsilon L%
\mathbf{1}\right) ^{\ast }}{\mathbf{\rightarrow }}\mathcal{A}/L\mathbf{1}%
\text{.}
\end{equation*}
By \cite[16.8.7]{Schubert} the functor $R/L\mathbf{1}:\mathcal{A}/L%
\mathbf{1}\rightarrow \mathcal{B}/RL\mathbf{1:}\left( A,a\right) \mapsto
\left( RA,Ra\right) ,\alpha \mapsto R\alpha $ is a right adjoint for the
composition $\left( \epsilon L\mathbf{1} \right) ^{\ast }\circ L/RL\mathbf{1}
$. As a consequence we get that a right adjoint of $L/\mathbf{1}$ is given by
the composition $\left( \eta \mathbf{1}\right) ^{\ast }\circ R/L\mathbf{1}$
which is exactly the functor $R/\mathbf{1}$ defined above.
\end{remark}

\begin{invisible}
\lbrack REFERENCE\ NEEDED: https://ncatlab.org/nlab/show/over+category]
\end{invisible}

\begin{lemma}
\label{lem:createscomma}Let $\mathcal{C}$ be a category and $\mathbf{1}\in
\mathcal{C}$. The forgetful functor $U_{\mathcal{C}}:%
\mathcal{C}/ \mathbf{1}\longrightarrow \mathcal{C}\ $creates colimits.
\end{lemma}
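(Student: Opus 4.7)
The plan is to unfold the definition of ``creates colimits'' directly. Given a diagram $D:\mathcal{J}\to \mathcal{C}/\mathbf{1}$, write $D(j)=(X_j,t_j)$ with $t_j:X_j\to \mathbf{1}$, and suppose that $(L,(\lambda_j:X_j\to L)_{j\in\mathcal{J}})$ is a colimiting cocone for $U_{\mathcal{C}}\circ D$ in $\mathcal{C}$. I need to produce a unique lift of $L$ to an object of $\mathcal{C}/\mathbf{1}$ making every $\lambda_j$ a morphism of $\mathcal{C}/\mathbf{1}$, and to show the resulting cocone is a colimit in $\mathcal{C}/\mathbf{1}$.

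First I would observe that the family $(t_j:X_j\to \mathbf{1})_{j\in\mathcal{J}}$ is a cocone on $U_\mathcal{C}\circ D$ with vertex $\mathbf{1}$: for a morphism $\alpha:j\to j'$ in $\mathcal{J}$, since $D\alpha$ is a morphism in $\mathcal{C}/\mathbf{1}$, we have $t_{j'}\circ U_\mathcal{C}D\alpha=t_j$. By the universal property of the colimit $L$, there exists a unique morphism $t:L\to \mathbf{1}$ with $t\circ \lambda_j=t_j$ for every $j$. Setting $\widetilde{L}:=(L,t)$, each $\lambda_j$ becomes a morphism $D(j)\to \widetilde{L}$ in $\mathcal{C}/\mathbf{1}$. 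This lift is unique: any $t':L\to \mathbf{1}$ making each $\lambda_j$ a morphism of slice categories must satisfy $t'\circ \lambda_j=t_j$ for all $j$, hence $t'=t$ by the same universal property.

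Next I would verify that $(\widetilde{L},(\lambda_j)_j)$ is a colimit in $\mathcal{C}/\mathbf{1}$. Let $((Y,t_Y),(\mu_j:D(j)\to (Y,t_Y))_j)$ be another cocone in $\mathcal{C}/\mathbf{1}$. Applying $U_\mathcal{C}$ gives a cocone on $U_\mathcal{C}\circ D$ in $\mathcal{C}$, so there is a unique $h:L\to Y$ in $\mathcal{C}$ with $h\circ \lambda_j=U_\mathcal{C}\mu_j$ for every $j$. To see that $h$ lifts to a morphism $\widetilde{L}\to (Y,t_Y)$ in $\mathcal{C}/\mathbf{1}$, I compute $(t_Y\circ h)\circ \lambda_j=t_Y\circ U_\mathcal{C}\mu_j=t_j$, while $t\circ \lambda_j=t_j$; by the uniqueness clause of the universal property of $L$ with codomain $\mathbf{1}$, we get $t_Y\circ h=t$. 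Thus $h$ is a morphism $\widetilde{L}\to (Y,t_Y)$ in $\mathcal{C}/\mathbf{1}$, and its uniqueness follows from the uniqueness of $h$ in $\mathcal{C}$ together with the faithfulness of $U_\mathcal{C}$.

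There is no real obstacle here; the argument is the textbook creation-of-colimits-by-slice-forgetful-functors. The only point to be slightly careful with is the uniqueness step $t_Y\circ h=t$, which is immediate once one realizes that both morphisms $L\to \mathbf{1}$ are determined by the same cocone $(t_j)$ through the universal property of $L$.
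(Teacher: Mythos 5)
Your proof is correct and is exactly the standard argument the paper invokes by citation (Borceux, Proposition 2.16.3, and the dual of Mac Lane's exercise): lift the cocone $(t_j)$ through the colimit to get the unique structure map $t:L\to\mathbf{1}$, and use the same universal property to check both the uniqueness of the lift and that the lifted cocone is colimiting. Nothing is missing.
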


\begin{proof}
Cf. \cite[Proposition 2.16.3 ]{Borceux1} or dual version of \cite[Exercice
1, page 108]{MacLane}.
\end{proof}

\begin{invisible}
Let $\mathcal{D}$ be a small category and let $F:\mathcal{D}\rightarrow
\mathcal{C}/ \mathbf{1}$ be a functor. Set $G:=U_{\mathcal{C}}\circ F:%
\mathcal{D}\rightarrow \mathcal{C}$ and assume $G$ has colimit $\left(
L,\left( sD:GD\rightarrow L\right) _{D\in \mathcal{D}}\right) .$ We can
write $FD=\left( GD,\gamma D:GD\rightarrow \mathbf{1}\right) .$ Given a
morphism $d:D\rightarrow D^{\prime }$ in $\mathcal{D}$, the fact that $Fd\in
\mathcal{C}/ \mathbf{1}$ means that $\gamma D^{\prime }\circ Gd=\gamma D.$
Thus $\left( \mathbf{1},\left( \gamma D:GD\rightarrow \mathbf{1}\right)
_{D\in \mathcal{D}}\right) $ is a cocone on $G.$ By the universal property
of colimits there is a unique morphism $\gamma L:L\rightarrow \mathbf{1}$
such that $\gamma L\circ sD=\gamma D.$ This means that $\left( L,\gamma
L\right) \in \mathcal{C}/ \mathbf{1}$ and that there is a unique morphism $%
\sigma D:\left( GD,\gamma D\right) \rightarrow \left( L,\gamma L\right) $
such that $U_{\mathcal{C}}\sigma D=sD.$

Clearly $U_{\mathcal{C}}\left( L,\gamma L\right) =L$ and $U_{\mathcal{C}%
}\left( \sigma D^{\prime }\circ Fd\right) =sD^{\prime }\circ Gd=sD=U_{%
\mathcal{C}}\left( \sigma D\right) $ so that $\sigma D^{\prime }\circ
Fd=\sigma D$ and $\left( \left( L,\gamma L\right) ,\left( \sigma D\right)
_{D\in \mathcal{D}}\right) $ is a cocone for $F.$ Given another cocone $%
\left( A,\left( fD:FD\rightarrow A\right) _{D\in \mathcal{D}}\right) $ such
that $U_{\mathcal{C}}A=L$ and $U_{\mathcal{C}}fD=sD,$ then $A=\left(
L,\lambda \right) $ for some $\lambda :L\rightarrow \mathbf{1.}$ Since $%
fD:FD\rightarrow A$ is a morphism in $\mathcal{C}/ \mathbf{1}$ we have that $%
\lambda \circ U_{\mathcal{C}}fD=\gamma D$ i.e. $\lambda \circ sD=\gamma D$
so that $\lambda =\gamma L$ by uniqueness of $\gamma L$. Thus $A=\left(
L,\lambda \right) =\left( L,\gamma L\right) .$ Moreover $U_{\mathcal{C}%
}fD=sD=U_{\mathcal{C}}\sigma D$ and since $fD:FD\rightarrow A=\left(
L,\gamma L\right) $ we get $fD=\sigma D.$ Hence $\left( A,\left( fD\right)
_{D\in \mathcal{D}}\right) =\left( \left( L,\gamma L\right) ,\left( \sigma
D\right) _{D\in \mathcal{D}}\right) $ as desired. This proves the uniqueness
of the cocone in the definition of creation.

Let us check that $\left( \left( L,\gamma L\right) ,\left( \sigma D\right)
_{D\in \mathcal{D}}\right) $ is the colimit of $F.$ Given another cocone $%
\left( B,\left( gD:FD\rightarrow B\right) _{D\in \mathcal{D}}\right) $ on $%
F, $ then $\left( U_{\mathcal{C}}B,\left( U_{\mathcal{C}}gD:GD\rightarrow U_{%
\mathcal{C}}B\right) _{D\in \mathcal{D}}\right) $ is a cocone on $G$ as $U_{%
\mathcal{C}}gD^{\prime }\circ Gd=U_{\mathcal{C}}\left( gD^{\prime }\circ
Fd\right) =U_{\mathcal{C}}gD.$ Thus there is a unique morphism $%
g:L\rightarrow U_{\mathcal{C}}B$ such that $g\circ sD=U_{\mathcal{C}}gD.$
Write $B=\left( U_{\mathcal{C}}B,\beta :U_{\mathcal{C}}B\rightarrow \mathbf{1%
}\right) .$ Then $\beta \circ g\circ sD=\beta \circ U_{\mathcal{C}}gD=\gamma
D=\gamma L\circ sD$ so that $\beta \circ g=\gamma L$ (by colimit). This
means that there is a morphism $g^{\prime }:\left( L,\gamma L\right)
\rightarrow B$ such that $U_{\mathcal{C}}g^{\prime }=g.$ Thus $U_{\mathcal{C}%
}g^{\prime }\circ U_{\mathcal{C}}\sigma D=g\circ sD=U_{\mathcal{C}}gD$ and
hence $g^{\prime }\circ \sigma D=gD.$ Assume there is another morphism $%
h:\left( L,\gamma L\right) \rightarrow B$ such that $h\circ \sigma D=gD.$ By
applying $U_{\mathcal{C}}$ we get $U_{\mathcal{C}}h\circ U_{\mathcal{C}%
}\sigma D=U_{\mathcal{C}}gD$ i.e. $U_{\mathcal{C}}h\circ sD=U_{\mathcal{C}%
}gD $ and hence $U_{\mathcal{C}}h=g$ by uniqueness of $g.$ Hence $U_{%
\mathcal{C}}h=U_{\mathcal{C}}g^{\prime }$ so that $h=g^{\prime }.$ Thus $%
\left( \left( L,\gamma L\right) ,\left( \sigma D\right) _{D\in \mathcal{D}%
}\right) $ is the colimit of $F.$
\end{invisible}

\begin{remark}
Since $\mathcal{B}$ is a category with pullbacks, if $\mathbf{1}\in \mathcal{%
B}$ is a terminal object then $\mathcal{B}$ would be finitely complete by
\cite[Proposition 2.8.2 ]{Borceux1}. As a consequence $\mathcal{B}/ \mathbf{1%
}$ is finitely complete whenever $\mathcal{B}$ is a category with pullbacks
(cf. \cite[Proposition 2.16.3 ]{Borceux1}).
\end{remark}

\begin{invisible}
First note that $\left( \mathbf{1},\mathrm{Id}_{\mathbf{1}}\right) $ is a
terminal object in $\mathcal{B}/ \mathbf{1.}$ In fact given $f:\left(
B,\beta \right) \rightarrow \left( \mathbf{1},\mathrm{Id}_{\mathbf{1}%
}\right) $ we get $\mathrm{Id}_{\mathbf{1}}\circ f=\beta $ i.e. $f=\beta .$
Hence there is a unique morphism $\left( B,\beta \right) \rightarrow \left(
\mathbf{1},\mathrm{Id}_{\mathbf{1}}\right) .$ Moreover $\mathcal{B}/ \mathbf{%
1}$ inherits the pullbacks of $\mathcal{B}.$ In fact, given a morphisms $%
f:\left( X,x\right) \rightarrow \left( Z,z\right) $ and $g:\left( Y,y\right)
\rightarrow \left( Z,z\right) $ let $\left( P,u,v\right) $ be the pullback
of $\left( f,g\right) $ in $\mathcal{B}.$
\begin{equation*}
\begin{array}{ccc}
\left( P,p\right) & \overset{v}{\longrightarrow } & \left( Y,y\right) \\
u\downarrow & \lrcorner & \downarrow g \\
\left( X,x\right) & \overset{f}{\longrightarrow } & \left( Z,z\right)%
\end{array}%
\end{equation*}%
Since $x\circ u=z\circ f\circ u=z\circ g\circ v=y\circ v$ we can set $%
p:=x\circ u=y\circ v.$ Let us check that $\left( \left( P,p\right)
,u,v\right) $ is the pullback of $\left( f,g\right) $ in $\mathcal{B}/
\mathbf{1.}$ Since $p:=x\circ u=y\circ v$ we have that $u$ and $v$ are
morphisms in $\mathcal{B}/ \mathbf{1}$. Given $\left( \left( P^{\prime
},p^{\prime }\right) ,u^{\prime },v^{\prime }\right) $ be such that $f\circ
u^{\prime }=g\circ v^{\prime }$, by the universal property of the pullback
in $\mathcal{B}$ there is a unique $\varphi :P^{\prime }\rightarrow P$ such
that $v\circ \varphi =v^{\prime }$ and $u\circ \varphi =u^{\prime }.$
Moreover $p\circ \varphi =x\circ u\circ \varphi =x\circ u^{\prime
}=p^{\prime }.$ Hence $\varphi :\left( P^{\prime },p^{\prime }\right)
\rightarrow \left( P,p\right) .$
\end{invisible}

In the rest of this section $\mathcal{M}$ is a non-empty preadditive braided
monoidal category such that

\begin{itemize}
\item $\mathcal{M}$ has equalizers, denumerable coproducts and coequalizers
of reflexive pairs of morphisms;

\item the tensor products are additive and preserve equalizers, denumerable
coproducts and coequalizers of reflexive pairs of morphisms.
\end{itemize}

\medskip

We include here a well-known result we need.

\begin{lemma}
\label{lem:zeroobj}A non-empty preadditive category $\mathcal{C}$ with
equalizers has a zero object.
\end{lemma}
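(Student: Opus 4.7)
The plan is to produce an object $Z$ whose identity morphism coincides with the zero endomorphism; in any preadditive category this immediately forces $Z$ to be both initial and terminal, hence a zero object. Since $\mathcal{C}$ is non-empty, I can pick an arbitrary object $X\in\mathcal{C}$ to start with, and exploit the fact that the preadditive structure supplies a zero morphism $0_{X,X}:X\to X$ parallel to $\mathrm{Id}_X$.

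First I would form the equalizer $e:Z\to X$ of the pair $(\mathrm{Id}_X,\,0_{X,X})$, which exists by hypothesis. The fork condition gives $\mathrm{Id}_X\circ e = 0_{X,X}\circ e$, i.e.\ $e = 0_{Z,X}$. Being an equalizer, $e$ is a monomorphism. Now compute $e\circ \mathrm{Id}_Z = e = 0_{Z,X} = e\circ 0_{Z,Z}$, where the last equality uses that composing with a zero morphism yields a zero morphism in a preadditive category. Cancelling the monomorphism $e$ yields $\mathrm{Id}_Z = 0_{Z,Z}$.

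The final step is to observe that any object $Z$ satisfying $\mathrm{Id}_Z=0_{Z,Z}$ is automatically a zero object: for every $Y\in\mathcal{C}$ and every $f:Z\to Y$ one has $f = f\circ \mathrm{Id}_Z = f\circ 0_{Z,Z} = 0_{Z,Y}$, so $0_{Z,Y}$ is the unique morphism $Z\to Y$; dually every $g:Y\to Z$ equals $0_{Y,Z}$. Hence $Z$ is simultaneously initial and terminal. The only mildly delicate point is the very first move — invoking the preadditive structure to produce the parallel pair whose equalizer we take — but once this equalizer is formed the remainder is purely formal, so I do not anticipate a real obstacle.
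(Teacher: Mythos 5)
Your proof is correct and is essentially the argument the paper has in mind: the paper notes that a preadditive category has zero morphisms and can form kernels as equalizers against zero, and then cites \cite{AHS} for the conclusion, whereas you carry out that cited step explicitly by equalizing $\mathrm{Id}_X$ with $0_{X,X}$ and showing the resulting object has zero identity, hence is initial and terminal. There are no gaps; the monomorphism property of equalizers and the bilinearity of composition are exactly what is needed.
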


\begin{proof}
For every $A,B$ in $\mathcal{C}$, the set $\mathrm{Hom}_{\mathcal{C}}(A,B)$ contains a zero morphism i.e. $\mathcal{C}$ is a pointed category. For any morphism $f:A\to B$ we can compute the equalizer of $f$ and the zero morphism $A\to B$, i.e. the kernel of $f$. By \cite{AHS}[7C(d), page 127], the category $\mathcal{C}$ has a zero object.
%
%
\end{proof}

\begin{remark}
Let us show that under the hypotheses above, the category $\mathcal{M}$ is a
pre-abelian, see \cite[pag 24]{Popescu}. First we see it is additive. By
Lemma \ref{lem:zeroobj}, the category $\mathcal{M},$ being non-empty and
preadditive, admits a zero object, say $\mathbf{0} $. Given two objects $%
X_{1},X_{2}$ in $\mathcal{M}$ we can set $X_{n}:=\mathbf{0}$ for all $n\in
\mathbb{N}$ with $n>2.$ Then the denumerable coproduct $\coprod_{n\in
\mathbb{N}}X_{n},$ which exists by assumption, is just the coproduct of $%
X_{1},X_{2}.$ By \cite[Proposition 1.2.4 and Definition 1.2.5]{Borceux2},
the category $\mathcal{M}$ has binary biproducts. Since $\mathcal{M}$ has a
zero object, then $\mathcal{M}$ is additive, see e.g. \cite[Definition 1.2.6]%
{Borceux2}.

By hypotheses $\mathcal{M}$ has all equalizers. Moreover, since $\mathcal{M}$
has binary coproducts and coequalizers of reflexive pairs, then $%
\mathcal{M}$ has all coequalizers: to check this one has to apply the procedure mentioned in \cite[page 20]{La} to replace a pair of morphisms by a reflexive pair with the same coequalizer.
Since $\mathcal{M}$ has a zero object, we get that $\mathcal{M}$ has all
kernels and cokernels. Thus $\mathcal{M}$ is a preabelian category. We point
out that, by \cite[Proposition 2.8.2]{Borceux1} and its dual form, the
category $\mathcal{M}$ is finitely complete and finitely cocomplete (this
makes sense since the dual of a preabelian category is preabelian, as
observed in \cite[page 24]{Popescu}).

We point out that, since, by hypothesis, denumerable coproducts and
coequalizers of reflexive pairs are preserved by tensor products,
all coequalizers are preserved too by \cite[Lemma 2.3]%
{AGM-Liftable}.
\end{remark}

By the assumptions above, we can apply \cite[Theorem 4.6]{AM-BraidedOb} to
give an explicit description of an adjunction $\widetilde{T}\dashv P:\mathrm{Bialg}\left( \mathcal{M}\right) \rightarrow \mathcal{M}.$
Note that $\mathbf{1}$ is a terminal object in $\mathrm{Bialg}\left(
\mathcal{M}\right) $ so that $\mathbf{0}:=P\mathbf{1}$ is a terminal object
in $\mathcal{M}$, as right adjoint functors preserve the terminal object. It is indeed a zero object in $\mathcal{M}$ by Lemma \ref{lem:zeroobj}.

As a particular case of the constructions above, consider the following
left-hand side diagram%
\begin{equation*}
\xymatrixcolsep{1.5cm}\xymatrixrowsep{0.6cm}\xymatrix{\Alg(\M)/
T\mathbf{0}\ar[r]^-{U_{\Alg(\M)}}\ar@<.5ex>@{.>}[d]^{\Omega/
\mathbf{0}}&\Alg(\M)\ar@<.5ex>@{.>}[d]^{\Omega}\\ \M/ \mathbf{0}
\ar[r]^{U_{\M}}\ar@<.5ex>[u]^{T/ \mathbf{0}}&\M\ar@<.5ex>[u]^{T}}
\qquad
\xymatrixcolsep{2cm}\xymatrixrowsep{0.6cm}\xymatrix{\Alg^+(\M)%
\ar[r]^-{U:=U_{\Alg(\M)}}\ar@<.5ex>@{.>}[d]^{\Omega^+}&\Alg(\M)%
\ar@<.5ex>@{.>}[d]^{\Omega}\\ \M
\ar[r]^{\id}\ar@<.5ex>[u]^{T^+}&\M\ar@<.5ex>[u]^{T}}
\end{equation*}
\begin{invisible}
\begin{equation*}
\begin{array}{ccc}
\mathrm{Alg}\left( \mathcal{M}\right) / T\mathbf{0} & \overset{U_{\mathrm{Alg%
}\left( \mathcal{M}\right) }}{\longrightarrow } & \mathrm{Alg}\left(
\mathcal{M}\right) \\
T/ \mathbf{0}\uparrow \downarrow \Omega / \mathbf{0} &  & T\uparrow
\downarrow \Omega \\
\mathcal{M}/ \mathbf{0} & \overset{U_{\mathcal{M}}}{\longrightarrow } &
\mathcal{M}%
\end{array}%
\end{equation*}
\end{invisible}
Since left adjoint functors preserve the initial object, we get that $T\mathbf{0}$ is initial. By uniqueness of initial object, we get $T\mathbf{0}\cong
\mathbf{1}$ as $\mathbf{1}$ is the initial object in $\mathrm{Alg}\left(
\mathcal{M}\right) $. Thus $\mathrm{Alg}\left( \mathcal{M}\right) / T\mathbf{%
0}$ is $\mathrm{Alg}\left( \mathcal{M}\right) / \mathbf{1}$ i.e. the
category of augmented algebras that will be denoted by $\mathrm{Alg}%
^{+}\left( \mathcal{M}\right) $. Note also that the functor $U_{\mathcal{M}}:%
\mathcal{M}/ \mathbf{0}\rightarrow \mathcal{M}$ is a category isomorphism
because $\mathbf{0}$ is terminal in $\mathcal{M}$. In light of these
observations we can rewrite the starting diagram as the right-hand side one above
\begin{invisible}
\begin{equation*}
\begin{array}{ccc}
\mathrm{Alg}^{+}\left( \mathcal{M}\right) & \overset{U:=U_{\mathrm{Alg}%
\left( \mathcal{M}\right) }}{\longrightarrow } & \mathrm{Alg}\left( \mathcal{%
M}\right) \\
T^{+}\uparrow \downarrow \Omega ^{+} &  & T\uparrow \downarrow \Omega \\
\mathcal{M} & \overset{\mathrm{Id}}{\longrightarrow } & \mathcal{M}%
\end{array}%
\end{equation*}
\end{invisible}
where $T^{+}:=\left( T/ \mathbf{0}\right) \circ \left( U_{\mathcal{M}%
}\right) ^{-1}$ and $\Omega ^{+}:=U_{\mathcal{M}}\circ \left( \Omega /
\mathbf{0}\right) .$ Explicitly%
\begin{eqnarray*}
T^{+}M &=&\left( T/ \mathbf{0}\right) \left( U_{\mathcal{M}}\right)
^{-1}M=\left( T/ \mathbf{0}\right) \left( M,tM\right) =\left( TM,TtM\right)
=\left( TM,\varepsilon _{TM}\right) , \\
\Omega ^{+}\left( A,\varepsilon \right) &=&U_{\mathcal{M}}\left( \Omega /
\mathbf{0}\right) \left( A,\varepsilon \right) =U_{\mathcal{M}}\left(
KA,tKA\right) =KA
\end{eqnarray*}
where $\Omega / \mathbf{0}$ associates $\left(
A,\varepsilon \right) \in \mathrm{Alg}^{+}\left( \mathcal{M}\right)$ the pair $\left( KA,tKA\right) $ defined by the pullback in $\mathcal{M}$
\begin{equation}\label{diag:defKaug}
\xymatrixcolsep{1.5cm}\xymatrixrowsep{0.6cm}\vcenter{\xymatrix{KA\pulb\ar[r]^-{tKA}%
\ar@<.5ex>[d]_{kA}&\mathbf{0}\ar@<.5ex>[d]^{\eta\mathbf{0}=i\Omega
\mathbf{1}}\\ \Omega A\ar[r]^{\Omega\varepsilon}&\Omega \mathbf{1\cong
}\Omega T\mathbf{0}} }
\end{equation}
\begin{invisible}
\begin{equation*}
\begin{array}{ccc}
KA & \overset{tKA}{\longrightarrow } & \mathbf{0} \\
kA\downarrow & \lrcorner & \downarrow \eta \mathbf{0}=i\Omega \mathbf{1} \\
\Omega A & \overset{\Omega \varepsilon }{\longrightarrow } & \Omega \mathbf{%
1\cong }\Omega T\mathbf{0}%
\end{array}%
\end{equation*}
\end{invisible}
where $iX:\mathbf{0}\rightarrow X$ is the unique morphism from the initial
object $\mathbf{0}$ and $tX:X\rightarrow \mathbf{0}$ the unique one into the
terminal object. This means that $\left( KA,kA\right) =\mathrm{Ker}\left(
\Omega \varepsilon \right) .$ Hence%
\begin{equation*}
T^{+}M=\left( TM,\varepsilon _{TM}\right) ,\qquad \left( \Omega ^{+}\left(
A,\varepsilon \right) ,kA\right) =\mathrm{Ker}\left( \Omega \varepsilon
\right) .
\end{equation*}%
Given a morphism $f:\left( A,\varepsilon \right) \rightarrow \left(
A^{\prime },\varepsilon ^{\prime }\right) $ then $\Omega ^{+}f:\Omega
^{+}\left( A,\varepsilon \right) \rightarrow \Omega ^{+}\left( A^{\prime
},\varepsilon ^{\prime }\right) $ is defined by
\begin{equation}
kA^{\prime }\circ \Omega ^{+}f=\Omega Uf\circ kA.  \label{form:Omega+morph}
\end{equation}
The unit $\eta ^{+}:=\left( U_{\mathcal{M}}\right) \left( \eta / \mathbf{0}%
\right) \left( U_{\mathcal{M}}\right) ^{-1}:\mathrm{Id}\rightarrow \Omega
^{+}T^{+}$ and the counit $\epsilon ^{+}:=\left( \epsilon / \mathbf{0}\right)
:T^{+}\Omega ^{+}\rightarrow \mathrm{Id}$ are uniquely determined by the
following equalities%
\begin{equation}
kTB\circ \eta ^{+}B=\eta B\qquad U\epsilon ^{+}\left( A,\varepsilon \right)
=\epsilon A\circ TkA.  \label{form:etaeps+}
\end{equation}
\begin{invisible}
In general a diagram as follows is a pullback if and only if $\left(
K,k\right) =\mathrm{Ker}\left( f\right) .$
\begin{equation*}
\begin{array}{ccc}
K & \overset{tK}{\longrightarrow } & \mathbf{0} \\
k\downarrow & \lrcorner & \downarrow iY \\
X & \overset{f}{\longrightarrow } & Y%
\end{array}%
\end{equation*}%
In fact, in order to check that $\left( K,k\right) =\mathrm{Ker}\left(
f\right) ,$ we have to check that the following fork is an equalizer in $%
\mathcal{M}$
\begin{equation*}
K\overset{k}{\rightarrow }X\overset{f}{\underset{iY\circ tX}{%
\rightrightarrows }}Y.
\end{equation*}

In fact $iY\circ tX\circ k=iY\circ tK=f\circ k$. Moreover given $%
g:W\rightarrow X$ such that $f\circ g=iY\circ tX\circ g,$ then $f\circ
g=iY\circ tW$ so that, by pullback, there is a unique morphism $g^{\prime
}:W\rightarrow K$ such that $k\circ g^{\prime }=g$ and $tK\circ g^{\prime
}=tW.$ Since the last equality is always true, we get that there is a unique
morphism $g^{\prime }:W\rightarrow K$ such that $k\circ g^{\prime }=g.$
\end{invisible}

Next aim is to show that the left-hand side diagram below fits into the
setting of Theorem \ref{thm:main}.
\begin{equation}  \label{square:T}
\vcenter{\xymatrixcolsep{2cm}\xymatrixrowsep{0.7cm}\xymatrix{\Bialg(\M)%
\ar@<.5ex>@{.>}[d]^P\ar[r]^{\mho
^{+}}&\Alg^+(\M)\ar[r]^-{U:=U_{\Alg(\M)}}\ar@<.5ex>@{.>}[d]^{\Omega^+}&\Alg(%
\M)\ar@<.5ex>@{.>}[d]^{\Omega}\\
\M\ar@<.5ex>[u]^{\widetilde{T}}\ar[r]^\id&\M
\ar[r]^{\id}\ar@<.5ex>[u]^{T^+}&\M\ar@<.5ex>[u]^{T}} }
\end{equation}

\begin{invisible}
\begin{equation}
\begin{array}{ccccc}
\mathrm{Bialg}\left( \mathcal{M}\right) & \overset{\mho ^{+}}{%
\longrightarrow } & \mathrm{Alg}^{+}\left( \mathcal{M}\right) & \overset{%
U:=U_{\mathrm{Alg}\left( \mathcal{M}\right) }}{\longrightarrow } & \mathrm{%
Alg}\left( \mathcal{M}\right) \\
\widetilde{T}\uparrow \downarrow P &  & T^{+}\uparrow \downarrow \Omega ^{+}
&  & T\uparrow \downarrow \Omega \\
\mathcal{M} & \overset{\mathrm{Id}}{\longrightarrow } & \mathcal{M} &
\overset{\mathrm{Id}}{\longrightarrow } & \mathcal{M}%
\end{array}%
\end{equation}
\end{invisible}

By construction of $\widetilde{T}$ we have $\mho \circ \widetilde{T}=T$. Here $\mho:\mathrm{Bialg}\left( \mathcal{M}\right)\to \mathrm{Alg}\left( \mathcal{M}\right)$ and %
$\mho ^{+}$ are the obvious forgetful functors such that $U\circ \mho ^{+}=\mho
\ $and $\mho ^{+}\circ \widetilde{T}=T^{+}.$

Moreover the assumptions guarantee that the category $\mathrm{Alg}\left(
\mathcal{M}\right) $ has coequalizers, see e.g. \cite[Proposition 2.5]%
{AGM-Liftable}, see also \cite[Theorem 2.3]{Porst}. Since $\mathrm{Bialg}\left( \mathcal{M}\right) =\mathrm{Coalg%
}\left( \mathrm{Alg}\left( \mathcal{M}\right) \right) $ we can apply \cite[%
Proposition 2.5]{Pareigis} to $\mathcal{C}:=\mathrm{Alg}\left( \mathcal{M}%
\right) ^{\mathrm{op}}$ to obtain that $\mho ^{\mathrm{op}}:\mathrm{Bialg}%
\left( \mathcal{M}\right) ^{\mathrm{op}}\rightarrow \mathrm{Alg}\left(
\mathcal{M}\right) ^{\mathrm{op}}$ creates limits, equivalently $\mho :%
\mathrm{Bialg}\left( \mathcal{M}\right) \rightarrow \mathrm{Alg}\left(
\mathcal{M}\right) $ creates colimits. Since $\mathrm{Alg}\left( \mathcal{M}%
\right) $ has coequalizers, we deduce that $\mathrm{Bialg}\left( \mathcal{M}%
\right) $ has coequalizers and $\mho $ preserves coequalizers. On the other
hand the functor $\Omega :\mathrm{Alg}\left( \mathcal{M}\right) \rightarrow
\mathcal{M}$ needs not to preserve coequalizers. Nevertheless $\Omega $
preserves the coequalizers of reflexive pairs of morphisms, see e.g. \cite[%
Corollary A.10]{AM-MM}. It is noteworthy that, since $\Omega $ has a left
adjoint $T$, then $\Omega $ is strictly monadic (the comparison functor is a
category isomorphism), see \cite[Theorem A.6]{AM-MM}.

Let $V\in \mathcal{M}.$ By construction $\Omega TV=\oplus _{n\in \mathbb{N}%
}V^{\otimes n},$ see \cite[Remark 1.2]{AM-BraidedOb}. Let$\alpha
_{n}V:V^{\otimes n}\rightarrow \Omega TV$ denote the canonical inclusion. The unit
of the adjunction $\left( T,\Omega \right) $ is $\eta :\mathrm{Id}_{\mathcal{%
M}}\rightarrow \Omega T$ defined by $\eta V:=\alpha _{1}V$ while the counit $%
\epsilon :T\Omega \rightarrow \mathrm{Id}$ is uniquely defined by the
equality
\begin{equation}
\Omega \epsilon \left( A,m,u\right) \circ \alpha _{n}A=m^{n-1}\text{ for
every }n\in \mathbb{N}  \label{form:epsTOmega}
\end{equation}%
where $m^{n-1}:A^{\otimes n}\rightarrow A$ is the iterated
multiplication of an algebra $\left( A,m,u\right) $ defined by $%
m^{-1}=u,m^{0}=\mathrm{Id}_{A}$ and, for $n\geq 2$, by $m^{n-1}=m\circ \left(
m^{n-2}\otimes A\right) .$

Denote by $\widetilde{\eta },\widetilde{\epsilon }$ the unit and counit of
the adjunction $( \widetilde{T},P) $.

\begin{lemma}
\label{lem:Omegareg}(\cite[Theorem 2.3]{Porst}) The functor $\Omega :\mathrm{Alg}%
\left( \mathcal{M}\right) \rightarrow \mathcal{M}$ preserves regular
epimorphisms.
\end{lemma}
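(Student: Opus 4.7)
The plan is to exploit the fact, already recalled just above the statement, that $\Omega$ preserves coequalizers of reflexive pairs. So it suffices to show that any regular epimorphism in $\mathrm{Alg}(\mathcal{M})$ arises as the coequalizer of a reflexive pair.

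First I would take a regular epimorphism $f:(A,m,u)\to (A',m',u')$ in $\mathrm{Alg}(\mathcal{M})$, so by definition $f=\mathrm{coeq}(a,b)$ for some parallel pair $a,b:(B,m_B,u_B)\rightrightarrows (A,m,u)$. Then I would apply the standard trick, recalled in \cite[page 20]{La} and implicit in the discussion of coequalizers in the paragraph preceding the statement, of replacing $(a,b)$ by a reflexive pair with the same coequalizer. Concretely, one forms the coproduct $(B,m_B,u_B)\amalg (A,m,u)$ in $\mathrm{Alg}(\mathcal{M})$ (which exists, since the assumptions on $\mathcal{M}$ guarantee cocompleteness of $\mathrm{Alg}(\mathcal{M})$, cf.\ \cite[Proposition 2.5]{AGM-Liftable}), and considers the pair
\[
[a,\mathrm{Id}_A],\,[b,\mathrm{Id}_A]:(B,m_B,u_B)\amalg (A,m,u)\rightrightarrows (A,m,u),
\]
which admits the canonical injection of $(A,m,u)$ as common section, hence is reflexive, and whose coequalizer is easily seen to coincide with $f$.

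Then I would apply $\Omega$ to this reflexive pair. By the remark preceding the lemma, $\Omega$ preserves coequalizers of reflexive pairs (this follows from \cite[Corollary A.10]{AM-MM}), so $\Omega f$ is the coequalizer in $\mathcal{M}$ of $\Omega [a,\mathrm{Id}_A]$ and $\Omega [b,\mathrm{Id}_A]$. In particular, $\Omega f$ is a regular epimorphism, as desired.

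The main obstacle, if any, is purely bookkeeping: one must check that the reflexive replacement really has the same coequalizer as the original pair, which is a routine diagram chase using the universal properties of the coproduct and the coequalizer, and that the coproduct exists in $\mathrm{Alg}(\mathcal{M})$ under the running assumptions on $\mathcal{M}$. No deeper content is involved, and the argument is essentially the one in \cite[Theorem 2.3]{Porst} specialized to our setting.
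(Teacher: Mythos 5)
The paper does not actually prove this lemma: it is stated with a citation to \cite[Theorem 2.3]{Porst} and used as a black box. Your proposal therefore supplies an argument where the paper defers to the literature, and the strategy is the right one: reduce a regular epimorphism to a coequalizer of a \emph{reflexive} pair and then invoke the fact, already recorded in the paper via \cite[Corollary A.10]{AM-MM}, that $\Omega$ preserves reflexive coequalizers. The reflexive replacement $[a,\mathrm{Id}_A],[b,\mathrm{Id}_A]:B\amalg A\rightrightarrows A$ does have the same coequalizing cocones as $(a,b)$, so that part of the bookkeeping is fine.

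The one step you should not wave away is the existence of the binary coproduct $B\amalg A$ in $\mathrm{Alg}(\mathcal{M})$. The reference you give (\cite[Proposition 2.5]{AGM-Liftable}) is invoked in the paper only for the existence of \emph{coequalizers} in $\mathrm{Alg}(\mathcal{M})$; binary coproducts of algebras are not automatic and require a Linton-type construction (present an algebra as a reflexive coequalizer of free algebras and use $TV\amalg TW\cong T(V\oplus W)$), which is essentially the content of Porst's cocompleteness theorem itself. A cheaper route that stays entirely within what the paper has already established: since $\mathcal{M}$ is finitely complete and $\Omega$ is strictly monadic (hence creates limits), $\mathrm{Alg}(\mathcal{M})$ has kernel pairs; a regular epimorphism in a category with kernel pairs is the coequalizer of its kernel pair, and kernel pairs are reflexive via the diagonal. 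Applying $\Omega$ and \cite[Corollary A.10]{AM-MM} then gives that $\Omega f$ is a regular epimorphism, with no appeal to coproducts of algebras. Either way the lemma holds; I would just replace the coproduct step by the kernel-pair one, or else explicitly invoke the cocompleteness statement from \cite{Porst} rather than \cite{AGM-Liftable}.
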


Not that the previous result does not mean that $\Omega $ preserves
coequalizers.

\begin{lemma}
\label{lem:furbo}1) Let $e:A\rightarrow A$ and $f:A\rightarrow A^{\prime }$ be morphisms in $\mathcal{M}$ such
that $A\otimes f,f\otimes A$ are epimorphisms and $f\circ e\circ e=f\circ e.$
Then $\left( -\right) ^{\otimes 2}:\mathcal{M}\rightarrow
\mathcal{M}$ preserves the coequalizer of $\left( fe,f\right) .$

2) Let $a,b:A\rightarrow B$ be a reflexive pair of morphisms in $\mathcal{M}$
and let $g:B\rightarrow A^{\prime }$ be a morphism in $\mathcal{M}$ such
that $A\otimes g,g\otimes A$ are epimorphisms. Then the functor $\left(
-\right) ^{\otimes 2}:\mathcal{M}\rightarrow \mathcal{M}$ preserves the
coequalizer of $\left( ga,gb\right) .$

3) $\Omega :\mathrm{Alg}\left( \mathcal{M}\right) \rightarrow \mathcal{M}$
creates the coequalizers of those parallel pairs $\left( f,g\right) $ such
that the functor $\left( -\right) ^{\otimes 2}:\mathcal{M}\rightarrow
\mathcal{M}$ preserves the coequalizer of $\left( \Omega f,\Omega g\right) .$
\end{lemma}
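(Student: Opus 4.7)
The plan is to handle the three parts in order, with Part 1 being the technical core, Part 2 reducing to a direct computation via a common section, and Part 3 being a standard lift of a coequalizer to the algebra category. Throughout I will freely use that the tensor bifunctor on $\mathcal{M}$ preserves \emph{all} coequalizers (recalled in the preamble via \cite[Lemma 2.3]{AGM-Liftable}), so that tensoring with any morphism of the form $\pi$ with $\pi$ a coequalizer (in particular a regular epimorphism), on either side, yields a regular epimorphism.

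For Part 1, let $\pi:A'\to Q$ be the coequalizer of $(fe,f)$ in $\mathcal{M}$. I plan to take an arbitrary $h:A'\otimes A'\to Z$ with $h\circ(fe\otimes fe)=h\circ(f\otimes f)$ and show it factors uniquely through $\pi\otimes\pi$. The key manipulation is: precomposing the coequalizing identity with $A\otimes e$ and using $fee=fe$ gives $h\circ(fe\otimes fe)=h\circ(f\otimes fe)$, whence $h\circ(f\otimes f)=h\circ(f\otimes fe)$; since $f\otimes A$ is epi, this yields $h\circ(A'\otimes f)=h\circ(A'\otimes fe)$. Because $A'\otimes(-)$ preserves the coequalizer of $(fe,f)$, we obtain a unique $h_{1}:A'\otimes Q\to Z$ with $h=h_{1}\circ(A'\otimes\pi)$. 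Symmetrically, precomposing the original identity with $e\otimes A$ gives $h\circ(fe\otimes fe)=h\circ(fe\otimes f)$, hence $h\circ(fe\otimes A')=h\circ(f\otimes A')$ after cancelling $A\otimes f$; pushing this through $A\otimes\pi$ (epi) shows $h_{1}\circ(fe\otimes Q)=h_{1}\circ(f\otimes Q)$, and $h_{1}$ factors as $\tilde h\circ(\pi\otimes Q)$. Composing, $h=\tilde h\circ(\pi\otimes\pi)$, and uniqueness follows since $\pi\otimes\pi$ is the composition of two regular epimorphisms.

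For Part 2, let $s:B\to A$ be a common section of $(a,b)$ and let $\pi:A'\to Q$ be the coequalizer of $(ga,gb)$. Given $h:A'\otimes A'\to Z$ coequalizing $(ga\otimes ga,gb\otimes gb)$, set $h':=h\circ(g\otimes g):B\otimes B\to Z$; the hypothesis reads $h'\circ(a\otimes a)=h'\circ(b\otimes b)$. Precomposing with $s\otimes A$ and using $as=bs=\mathrm{Id}_{B}$ gives $h'\circ(B\otimes a)=h'\circ(B\otimes b)$, equivalently $h\circ(g\otimes ga)=h\circ(g\otimes gb)$; cancelling the epimorphism $g\otimes A$ yields $h\circ(A'\otimes ga)=h\circ(A'\otimes gb)$. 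Precomposing instead with $A\otimes s$ yields symmetrically $h\circ(ga\otimes A')=h\circ(gb\otimes A')$ after cancelling $A\otimes g$. From here the same two-step factorisation through $A'\otimes\pi$ and $\pi\otimes Q$ (each of which is a regular epimorphism, the tensor preserving all coequalizers) produces a unique $\tilde h:Q\otimes Q\to Z$ with $h=\tilde h\circ(\pi\otimes\pi)$.

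For Part 3, let $f,g:(A_{1},m_{1},u_{1})\to(A_{2},m_{2},u_{2})$ be a parallel pair in $\mathrm{Alg}(\mathcal{M})$ whose image $(\Omega f,\Omega g)$ has a coequalizer $\pi:A_{2}\to Q$ preserved by $(-)^{\otimes 2}$. I plan to transfer the algebra structure to $Q$: since $\pi\circ m_{2}$ equalises the pair $(\Omega f\otimes\Omega f,\Omega g\otimes\Omega g)$ (using that $f,g$ are algebra morphisms and that $\pi$ coequalises $\Omega f,\Omega g$), the universal property of $\pi\otimes\pi$ yields a unique $m_{Q}:Q\otimes Q\to Q$ with $m_{Q}\circ(\pi\otimes\pi)=\pi\circ m_{2}$; set $u_{Q}:=\pi\circ u_{2}$. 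Associativity and unitality of $(Q,m_{Q},u_{Q})$ are checked after precomposing with $\pi^{\otimes 3}$, respectively $\pi$, both of which are epi (as composites of regular epimorphisms obtained by tensoring $\pi$ with identities). By construction $\pi$ is an algebra morphism. For universality, any algebra morphism $\phi:(A_{2},m_{2},u_{2})\to(R,m_{R},u_{R})$ with $\phi\circ f=\phi\circ g$ gives, at the level of $\mathcal{M}$, a unique $\phi':Q\to R$ with $\phi'\circ\pi=\Omega\phi$; that $\phi'$ respects multiplication and unit is forced by the corresponding identities for $\phi$ and the fact that $\pi\otimes\pi$ and $\pi$ are epi. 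Uniqueness of the algebra structure on $Q$ and of the lifted coequalizer in $\mathrm{Alg}(\mathcal{M})$ both follow from the epicity of $\pi\otimes\pi$. No single step is really the main obstacle; the main care point is keeping track of which tensor factor one is reducing at each stage in Parts 1 and 2, and verifying in Part 3 that the associativity diagram for $m_{Q}$ is controlled by an epimorphism $\pi^{\otimes 3}$, which is available without any extra hypothesis thanks to the remark that tensor preserves all coequalizers in $\mathcal{M}$.
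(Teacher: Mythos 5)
Your argument is correct and, for parts 1) and 3), follows essentially the same route as the paper: in 1) you derive the two one\nobreakdash-sided coequalizing identities $h\circ (A'\otimes fe)=h\circ (A'\otimes f)$ and $h\circ (fe\otimes A')=h\circ (f\otimes A')$ from the epimorphism hypotheses and then factor successively through $A'\otimes \pi$ and $\pi\otimes Q$ (the paper does the two tensor slices in the opposite order, which is immaterial), and in 3) you carry out the standard transfer of the algebra structure that the paper dismisses as straightforward. The only divergence is in part 2): the paper reduces it to part 1) in two lines by setting $f:=g\circ b$ and $e:=s\circ a$, observing that $(ga,gb)=(fe,f)$ with $fee=fe$ and that $f\otimes A$, $A\otimes f$ are epimorphisms as composites of an epimorphism with a split epimorphism; you instead rerun the computation of part 1) directly, using the common section $s$ to pass from $h'\circ(a\otimes a)=h'\circ(b\otimes b)$ to the one-sided identities. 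Both are valid; the paper's reduction avoids the repetition, while your direct version makes part 2) self-contained.
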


\begin{proof}
Recall that $\mathcal{M}$ has all coequalizers and the tensor products
preserve coequalizers.

1) Consider the following left-hand side coequalizer
\begin{equation}  \label{coeq:furbo1}
\xymatrixcolsep{1cm} \xymatrix{A\ar@<.5ex>[r]^{fe}\ar@<-.5ex>[r]_{f}&A'%
\ar[r]^{p}& V} \qquad \xymatrixcolsep{1cm} \xymatrix{\duplica{A}%
\ar@<.5ex>[r]^{\duplica{fe}}\ar@<-.5ex>[r]_{\duplica{f}}&\duplica{A'}%
\ar[r]^{\duplica{p}}& \duplica{V}}
\end{equation}

\begin{invisible}
\begin{equation*}
A\overset{fe}{\underset{f}{\rightrightarrows }}A^{\prime }\overset{p}{%
\longrightarrow }V.
\end{equation*}
\end{invisible}

Let us check that the right-hand side one is a coequalizer too.

\begin{invisible}
\begin{equation*}
A\otimes A\overset{fe\otimes fe}{\underset{f\otimes f}{\rightrightarrows }}%
A^{\prime }\otimes A^{\prime }\overset{p\otimes p}{\longrightarrow }V\otimes
V
\end{equation*}%
is a coequalizer.
\end{invisible}

Let $\zeta :A^{\prime }\otimes A^{\prime }\rightarrow Z$ be such that $\zeta
\circ \left( fe\otimes fe\right) =\zeta \circ \left( f\otimes f\right) .$
Then%
\begin{eqnarray*}
\zeta \circ \left( fe\otimes A^{\prime }\right) \circ \left( A\otimes
f\right) &=&\zeta \circ \left( f\otimes f\right) \circ \left( e\otimes
A\right) =\zeta \circ \left( fe\otimes fe\right) \circ \left( e\otimes A\right) \\
&=&\zeta \circ \left( fe\otimes fe\right) =\zeta \circ \left( f\otimes f\right) =\zeta \circ \left( f\otimes
A^{\prime }\right) \circ \left( A\otimes f\right) .
\end{eqnarray*}%
Since $A\otimes f$ is an epimorphism, we deduce that $\zeta \circ \left(
fe\otimes A^{\prime }\right) =\zeta \circ \left( f\otimes A^{\prime }\right)
.$

Since the tensor products preserve coequalizers, the following are both
coequalizers.
\begin{equation}  \label{coeq:furbo2}
\xymatrixcolsep{1cm} \xymatrix{A\otimes A'\ar@<.5ex>[r]^{fe\otimes
A'}\ar@<-.5ex>[r]_{f\otimes A'}&A'\otimes A'\ar[r]^{p\otimes A'}& V\otimes
A'}\qquad \xymatrixcolsep{1cm} \xymatrix{V\otimes A\ar@<.5ex>[r]^{V\otimes
fe}\ar@<-.5ex>[r]_{V\otimes f}&V\otimes A'\ar[r]^{V\otimes p}& V\otimes V}
\end{equation}

\begin{invisible}
\begin{equation*}
A\otimes A^{\prime }\overset{fe\otimes A^{\prime }}{\underset{f\otimes
A^{\prime }}{\rightrightarrows }}A^{\prime }\otimes A^{\prime }\overset{%
p\otimes A^{\prime }}{\longrightarrow }V\otimes A^{\prime }
\end{equation*}
\end{invisible}

By using the left one there is a morphism $\zeta _{1}:V\otimes A^{\prime
}\rightarrow Z$ such that $\zeta _{1}\circ \left( p^{\prime }\otimes
A^{\prime }\right) =\zeta .$ We have%
\begin{eqnarray*}
\zeta _{1}\circ \left( V\otimes fe\right) \circ \left( pf\otimes A\right)
&=&\zeta _{1}\circ \left( p\otimes A^{\prime }\right) \circ \left( f\otimes
f\right) \circ \left( A\otimes e\right) =\zeta \circ \left( f\otimes f\right) \circ \left( A\otimes e\right) \\
&=&\zeta \circ \left( fe\otimes fe\right) \circ \left( A\otimes e\right) =\zeta \circ \left( fe\otimes fe\right) =\zeta \circ \left( f\otimes f\right) \\
&=&\zeta _{1}\circ \left( p\otimes A^{\prime }\right) \circ \left( f\otimes
f\right) \zeta _{1}\circ \left( V\otimes f\right) \circ \left( pf\otimes A\right) .
\end{eqnarray*}

Now, $f\otimes A$ is an epimorphism by assumption. Moreover $p\otimes A$ is
an epimorphism because the tensor products preserve coequalizers. Thus, from
the chain of equalities above, we deduce that $\zeta _{1}\circ \left(
V\otimes fe\right) =\zeta _{1}\circ \left( V\otimes f\right) $. Since the
right-hand side diagram in \eqref{coeq:furbo2} is a coequalizer,
\begin{invisible}
\begin{equation*}
V\otimes A\overset{V\otimes fe}{\underset{V\otimes f}{\rightrightarrows }}%
V\otimes A^{\prime }\overset{V\otimes p}{\longrightarrow }V\otimes V.
\end{equation*}
\end{invisible}
there is a morphism $\zeta _{2}:V\otimes V\rightarrow Z$ such that $\zeta
_{2}\circ \left( V\otimes p\right) =\zeta _{1}.$ Therefore%
\begin{equation*}
\zeta _{2}\circ \left( p\otimes p\right) =\zeta _{2}\circ \left( V\otimes
p\right) \circ \left( p\otimes A^{\prime }\right) =\zeta _{1}\circ \left(
p\otimes A^{\prime }\right) =\zeta .
\end{equation*}%
Note also that $p\otimes p=\left( V\otimes p\right) \circ \left( p\otimes
A^{\prime }\right) $ is an epimorphism. Thus the right-hand side diagram in %
\eqref{coeq:furbo1} is a coequalizer.
\begin{invisible}
\begin{equation*}
A\otimes A\overset{fe\otimes fe}{\underset{f\otimes f}{\rightrightarrows }}%
A^{\prime }\otimes A^{\prime }\overset{p\otimes p}{\longrightarrow }V\otimes
V
\end{equation*}%
is a coequalizer.
\end{invisible}

2) Let $s:A^{\prime }\rightarrow A$ be such that $a\circ s=\mathrm{Id}%
=b\circ s.$ Set $f:=g\circ b$ and $e:=s\circ a.$ Then $f\circ e=g\circ
b\circ s\circ a=g\circ a$ so that $\left( ga,gb\right) =\left( fe,f\right) $
and we can apply 1). We just point out that $f\otimes A=\left( g\otimes
A\right) \circ \left( b\otimes A\right) $ is an epimorphism as a composition
of the epimorphism $g\otimes A$ by the split-epimorphism $b\otimes A$.
Similarly $A\otimes f$ is an epimorphism.

3) It is straightforward.
\begin{invisible}
Let $f,g:\left( A,m_{A},u_{A}\right) \rightarrow \left( A^{\prime
},m_{A^{\prime }},u_{A^{\prime }}\right) $ be morphisms in $\mathrm{Alg}%
\left( \mathcal{M}\right) $ such that the functor $\left( -\right) ^{\otimes
2}:\mathcal{M}\rightarrow \mathcal{M}$ preserves the coequalizer of $\left(
f^{\prime }:=\Omega f,g^{\prime }:=\Omega g\right) .$ Thus, if we denote by
\begin{equation*}
A\overset{g^{\prime }}{\underset{f^{\prime }}{\rightrightarrows }}A^{\prime }%
\overset{p^{\prime }}{\longrightarrow }V
\end{equation*}%
the coequalizer of $\left( f^{\prime },g^{\prime }\right) ,$ we get the
coequalizer%
\begin{equation*}
A\otimes A\overset{g^{\prime }\otimes g^{\prime }}{\underset{f^{\prime
}\otimes f^{\prime }}{\rightrightarrows }}A^{\prime }\otimes A^{\prime }%
\overset{p^{\prime }\otimes p^{\prime }}{\longrightarrow }V\otimes V
\end{equation*}%
Consider the following diagram
\begin{equation*}
\begin{array}{ccccc}
A\otimes A & \overset{g^{\prime }\otimes g^{\prime }}{\underset{f^{\prime
}\otimes f^{\prime }}{\rightrightarrows }} & A^{\prime }\otimes A^{\prime }
& \overset{p^{\prime }\otimes p^{\prime }}{\longrightarrow } & V\otimes V \\
m_{A}\downarrow &  & m_{A^{\prime }}\downarrow &  & \downarrow \exists m_{V}
\\
A & \overset{g^{\prime }}{\underset{f^{\prime }}{\rightrightarrows }} &
A^{\prime } & \overset{p^{\prime }}{\longrightarrow } & V%
\end{array}%
\end{equation*}%
and set%
\begin{equation*}
u_{V}:=p^{\prime }\circ u_{A^{\prime }}.
\end{equation*}%
We get%
\begin{equation*}
m_{V}\circ \left( V\otimes m_{V}\right) \circ \left( p^{\prime }\otimes
p^{\prime }\otimes p^{\prime }\right) =m_{V}\circ \left( m_{V}\otimes
V\right) \circ \left( p^{\prime }\otimes p^{\prime }\otimes p^{\prime
}\right) ,
\end{equation*}%
\begin{equation*}
m_{V}\circ \left( V\otimes u_{V}\right) \circ r_{V}^{-1}\circ p^{\prime
}=p^{\prime }=m_{V}\circ \left( u_{V}\otimes V\right) \circ l_{V}^{-1}\circ
p^{\prime },
\end{equation*}

Since the tensor products preserve coequalizers we get that $p^{\prime
}\otimes p^{\prime }\otimes V$ and $A^{\prime }\otimes A^{\prime }\otimes p$
are epimorphisms and hence also $p^{\prime }\otimes p^{\prime }\otimes
p^{\prime }$ is. Hence we deduce that $\left( V,m_{V},u_{V}\right) \in
\mathrm{Alg}\left( \mathcal{M}\right) $ and $p^{\prime }$ induces $p:\left(
A^{\prime },m_{A^{\prime }},u_{A^{\prime }}\right) \rightarrow \left(
V,m_{V},u_{V}\right) $ in $\mathrm{Alg}\left( \mathcal{M}\right) $ such that
$p^{\prime }:=\Omega p.$ Let us check that
\begin{equation*}
\left( A,m_{A},u_{A}\right) \overset{g}{\underset{f}{\rightrightarrows }}%
\left( A^{\prime },m_{A^{\prime }},u_{A^{\prime }}\right) \overset{p}{%
\longrightarrow }\left( V,m_{V},u_{V}\right)
\end{equation*}%
is a coequalizer in $\mathrm{Alg}\left( \mathcal{M}\right) .$ Clearly $p$
coequalizes the parallel pair above since $\Omega $ is faithful. Let $%
h:\left( A^{\prime },m_{A^{\prime }},u_{A^{\prime }}\right) \rightarrow
\left( E^{\prime },m_{E^{\prime }},u_{E^{\prime }}\right) $ in $\mathrm{Alg}%
\left( \mathcal{M}\right) $ be such that $h\circ g=h\circ f.$ Set $h^{\prime
}:=\Omega h.$ Then $h^{\prime }\circ g^{\prime }=h^{\prime }\circ f^{\prime
} $ and hence there is $k^{\prime }:V\rightarrow E^{\prime }$ such that $%
k^{\prime }\circ p^{\prime }=h^{\prime }.$ Clearly%
\begin{eqnarray*}
k^{\prime }\circ m_{V}\circ \left( p^{\prime }\otimes p^{\prime }\right)
&=&k^{\prime }\circ p^{\prime }\circ m_{A^{\prime }}=h^{\prime }\circ
m_{A^{\prime }}=m_{E^{\prime }}\circ \left( h^{\prime }\otimes h^{\prime
}\right) =m_{E^{\prime }}\circ \left( k^{\prime }\otimes k^{\prime }\right)
\circ \left( p^{\prime }\otimes p^{\prime }\right) , \\
k^{\prime }\circ u_{V} &=&k^{\prime }\circ p^{\prime }\circ u_{A^{\prime
}}=h^{\prime }\circ u_{A^{\prime }}=u_{E^{\prime }},
\end{eqnarray*}%
so that $k^{\prime }$ induces $k:\left( V,m_{V},u_{V}\right) \rightarrow
\left( E^{\prime },m_{E^{\prime }},u_{E^{\prime }}\right) $ in $\mathrm{Alg}%
\left( \mathcal{M}\right) $ such that $k^{\prime }:=\Omega k.$ Clearly $%
k\circ p=h.$ Moreover $p$ is an epimorphism as $p^{\prime }$ is and $\Omega $
is faithful. Thus $k$ is unique.
\end{invisible}
\end{proof}

\begin{lemma}
\label{lem:Ucoeq}The forgetful functor $U:\mathrm{Alg}^{+}\left( \mathcal{M}%
\right) \longrightarrow \mathrm{Alg}\left( \mathcal{M}\right) $ creates
colimits. Moreover the category $\mathrm{Alg}^{+}\left( \mathcal{M}\right) $
has coequalizers and $U$ preserves all coequalizers.
\end{lemma}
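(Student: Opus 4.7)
The plan is to reduce everything to the already-stated Lemma \ref{lem:createscomma}, which asserts that for any category $\mathcal{C}$ and $\mathbf{1}\in \mathcal{C}$, the forgetful functor $U_{\mathcal{C}}:\mathcal{C}/\mathbf{1}\rightarrow \mathcal{C}$ creates colimits. Indeed, by construction $\mathrm{Alg}^{+}(\mathcal{M})=\mathrm{Alg}(\mathcal{M})/T\mathbf{0}\cong \mathrm{Alg}(\mathcal{M})/\mathbf{1}$ (since $T\mathbf{0}\cong \mathbf{1}$ is initial in $\mathrm{Alg}(\mathcal{M})$, as noted right before diagram \eqref{square:T}) and $U=U_{\mathrm{Alg}(\mathcal{M})}$ is exactly the canonical forgetful functor out of this slice category. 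Hence the first assertion, that $U$ creates colimits, follows immediately by applying Lemma \ref{lem:createscomma} to $\mathcal{C}:=\mathrm{Alg}(\mathcal{M})$ and $\mathbf{1}:=T\mathbf{0}$.

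For the second assertion, I would recall from the paragraph following \eqref{square:T} that $\mathrm{Alg}(\mathcal{M})$ has all coequalizers (this was deduced via the reflexive coequalizers hypothesis on $\mathcal{M}$ together with the replacement trick from \cite{La} used in \cite[Proposition 2.5]{AGM-Liftable} or \cite[Theorem 2.3]{Porst}). Given any parallel pair $f,g:(A,\varepsilon_A)\rightrightarrows (A',\varepsilon_{A'})$ in $\mathrm{Alg}^+(\mathcal{M})$, the pair $(Uf,Ug)$ admits a coequalizer in $\mathrm{Alg}(\mathcal{M})$; since $U$ creates colimits, this coequalizer lifts uniquely to a coequalizer of $(f,g)$ in $\mathrm{Alg}^+(\mathcal{M})$, and this lifted coequalizer is sent by $U$ back to the original one. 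Consequently $\mathrm{Alg}^+(\mathcal{M})$ has all coequalizers and $U$ preserves them.

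The argument is essentially a bookkeeping one, so there is no real obstacle: the only point that requires a small sanity check is that the identification $\mathrm{Alg}^+(\mathcal{M})=\mathrm{Alg}(\mathcal{M})/T\mathbf{0}$ is compatible with the forgetful functor $U$, i.e.\ that $U$ coincides with $U_{\mathrm{Alg}(\mathcal{M})}$ under this identification. This is transparent from the explicit description of $\mathrm{Alg}^+(\mathcal{M})$ given before \eqref{square:T}, where an object of $\mathrm{Alg}^+(\mathcal{M})$ is a pair $(A,\varepsilon :A\to \mathbf{1})$ and $U(A,\varepsilon)=A$. Therefore no further computation is needed and the proof reduces to citing Lemma \ref{lem:createscomma} together with the already-established existence of coequalizers in $\mathrm{Alg}(\mathcal{M})$.
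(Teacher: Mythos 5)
Your proposal is correct and follows exactly the paper's own argument: apply Lemma \ref{lem:createscomma} to the slice category $\mathrm{Alg}(\mathcal{M})/T\mathbf{0}\cong\mathrm{Alg}(\mathcal{M})/\mathbf{1}$ to get creation of colimits, then combine this with the existence of coequalizers in $\mathrm{Alg}(\mathcal{M})$ to conclude. You merely spell out the lifting step and the identification of $U$ with $U_{\mathrm{Alg}(\mathcal{M})}$ more explicitly than the paper does.
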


\begin{proof}
By Lemma \ref{lem:createscomma}, the forgetful functor $U:\mathrm{Alg}%
^{+}\left( \mathcal{M}\right) \longrightarrow \mathrm{Alg}\left( \mathcal{M}%
\right) \ $creates colimits and since $\mathrm{Alg}\left( \mathcal{M}\right)
$ has coequalizers, we deduce that $\mathrm{Alg}^{+}\left( \mathcal{M}%
\right) $ has coequalizers and $U$ preserves them.
\end{proof}

\begin{lemma}
The forgetful functor $\mho ^{+}:\mathrm{Bialg}\left( \mathcal{M}\right)
\longrightarrow \mathrm{Alg}^{+}\left( \mathcal{M}\right) $ preserves
coequalizers.
\end{lemma}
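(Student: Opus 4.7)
My plan is to deduce the result by combining three facts already collected in the preceding discussion: (i) $\mho: \mathrm{Bialg}(\mathcal{M})\to\mathrm{Alg}(\mathcal{M})$ preserves coequalizers (indeed creates colimits, as recalled from \cite{Pareigis}); (ii) $U: \mathrm{Alg}^{+}(\mathcal{M})\to\mathrm{Alg}(\mathcal{M})$ creates (in particular, reflects) colimits by Lemma \ref{lem:Ucoeq}; (iii) the factorisation $U\circ\mho^{+}=\mho$ from diagram \eqref{square:T}. The strategy is purely formal: lift the coequalizer from $\mathrm{Alg}(\mathcal{M})$ to $\mathrm{Alg}^{+}(\mathcal{M})$ along $U$, and match it with the image of $\mho^{+}$.

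Concretely, start with a parallel pair $f,g:B\rightrightarrows B'$ in $\mathrm{Bialg}(\mathcal{M})$ admitting a coequalizer $p:B'\to B''$ (such coequalizers exist since $\mho$ creates them from the coequalizers in $\mathrm{Alg}(\mathcal{M})$). By (i) the morphism $\mho p$ is the coequalizer of $(\mho f,\mho g)$ in $\mathrm{Alg}(\mathcal{M})$. Now view $(\mho^{+}f,\mho^{+}g)$ as a diagram $D:\mathcal{J}\to\mathrm{Alg}^{+}(\mathcal{M})$, so that $U\circ D$ is $(\mho f,\mho g)$ by (iii); its colimit in $\mathrm{Alg}(\mathcal{M})$ is therefore $\mho p$.

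Applying the creation property (ii), there is a unique cocone on $D$ whose $U$-image is the colimit cocone $(\mho B'',\mho p)$, and moreover this cocone is itself a coequalizer in $\mathrm{Alg}^{+}(\mathcal{M})$. On the other hand, $\mho^{+}p:\mho^{+}B'\to\mho^{+}B''$ is such a cocone on $D$: faithfulness of $U$ combined with $U\mho^{+}p\circ U\mho^{+}f=\mho p\circ \mho f=\mho p\circ \mho g=U\mho^{+}p\circ U\mho^{+}g$ shows that $\mho^{+}p$ coequalizes $(\mho^{+}f,\mho^{+}g)$, and $U\mho^{+}p=\mho p$ while $U\mho^{+}B''=\mho B''$. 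Uniqueness in (ii) identifies this cocone with the one produced by creation, so $\mho^{+}p$ is the coequalizer of $(\mho^{+}f,\mho^{+}g)$ in $\mathrm{Alg}^{+}(\mathcal{M})$, as desired.

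I do not foresee any genuine obstacle: the argument is a routine exercise in the interplay between preservation and creation of colimits, and nothing beyond the already established Lemma \ref{lem:Ucoeq} and the creation of colimits by $\mho$ is required. The only point to watch is to avoid a potentially misleading shortcut: one could be tempted to claim directly that $\mho^{+}$ creates coequalizers, but in fact only the preservation statement is needed and follows cleanly from the creation property of $U$ applied to the colimit produced by $\mho$.
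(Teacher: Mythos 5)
Your proposal is correct and follows essentially the same route as the paper: both use that $\mho$ creates (hence preserves) coequalizers, the factorisation $\mho=U\circ\mho^{+}$, and the creation property of $U$ from Lemma \ref{lem:Ucoeq} to conclude. The only cosmetic difference is that the paper phrases the last step as ``$U$ creates, whence reflects coequalizers,'' while you unfold the creation/uniqueness argument explicitly, which amounts to the same thing.
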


\begin{proof}
Since $\mho :\mathrm{Bialg}\left(
\mathcal{M}\right) \rightarrow \mathrm{Alg}\left( \mathcal{M}\right) $
creates colimits, we have that $\mho$ preserves all colimits that exist in $\mathrm{Alg}\left(
\mathcal{M}\right) $, see \cite[11.5, page 106]{McLarty}. Since $\mathrm{Alg}\left(
\mathcal{M}\right) $ has all coequalizers, we get that $\mho$ preserves all coequalizers. Since $\mho=U\mho ^{+}$ and, by Lemma \ref{lem:Ucoeq}, $U$ creates, whence reflects
coequalizers \cite[Exercice 1, page 150]{MacLane}, we get that $\mho ^{+}$ preserves coequalizers as desired.
\begin{invisible}
Consider a coequalizer $\xymatrixcolsep{.5cm} \xymatrix{X\ar@<.5ex>[r]^{f}%
\ar@<-.5ex>[r]_{g}&Y\ar[r]^{c}& C}$
$X\overset{f}{\underset{g}{\rightrightarrows }}Y\overset{c}{\rightarrow }C$
in $\mathrm{Bialg}\left( \mathcal{M}\right) .$ Since $\mathrm{Alg}\left(
\mathcal{M}\right) $ has coequalizers, and $\mho :\mathrm{Bialg}\left(
\mathcal{M}\right) \rightarrow \mathrm{Alg}\left( \mathcal{M}\right) $
creates colimits, there is a coequalizer $\xymatrixcolsep{.5cm} %
\xymatrix{X\ar@<.5ex>[r]^{f}\ar@<-.5ex>[r]_{g}&Y\ar[r]^{c'}& C'}$
$X\overset{f}{\underset{g}{\rightrightarrows }}Y\overset{c^{\prime }}{%
\rightarrow }C^{\prime }$
which is preserved by $\mho .$ By uniqueness of coequalizers this
coequalizer is $\xymatrixcolsep{.5cm} \xymatrix{X\ar@<.5ex>[r]^{f}%
\ar@<-.5ex>[r]_{g}&Y\ar[r]^{c}& C}$.
$X\overset{f}{\underset{g}{\rightrightarrows }}Y\overset{c}{\rightarrow }C.$
Since $\xymatrixcolsep{.7cm}
\xymatrix{U\mho ^{+}X\ar@<.5ex>[r]^{U\mho
^{+}f}\ar@<-.5ex>[r]_{U\mho ^{+}g}&U\mho ^{+}Y\ar[r]^{U\mho ^{+}c}& U\mho
^{+}C}$
$U\mho ^{+}X\overset{U\mho ^{+}f}{\underset{U\mho ^{+}g}{\rightrightarrows }}%
U\mho ^{+}Y\overset{U\mho ^{+}c}{\rightarrow }U\mho ^{+}C$
is a coequalizer and $U$ creates (Lemma \ref{lem:Ucoeq}), whence reflects
coequalizers \cite[Exercice 1, page 150]{MacLane}, we get that $%
\xymatrixcolsep{.7cm}
\xymatrix{\mho ^{+}X\ar@<.5ex>[r]^{\mho
^{+}f}\ar@<-.5ex>[r]_{\mho ^{+}g}&\mho ^{+}Y\ar[r]^{\mho ^{+}c}& \mho ^{+}C}$
$\mho ^{+}X\overset{\mho ^{+}f}{\underset{\mho ^{+}g}{\rightrightarrows }}%
\mho ^{+}Y\overset{\mho ^{+}c}{\rightarrow }\mho ^{+}C$
is a coequalizer.\end{invisible}
\end{proof}

\begin{corollary}
\label{coro:OmUReg}$\Omega U:\mathrm{Alg}^{+}\left( \mathcal{M}\right)
\longrightarrow \mathcal{M}$ preserves coequalizers for pairs $\left(
fe,f\right) $ where $f:A\rightarrow A^{\prime }$ is composition of regular
epimorphisms in $\mathrm{Alg}^{+}\left( \mathcal{M}\right) $ and $%
e:A\rightarrow A$ is a morphism in $\mathrm{Alg}^{+}\left( \mathcal{M}%
\right) $ such that $f\circ e\circ e=f\circ e.$
\end{corollary}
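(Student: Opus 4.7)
The plan is to reduce the claim about $\Omega U$ to the two ingredients we already have: that $U$ preserves all coequalizers (Lemma \ref{lem:Ucoeq}), and that $\Omega$ creates, hence preserves, those coequalizers whose underlying pair is well-behaved under $(-)^{\otimes 2}$ (Lemma \ref{lem:furbo}(3)). So given the pair $(fe,f)$ in $\mathrm{Alg}^{+}(\mathcal{M})$, I would first form its coequalizer $p:A'\to C$ in $\mathrm{Alg}^{+}(\mathcal{M})$, then observe that $Up$ is the coequalizer of $(Ufe,Uf)=(Uf\circ Ue,Uf)$ in $\mathrm{Alg}(\mathcal{M})$ by Lemma \ref{lem:Ucoeq}. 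The remaining task is to show that $\Omega$ preserves this latter coequalizer.

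To invoke Lemma \ref{lem:furbo}(3) at this step, I need to verify that $(-)^{\otimes 2}$ preserves the coequalizer of $(\Omega Uf\circ\Omega Ue,\Omega Uf)$ in $\mathcal{M}$, which is exactly the setup of Lemma \ref{lem:furbo}(1) applied to the morphisms $\Omega Uf:\Omega UA\to \Omega UA'$ and $\Omega Ue:\Omega UA\to \Omega UA$. The idempotency-type condition $\Omega Uf\circ \Omega Ue\circ \Omega Ue=\Omega Uf\circ \Omega Ue$ is immediate from functoriality of $\Omega U$ applied to the hypothesis $f\circ e\circ e=f\circ e$.

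The key verification is then the epimorphism condition on $\Omega UA\otimes \Omega Uf$ and $\Omega Uf\otimes \Omega UA$. Here I would use the assumption that $f$ is a composition of regular epimorphisms in $\mathrm{Alg}^{+}(\mathcal{M})$: since $U$ preserves coequalizers (Lemma \ref{lem:Ucoeq}), $Uf$ is a composition of regular epimorphisms in $\mathrm{Alg}(\mathcal{M})$; by Lemma \ref{lem:Omegareg}, $\Omega Uf$ is then a composition of regular epimorphisms in $\mathcal{M}$. Because the tensor products in $\mathcal{M}$ preserve coequalizers by assumption, they preserve regular epimorphisms, so tensoring $\Omega Uf$ on either side by $\Omega UA$ yields a composition of regular epimorphisms, in particular an epimorphism.

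With both hypotheses of Lemma \ref{lem:furbo}(1) verified, that lemma gives the preservation by $(-)^{\otimes 2}$, and then Lemma \ref{lem:furbo}(3) tells us that $\Omega$ creates—and therefore preserves—the coequalizer $Up$. Composing with the preservation by $U$, we conclude that $\Omega U$ preserves the coequalizer of $(fe,f)$. I do not expect any real obstacle here; the only point needing attention is that ``composition of regular epimorphisms'' need not itself be regular, but this does not matter because Lemma \ref{lem:furbo}(1) only requires the tensored morphisms to be epimorphisms, which follows from the fact that tensor functors preserve each factor in the composition.
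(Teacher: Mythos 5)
Your proposal is correct and follows essentially the same route as the paper: push the coequalizer down with Lemma \ref{lem:Ucoeq}, check the hypotheses of Lemma \ref{lem:furbo}(1) for $\Omega Uf$ and $\Omega Ue$ (using Lemma \ref{lem:Omegareg} and preservation of coequalizers by the tensor products to get the epimorphism conditions), and conclude via Lemma \ref{lem:furbo}(3) that $\Omega$ creates, hence preserves, the relevant coequalizer. Your closing remark that only epimorphy, not regularity, of the tensored morphisms is needed is exactly the right observation and matches the paper's use of the lemma.
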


\begin{proof}
Consider in $\mathrm{Alg}^{+}\left( \mathcal{M}\right) $ the following
left-hand side coequalizer.
\begin{equation*}
\xymatrixcolsep{.7cm} \xymatrix{A\ar@<.5ex>[r]^{f\circ
e}\ar@<-.5ex>[r]_{f}&A'\ar[r]^{p}& C} \qquad \xymatrixcolsep{.7cm} %
\xymatrix{UA\ar@<.5ex>[r]^{Uf\circ Ue}\ar@<-.5ex>[r]_{Uf}&UA'\ar[r]^{Up}& UC}
\end{equation*}

\begin{invisible}
\begin{equation*}
A\overset{f\circ e}{\underset{f}{\rightrightarrows }}A^{\prime }\overset{p}{%
\longrightarrow }C.
\end{equation*}
\end{invisible}

By Lemma \ref{lem:Ucoeq}, $U$ preserves coequalizers so that also the
right-hand side one is a coequalizer.

\begin{invisible}
we have the coequalizer%
\begin{equation*}
UA\overset{Uf\circ Ue}{\underset{Uf}{\rightrightarrows }}UA^{\prime }\overset%
{Up}{\longrightarrow }UC.
\end{equation*}
\end{invisible}

Since $\Omega U$ preserves the regular epimorphisms (as $U$ preserves
coequalizers and $\Omega $ preserves regular epimorphisms by Lemma \ref%
{lem:Omegareg}), we get that $\Omega Uf$ is composition of regular
epimorphisms. Since the tensor products preserves coequalizers, $%
\Omega UA\otimes \Omega Uf$ and $\Omega Uf\otimes \Omega UA$ are
epimorphisms.

Since $\Omega Uf\circ \Omega Ue\circ \Omega Ue=\Omega Uf\circ \Omega Ue$ and
$\Omega UA\otimes \Omega Uf,\Omega Uf\otimes \Omega UA$ are epimorphisms, we
can apply Lemma \ref{lem:furbo}-1) to $"f"=\Omega Uf$ and $"e"=\Omega Ue$ to
get that the coequalizer of $\left( \Omega Uf\circ \Omega Ue,\Omega
Uf\right) $ is preserved by $\left( -\right) ^{\otimes 2}$ and hence, by
Lemma \ref{lem:furbo}-3), $\Omega $ creates the coequalizer of $\left(
Uf\circ Ue,Uf\right) .$ As a consequence the above right-hand side displayed
coequalizer is preserved by $\Omega .$ Hence $\Omega U$ preserves the
starting coequalizer.
\end{proof}

\begin{proposition}
\label{pro:cosplitnat}Let $\nu :F\rightarrow G$ and $\tau :G\rightarrow F$
be natural transformations such that $\tau \circ \nu =\mathrm{Id}.$ Then $F$
preserves those colimits which are preserved by $G.$ Moreover $F$ preserves
regular epimorphisms which are preserved by $G.$
\end{proposition}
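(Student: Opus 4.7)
The plan is to exploit the identity $\tau\circ\nu=\mathrm{Id}$, which exhibits $F$ as a retract of $G$ in the functor category, and to transfer colimits and regular epimorphisms from $G$ to $F$ along this retraction. In both statements the core calculations are: (i) given data on $F$, push it to $G$ via $\tau$; (ii) exploit what $G$ preserves to obtain a mediating arrow; (iii) pull the mediating arrow back via $\nu$, using $\tau\nu=\mathrm{Id}$ to recover the original data and to ensure uniqueness.

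For the colimit statement, let $D:\mathcal{D}\to\mathcal{C}$ be a functor with colimit $(L,(s_X:DX\to L)_{X\in\mathcal{D}})$ preserved by $G$, so $(GL,(Gs_X)_X)$ is a colimit of $G\circ D$. Given an arbitrary cocone $(Y,(f_X:FDX\to Y)_X)$ on $F\circ D$, the family $(f_X\circ \tau DX:GDX\to Y)_X$ is a cocone on $G\circ D$: indeed for $d:X\to X'$ in $\mathcal{D}$, naturality of $\tau$ and the cocone property of $(f_X)_X$ give $f_{X'}\circ \tau DX'\circ GDd=f_{X'}\circ FDd\circ \tau DX=f_X\circ \tau DX$. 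By the universal property of $(GL,(Gs_X)_X)$, there is a unique $g:GL\to Y$ with $g\circ Gs_X=f_X\circ \tau DX$ for every $X$. Set $h:=g\circ \nu L:FL\to Y$. Then, using naturality of $\nu$ and $\tau DX\circ \nu DX=\mathrm{Id}_{FDX}$,
\begin{equation*}
h\circ Fs_X=g\circ \nu L\circ Fs_X=g\circ Gs_X\circ \nu DX=f_X\circ \tau DX\circ \nu DX=f_X.
\end{equation*}
For uniqueness, suppose $h':FL\to Y$ satisfies $h'\circ Fs_X=f_X$ for all $X$. Then $(h'\circ \tau L)\circ Gs_X=h'\circ Fs_X\circ \tau DX=f_X\circ \tau DX$, so $h'\circ\tau L=g$ by uniqueness in the $G$-colimit. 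Hence $h'=h'\circ \tau L\circ \nu L=g\circ \nu L=h$, proving that $(FL,(Fs_X)_X)$ is the colimit of $F\circ D$.

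For the regular-epi statement, let $p:A\to B$ be a regular epimorphism in $\mathcal{C}$ such that $Gp$ is also a regular epimorphism, say the coequalizer of a pair $(\alpha,\beta):X\rightrightarrows GA$. Consider the pair $(\tau A\circ\alpha,\tau A\circ\beta):X\rightrightarrows FA$. Using naturality of $\tau$ and $Gp\circ\alpha=Gp\circ\beta$, we have $Fp\circ\tau A\circ\alpha=\tau B\circ Gp\circ\alpha=\tau B\circ Gp\circ\beta=Fp\circ\tau A\circ\beta$, so $Fp$ coequalizes this pair. I claim this coequalization is universal. Indeed, given $f:FA\to Y$ with $f\circ\tau A\circ\alpha=f\circ\tau A\circ\beta$, the morphism $f\circ\tau A:GA\to Y$ coequalizes $(\alpha,\beta)$ and factors uniquely as $f\circ\tau A=g\circ Gp$ for a unique $g:GB\to Y$. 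Setting $h:=g\circ\nu B$, we compute $h\circ Fp=g\circ\nu B\circ Fp=g\circ Gp\circ\nu A=f\circ\tau A\circ\nu A=f$. For uniqueness, if $h'\circ Fp=f$ then $h'\circ\tau B\circ Gp=h'\circ Fp\circ\tau A=f\circ\tau A=g\circ Gp$; since $Gp$ is an epimorphism, $h'\circ\tau B=g$, whence $h'=h'\circ\tau B\circ\nu B=g\circ\nu B=h$. Hence $Fp$ is the coequalizer of $(\tau A\circ\alpha,\tau A\circ\beta)$, i.e. a regular epimorphism.

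The argument is essentially bookkeeping with naturality squares and the splitting identity $\tau\circ\nu=\mathrm{Id}$; no serious obstacle is expected. The only mild subtlety is that for the regular-epi part we do not assume the coequalizer defining $p$ is preserved by $G$, so we must reconstruct a coequalizer pair from $Gp$ itself and transport it through $\tau A$, rather than using the tautological candidate coming from $p$.
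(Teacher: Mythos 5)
Your proof is correct. It diverges from the paper's argument mainly in the second half, so a brief comparison is in order. For the colimit statement the paper simply cites \cite[Lemma 1.7]{MW}; your direct verification (push a cocone on $F\circ D$ to one on $G\circ D$ via $\tau$, obtain the mediating arrow from the $G$-colimit, pull it back along $\nu$, and use $\tau\circ\nu=\mathrm{Id}$ for both the factorization and its uniqueness) is the standard retract argument and presumably what that lemma contains, so here you are essentially reproving the cited result. For the regular-epimorphism statement the routes genuinely differ: the paper observes that $\tau$ is the coequalizer of the pair $(\nu\circ\tau,\mathrm{Id})$, assembles the square
\begin{equation*}
\xymatrixcolsep{1.5cm}\xymatrix{GA\ar[d]_{Gp}\ar[r]^{\tau A}& FA\ar[d]^{Fp}\\ GB\ar[r]_{\tau B}& FB}
\end{equation*}
as a pushout (using that $Gp$ is an epimorphism), and concludes via stability of regular epimorphisms under pushout. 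You instead pick a coequalizer presentation $(\alpha,\beta)$ of $Gp$, transport it along $\tau A$, and check by hand that $Fp$ is the coequalizer of $(\tau A\circ\alpha,\tau A\circ\beta)$. Your version is more self-contained — it needs nothing beyond naturality and the splitting identity, whereas the paper leans on three external references — and it exhibits an explicit coequalizer pair for $Fp$; the paper's version is shorter on the page and isolates the structural fact that the square above is a pushout, which is reusable. Both arguments, correctly, use only that $Gp$ is a regular epimorphism and never that $p$ itself is.
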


\begin{proof}
The first part follows from \cite[Lemma 1.7]{MW}. By the dual argument used therein, we have that $\tau$ is the coequalizer of the parallel pair $(\nu\circ\tau,\id)$. Therefore, given a morphism $p:A\rightarrow B$ such that $Gp$ is a regular epimorphism, the two rows in the following diagram are coequalizers.
\begin{equation*}
  \xymatrixcolsep{1.5cm} \xymatrixrowsep{.7cm}
  \xymatrix{
  GA\ar@{->>}[d]_{Gp}\ar@<+.5ex>[r]^{\nu A\circ\tau A}\ar@<-.5ex>[r]_{\id_{GA}}&GA\push\ar[d]_{Gp}\ar[r]^{\tau A}& FA\ar[d]^{Fp}\\
  GB\ar@<+.5ex>[r]^{\nu B\circ\tau B}\ar@<-.5ex>[r]_{\id_{GB}}&GB\ar[r]_-{\tau B }& FB}
\end{equation*}
Since $Gp$ is an epimorphism, by a well-known result (see e.g. \cite[Proposition 2.4]{Ho}), we have that the right square above is a pushout. Hence, by \cite[Proposition 4.3.8]{Borceux1}, we conclude that $Fp$ is a regular
epimorphism.
\begin{invisible}
\begin{equation*}
\xymatrixcolsep{.5cm} \xymatrixrowsep{.5cm}\xymatrix{GA\push\ar[d]_{Gp}\ar[r]^{\tau A}& FA\ar@/^1.5pc/[rdd]^-{h}\ar[d]^{Fp}\\ GB\ar@/_1pc/[rrd]_g\ar[r]_-{\tau B }& FB\ar@{.>}[dr]^f\\&&X}
\end{equation*}
Let $g,h$ be morphisms such that $g\circ Gp=h \circ \tau A$ and set $f:=g\circ\nu B$. We get $f\circ Fp=g\circ\nu B\circ Fp=g\circ Gp\circ\nu A=h\circ \tau A\circ\nu A=h$ and $f\circ\tau B\circ Gp=f\circ Fp\circ\tau A=h\circ\tau A=g\circ Gp.$ Since $Gp$ is an epimorphism, we get $f\circ\tau B=g$. Thus $f$ is unique as $\tau A$ is a split-epimorphism.
\end{invisible}
\end{proof}

Consider the natural transformation $\xi :P\rightarrow \Omega \mho $
defined, as in Theorem \ref{thm:main}, by%
\begin{equation*}
P\overset{\eta P}{\rightarrow }\Omega TP=\Omega \mho \widetilde{T}P\overset{%
\Omega \mho \widetilde{\epsilon }}{\rightarrow }\Omega \mho .
\end{equation*}%
As in the proof of the above theorem, we have (\ref{form:efeps}) that in
local notations becomes%
\begin{equation}
\epsilon \mho \circ T\xi =\mho \widetilde{\epsilon }.  \label{form:epstilde}
\end{equation}%
so that $\xi $ is exactly the natural transformation of \cite[Theorem 4.6]%
{AM-BraidedOb}, whose components are the canonical inclusions of the
subobject of primitives of a bialgebra $B$ in $\mathcal{M}$ into $\Omega
\mho B$ and hence they are regular monomorphisms.

\begin{invisible}
Note that the monad associated to $\left( T^{+},\Omega ^{+}\right) $ is
\begin{equation*}
\Omega ^{+}T^{+}M=\Omega ^{+}\left( TM,\varepsilon _{TM}\right) =\mathrm{Ker}%
\left( \Omega \varepsilon _{TM}\right) .
\end{equation*}
\end{invisible}

Since $UT^{+}=T,$ we can define
\begin{equation*}
\zeta :=\left( \Omega ^{+}\overset{\eta \Omega ^{+}}{\rightarrow }\Omega
T\Omega ^{+}=\Omega UT^{+}\Omega ^{+}\overset{\Omega U\epsilon ^{+}}{%
\rightarrow }\Omega U\right) .
\end{equation*}%
Given $\left( A,\varepsilon \right) \in \mathrm{Alg}^{+}\left( \mathcal{M}%
\right) ,$ we have $\zeta \left( A,\varepsilon \right) :\mathrm{Ker}\left(
\Omega \varepsilon \right) \rightarrow \Omega A.$

\begin{remark}
We compute%
\begin{eqnarray*}
\zeta \left( A,\varepsilon \right) &=&\left( \Omega U\epsilon ^{+}\circ \eta
\Omega ^{+}\right) \left( A,\varepsilon \right) =\Omega U\epsilon ^{+}\left( A,\varepsilon \right) \circ \eta \Omega
^{+}\left( A,\varepsilon \right) \\
&\overset{(\ref{form:etaeps+})}{=}&\Omega \left( \epsilon A\circ TkA\right)
\circ \eta \Omega ^{+}\left( A,\varepsilon \right) =\Omega \epsilon A\circ \Omega TkA\circ \eta \Omega ^{+}\left(
A,\varepsilon \right) =\Omega \epsilon A\circ \eta \Omega A\circ kA=kA.
\end{eqnarray*}%
where $kA$ is the morphism in diagram \eqref{diag:defKaug}. Thus
\begin{equation}
\zeta \left( A,\varepsilon \right) =kA.  \label{form:zetakappa}
\end{equation}
\end{remark}

\begin{invisible}
Questions: $\Omega ^{+}$ is monadic? AMMESSO\ CHE\ SERVA\ si\ potrebbe
provare a ricalcare la dim vista per $\Omega $. In general, if $R$ is
monadic what can we say about $L/ \mathbf{1}$ and $R/ \mathbf{1}? $
\end{invisible}

\begin{lemma}
\label{lem:cosplitzeta}There is a natural transformation $\tau :\Omega U\rightarrow
\Omega ^{+}$ such that $\tau \circ \zeta =\mathrm{Id}$. As a consequence $%
\Omega ^{+}:\mathrm{Alg}^{+}\left( \mathcal{M}\right) \rightarrow \mathcal{M}
$ preserves coequalizers for pairs $\left( fe,f\right) $ where $%
f:A\rightarrow A^{\prime }$ is composition of regular epimorphisms in $%
\mathrm{Alg}^{+}\left( \mathcal{M}\right) $ and $e:A\rightarrow A$ is a
morphism in $\mathrm{Alg}^{+}\left( \mathcal{M}\right) $ such that $f\circ
e\circ e=f\circ e.$
Moreover $\Omega ^{+}$ preserves regular epimorphisms.
\end{lemma}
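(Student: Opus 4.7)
The plan is to construct $\tau$ pointwise using the fact that the augmentation $\varepsilon: A\to \mathbf{1}$ of an object $(A,\varepsilon)\in \mathrm{Alg}^{+}(\mathcal{M})$ satisfies $\varepsilon\circ u_{A}=u_{\mathbf{1}}=\mathrm{Id}_{\mathbf{1}}$, so that $\Omega\varepsilon$ is a split epimorphism in the additive category $\mathcal{M}$ with retraction $\Omega u_{A}$. First I would set $e_{A}:=\mathrm{Id}_{\Omega A}-\Omega u_{A}\circ \Omega\varepsilon:\Omega A\to \Omega A$ and observe that $\Omega\varepsilon\circ e_{A}=0$, so by the universal property of the kernel $(KA,kA)=\mathrm{Ker}(\Omega\varepsilon)$ there exists a unique morphism $\tau(A,\varepsilon):\Omega A\to KA$ with $kA\circ \tau(A,\varepsilon)=e_{A}$. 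Precomposing with $kA$ and using $\Omega\varepsilon\circ kA=0$ yields $kA\circ \tau(A,\varepsilon)\circ kA=kA$, and since $kA$ is a monomorphism I obtain $\tau(A,\varepsilon)\circ kA=\mathrm{Id}_{KA}$, which in view of \eqref{form:zetakappa} is exactly $\tau\circ \zeta=\mathrm{Id}$.

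Next I would check naturality of $\tau$. Given a morphism $f:(A,\varepsilon)\to (A',\varepsilon')$ in $\mathrm{Alg}^{+}(\mathcal{M})$ one has $\varepsilon'\circ Uf=\varepsilon$ and $Uf\circ u_{A}=u_{A'}$ (since $Uf$ is an algebra morphism). A direct computation, precomposing both sides with the monomorphism $kA'$ and using \eqref{form:Omega+morph}, shows
\[
kA'\circ\tau(A',\varepsilon')\circ \Omega Uf = \Omega Uf-\Omega u_{A'}\circ \Omega\varepsilon = kA'\circ\Omega^{+}f\circ\tau(A,\varepsilon),
\]
and monicity of $kA'$ gives the naturality square. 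This produces the required natural transformation $\tau:\Omega U\to \Omega^{+}$ with $\tau\circ\zeta=\mathrm{Id}$. The only subtlety here is that one must genuinely use preadditivity: the retraction $\tau(A,\varepsilon)$ is manufactured from the idempotent $e_{A}$ obtained by subtracting $\Omega u_{A}\circ\Omega\varepsilon$ from the identity, which has no purely set-theoretic analogue.

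For the consequences, I would apply Proposition \ref{pro:cosplitnat} with $F=\Omega^{+}$, $G=\Omega U$, $\nu=\zeta$ and the $\tau$ just constructed: $\Omega^{+}$ preserves any colimit preserved by $\Omega U$, and any regular epimorphism preserved by $\Omega U$. Combining this with Corollary \ref{coro:OmUReg} (which says $\Omega U$ preserves the coequalizers of the prescribed pairs $(fe,f)$) immediately yields the assertion about coequalizers of such pairs. Finally, $\Omega U$ preserves regular epimorphisms because $U$ preserves all coequalizers by Lemma \ref{lem:Ucoeq} and $\Omega$ preserves regular epimorphisms by Lemma \ref{lem:Omegareg}; hence $\Omega^{+}$ preserves regular epimorphisms too. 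The main (mild) obstacle is verifying naturality of $\tau$ cleanly from the defining equation $kA\circ\tau(A,\varepsilon)=\mathrm{Id}-\Omega u_{A}\circ\Omega\varepsilon$; everything else is a formal consequence of the already-established preservation results.
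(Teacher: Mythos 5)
Your construction of $\tau(A,\varepsilon)$ from the idempotent $\mathrm{Id}_{\Omega A}-u_{\Omega A}\circ\Omega\varepsilon$ via the universal property of the kernel $\mathrm{Ker}(\Omega\varepsilon)$, the monomorphism argument for $\tau\circ\zeta=\mathrm{Id}$ and naturality, and the deduction of the preservation statements from Proposition \ref{pro:cosplitnat} together with Corollary \ref{coro:OmUReg}, Lemma \ref{lem:Ucoeq} and Lemma \ref{lem:Omegareg} is exactly the paper's own argument. The proposal is correct and follows essentially the same route.
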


\begin{proof}
Let $\left( A,\varepsilon \right) \in \mathrm{Alg}^{+}\left( \mathcal{M}%
\right) .$ As observed, the pullback \eqref{diag:defKaug} means that
 $\Omega ^{+}\left( A,\varepsilon \right) =KA=\mathrm{Ker%
}\left( \Omega \varepsilon \right) .$
\begin{invisible}
\begin{equation*}
\begin{array}{ccc}
KA & \overset{tKA}{\longrightarrow } & \mathbf{0} \\
kA\downarrow & \lrcorner & \downarrow \eta \mathbf{0}=i\Omega \mathbf{1} \\
\Omega A & \overset{\Omega \varepsilon }{\longrightarrow } & \Omega \mathbf{%
1\cong }\Omega T\mathbf{0}%
\end{array}%
\end{equation*}
\end{invisible}
The canonical inclusion is $kA$ which by (\ref{form:zetakappa}) equals $\zeta(A,\varepsilon)$. Thus we have the following kernel in $%
\mathcal{M}.$
\begin{equation*}
0\rightarrow \Omega ^{+}\left( A,\varepsilon \right) \overset{\zeta \left(
A,\varepsilon \right) }{\longrightarrow }\Omega A\overset{\Omega \varepsilon
}{\longrightarrow }\Omega \mathbf{1}=\mathbf{1}
\end{equation*}%
Since $\varepsilon $ is an algebra morphism, we have $\Omega \varepsilon
\circ u_{\Omega A}=\mathrm{Id}.$ Hence $\Omega \varepsilon \circ \left(
\mathrm{Id}_{\Omega A}-u_{\Omega A}\circ \Omega \varepsilon \right) =0$ so
that, by the universal property of the kernel we get a unique morphism $\tau
\left( A,\varepsilon \right) :\Omega A\rightarrow \Omega ^{+}\left(
A,\varepsilon \right) $ such that $\zeta \left( A,\varepsilon \right) \circ
\tau \left( A,\varepsilon \right) =\mathrm{Id}_{\Omega A}-u_{\Omega A}\circ
\Omega \varepsilon .$ Moreover $\tau \left( A,\varepsilon \right) \circ
\zeta \left( A,\varepsilon \right) =\mathrm{Id}_{\Omega ^{+}\left(
A,\varepsilon \right) }.$

\begin{invisible}
$\zeta \left( A,\varepsilon \right) \circ \tau \left( A,\varepsilon \right)
\circ \zeta \left( A,\varepsilon \right) =\left( \mathrm{Id}_{\Omega
A}-u_{\Omega A}\circ \Omega \varepsilon \right) \circ \zeta \left(
A,\varepsilon \right) =\mathrm{Id}_{\Omega A}=\zeta \left( A,\varepsilon
\right) \circ \mathrm{Id}_{\Omega ^{+}\left( A,\varepsilon \right) }.$ Since
$\zeta \left( A,\varepsilon \right) $ is a monomorphism, we get $\tau \left(
A,\varepsilon \right) \circ \zeta \left( A,\varepsilon \right) =\mathrm{Id}%
_{\Omega ^{+}\left( A,\varepsilon \right) }.$
\end{invisible}

It remains to check that $\tau \left( A,\varepsilon \right) $ is natural in $%
\left( A,\varepsilon \right) $. To this aim, first let $f:\left(
A,\varepsilon _{A}\right) \rightarrow \left( B,\varepsilon _{B}\right) $ be
a morphism in $\mathrm{Alg}^{+}\left( \mathcal{M}\right) $ and compute%
\begin{eqnarray*}
\zeta \left( B,\varepsilon _{B}\right) \circ \tau \left( B,\varepsilon
_{B}\right) \circ \Omega Uf &=&\left( \mathrm{Id}_{\Omega B}-u_{\Omega
B}\circ \Omega \varepsilon _{B}\right) \circ \Omega Uf =\Omega Uf-u_{\Omega B}\circ \Omega \varepsilon _{B}\circ \Omega Uf \\
&=&\Omega Uf-u_{\Omega B}\circ \Omega \left( \varepsilon _{B}\circ Uf\right)
=\Omega Uf-u_{\Omega B}\circ \Omega \varepsilon _{A} \\
&=&\Omega Uf-\Omega Uf\circ u_{\Omega A}\circ \Omega \varepsilon _{A} =\Omega Uf\circ \left( \mathrm{Id}_{\Omega A}-u_{\Omega A}\circ \Omega
\varepsilon _{A}\right) \\
&=&\Omega Uf\circ \zeta \left( A,\varepsilon _{A}\right) \circ \tau \left(
A,\varepsilon \right) =\Omega Uf\circ kA\circ \tau \left( A,\varepsilon \right) \\
&\overset{(\ref{form:Omega+morph})}{=}&kA^{\prime }\circ \Omega ^{+}f\circ
\tau \left( A,\varepsilon \right) =\zeta \left( B,\varepsilon _{B}\right) \circ \Omega ^{+}f\circ \tau
\left( A,\varepsilon \right) .
\end{eqnarray*}%
Since $\zeta \left( B,\varepsilon _{B}\right) $ is a monomorphism we deduce $%
\tau \left( B,\varepsilon _{B}\right) \circ \Omega Uf=\Omega ^{+}f\circ \tau
\left( A,\varepsilon \right) $ which means that $\tau $ is natural. Thus $%
\zeta :\Omega ^{+}\rightarrow \Omega U$ cosplits via $\tau :\Omega
U\rightarrow \Omega ^{+}$ i.e. $\tau \circ \zeta =\mathrm{Id}$.

Now, by Lemma \ref{lem:Omegareg}, the functor $\Omega :\mathrm{Alg}\left(
\mathcal{M}\right) \rightarrow \mathcal{M}$ preserves regular epimorphisms.

By Lemma \ref{lem:Ucoeq}, the forgetful functor $U:\mathrm{Alg}^{+}\left(
\mathcal{M}\right) \longrightarrow \mathrm{Alg}\left( \mathcal{M}\right) \ $%
creates colimits and preserves all coequalizers. As a consequence $\Omega U:%
\mathrm{Alg}^{+}\left( \mathcal{M}\right) \longrightarrow \mathcal{M}$
preserves regular epimorphisms. Hence by Proposition \ref{pro:cosplitnat}
also $\Omega ^{+}$ preserves regular epimorphisms. By Corollary \ref%
{coro:OmUReg}, the functor $\Omega U:\mathrm{Alg}^{+}\left( \mathcal{M}%
\right) \longrightarrow \mathcal{M}$ preserves coequalizers for pairs $%
\left( fe,f\right) $ where $f:A\rightarrow A^{\prime }$ is composition of
regular epimorphisms in $\mathrm{Alg}^{+}\left( \mathcal{M}\right) $ and $%
e:A\rightarrow A$ is a morphism in $\mathrm{Alg}^{+}\left( \mathcal{M}%
\right) $ such that $f\circ e\circ e=f\circ e.$ By Proposition \ref%
{pro:cosplitnat}, the functor $\Omega ^{+}$ preserves the same type of
coequalizers.
\end{proof}

Next aim is to show that the functor $T^{+}:\mathcal{M}\rightarrow \mathrm{%
Alg}^{+}\left( \mathcal{M}\right) $ is h-separable. First
note that there is a unique morphism $\omega V:\Omega TV\rightarrow V$ such
that%
\begin{equation}
\omega V\circ \alpha _{n}V=\delta _{n,1}\mathrm{Id}_{V}.
\label{form:gamalpha}
\end{equation}%
Given $f:V\rightarrow W$ a morphism in $\mathcal{M},$ we get for every $n\in
\mathbb{N},$%
\begin{equation*}
\omega W\circ \Omega Tf\circ \alpha _{n}V=\omega W\circ \alpha _{n}W\circ
f^{\otimes n}=\delta _{n,1}f^{\otimes n}=\delta _{n,1}f=f\circ \omega V\circ
\alpha _{n}V
\end{equation*}%
so that $\omega W\circ \Omega Tf=f\circ \omega V$ which means that $\omega
:=\left( \omega V\right) _{V\in \mathcal{M}}$ is a natural transformation $%
\omega :\Omega T\rightarrow \mathrm{Id}_{\mathcal{M}}.$

\begin{lemma}
\label{lem:omega} The natural transformation $\omega $ fulfills $\omega
\circ \eta =\mathrm{Id}$ and%
\begin{equation}
\omega \omega \circ \Omega T\zeta T^{+}=\omega \circ \Omega \epsilon T\circ
\Omega T\zeta T^{+}.  \label{form:omega}
\end{equation}
\end{lemma}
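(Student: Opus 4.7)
The first identity is immediate by evaluating at an arbitrary $V\in\mathcal M$: $\omega V\circ\eta V=\omega V\circ\alpha_1 V=\mathrm{Id}_V$ by \eqref{form:gamalpha}.

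For the second identity, set $W:=\Omega^{+}T^{+}V$ and $k:=\zeta T^{+}V=kTV:W\to\Omega TV$. Since $\Omega T W=\coprod_{n\ge 0}W^{\otimes n}$ with coprojections $\alpha_n W$, it suffices by the universal property of the coproduct to show that both sides agree after precomposition with each $\alpha_n W$. Using the naturality $\Omega Tk\circ\alpha_n W=\alpha_n(\Omega TV)\circ k^{\otimes n}$ of $\alpha_n:(-)^{\otimes n}\to\Omega T$, the defining relation $\omega V\circ\alpha_n V=\delta_{n,1}\mathrm{Id}_V$, and the formula \eqref{form:epsTOmega} giving $\Omega\epsilon TV\circ\alpha_n(\Omega TV)=m_{TV}^{n-1}$, the required equality collapses to
\[
\delta_{n,1}\,\omega V\circ k \;=\; \omega V\circ m_{TV}^{n-1}\circ k^{\otimes n}\qquad(n\ge 0).
\]
The cases $n=0$ and $n=1$ are handled at once: for $n=0$, both sides are zero since $m_{TV}^{-1}=u_{TV}=\alpha_0 V$ and $\omega V\circ\alpha_0 V=0$; for $n=1$, both sides equal $\omega V\circ k$.

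The main obstacle is therefore the case $n\ge 2$, where I must prove $\omega V\circ m_{TV}^{n-1}\circ k^{\otimes n}=0$. The intuition is that $k$ lands in the positive-degree part $\coprod_{m\ge 1}V^{\otimes m}$ of $\Omega TV$, so $n$-fold multiplication in $TV$ lands in degrees $\ge n\ge 2$, which is annihilated by $\omega V$. To make this categorical I plan to exploit that $\Omega\varepsilon_{TV}$ is split by $u_{TV}=\alpha_0 V$, so $\Omega TV\cong W\oplus\mathbf 1$ and thus $W\cong\coprod_{m\ge 1}V^{\otimes m}$, with $k$ the evident inclusion; since tensor products preserve denumerable coproducts, we may precompose further with each coprojection $\alpha_{m_1}V\otimes\cdots\otimes\alpha_{m_n}V$ (with $m_i\ge 1$) and check vanishing there. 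For such a tuple, the compatibility of multiplication in $TV$ with the grading gives $m_{TV}^{n-1}\circ(\alpha_{m_1}V\otimes\cdots\otimes\alpha_{m_n}V)=\alpha_{m_1+\cdots+m_n}V$ (up to the canonical associator identifying $V^{\otimes m_1}\otimes\cdots\otimes V^{\otimes m_n}\cong V^{\otimes(m_1+\cdots+m_n)}$); since $\sum m_i\ge n\ge 2\neq 1$, the defining relation of $\omega V$ gives $\omega V\circ\alpha_{\sum m_i}V=0$, and we conclude. The hard part is the combinatorial bookkeeping needed to turn the informal ``degree-addition'' argument into a formal statement via distributivity of $\otimes$ over infinite coproducts; the key categorical inputs are the splitting of $\Omega\varepsilon_{TV}$, the coproduct-preservation of tensor products, the identification $\Omega TV=\coprod_m V^{\otimes m}$ and the fact that the tensor-algebra multiplication realizes concatenation on homogeneous components.
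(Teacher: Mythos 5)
Your proof is correct, but it is not the route the paper takes. The paper's proof is essentially a citation: it invokes \cite[Lemma 5.2]{AM-heavy}, where the identity $\omega\omega\circ\Omega T\zeta'\widetilde{T}=\omega\circ\Omega\epsilon T\circ\Omega T\zeta'\widetilde{T}$ is established for the natural transformation $\zeta':E\to\Omega\mho$ whose domain assigns to a bialgebra the kernel of its counit, and then spends its effort on the identification $E\widetilde{T}=\Omega^{+}T^{+}$ and $\zeta'\widetilde{T}=\zeta T^{+}$ (via the equality $kTM=\zeta T^{+}M$ of (\ref{form:zetakappa})), after which the desired formula is obtained by substitution. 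You instead give a self-contained direct computation: reduce to the components $\alpha_n W$ of the coproduct $\Omega TW=\coprod_n W^{\otimes n}$, use naturality of $\omega$ and $\alpha_n$ together with (\ref{form:gamalpha}) and (\ref{form:epsTOmega}) to collapse the identity to $\delta_{n,1}\,\omega V\circ k=\omega V\circ m_{TV}^{n-1}\circ k^{\otimes n}$, and kill the case $n\geq 2$ by the degree argument (the splitting of $\Omega\varepsilon_{TV}$ by $u_{TV}=\alpha_0 V$ identifies $\Omega^{+}T^{+}V$ with $\coprod_{m\geq 1}V^{\otimes m}$, tensor products distribute over the denumerable coproducts by the standing hypotheses on $\mathcal{M}$, and the tensor-algebra multiplication concatenates homogeneous components, so everything lands in degree $\geq 2$ where $\omega V$ vanishes). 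Each step is legitimate under the assumptions of Section \ref{sec:6}. What your approach buys is independence from the external reference and an explicit picture of why the identity holds (it is literally the statement that products of elements of positive degree have no degree-one part); what the paper's approach buys is brevity and the reuse of a result already proved in the h-separability paper, at the cost of the bookkeeping needed to match $\zeta'\widetilde{T}$ with $\zeta T^{+}$.
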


\begin{proof}
\begin{invisible}
Note that $U\epsilon ^{+}\left( A,\varepsilon \right) =\epsilon A\circ
TkA=\epsilon U\left( A,\varepsilon \right) \circ T\zeta \left( A,\varepsilon
\right) $ so that
\begin{equation*}
U\epsilon ^{+}=\epsilon U\circ T\zeta .
\end{equation*}%
Hence $\omega \circ \Omega \epsilon T\circ \Omega T\zeta T^{+}=\omega \circ
\Omega \epsilon UT^{+}\circ \Omega T\zeta T^{+}=\omega \circ \Omega \left(
\epsilon U\circ T\zeta \right) T^{+}=\omega \circ \Omega \left( \epsilon
U\circ T\zeta \right) T^{+}=\omega \circ \Omega U\epsilon ^{+}T^{+}.$
Moreover $T^{+}\left( \omega \circ \zeta T^{+}\right) =?=\epsilon
^{+}T^{+}\Leftrightarrow \Omega ^{+}T^{+}\left( \omega \circ \zeta
T^{+}\right) \circ \eta ^{+}\Omega ^{+}T^{+}=?=\Omega ^{+}\epsilon
^{+}T^{+}\circ \eta ^{+}\Omega ^{+}T^{+}$
\end{invisible}

\begin{invisible}
$\Leftrightarrow \eta ^{+}\circ \omega \circ \zeta T^{+}=?=\mathrm{Id.}$
Never true because $\eta ^{+}$ is an inclusion not surjective in general.
\end{invisible}

In \cite[Lemma 5.2]{AM-heavy} we prove that
$
\omega \omega \circ \Omega T\zeta ^{\prime }\widetilde{T}=\omega \circ
\Omega \epsilon T\circ \Omega T\zeta ^{\prime }\widetilde{T}
$
where $\zeta ^{\prime }:E\rightarrow \Omega \mho $ is a natural
transformation whose domain is the functor $E:\mathrm{Bialg}\left( \mathcal{M%
}\right) \rightarrow \mathcal{M}$ assigning to each bialgebra $A$ the kernel
$\left( EA,\zeta ^{\prime }A:EA\rightarrow \Omega \mho A\right) $ in $%
\mathcal{M}$ of its counit $\Omega \varepsilon _{\mho A},$ where here $%
\varepsilon _{\mho A}$ is regarded as an algebra map. Then, for every $M\in
\mathcal{M}$, we have%
\begin{eqnarray*}
\left( E\widetilde{T}M,\zeta ^{\prime }\widetilde{T}M\right) =\mathrm{Ker}%
\left( \Omega \varepsilon _{\mho \widetilde{T}M}\right) =\mathrm{Ker}\left(
\Omega \varepsilon _{TM}\right) =\left( \Omega ^{+}\left( TM,\varepsilon _{TM}\right) ,kTM\right) \overset{%
\left( \ref{form:zetakappa}\right) }{=}\left( \Omega ^{+}T^{+}M,\zeta
T^{+}M\right) .
\end{eqnarray*}%
Moreover, given $f:M\rightarrow N$, since, by (\ref{form:zetakappa}) we have
$kTM=\zeta T^{+}M,$ we obtain
\begin{eqnarray*}
\zeta T^{+}N\circ \Omega ^{+}T^{+}f &=&kTN\circ \Omega ^{+}T^{+}f=\Omega
UT^{+}f\circ kTM=\Omega Tf\circ \zeta T^{+}M \\
&=&\Omega \mho \widetilde{T}f\circ \zeta ^{\prime }\widetilde{T}M=\zeta
^{\prime }\widetilde{T}N\circ E\widetilde{T}f=\zeta T^{+}N\circ E\widetilde{T%
}f
\end{eqnarray*}
so that $\Omega ^{+}T^{+}f=E\widetilde{T}f.$ As a consequence $E\widetilde{T}%
=\Omega ^{+}T^{+}$ and $\zeta ^{\prime }\widetilde{T}=\zeta T^{+}.$ If we
substitute $\zeta ^{\prime }\widetilde{T}$ by $\zeta T^{+}$ in the starting
equality we obtain the desired one.
\end{proof}

The following result shows that $T^{+}$ is h-separable

\begin{lemma}
\label{lem:augmtilde}$\zeta T^{+}:\Omega ^{+}T^{+}\rightarrow
\Omega T$ is a monad morphism between the monads associated to $\left( T^{+},\Omega
^{+}\right) $ and $\left( T,\Omega \right) .$ Moreover
$\omega ^{+}:=\omega \circ \zeta T^{+}:\Omega
^{+}T^{+}\rightarrow \mathrm{Id}$ is an augmentation for the monad
associated to the adjunction $\left( T^{+},\Omega ^{+}\right) .$ Equivalently $T^{+}$ is h-separable.
\end{lemma}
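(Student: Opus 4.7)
The plan is to first verify the monad morphism claim and then reduce the augmentation check to Lemma \ref{lem:omega} via the interchange law.

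First I would observe that this is a direct instance of the first part of Theorem \ref{thm:main} applied to the left-hand square of \eqref{square:T}, with $L=T^{+}$, $R=\Omega^{+}$, $L'=T$, $R'=\Omega$, and $F=U$ (so that $FL=UT^{+}=T=L'$). The natural transformation $\xi:R\to R'F$ produced by \eqref{def:xi} then coincides by construction with $\zeta:\Omega^{+}\to \Omega U$. Hence $\zeta T^{+}:\Omega^{+}T^{+}\to \Omega T$ is a morphism of monads. In particular one obtains the unit identity $\zeta T^{+}\circ \eta^{+}=\eta$ (which may alternatively be checked directly from $\zeta T^{+}M = kTM$, see \eqref{form:zetakappa}, combined with the first equation of \eqref{form:etaeps+}), and the multiplication identity
\begin{equation*}
\zeta T^{+}\circ \Omega^{+}\epsilon^{+}T^{+}\;=\;\Omega\epsilon T\circ \Omega T\zeta T^{+}\circ \zeta T^{+}\Omega^{+}T^{+}.
\end{equation*}

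Next I would verify that $\omega^{+}=\omega\circ \zeta T^{+}$ is an augmentation of the monad $\Omega^{+}T^{+}$. Unitality is immediate: by Lemma \ref{lem:omega} and the identity $\zeta T^{+}\circ \eta^{+}=\eta$ just recorded,
\begin{equation*}
\omega^{+}\circ \eta^{+}\;=\;\omega\circ \zeta T^{+}\circ \eta^{+}\;=\;\omega\circ \eta\;=\;\mathrm{Id}.
\end{equation*}
For compatibility with the monad multiplication, unfold
\begin{equation*}
\omega^{+}\omega^{+}\;=\;\omega^{+}\circ \Omega^{+}T^{+}\omega^{+}\;=\;\omega\circ \zeta T^{+}\circ \Omega^{+}T^{+}\omega\circ \Omega^{+}T^{+}\zeta T^{+}.
\end{equation*}
Naturality of $\zeta$ applied to $T^{+}\omega:T^{+}\Omega T\to T^{+}$ yields $\zeta T^{+}\circ \Omega^{+}T^{+}\omega=\Omega T\omega\circ \zeta T^{+}\Omega T$, while the interchange law for the horizontal composition $\zeta T^{+}\ast \zeta T^{+}$ gives $\zeta T^{+}\Omega T\circ \Omega^{+}T^{+}\zeta T^{+}=\Omega T\zeta T^{+}\circ \zeta T^{+}\Omega^{+}T^{+}$. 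Combining these,
\begin{equation*}
\omega^{+}\omega^{+}\;=\;\omega\omega\circ \Omega T\zeta T^{+}\circ \zeta T^{+}\Omega^{+}T^{+}.
\end{equation*}

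Now apply \eqref{form:omega} from Lemma \ref{lem:omega} to rewrite $\omega\omega\circ \Omega T\zeta T^{+}=\omega\circ \Omega\epsilon T\circ \Omega T\zeta T^{+}$, and then use the monad morphism identity for $\zeta T^{+}$ recorded above, to conclude
\begin{equation*}
\omega^{+}\omega^{+}\;=\;\omega\circ \Omega\epsilon T\circ \Omega T\zeta T^{+}\circ \zeta T^{+}\Omega^{+}T^{+}\;=\;\omega\circ \zeta T^{+}\circ \Omega^{+}\epsilon^{+}T^{+}\;=\;\omega^{+}\circ \Omega^{+}\epsilon^{+}T^{+}.
\end{equation*}
The final equivalent reformulation in terms of h-separability of $T^{+}$ is then a direct invocation of \cite[Corollary 2.7]{AM-heavy}, cited at the start of this section as the characterization of augmented monads of the form $RL$.

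The main obstacle is bookkeeping: one has to keep straight which component identities are horizontal compositions of $\zeta T^{+}$ with itself versus whiskerings, and at which spot Lemma \ref{lem:omega} is used. Once the identity $\omega\omega\circ \Omega T\zeta T^{+}=\omega\circ \Omega\epsilon T\circ \Omega T\zeta T^{+}$ is available, the rest is a pure manipulation of the interchange law and naturality of $\zeta$; no extra properties of $\mathcal{M}$ are needed beyond those already assumed.
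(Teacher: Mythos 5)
Your proof is correct and follows essentially the same route as the paper: the augmentation identities are verified by exactly the same chain (interchange law reducing $\omega^{+}\omega^{+}$ to $\omega\omega\circ \Omega T\zeta T^{+}\circ\zeta T^{+}\Omega^{+}T^{+}$, then Lemma \ref{lem:omega} and the monad-morphism identity for $\zeta T^{+}$), with the only packaging difference being that you obtain the monad-morphism claim by recognizing $\zeta$ as the $\xi$ of Theorem \ref{thm:main} for the square relating $\left(T^{+},\Omega^{+}\right)$ and $\left(T,\Omega\right)$, whereas the paper recomputes it directly. One trivial slip: the square you invoke (with $F=U$, $L=T^{+}$, $L'=T$) is the right-hand square of \eqref{square:T}, not the left-hand one, but your explicit identifications make the intended instance unambiguous.
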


\begin{proof}
The monads to consider are $\left( \Omega ^{+}T^{+},\Omega ^{+}\epsilon
^{+}T^{+},\eta ^{+}\right) $ and $\left( \Omega T,\Omega \epsilon T,\eta
\right) .$ We compute%
\begin{eqnarray*}
\zeta T^{+}\circ \Omega ^{+}\epsilon ^{+}T^{+}&\overset{\text{nat. }\zeta }{=}&%
\Omega U\epsilon ^{+}T^{+}\circ \zeta T^{+}\Omega ^{+}T^{+}\overset{(\ref%
{form:etaeps+}),(\ref{form:zetakappa})}{=}\Omega \epsilon UT^{+}\circ \Omega
T\zeta T^{+}\circ \zeta T^{+}\Omega ^{+}T^{+}=\Omega \epsilon T\circ \zeta
T^{+}\zeta T^{+}\\
\zeta T^{+}\circ \eta ^{+}&\overset{(\ref{form:zetakappa})}{=}&kUT^{+}\circ
\eta ^{+}=kT\circ \eta ^{+}\overset{(\ref{form:etaeps+})}{=}\eta .
\end{eqnarray*}
We have so proved that $\zeta T^{+}:\Omega ^{+}T^{+}\rightarrow \Omega T$ is
a morphism of monads. We compute%
\begin{eqnarray*}
\omega ^{+}\omega ^{+} &=&\omega \omega \circ \zeta T^{+}\zeta T^{+}=\omega
\omega \circ \Omega T\zeta T^{+}\circ \zeta T^{+}\Omega ^{+}T^{+} \overset{(%
\ref{form:omega})}{=}\omega \circ \Omega \epsilon T\circ \Omega T\zeta
T^{+}\circ \zeta T^{+}\Omega ^{+}T^{+}, \\
&=&\omega \circ \Omega \epsilon T\circ \zeta T^{+}\zeta T^{+}=\omega \circ
\zeta T^{+}\circ \Omega ^{+}\epsilon ^{+}T^{+}=\omega ^{+}\circ \Omega
^{+}\epsilon ^{+}T^{+}.
\end{eqnarray*}%
Moreover $\omega ^{+}\circ \eta ^{+}=\omega \circ \zeta T^{+}\circ \eta
^{+}=\omega \circ \eta =\mathrm{Id}.$ Thus $\omega ^{+}$ is an
augmentation for the monad $\left( \Omega ^{+}T^{+},\Omega ^{+}\epsilon
^{+}T^{+},\eta ^{+}\right) .$ By \cite[Corollary 2.7]{AM-heavy}, this means that
$T^{+}$ is h-separable.
\end{proof}

As a consequence of the results above, Theorem \ref{thm:main}
applies to the leftmost diagram in \eqref{square:T}.

\begin{invisible}
\begin{equation*}
\xymatrixcolsep{1.5cm}\xymatrixrowsep{0.7cm}\xymatrix{\Bialg(\M)\ar[r]^-{%
\mho ^{+}}\ar@<.5ex>@{.>}[d]^{P}&\Alg^+(\M)\ar@<.5ex>@{.>}[d]^{\Omega
^{+}}\\ \M\ar[r]^{\id}\ar@<.5ex>[u]^{\widetilde{T}}&\M\ar@<.5ex>[u]^{T^+}}
\end{equation*}\begin{equation*}
\begin{array}{ccc}
\mathrm{Bialg}\left( \mathcal{M}\right) & \overset{\mho ^{+}}{%
\longrightarrow } & \mathrm{Alg}^{+}\left( \mathcal{M}\right) \\
\widetilde{T}\uparrow \downarrow P &  & T^{+}\uparrow \downarrow \Omega ^{+}
\\
\mathcal{M} & \overset{\mathrm{Id}}{\longrightarrow } & \mathcal{M}%
\end{array}%
\end{equation*}
\end{invisible}

\section{Conclusions\label{sec:7}}

In this section we collect some fallouts of Theorem \ref{thm:main}. We
describe explicitly the functor $\Gamma _{\left[ n\right] }$ in case of
Yetter-Drinfeld modules and in particular of vector spaces. We infer an
analogue of the notion of combinatorial rank and we propose possible lines
of future investigation on the subject.

\begin{example}
\label{ex:YD}Let $H$ be a finite-dimensional Hopf algebra over a field $%
\Bbbk $. We want to apply the results of the previous sections in the case
when $\mathcal{M}$ is the category $_{H}^{H}\mathcal{YD}$ of (left-left)
Yetter-Drinfeld modules over $H.$ This category is braided as the antipode
of $H$ is invertible. Moreover $_{H}^{H}\mathcal{YD}$ satisfies all the
requirements of Section \ref{sec:6}. The related diagram rewrites as follows
and fulfills the assumptions of Theorem \ref{thm:main}.%
\begin{equation*}
\xymatrixcolsep{1.5cm}\xymatrixrowsep{0.7cm}\xymatrix{\Bialg(\yd)\ar[r]^-{%
\mho ^{+}}\ar@<.5ex>@{.>}[d]^{P}&\Alg^+(\yd)\ar@<.5ex>@{.>}[d]^{\Omega
^{+}}\\ \yd\ar[r]^{\id}\ar@<.5ex>[u]^{\widetilde{T}}&\yd\ar@<.5ex>[u]^{T^+}}
\end{equation*}

\begin{invisible}
\begin{equation*}
\begin{array}{ccc}
\mathrm{Bialg}\left( _{H}^{H}\mathcal{YD}\right) & \overset{\mho ^{+}}{%
\longrightarrow } & \mathrm{Alg}^{+}\left( _{H}^{H}\mathcal{YD}\right) \\
\widetilde{T}\uparrow \downarrow P &  & T^{+}\uparrow \downarrow \Omega ^{+}
\\
_{H}^{H}\mathcal{YD} & \overset{\mathrm{Id}}{\longrightarrow } & _{H}^{H}%
\mathcal{YD}%
\end{array}%
\end{equation*}
\end{invisible}

Let $V\in {}_{H}^{H}\mathcal{YD}$ and $\left( A,\varepsilon \right) \in
\mathrm{Alg}^{+}\left( _{H}^{H}\mathcal{YD}\right) .$ The object $\widetilde{%
T}V$ is the usual tensor algebra $TV$ that becomes the tensor algebra in $%
_{H}^{H}\mathcal{YD}$, as $V$ belongs to $_{H}^{H}\mathcal{YD}$, and that is
endowed with a braided bialgebra structure by means of its universal
property and the braiding of $V$. By definition
\begin{equation*}
T^{+}V=\left( TV,\varepsilon _{TV}\right) ,\qquad \left( \Omega ^{+}\left(
A,\varepsilon \right) ,kA\right) =\mathrm{Ker}\left( \Omega \varepsilon
\right) .
\end{equation*}%
Thus $\Omega ^{+}\left( A,\varepsilon \right) $ is nothing but the
augmentation ideal $A^{+}$ regarded as an object in $_{H}^{H}\mathcal{YD}$
being the kernel of $\varepsilon $ which is a morphism in this category. The
monad $\Omega ^{+}T^{+}$ is augmented via the morphism $\omega ^{+}:\Omega
^{+}T^{+}\rightarrow \mathrm{Id}$ of Lemma \ref{lem:augmtilde}. Note that,
for any $V\in {_{H}^{H}\mathcal{YD}},$ the map $\omega ^{+}V:\Omega
^{+}T^{+}V\rightarrow V$ is just the restriction to $\Omega
^{+}T^{+}V=\left( TV\right) ^{+}$ of the canonical projection $\omega
V:\Omega TV\rightarrow V$ onto $V$ (note that $\omega V$ is not in general
an augmentation for $\Omega T$ because $T$ is not h-separable \cite[%
Corollary 2.7 and Remark 5.3]{AM-heavy}).

By Theorem \ref{thm:main}, also the monad $P\widetilde{T}$ is augmented via $%
\gamma :=\omega ^{+}\circ \xi \widetilde{T}:P\widetilde{T}\rightarrow
\mathrm{Id}$. Explicitly $\gamma V$ is the restriction of $\omega V$ to the
Yetter-Drinfeld submodule of primitive elements of $\widetilde{T}V$. For
every $n\in \mathbb{N}$, there are a functor $\Gamma _{\left[ n\right]
}:{}_{H}^{H}\mathcal{YD}\rightarrow {}_{H}^{H}\mathcal{YD}_{\left[ n\right]
} $ and a natural transformation $\gamma _{\left[ n\right] }:P\widetilde{T}_{%
\left[ n\right] }\Gamma _{\left[ n\right] }\rightarrow \mathrm{Id},$ such
that $\Gamma _{\left[ 0\right] }:=\mathrm{Id},\gamma _{\left[ 0\right]
}:=\gamma $ and, for $n\geq 0,\Gamma _{\left[ n+1\right] }V=\left( \Gamma _{%
\left[ n\right] }V,\gamma _{\left[ n\right] }V\right) \in {}_{H}^{H}\mathcal{%
YD}_{\left[ n+1\right] }$ and $\gamma _{\left[ n\right] }\circ U_{\left[ n%
\right] }\widetilde{\eta} _{\left[ n\right] }\Gamma _{\left[ n\right] }=%
\mathrm{Id}$. Let us describe explicitly the functor
\begin{equation*}
S_{\left[ n\right] }:=\widetilde{T}_{\left[ n\right] }\Gamma _{\left[ n%
\right] }:{}_{H}^{H}\mathcal{YD}\rightarrow \mathrm{Bialg}\left( _{H}^{H}%
\mathcal{YD}\right).
\end{equation*}%
For $n=0$ we have $S_{\left[ 0\right] }:=\widetilde{T}_{\left[ 0\right]
}\Gamma _{\left[ 0\right] }=\widetilde{T}.$ Moreover $S_{\left[ n+1\right]
}V $ is given by the coequalizer, see \eqref{coeq:main1}:
\begin{equation*}
\xymatrixcolsep{2.3cm} \xymatrix{\widetilde{T}PS_{[n]}V\ar@<.5ex>[r]^-{%
\pi_{[n]}\Gamma_{[n]}V\circ
\widetilde{T}\gamma_{[n]}V}\ar@<-.5ex>[r]_{\widetilde{\epsilon}
S_{[n]}V}&S_{[n]}V\ar[r]^-{\pi _{[n,n+1]}\Gamma_{[n+1]}V}& S_{[n+1]}V}
\end{equation*}

\begin{invisible}
\begin{equation*}
\widetilde{T}PS_{\left[ n\right] }V\overset{\pi _{\left[ n\right] }\Gamma _{%
\left[ n\right] }V\circ \widetilde{T}\gamma _{\left[ n\right] }V}{\underset{%
\widetilde{\epsilon }S_{\left[ n\right] }V}{\rightrightarrows }}S_{\left[ n%
\right] }V\overset{\pi _{\left[ n,n+1\right] }\Gamma _{\left[ n+1\right] }V}{%
\longrightarrow }S_{\left[ n+1\right] }V.
\end{equation*}
\end{invisible}

By Lemma \ref{lem:rank}, a bialgebra map $f:S_{\left[ n\right] }V\rightarrow
B$ in $_{H}^{H}\mathcal{YD}$ together with the above parallel pair is a fork if and only if $%
Pf\circ e_{\left[ n\right] }V=Pf,$ where $e_{\left[ n\right] }:=U_{\left[ n%
\right] }\widetilde{\eta }_{\left[ n\right] }\Gamma _{\left[ n\right] }\circ
\gamma _{\left[ n\right] }.$ Since $Pf:PS_{\left[ n\right] }V\rightarrow PB$
is just the restriction of $f$ to the primitive elements, we get that $S_{%
\left[ n+1\right] }V$ is obtained by factoring out $S_{\left[ n\right] }V$
by its two-sided ideal generated by $\mathrm{Im}\left( \mathrm{Id}-e_{\left[
n\right] }V\right) .$ Since $e_{\left[ n\right] }V$ is idempotent, we have
that $\mathrm{Im}\left( \mathrm{Id}-e_{\left[ n\right] }V\right) =\mathrm{Ker%
}\left( e_{\left[ n\right] }V\right) $. By definition of $e_{\left[ n\right]
}V$ and since $U_{\left[ n\right] }\widetilde{\eta }_{\left[ n\right]
}\Gamma _{\left[ n\right] }V$ is split-injective, its retraction being $%
\gamma _{\left[ n\right] }V$, we get $\mathrm{Im}\left( \mathrm{Id}-e_{\left[
n\right] }V\right) =\mathrm{Ker}\left( \gamma _{\left[ n\right] }V\right) .$
Hence $S_{\left[ n+1\right] }V=\frac{S_{\left[ n\right] }V}{\left\langle
\mathrm{Ker}\left( \gamma _{\left[ n\right] }V\right) \right\rangle }.$

In order to give explicitly $\mathrm{Ker}\left( \gamma _{\left[ n\right]
}V\right) $ aind to get a complete description of the functors $\Gamma _{%
\left[ n\right] },$ let us take a closer look at $\gamma _{\left[ n\right] }$%
. By construction (see Theorem \ref{thm:main}), we have that $\gamma _{\left[
n\right] }:=\omega _{\left[ n\right] }^{+}\circ \xi S_{\left[ n\right] }:PS_{%
\left[ n\right] }\rightarrow \mathrm{Id}$ where $\omega _{\left[ n\right]
}^{+}:\Omega ^{+}\mho ^{+}S_{\left[ n\right] }\rightarrow \mathrm{Id}$
(denoted by $\gamma _{\left[ n\right] }^{\prime }$ in the quoted theorem as
it stems from $\gamma ^{\prime }=\omega ^{+}$) is defined iteratively by
the following equality $\omega _{\left[ n+1\right] }^{+}\circ \Omega
^{+}\mho ^{+}\pi _{\left[ n,n+1\right] }\Gamma _{\left[ n+1\right] }=\omega
_{\left[ n\right] }^{+}.$ Since $\Omega ^{+}\mho ^{+}\pi _{\left[ n,n+1%
\right] }\Gamma _{\left[ n+1\right] }$ is surjective, we get that $\omega _{%
\left[ n\right] }^{+}V:\Omega ^{+}\mho ^{+}S_{\left[ n\right] }V=\left( S_{%
\left[ n\right] }V\right) ^{+}\rightarrow V$ is just the projection onto $V$
passed to the quotient. Since $\gamma _{\left[ n\right] }:=\omega _{\left[ n%
\right] }^{+}\circ \xi S_{\left[ n\right] }$ we get that $\gamma _{\left[ n%
\right] }V:PS_{\left[ n\right] }V\rightarrow V$ is still the projection onto
$V.$ As a consequence $\mathrm{Ker}\left( \gamma _{\left[ n\right] }V\right)
$ is spanned by the homogeneous elements of $PS_{\left[ n\right] }V$ of
degree at least two.

Note that, if we forget the structure of Yetter-Drinfeld module and we just
keep the underlying braided bialgebra structure, the braided bialgebra $S_{%
\left[ n\right] }V$ is exactly what in \cite[Definition 3.10]{Ar-OntheComb}
was denoted by $S^{\left[ n\right] }\left( B\right) $ for $B:=\widetilde{T}%
V. $ As a consequence, the direct limit of the direct system
\begin{equation*}
\widetilde{T}V\rightarrow S_{\left[ 1\right] }V\rightarrow S_{\left[ 2\right]
}V\rightarrow \cdots
\end{equation*}%
is the Nichols algebra $\mathcal{B}\left( V,c\right) $ (\cite[Corollary 3.17
and Remark 5.4]{Ar-OntheComb})$,$ where $c:V\otimes V\rightarrow V\otimes V$
is the braiding of $V$ in $_{H}^{H}\mathcal{YD}$.
\end{example}

\begin{remark}
In the previous example $S_{\left[ n+1\right] }V$ is obtained by factoring
out $S_{\left[ n\right] }V$ by the two-sided ideal generated by the
primitive elements in $S_{\left[ n\right] }V$ of degree at least two.
Following \cite[Definition 4.1 and Section 5]{Ar-OntheComb}, we get that the
combinatorial rank of $V,$ regarded as braided vector space through the
braiding $c$ of $_{H}^{H}\mathcal{YD}$ as above, is the smallest $n$ such
that $\pi _{\left[ n,n+1\right] }\Gamma _{\left[ n+1\right] }V:S_{\left[ n%
\right] }V\rightarrow S_{\left[ n+1\right] }V$ is invertible, if such an $n$
exists. In this case obviously $S_{\left[ n\right] }V=\mathcal{B}\left(
V,c\right) $.

Since, in the setting of Theorem \ref{thm:main}, we can always define $S_{%
\left[ n\right] }:=L_{\left[ n\right] }\Gamma _{\left[ n\right] }:\mathcal{B}%
\rightarrow \mathcal{A}$, for every $B\in \mathcal{B}$ we are lead to the
following definition.
\end{remark}

\begin{definition}
\label{def:combrank}In the setting of Theorem \ref{thm:main}, consider the
functor $S_{\left[ n\right] }:=L_{\left[ n\right] }\Gamma _{\left[ n\right]
}:\mathcal{B}\rightarrow \mathcal{A}$. We define the \textbf{combinatorial
rank} of an object $B\in \mathcal{B}$ (with respect to the adjunction $%
\left( L,R\right) $) to be the smallest $n$ such that $\pi _{\left[ n,n+1%
\right] }\Gamma _{\left[ n+1\right] }B:S_{\left[ n\right] }B\rightarrow S_{%
\left[ n+1\right] }B$ is invertible, if such an $n$ exists.
\end{definition}

\begin{remark}
Thus a concept of combinatorial rank can be introduced and investigated in
this very general setting in which there are neither bialgebras nor braided
vector spaces but just an adjunction $\left( L,R\right) $ as in Theorem \ref%
{thm:main}. Note that, by Lemma \ref{lem:rank}, the morphism $\pi _{\left[
n,n+1\right] }\Gamma _{\left[ n+1\right] }B$ is invertible if and only if
either $\gamma _{\left[ n\right] }B$ or $\eta _{\left[ n\right] }\Gamma _{%
\left[ n\right] }B$ is invertible.

As we will see below, a case of interest is the one in which all objects in $%
\mathcal{B}$ have combinatorial rank at most one, equivalently $\eta _{\left[
1\right] }\Gamma _{\left[ 1\right] }:\Gamma _{\left[ 1\right] }\rightarrow
R_{\left[ 1\right] }S_{\left[ 1\right] }$ is invertible. Since, to this aim,
only the functor $\Gamma _{\left[ 1\right] }$ is needed, we can even more
relax our assumptions by taking just an adjunction $(L,R)$ with an
augmentation $\gamma :RL\rightarrow \mathrm{Id}$ for the associated monad,
avoiding the setting of Theorem \ref{thm:main} and define directly $\Gamma _{%
\left[ 1\right] }$ by $\Gamma _{\left[ 1\right] }B:=(B,\gamma B)$.
\end{remark}

\begin{theorem}
\label{thm:bound}In the setting of Theorem \ref{thm:main}, if the adjunction
$\left( L_{N},R_{N}\right) $ is idempotent for some $N\in \mathbb{N}$, then
every object in $\mathcal{B}$ has combinatorial rank at most $N$ with
respect to the adjunction $\left( L,R\right) .$ In particular the length of
the monadic decomposition of $R:\mathcal{A}\rightarrow \mathcal{B}$ is an
upper bound for the combinatorial rank of objects in $\mathcal{B}$ with
respect to the adjunction $\left( L,R\right) .$
\end{theorem}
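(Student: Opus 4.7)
The plan is to reduce the claim, via Lemma \ref{lem:rank}, to showing that $\eta_N \Gamma_N B$ is invertible for every $B \in \mathcal{B}$, and then to deduce this from the idempotence hypothesis together with the fact that $\gamma_N B$ retracts $\eta_N \Gamma_N B$. By Definition \ref{def:combrank} and the last assertion of Lemma \ref{lem:rank}, it suffices to prove that $\pi_{[N,N+1]}\Gamma_{[N+1]}B$ is invertible for every $B$, equivalently (by the same lemma) that $\eta_{[N]}\Gamma_{[N]}B$ is invertible. The identity $R_{[N]}\lambda_N \circ \eta_{[N]}\Lambda_N = \Lambda_N \eta_N$ derived in the proof of Theorem \ref{teo:main}(3), combined with $\Gamma_{[N]} = \Lambda_N \Gamma_N$ (Proposition \ref{pro:Gamman}), the invertibility of $\lambda_N$ (Remark \ref{rem:adj}) and the faithfulness of $\Lambda_N$, further reduces the problem to showing that $\eta_N \Gamma_N B$ is invertible for every $B \in \mathcal{B}$.

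The core of the proof will be the following claim about idempotent monads: if $(T,m,u)$ is an idempotent monad on any category and $uX$ admits a retraction $r$, then $uX$ is invertible. I intend to prove it directly. Naturality of $u$ at $r:TX\to X$ gives $uX \circ r = Tr \circ uTX$; applying $T$ to $r \circ uX = \mathrm{Id}_X$ yields $Tr \circ TuX = \mathrm{Id}_{TX}$, where $TuX$ is invertible by idempotence, whence $Tr$ is invertible; since $uTX$ is also invertible by idempotence, $uX \circ r$ is a composite of invertible morphisms, hence invertible; together with $r \circ uX = \mathrm{Id}_X$ this makes $uX$ both split monic and split epic, therefore an isomorphism. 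Applying this claim to the monad $T = R_N L_N$ on $\mathcal{B}_N$ (idempotent by hypothesis), to $X = \Gamma_N B$, and to $r = \gamma_N B$ --- which is a retraction of $\eta_N\Gamma_N B$ because $\Gamma_{N+1}B = (\Gamma_N B, \gamma_N B)$ is an $R_N L_N$-algebra by Proposition \ref{pro:Gamman} --- supplies the missing invertibility.

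For the last sentence of the theorem, if $R$ has a monadic decomposition of monadic length $N$ then $L_N$ is fully faithful by Definition \ref{def:MonDec}, so $\eta_N$ is invertible and the monad $R_N L_N$ is trivially idempotent; hence the first part applies. The main delicate point I foresee is the idempotent-monad claim above: it is short but requires juggling naturality of $u$ with both equivalent forms of idempotence ($Tu$ and $uT$ invertible), and care that being simultaneously split monic and split epic really forces $uX$ to be an isomorphism; the remainder should be a direct chase through the relations assembled in Sections \ref{sec:2}--\ref{sec:5}.
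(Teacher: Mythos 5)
Your proof is correct and follows essentially the same route as the paper: reduce via Lemma \ref{lem:rank} and the identity $R_{\left[ N\right] }\lambda _{N}\circ \eta _{\left[ N\right] }\Lambda _{N}=\Lambda _{N}\eta _{N}$ to the invertibility of $\eta _{N}\Gamma _{N}B$, and deduce that from idempotence together with the fact that $\Gamma _{N+1}B=\left( \Gamma _{N}B,\gamma _{N}B\right) $ is an $R_{N}L_{N}$-algebra. The only difference is cosmetic: where the paper invokes the standard equivalence between idempotence of $\left( L_{N},R_{N}\right) $ and invertibility of $\eta _{N}U_{N,N+1}$ (so that $\eta _{N}\Gamma _{N}=\eta _{N}U_{N,N+1}\Gamma _{N+1}$ is automatically invertible), you prove the needed implication directly via your correct auxiliary lemma that, for an idempotent monad, any unit component admitting a retraction is invertible.
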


\begin{proof}
The fact that the adjunction $\left( L_{N},R_{N}\right) $ is idempotent is
equivalent to require that $\eta _{N}U_{N,N+1}$ is an isomorphism. By
Proposition\ref{pro:Gamman}, we have that $\Gamma _{\left[ N\right]
}=\Lambda _{N}\Gamma _{N}$ and $U_{N,N+1}\circ \Gamma _{N+1}=\Gamma _{N}.$
Thus $\eta _{N}\Gamma _{N}=\eta _{N}U_{N,N+1}\Gamma _{N+1}$ is an
isomorphism. As in the proof of Theorem \ref{teo:main}, we get $R_{\left[
N\right] }\lambda _{N}\circ \eta _{\left[ N\right] }\Lambda _{N}=\Lambda
_{N}\eta _{N}.$ In particular we get $R_{\left[ N\right] }\lambda _{N}\Gamma
_{N}\circ \eta _{\left[ N\right] }\Lambda _{N}\Gamma _{N}=\Lambda _{N}\eta
_{N}\Gamma _{N}$ i.e. $R_{\left[ N\right] }\lambda _{N}\Gamma _{N}\circ \eta
_{\left[ N\right] }\Gamma _{\left[ N\right] }=\Lambda _{N}\eta _{N}\Gamma
_{N}.$ Since $\eta _{N}\Gamma _{N}$ and $\lambda _{N}$ are invertible, we
get that $\eta _{\left[ N\right] }\Gamma _{\left[ N\right] }$ is invertible.
By the foregoing, every object in $\mathcal{B}$ has combinatorial rank at
most $N.$

If $R$ has a monadic decomposition of monadic length $N$, then $L_{N}$ is
fully faithful i.e. $\eta _{N}$ is invertible. Thus, in particular, $\eta
_{N}U_{N,N+1}$ is an isomorphism and hence $\left( L_{N},R_{N}\right) $ is
idempotent. As a consequence every object in $\mathcal{B}$ has combinatorial
rank at most $N.$
\end{proof}

\begin{corollary}
\label{coro:MM}Let $\mathcal{M}$ be a symmetric MM-category in the sense of
\cite[Definition 7.4]{AM-MM}. Then every object in $\mathcal{M}$ has
combinatorial rank at most one with respect to the adjunction $(\widetilde{T}%
,P).$
\end{corollary}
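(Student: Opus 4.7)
The plan is to deduce the corollary by applying Theorem \ref{thm:bound} with $N=1$ to the adjunction $(\widetilde{T},P):\mathrm{Bialg}(\mathcal{M})\rightarrow \mathcal{M}$. To do this, we need to verify that, for a symmetric MM-category $\mathcal{M}$, the adjunction $(\widetilde{T}_1,P_1)$ obtained after performing one step of the monadic decomposition of $(\widetilde{T},P)$ is idempotent, i.e.\ the unit $\eta _{1}$ of $(\widetilde{T}_1,P_1)$ becomes invertible on every object of the form $P_1 B$ (equivalently, $\widetilde{T}_1\eta_1$ is invertible, or $\epsilon_1 \widetilde{T}_1$ is invertible).

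First, I would recall from \cite[Definition 7.4]{AM-MM} the defining requirements of a symmetric MM-category: there, $\mathcal{M}$ is endowed with the symmetric braiding and is asked to satisfy the categorical analogue of the Milnor--Moore theorem, producing an equivalence $\Lambda$ between $\mathcal{M}_2$ and the category of Lie algebras (or restricted Lie algebras) in $\mathcal{M}$ together with the identification $\mathcal{U}\simeq \widetilde{T}_2\circ \Lambda^{-1}$. The next step is to unpack what this means at level $1$: the Milnor--Moore equivalence ensures that $\widetilde{T}_2$ is fully faithful, so $\eta_2$ is invertible, and hence the monad $R_2 L_2=P_2\widetilde{T}_2$ on $\mathcal{M}_2$ is isomorphic to the identity. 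Pulling this back through the forgetful functor $U_{1,2}:\mathcal{M}_2\to \mathcal{M}_1$ and combining with the description of $(\widetilde{T}_1,P_1)$ in terms of symmetrization of the tensor algebra, one sees that the restriction of $\eta_{1}$ to objects of the form $P_1 B$ is an isomorphism, i.e.\ $(\widetilde{T}_1,P_1)$ is an idempotent adjunction.

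Once the idempotency of $(\widetilde{T}_1,P_1)$ is established, the conclusion is immediate by Theorem \ref{thm:bound}: every object $V\in \mathcal{M}$ has combinatorial rank at most $1$ with respect to $(\widetilde{T},P)$. Equivalently, via Lemma \ref{lem:rank}, the canonical projection $S_{[1]}V\to S_{[2]}V$ is invertible, so $S_{[1]}V$ already coincides with the Nichols algebra $\mathcal{B}(V,c)$ in this prototype.

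The main obstacle is the second paragraph: extracting the idempotency of $(\widetilde{T}_1,P_1)$ from the axioms of \cite[Definition 7.4]{AM-MM}. The delicate point is that having length-$2$ monadic decomposition by itself yields only combinatorial rank at most $2$; to sharpen this to $1$ one must use the specific symmetric structure, which forces the primitives of $S_{[1]}V$ to equal $V$ (so that the symmetric algebra quotient matches $\mathcal{U}$ of the associated abelian Lie object). A careful translation between the idempotency condition of Theorem \ref{thm:bound} and the Milnor--Moore axioms of \cite{AM-MM} is therefore the technical heart of the argument.
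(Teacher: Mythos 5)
Your overall skeleton is exactly the paper's: reduce to showing that the adjunction $(\widetilde{T}_{1},P_{1})$ is idempotent and then invoke Theorem \ref{thm:bound} with $N=1$ (after noting that a symmetric MM-category satisfies the hypotheses of Section \ref{sec:6}, so that $(\widetilde{T},P)$ is in the setting of Theorem \ref{thm:main}). The paper, however, obtains the idempotency by directly citing \cite[Theorem 7.2]{AM-MM}, whereas you try to \emph{derive} it from the full faithfulness of $\widetilde{T}_{2}$ supplied by the Milnor--Moore equivalence, and that derivation is where your argument breaks down.

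Concretely, the step ``$\eta_{2}$ is invertible, and pulling this back through $U_{1,2}$ one sees that $\eta_{1}$ is invertible on objects of the form $P_{1}B$'' does not work: invertibility of $\eta_{2}$ (equivalently, monadic length at most $2$) is strictly \emph{weaker} than idempotency of $(\widetilde{T}_{1},P_{1})$, and the implication runs in the opposite direction. Indeed, for \emph{any} monadic adjunction $(L_{1},R_{1})$ the comparison $R_{2}$ is an equivalence, hence $\eta_{2}$ is invertible, while $(L_{1},R_{1})$ is generally far from idempotent (the free--forgetful adjunction for algebras is the standard example); so nothing about $\eta_{1}U_{1,2}$, i.e.\ about $\eta_{1}$ evaluated at underlying objects of $\mathcal{M}_{2}$, can be extracted from $\eta_{2}$ alone. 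This is precisely why the paper warns (cf.\ Remark \ref{rem:Milnor-Moore} and Theorem \ref{thm:bound}) that length-$N$ monadic decomposition only yields rank at most $N$: with $N=2$ you would get rank at most $2$, not $1$. The genuine content needed is the idempotency of $(\widetilde{T}_{1},P_{1})$ itself, which in \cite{AM-MM} is proved \emph{first} from the MM-axioms and from which the length bound is then deduced. You correctly identify this as ``the technical heart of the argument'' in your last paragraph, but you leave it open rather than closing it; as written, the proof therefore has a gap at its decisive step, which the paper fills by the citation of \cite[Theorem 7.2]{AM-MM}.
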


\begin{proof}
By hypothesis all the requirements of Section \ref{sec:6} are satisfied so
that the adjunction $(\widetilde{T},P)$ is in the setting of Theorem \ref%
{thm:main}. By \cite[Theorem 7.2]{AM-MM} the adjunction $(\widetilde{T}%
_{1,}P_{1})$ is idempotent. We conclude by Theorem \ref{thm:bound}.
\end{proof}

As a consequence all the symmetric MM-categories given in \cite[Section 9]%
{AM-MM} have objects with combinatorial rank at most one.

\begin{example}
\label{ex:vec}Consider the particular case when $\mathcal{M}$ is the
category $\mathrm{Vec}$ of vector spaces over a field $\Bbbk $. Since $%
\mathrm{Vec}$ is just $_{H}^{H}\mathcal{YD}$ in case $H$ is the trivial Hopf
algebra $\Bbbk $, this is a particular case of Example \ref{ex:YD}. The
diagram above can be more easily written as follows
\begin{equation*}
\xymatrixcolsep{1.5cm}\xymatrixrowsep{0.7cm}\xymatrix{\Bialg\ar[r]^-{\mho
^{+}}\ar@<.5ex>@{.>}[d]^{P}&\Alg^+\ar@<.5ex>@{.>}[d]^{\Omega ^{+}}\\
\Vec\ar[r]^{\id}\ar@<.5ex>[u]^{\widetilde{T}}&\Vec\ar@<.5ex>[u]^{T^+}}
\end{equation*}

\begin{invisible}
\begin{equation*}
\begin{array}{ccc}
\mathrm{Bialg} & \overset{\mho ^{+}}{\longrightarrow } & \mathrm{Alg}^{+} \\
\widetilde{T}\uparrow \downarrow P &  & T^{+}\uparrow \downarrow \Omega ^{+}
\\
\mathrm{Vec} & \overset{\mathrm{Id}}{\longrightarrow } & \mathrm{Vec}%
\end{array}%
\end{equation*}%
Let $V\in \mathrm{Vec}$ and $\left( A,\varepsilon \right) \in \mathrm{Alg}%
^{+}.$ The object $\widetilde{T}V$ is the usual tensor algebra $TV$ endowed
with its canonical bialgebra structure that makes primitive the elements in $%
V$. Again, by definition, $T^{+}V=\left( TV,\varepsilon _{TV}\right) $ and $%
\left( \Omega ^{+}\left( A,\varepsilon \right) ,kA\right) =\mathrm{Ker}%
\left( \Omega \varepsilon \right) $ is nothing but the augmentation ideal $%
A^{+}$. The monad $\Omega ^{+}T^{+}$ is augmented via the morphism $\omega
^{+}:\Omega ^{+}T^{+}\rightarrow \mathrm{Id}$ of Lemma \ref{lem:augmtilde}
that on components is just the restriction to $\Omega ^{+}T^{+}V=\left(
TV\right) ^{+}$ of the canonical projection $\omega V:\Omega TV\rightarrow V$
onto $V$.By Theorem \ref{thm:main}, we get an augmentation $\gamma :=\omega
^{+}\circ \xi \widetilde{T}:P\widetilde{T}\rightarrow \mathrm{Id}$ for the
monad $P\widetilde{T}$ that on components is the restriction of $\omega V$
to the subspace of primitive elements of $\widetilde{T}V$. As above for
every $n\in \mathbb{N}$, there are a functor $\Gamma _{\left[ n\right] }:%
\mathrm{Vec}\rightarrow \mathrm{Vec}_{\left[ n\right] }$ and a natural
transformation $\gamma _{\left[ n\right] }:P\widetilde{T}_{\left[ n\right]
}\Gamma _{\left[ n\right] }\rightarrow \mathrm{Id}.$
\end{invisible}

As above we can define $S_{\left[ n\right] }:=\widetilde{T}_{\left[ n\right]
}\Gamma _{\left[ n\right] }:\mathrm{Vec}\rightarrow \mathrm{Bialg.}$ Thus $%
S_{\left[ 0\right] }:=\widetilde{T}$ and $S_{\left[ n+1\right] }V=\frac{S_{%
\left[ n\right] }V}{\left\langle \mathrm{Ker}\left( \gamma _{\left[ n\right]
}V\right) \right\rangle }$ is obtained by factoring out $S_{\left[ n\right]
}V$ by the two-sided ideal generated by the homogeneous primitive elements
in $S_{\left[ n\right] }V$ of degree at least two. Note that the procedure
we used to compute $S_{\left[ 1\right] }V=\frac{\widetilde{T}V}{\left\langle
\mathrm{Ker}\left( \gamma V\right) \right\rangle }$ is essentially the same
used to compute $L_{1}V_{1}$ in the proof of \cite[Theorem 3.4]%
{AGM-MonadicLie1}.

By \cite[Definition 6.8 and Theorem 6.13]{Ar-OntheComb}, if $char\left(
\Bbbk \right) =0,$ and \cite[Example 3.13]{Ardi-Univ}, if $char\left( \Bbbk
\right) =p,$ we get that $V,$ regarded as a braided vector space via the
braiding $c:V\otimes V\rightarrow V\otimes V:x\otimes y\mapsto y\otimes x$
of $\mathrm{Vec},$ has combinatorial rank at most one. Thus $\mathrm{Vec}$
is an example of braided monoidal category where every object has
combinatorial rank at most one with respect to the adjunction $(\widetilde{T}%
,P).$

By the foregoing $S_{\left[ 1\right] }V$ coincides with the Nichols algebra $%
\mathcal{B}\left( V,c\right) $ and all the maps $\pi _{\left[ 1,2\right]
}\Gamma _{\left[ 2\right] }V:S_{\left[ 1\right] }V\rightarrow S_{\left[ 2%
\right] }V$, $\gamma _{\left[ 1\right] }V:PS_{\left[ 1\right] }V\rightarrow V
$ and $U_{\left[ 1\right] }\eta _{\left[ 1\right] }\Gamma _{\left[ 1\right]
}V:V\rightarrow RS_{\left[ 1\right] }V$ are invertible. By Lemma \ref%
{lem:rank}, we have that $\pi _{\left[ n,n+1\right] }\Gamma _{\left[ n+1%
\right] }B$ is invertible for all $n\geq 1$ and hence $\gamma _{\left[ n%
\right] }V$ is invertible for all $n\geq 1.$

\label{ex:YDrank2}In Example \ref{ex:vec} we observed that $\gamma _{\left[ 1%
\right] }V:PS_{\left[ 1\right] }V\rightarrow V$ (equivalently $U_{\left[ 1%
\right] }\eta _{\left[ 1\right] }\Gamma _{\left[ 1\right] }V:V\rightarrow
RS_{\left[ 1\right] }V)$ is an isomorphism for $\mathcal{M}=\mathrm{Vec}$.
This fact may fail to be true if we change $\mathcal{M}$. For instance, let
us come back to the category $_{H}^{H}\mathcal{YD}$. By the foregoing we
have $S_{\left[ 1\right] }V=\frac{\widetilde{T}V}{\left\langle \mathrm{Ker}%
\left( \gamma V\right) \right\rangle }$ where $\gamma V:P\widetilde{T}%
V\rightarrow V$ is the projection on degree one and hence $\mathrm{Ker}%
\left( \gamma V\right) $ are the elements of $P\widetilde{T}V$ of degree at
least two. In order to see that the projection $\gamma _{\left[ 1\right]
}V:PS_{\left[ 1\right] }V\rightarrow V$ and the injection $U_{\left[ 1\right]
}\eta _{\left[ 1\right] }\Gamma _{\left[ 1\right] }V:V\rightarrow RS_{\left[
1\right] }V$ need not to be invertible we refer to \cite[Section 7]%
{Ar-OntheComb} where examples of braided vector spaces of combinatorial rank
greater than two, arising as object in $_{H}^{H}\mathcal{YD}$ and braided
via the braiding of $_{H}^{H}\mathcal{YD},$ are given.

\begin{invisible}
The fact that the combinatorial rank is greater than two means that the
projection $\pi _{\left[ 1,2\right] }\Gamma _{\left[ 2\right] }V:S_{\left[ 1%
\right] }V\rightarrow S_{\left[ 2\right] }V$ is not invertible and hence the
projection $\gamma _{\left[ 1\right] }V:PS_{\left[ 1\right] }V\rightarrow V$
is not injective and the injection $U_{\left[ 1\right] }\eta _{\left[ 1%
\right] }\Gamma _{\left[ 1\right] }V:V\rightarrow RS_{\left[ 1\right] }V$ is
not surjective.
\end{invisible}

It would be of interest to determine which conditions on $H$ guarantee that $%
U_{\left[ 1\right] }\eta _{\left[ 1\right] }\Gamma _{\left[ 1\right]
}V:V\rightarrow RS_{\left[ 1\right] }V$ is always invertible for every $V\in
{}_{H}^{H}\mathcal{YD}$, equivalently any object in $_{H}^{H}\mathcal{YD}$
has combinatorial rank at most one.
\end{example}

\begin{remark}
\label{rem:Milnor-Moore} In \cite[Theorem 3.4]{AGM-MonadicLie1} we showed
that the functor $P$ in case $\mathcal{M}=\mathrm{Vec}$ admits a monadic
decomposition of length at most two, represented in the following diagram.
\begin{equation*}
\xymatrix{\Bialg\ar@<.5ex>[d]^{P}&\Bialg\ar@<.5ex>[d]^{P_1}\ar[l]_{%
\mathrm{Id}}&\Bialg\ar@<.5ex>[d]^{P_2}\ar[l]_{\mathrm{Id}}\\
\Vec\ar@<.5ex>@{.>}[u]^{\widetilde{T}}&\Vec_1\ar@<.5ex>@{.>}[u]^{%
\widetilde{T}_1} \ar[l]_{U_{0,1}}&\Vec_2
\ar@<.5ex>@{.>}[u]^{\widetilde{T}_2} \ar[l]_{U_{1,2}}}
\end{equation*}%
This result was obtained by proving first that the adjunction $(\widetilde{T}%
_{1},P_{1})$ is idempotent or equivalently that $\widetilde{\eta }%
_{1}U_{1,2} $ is an isomorphism. Note that, by \cite[Proposition 2.3]%
{AGM-MonadicLie1}, we can take $\widetilde{T}_{2}:=\widetilde{T}_{1}U_{1,2},$
$U_{1,2}\widetilde{\eta }_{2}=\widetilde{\eta }_{1}U_{1,2}$ and $\widetilde{%
\epsilon }_{2}=\widetilde{\epsilon }_{1}.$ We have seen in \cite[Theorems
7.2 and 8.1]{AM-MM} and \cite[Theorem 3.3]{AGM-Restricted} that the category
$\mathrm{Vec}_{2}$ is equivalent to the category $\mathrm{Lie}$ of
(restricted) Lie algebras over $\Bbbk $ and that the adjunction $(\widetilde{%
T}_{2},P_{2})$ plays the role of the usual adjunction, between the
categories $\mathrm{Bialg}$ and $\mathrm{Lie}$, given by the (restricted)
universal enveloping algebra functor and the primitive functor. The fact
that the monadic decomposition has length at most two means that the unit $%
\widetilde{\eta }_{2}:\mathrm{Id}\rightarrow P_{2}\widetilde{T}_{2}$ is
invertible. In view of the identifications we mentioned, this is the
counterpart of half of the Milnor--Moore theorem \cite[Theorems 5.18(1) and
6.11(1)]{MM}. Now, given $V_{2}:=\left( V,\mu ,\mu _{1}\right) \in \mathrm{%
Vec}_{2}$, with $\mu :P\widetilde{T}V\rightarrow V,$ $V_{1}:=\left( V,\mu
\right) $ and $\mu _{1}:P_{1}\widetilde{T}_{1}V_{1}\rightarrow V_{1},$ one
has $\mu _{1}\circ \widetilde{\eta }_{1}=\mathrm{Id}$ and hence $\mu
_{1}=\left( \widetilde{\eta }_{1}V_{1}\right) ^{-1}$ (note that $\widetilde{%
\eta }_{1}V_{1}=\widetilde{\eta }_{1}U_{1,2}V_{2}$ is invertible). Moreover $%
\widetilde{T}_{2}V_{2}=\widetilde{T}_{1}U_{1,2}V_{2}=\widetilde{T}_{1}V_{1}$%
. Following the proof of \cite[Theorem 3.4]{AGM-MonadicLie1}, we can compute
explicitly $\widetilde{T}_{1}V_{1}\ $ as $\frac{\widetilde{T}V}{\left\langle
z-\mu \left( z\right) \mid z\in EV\right\rangle },$ where $EV$ denotes the
subspace of $P\widetilde{T}V$ spanned by element of homogeneous degree
greater than one, and hence we obtain that $\widetilde{T}_{1}V_{1}=U\left(
V,c,\mu \right) $ in the sense of \cite[Definition 3.5]{Ardi-PrimGen}, where
$c:V\otimes V\rightarrow V\otimes V:x\otimes y\mapsto y\otimes x$ is the
braiding of $\mathrm{Vec}$.

\begin{invisible}
$\widetilde{T}_{1}V_{1}$ is defined by the following coequalizer.%
\begin{equation*}
\widetilde{T}P\widetilde{T}V\overset{\widetilde{T}\mu }{\underset{\widetilde{%
\epsilon }\widetilde{T}V}{\rightrightarrows }}\widetilde{T}V\overset{\pi
_{0,1}V_{1}}{\longrightarrow }\widetilde{T}_{1}V_{1}.
\end{equation*}%
As in Example \ref{ex:vec}, we get that $\widetilde{T}_{1}V_{1}$ is the
quotient of $\widetilde{T}V$ by the two-sided ideal generated by $\mathrm{Id}%
_{P\widetilde{T}V}-\widetilde{\eta }V\circ \mu $ so that $\widetilde{T}%
_{1}V_{1}=\frac{\widetilde{T}V}{\left\langle z-\mu \left( z\right) \mid z\in
P\widetilde{T}V\right\rangle }.$
\end{invisible}

Note that, in the same quoted definition, it is set $S\left( V,c\right)
:=U\left( V,c,0\right) =\frac{\widetilde{T}V}{\left\langle z\mid z\in
EV\right\rangle }.$ Clearly $S\left( V,c\right) $ coincides with $S_{\left[ 1%
\right] }V$ of Example \ref{ex:vec}. In \cite[Corollary 5.5]{Ardi-PrimGen}
it is proved that $PU\left( V,c,\mu \right) \cong V$ using the fact that $%
PS\left( V,c\right) \cong V.$ In view of the above identifications, the
latter isomorphism means that $U_{\left[ 1\right] }\eta _{\left[ 1\right]
}\Gamma _{\left[ 1\right] }V:V\rightarrow PL_{\left[ 1\right] }\Gamma _{%
\left[ 1\right] }V=PS_{\left[ 1\right] }V$ is invertible and we already
observed that this is another way to say that $V$ has combinatorial rank at
most one (the primitive elements in $PS_{\left[ 1\right] }V$ are
concentrated in degree one). On the other hand, the first isomorphism
implies that $U_{1}\widetilde{\eta }_{1}V_{1}:V\rightarrow P\widetilde{T}%
_{1}V_{1}$ is invertible for any $V_{2}\in \mathrm{Vec}_{2}$. Equivalently $%
U_{1}\widetilde{\eta }_{1}U_{1,2}$ is invertible which is the same as
requiring that $\widetilde{\eta }_{1}U_{1,2}$ is invertible i.e. the
condition, recalled above, saying that the adjunction $(\widetilde{T}%
_{1},P_{1})$ is idempotent. Summing up, using that any object in $\mathrm{Vec%
}$ has combinatorial rank at most one, we can prove that $(\widetilde{T}%
_{1},P_{1})$ is idempotent and hence that $P$ has monadic decomposition of
length at most two.

As mentioned, we can consider an adjunction $\left( L,R\right) $ whose
associated monad is augmented. If every object in $\mathcal{B}$ has
combinatorial rank at most one, it is natural to wonder if, also in this
wider setting, it is true that $\left( L_{1},R_{1}\right) $ is idempotent
and hence $R$ has monadic decomposition of length at most two. In this way
the adjunction $\left( L_{2},R_{2}\right) $ would be involved in an analogue
of the Milnor--Moore theorem in the above sense. More generally one can ask
whether $\left( L_{N},R_{N}\right) $ is idempotent in case the combinatorial
rank of objects in $\mathcal{B}$ for an adjunction $\left( L,R\right) $ as
in Theorem \ref{thm:main} is at most $N\in \mathbb{N}$. This would provide
an inverse of Theorem \ref{thm:bound}.
\end{remark}

\end{document}